\newtheorem{thrm}{Theorem}[section]
\newtheorem{prop}[thrm]{Proposition}
\newtheorem{lem}[thrm]{Lemma}
\newtheorem{cor}[thrm]{Corollary}
\newtheorem{conj}[thrm]{Conjecture}
\numberwithin{equation}{section}
\theoremstyle{remark}
\newtheorem{remark}[thrm]{Remark}
\theoremstyle{definition}
\newtheorem{definition}[thrm]{Definition}
\newtheorem{example}[thrm]{Example}
\newcommand{\g}{\mathcal{G}}
\newcommand{\s}{\mathcal{S}}
\newcommand{\h}{\mathcal{H}}
\newcommand{\col}{\mathcal{C}}
\newcommand{\row}{\mathcal{R}}
\newcommand{\gs}{\g(\s_{n})}
\newcommand{\gsz}{\g(\s^{0}_{n})}
\newcommand{\Ds}{\mathcal{D}_{\s}}         
\newcommand{\Dh}{\mathcal{D}_{\h}}
\newcommand{\Dsz}{\mathcal{D}^{0}_{\s}}            
\newcommand{\Dhz}{\mathcal{D}^{0}_{\h}}
\newcommand{\As}{\mathcal{A}_{\s}}
\newcommand{\Ah}{\mathcal{A}_{\h}}
\newcommand{\F}{\mathbb{F}}
\newcommand{\C}{\mathbb{C}}
\newcommand{\p}{\mathcal{P}}
\newcommand{\pr}{\mathcal{P}^{\textnormal{$e$-reg}}}
\newcommand{\ps}{\mathcal{P}^{\textnormal{$e$-sing}}}
\newcommand{\la}{\lambda}
\newcommand{\domr}{\vartriangleright}
\newcommand{\adj}{\textnormal{adj}}
\newcommand{\uar}{{\uparrow}}
\newcommand{\delf}{W_{\mathbb{F}}}
\newcommand{\LF}{L_{\mathbb{F}}}
\newdimen\abax \abax=0cm
\newdimen\abay \abay=0cm
\newdimen\abah \abah=1cm
\newdimen\abav \abav=.85cm
\newdimen\abahd \abahd=1.6cm
\newdimen\abavd \abavd=1.6cm
\newdimen\abadts \abadts=.9cm
\newdimen\abab \abab=.8cm
\newdimen\aban \aban=.3cm
\newdimen\abat \abat=.3cm
\newdimen\abau \abau=.5cm
\def\abas{.37}
\def\drawbead{\draw(\abax,\abay-.1cm)--++(0,\abav+.2cm);\shade[shading=ball,ball color=black](\abax,\abay+.5*\abav)circle(.5*\abab);\advance\abax\abah}
\def\drawnobead{\draw(\abax,\abay-.1cm)--++(0,\abav+.2cm);\draw(\abax-.5*\aban,\abay+.5*\abav)--++(\aban,0);\advance\abax\abah}
\def\drawminus{\draw(\abax-.5*\abau,\abay+.5*\abav)--++(\abau	,0);\advance\abax\abah}
\def\drawplus{\draw(\abax,\abay+.1cm)--++(0,\aban+.3cm);\draw(\abax-.5*\abau,\abay+.5*\abav)--++(\abau,0);\advance\abax\abah}
\def\drawtopleft{\draw(\abax,\abay-.1cm)--++(0,\abat+.1cm)--++(.5*\abah+.1cm,0);\advance\abax\abah}
\def\drawnormal{\draw[red](\abax-.5*\abau,\abay+.5*\abav)--++(\abau	,0);\advance\abax\abah}
\def\drawconormal{\draw[red](\abax,\abay+.1cm)--++(0,\aban+.3cm);\draw[red](\abax-.5*\abau,\abay+.5*\abav)--++(\abau,0);\advance\abax\abah}
\def\drawtilde{$\sim$;\advance\abax\abah}
\def\drawvdots{\draw[dotted,thick](\abax,\abay+.5*\abavd-.5*\abadts)--++(0,\abadts);\advance\abax\abah}
\def\drawhdots{\draw[dotted,thick](\abax+.5*\abahd-.5*\abah-.5*\abadts,\abay+.5*\abav)--++(\abadts,0);\advance\abax\abahd}
\def\drawempty{\advance\abax\abah}
\def\drawtopright{\draw(\abax,\abay-.1cm)--++(0,\abat+.1cm)--++(-.5*\abah-.1cm,0);\advance\abax\abah}
\def\drawtopmiddle{\draw(\abax,\abay-.1cm)--++(0,\abat+.1cm)++(-.5*\abah-.1cm,0)--++(\abah+.2cm,0);\advance\abax\abah}
\def\drawtopdots{\draw[dotted,thick](\abax+.5*\abahd-.5*\abah-.5*\abadts,\abay+\abat)--++(\abadts,0);\advance\abax\abahd}
\def\@bacus#1#2.{%
\if b#1\drawbead\fi%
\if n#1\drawnobead\fi%
\if -#1\drawminus\fi%
\if +#1\drawplus\fi%
\if E#1\drawempty\fi%
\if C#1\drawconormal\fi%
\if N#1\drawnormal\fi%
\if s#1\drawtilde\fi%

\if ,#1\abax=0cm\advance\abay-\abav\fi%
\if ;#1\abax=0cm\advance\abay-\abavd\fi%
\if h#1\drawhdots\fi%
\if v#1\drawvdots\fi%
\if l#1\drawtopleft\fi%
\if m#1\drawtopmiddle\fi%
\if r#1\drawtopright\fi%
\if t#1\drawtopdots\fi%
\if_#1\advance\abax\abahd\fi%
\if\space #2\else\@bacus#2 .\fi%
}
\def\sabacus(#1,#2){\abax=0cm\abay=0cm\begin{tikzpicture}[scale=\abas*#1]\@bacus#2 .\end{tikzpicture}}
\def\abacus(#1){\sabacus(1,#1)}
\begin{document}
\title{James's Conjecture holds for blocks of $q$-Schur algebras of weights 3 and 4}
\author{Aaron Yi Rui Low}
\address{Department of Mathematics, National University of Singapore}
\email{aaronlyr94@gmail.com}
\date{28 January 2021}
\begin{abstract}
When the characteristic of the underlying field is at least 5, we prove that the adjustment matrix for blocks of $q$-Schur algebras of weights 3 and 4 is the identity matrix. Moreover, we show that the decomposition numbers for weight 3 blocks of $q$-Schur algebras are bounded above by one.
\end{abstract}

\maketitle
\pagestyle{plain}

\section{Introduction}
Given a positive integer $m$, let $\mathbb{N}^{m}_{>0}:=\{(a_{1},a_{2},\dots,a_{m}) \mid a_{i}\in\mathbb{N}_{>0} \forall 1\le i\le m  \}$ be the set of $m$-tuples of positive integers. We now fix an $n\in\mathbb{N}_{>0}$.
\begin{definition}
An $m$-tuple $\lambda=(\lambda_{1},\lambda_{2},\dots,\lambda_{m})\in\mathbb{N}^{m}_{>0}$ is a $composition$ of $n$ if $\sum\limits_{i=1}^{m}\lambda_{i}=n$; if moreover, $\lambda_{i}\ge\lambda_{i+1}$ $ \forall 1\le i \le m-1$, we say that $\lambda$ is a $partition$ of $n$. We refer to $\lambda_{i}$ as the \textit{parts} of $\lambda$ and denote the \textit{number of parts of} $\lambda$ by $l(\lambda):=m$. 
\end{definition}
We denote the set of all partitions of $n$ by $\mathcal{P}(n)$. 
\begin{definition}
Let $e \ge 2$ be an integer. We call a partition $\lambda$ \textit{e-singular} if there is an integer $1\le i \le l(\lambda)-e+1$ such that $\lambda_{i}=\lambda_{i+1}=\dots=\lambda_{i+e-1}\neq0$; we call $\lambda$ \textit{e-regular} otherwise. We denote the set of all $e$-singular partitions of $n$ by $\ps(n)$ and the set of all $e$-regular partitions of $n$ by $\pr(n)$.
\end{definition}

Suppose that $q$ is a non-zero element of a field $\mathbb{F}$. Unless mentioned explicitly, we assume that $\mathbb{F}$ can have any characteristic (including zero). Let $e$ be the least positive integer such that $1+q+\dots+q^{e-1}=0$, assuming throughout the paper that it exists. Let $n$ be a positive integer. Denote by $\s_{n}=\s_{q}(n,n)$ the \textit{q-Schur algebra} (over $\mathbb{F}$) defined in~\cite{dipper 2}. When $q=1$, this is just the classical Schur algebra over $\mathbb{F}$. To each partition $\la\in\p(n)$, we associate a \textit{Weyl module} $W^{\la}$. Each $W^{\la}$ has a simple head $L^{\la}$ which is self-dual with respect to the contravariant duality induced by the anti-automorphism of $\s_{n}$. Moreover, the set $\{L^{\la} \mid \la\in \p(n)\}$ is a complete set of mutually non-isomorphic simple modules of $\s_{n}$. Given any two partitions $\la, \mu \in \p(n)$, the composition multiplicities $[W^{\lambda}:L^{\mu}]$ are called the \textit{decomposition numbers} of $\s_{n}$. We record these in a \textit{decomposition matrix} with rows and columns indexed by $\p(n)$ and whose $(\lambda,\mu)$-entry is $[W^{\lambda}:L^{\mu}]$.

Let $\mathcal{H}_{n}=\mathcal{H}_{\mathbb{F},q}(\mathfrak{S}_{n})$ denote the \textit{Iwahori-Hecke algebra} of the symmetric group $\mathfrak{S}_{n}$. This is a `deformation' of the group algebra $\mathbb{F}\mathfrak{S}_{n}$; we refer the reader to~\cite{mathas book} for its definition. When $q=1$, this is simply the group algebra $\mathbb{F}\mathfrak{S}_{n}$. The representation theory of $\h_{n}$ is very similar to that of $\s_{n}$. To each partition $\lambda\in\p(n)$, we associate a \textit{Specht Module} $S^{\lambda}$ for $\mathcal{H}_{n}$. If $\lambda\in \pr(n)$, then $S^{\lambda}$ has a simple head $D^{\lambda}$. The set $\{D^{\lambda} \mid \la\in\pr(n)   \}$ is a complete set of mutually non-isomorphic simple $\mathcal{H}_{n}$-modules. The decomposition numbers of $\mathcal{H}_{n}$ are the composition multiplicities $[S^{\lambda}:D^{\mu}]$; the decomposition matrix for $\mathcal{H}_{n}$ has rows indexed by $\p(n)$ and columns indexed by $\pr(n)$, with $(\lambda,\mu)$-entry $[S^{\lambda}:D^{\mu}]$. 

\textbf{Warning:}
The Specht and Weyl modules, $S^{\la}$ and $W^{\la}$ defined in~\cite{mathas book} are isomorphic to the dual of the Specht and Weyl modules defined by Dipper and James in~\cite{dipper 1} indexed by $\la'$. We adopt the Specht and Weyl modules defined by Dipper and James in~\cite{dipper 1} rather than those in~\cite{mathas book}.

A central problem in the study of the representation theory of $\s_{n}$ and $\mathcal{H}_{n}$ is to determine their decomposition matrices. These two problems are closely related. In fact, there is an exact functor called the \textit{Schur functor} from the category of $\s_{n}$-modules to the category of $\mathcal{H}_{n}$-modules. Given $\mu\in\p(n)$, the Schur functor sends the Weyl module $W^{\mu}$ to the Specht module $S^{\mu}$. If $\mu\in\pr(n)$, the simple module $L^{\mu}$ is sent to the simple module $D^{\mu}$; otherwise if $\mu\in\ps(n)$, $L^{\mu}$ is sent to zero. Using the Schur functor, one can deduce the following:
 \begin{thrm}~\cite[Theorem 4.18]	{mathas book}\label{schur functor d}
Suppose that $\lambda$ and $\mu$ are partitions of $n$ and that $\mu$ is $e$-regular. Then, $$[W^{\lambda}:L^{\mu}]=[S^{\lambda}:D^{\mu}].$$
\end{thrm}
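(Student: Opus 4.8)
The plan is to exploit the exactness of the Schur functor, which I will denote $f$, together with the fact that it sends Weyl modules to Specht modules and sends each simple $L^{\mu}$ to $D^{\mu}$ when $\mu$ is $e$-regular and to $0$ when $\mu$ is $e$-singular. Fix a composition series $0 = M_{0} \subset M_{1} \subset \dots \subset M_{r} = W^{\lambda}$ of the Weyl module with composition factors $M_{i}/M_{i-1} \cong L^{\mu_{i}}$; by definition the multiplicity $[W^{\lambda}:L^{\mu}]$ counts the indices $i$ with $\mu_{i} = \mu$. Applying $f$ to this filtration and using exactness, I obtain a chain $0 = f(M_{0}) \subseteq f(M_{1}) \subseteq \dots \subseteq f(M_{r}) = f(W^{\lambda}) \cong S^{\lambda}$, in which each successive quotient $f(M_{i})/f(M_{i-1})$ is isomorphic to $f(L^{\mu_{i}})$, hence is either $D^{\mu_{i}}$ (when $\mu_{i} \in \pr(n)$) or zero (when $\mu_{i} \in \ps(n)$).

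The next step is to discard the trivial steps: deleting the repetitions where $f(M_{i}) = f(M_{i-1})$ turns this chain into an honest composition series of $S^{\lambda}$ whose factors are exactly the $D^{\mu_{i}}$ arising from $e$-regular $\mu_{i}$. Consequently, for every $e$-regular partition $\mu$, the multiplicity $[S^{\lambda}:D^{\mu}]$ equals the number of indices $i$ with $\mu_{i} = \mu$, which is precisely $[W^{\lambda}:L^{\mu}]$. This gives the claimed equality $[W^{\lambda}:L^{\mu}] = [S^{\lambda}:D^{\mu}]$ whenever $\mu$ is $e$-regular.

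The one point requiring genuine care—and the main obstacle—is justifying that the composition multiplicities of $S^{\lambda}$ can be read off from the particular chain produced by applying $f$ to a composition series of $W^{\lambda}$. A priori $f$ might collapse several distinct simple Weyl-module composition factors or fail to respect semisimplicity of quotients, but the key facts that rescue the argument are that $f$ is exact (so it preserves short exact sequences, hence subquotients) and that it sends simple modules to simple-or-zero in a way that is injective on the $e$-regular labels. The Jordan–Hölder theorem then guarantees the count is independent of the chosen series, so it suffices to compute with this one. I would also remark that, taking $\lambda$ ranging over $\p(n)$, this identifies the $e$-regular columns of the decomposition matrix of $\s_{n}$ with the (entire) decomposition matrix of $\h_{n}$, a fact used repeatedly in the sequel.
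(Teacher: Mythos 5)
Your argument is correct and is exactly the standard deduction the paper alludes to ("Using the Schur functor, one can deduce the following"): the paper itself gives no proof, citing \cite[Theorem 4.18]{mathas book}, and that proof is precisely your exactness-plus-Jordan--H\"older argument, using that the Schur functor is exact, sends $W^{\lambda}$ to $S^{\lambda}$, and sends $L^{\mu}$ to $D^{\mu}$ or $0$ according as $\mu$ is $e$-regular or $e$-singular, with distinct $e$-regular labels giving non-isomorphic simples. No gaps; nothing further is needed.
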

In other words, the decomposition matrix of $\mathcal{H}_{n}$ is a submatrix of the decomposition matrix of $\s_{n}$; obtained by deleting the columns indexed by $\ps(n)$. Lascoux, Leclerc and Thibon conjectured that their recursive LLT algorithm~\cite{llt} calculates the decomposition matrices of $\h_{n}$ over $\mathbb{C}$. This was later proved by Ariki in~\cite{Ariki}. It is known that decomposition matrices over fields of prime characteristic may be obtained from decomposition matrices over $\C$ by post-multiplying by an `adjustment matrix'. In view of this, we often study the adjustment matrices instead of the decomposition matrices directly when working over fields of prime characteristic. The complexity of the representation theory of a block of $\h_{n}$ or $\s_{n}$ is roughly captured by a measure called the weight of that block. James conjectured~\cite{glnq} that when the weight of a block is less than the characteristic $p>0$ of the underlying field, the adjustment matrix is the identity matrix; therefore the decomposition matrix is computable using the LLT algorithm in principle. In the case of $\h_{n}$, the conjecture has been proved for weights up to $4$ by the works of Richards~\cite{Richards} and Fayers~\cite{wt 3,wt 4}. The author~\cite{wt 5} has also proved that the adjustment matrix for the principal block of $\mathcal{H}_{5e}$ of weight 5 is the identity matrix when $\textnormal{char}(\mathbb{F})\ge5$ and $e\neq4$. For the $q$-Schur algebras, the weight 2 case was proved by Schroll and Tan~\cite{Ac duality}. However, Williamson found a counter-example~\cite{Williamson} to James's Conjecture. Nevertheless, the smallest counter-example produced in his paper occurs in the symmetric group $\mathfrak{S}_{n}$ where $n=1744860$. There is considerable interest in finding smaller counter-examples. 

This paper is structured in the following way. In section \ref{background}, we mention some fundamental results in the modular representation theory of $\h_{n}$ and $\s_{n}$. Our main object of interest, the adjustment matrices are introduced in section \ref{section adj}. We give an overview of the existing results on adjustment matrices and provide some new tools for studying them. In sections \ref{section wt 3} and \ref{section wt 4}, we apply the results from sections \ref{background} and \ref{section adj} to prove James's Conjecture for blocks of $\s_{n}$ of weights $3$ and $4$. We also show that the decomposition numbers for weight $3$ blocks of $\s_{n}$ are bounded above by one.

\section{Background}\label{background}
\subsection{Partitions and abacus displays}
Take an abacus with $e$ vertical runners, numbered $0,\dots,e-1$ from left to right, marking positions $0,1,\dots$ on the runners increasing from left to right along successive `rows'. Given $\la\in \p(n)$, take an integer $r\ge l(\lambda)$. Define 
\begin{equation*}
\beta_{i}=\begin{cases}
\lambda_{i}+r-i, & \textnormal{if } 1\le i \le l(\lambda),\\
r-i, &  \textnormal{if } r>l(\lambda).
\end{cases}
\end{equation*}
Now, place a bead at position $\beta_{i}$ for each $i$. The resulting configuration is an abacus display for $\lambda$ with $r$ beads. If a bead has been placed at position $j$, we say that the position $j$ is $occupied$. Otherwise, we say that the position $j$ is $unoccupied$ or $vacant$. We remark that moving a bead from position $a$ to an unoccupied position $b<a$ is akin to removing a rim hook of length $h=b-a$ from the Young diagram of $\lambda$. The \textit{leg-length} of the hook is given by the number of occupied positions between $a$ and $b$. By moving all the beads as high as possible on their runners, the resulting configuration is an abacus display for the \textit{e-core} of $\lambda$. The $relative$ $e$-$sign$ of $\lambda$, denoted by $\sigma_{e}(\lambda)$ is $(-1)^{t}$, where $t$ is the total leg lengths of \textit{e}-hooks removed to obtain the \textit{e}-core (See~\cite[\S2]{esign}). Note that although $t$ depends on the choice of $e$-hooks removed, its parity is constant. We define the \textit{e-weight} of a bead to be the number of vacant positions above it on the same runner. The $e$-weight of $\lambda$ is the sum of $e$-weights of all the beads in an abacus display for $\lambda$. Thus, if $\lambda$ has $e$-weight $w$, then its $e$-core is a partition of $n-ew$. If there is no ambiguity, we often just refer to $e$-weight as \textit{weight} and $e$-core as \textit{core}.

It is easy to read addable and removable nodes from an abacus display. Display $\lambda$ on an abacus with $e$ runners and $r$ beads, where $r\ge l(\lambda)$. Let $i$ be the residue class of $(j+r)$ modulo $e$. Then, the removable nodes of $\lambda$ with $e$-residue $j$ correspond to the beads on runner $i$ with unoccupied preceding positions, while the addable nodes of $e$-residue $j$ correspond to the vacant positions on runner $i$ with occupied preceding positions. We call the removable (resp. addable) nodes with $e$-residue $j$ \textit{j-removable} (resp. \textit{j-addable}). Removing a removable node corresponds to moving the corresponding bead to its preceding unoccupied position, while adding an addable node corresponds to moving the corresponding bead to its succeeding unoccupied position. We adopt the convention that position $0$ has an occupied preceding position.

\subsection{Blocks of $\s_{n}$ and $\mathcal{H}_{n}$}
\begin{thrm}~\cite[Theorem 5.37]{mathas book}\label{nakayama}
Let $\lambda$ and $\mu$ be partitions of $n$. Then, $W^{\lambda}$ and $W^{\mu}$ lie in the same block of $\s_{n}$ if and only if $\lambda$ and $\mu$ have the same $e$-core.
\end{thrm}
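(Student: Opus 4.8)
The plan is to prove the two implications separately; the harder direction is that a common block forces a common $e$-core, which I would handle by a central character. To a partition $\rho$ of $n$ attach its \emph{residue content} $\operatorname{cont}(\rho)=(c_{0}(\rho),\dots,c_{e-1}(\rho))$, where $c_{i}(\rho)$ counts the nodes of $\rho$ of $e$-residue $i$ (equivalently, $\operatorname{cont}(\rho)$ records the multiset of $e$-residues of the nodes of $\rho$, read off from any abacus display). The key representation-theoretic input is that $\s_{n}$ carries a commutative --- in fact central --- subalgebra, built from the symmetric polynomials in the Jucys--Murphy elements, which acts on the Weyl module $W^{\rho}$ through a single generalised eigenvalue, and that this eigenvalue is governed by, and in turn determines, $\operatorname{cont}(\rho)$. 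Since the generalised central character is constant on each block, $W^{\la}$ and $W^{\mu}$ lying in a common block would force $\operatorname{cont}(\la)=\operatorname{cont}(\mu)$. One then concludes with the purely combinatorial fact that, among partitions of a fixed $n$, $\operatorname{cont}$ is a complete invariant of the $e$-core: sliding a bead up one step on its runner (equivalently, removing an $e$-hook) deletes exactly one node of each residue, so $\operatorname{cont}$ drops by $(1,\dots,1)$ on passing to the $e$-core, and an $e$-core is uniquely recovered from its residue content. Hence $\operatorname{cont}(\la)=\operatorname{cont}(\mu)$ forces $\la$ and $\mu$ to have the same $e$-core.

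For the converse (same $e$-core forces a common block), I would reduce to the corresponding, previously known, statement for $\h_{n}$ (see, e.g.,~\cite{mathas book}) by means of Theorem~\ref{schur functor d}. Fix an $e$-core $\kappa$. The Nakayama conjecture for $\h_{n}$ is equivalent to the assertion that any two partitions $\rho,\sigma$ of $n$ with $e$-core $\kappa$ are joined by a chain $\rho=\rho_{0},\rho_{1},\dots,\rho_{k}=\sigma$ in which, for every $j$, the Specht modules $S^{\rho_{j}}$ and $S^{\rho_{j+1}}$ share a composition factor $D^{\nu_{j}}$ with $\nu_{j}\in\pr(n)$ (this is the connectedness of the decomposition-matrix graph of the $\h_{n}$-block with core $\kappa$, which follows from the Specht filtrations of its indecomposable projectives). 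By Theorem~\ref{schur functor d} we have $[W^{\rho_{j}}:L^{\nu_{j}}]=[S^{\rho_{j}}:D^{\nu_{j}}]\neq0$, and similarly with $\rho_{j+1}$ in place of $\rho_{j}$; thus $L^{\nu_{j}}$ is a composition factor of each of $W^{\rho_{j}}$ and $W^{\rho_{j+1}}$. Since every $W^{\rho}$ has simple head $L^{\rho}$, it is indecomposable, so $W^{\rho_{j}}$ and $W^{\rho_{j+1}}$ lie in a common block of $\s_{n}$; running along the chain places all the $W^{\rho}$ with $e$-core $\kappa$ --- in particular $W^{\la}$ and $W^{\mu}$ --- in a single block.

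The step I expect to be the main obstacle is the first one, and within it the representation-theoretic input used there: actually producing the central subalgebra of $\s_{n}$ and computing its action on every Weyl module is where the genuine content lies (the theory of Jucys--Murphy elements, or equivalently the affine/cyclotomic structure on $\bigoplus_{n}\s_{n}\text{-mod}$), whereas the converse is essentially formal once the Schur functor and Theorem~\ref{schur functor d} are available. If one prefers not to invoke the Hecke-algebra Nakayama conjecture, the converse can instead be obtained directly by induction on the $e$-weight, using the branching rules for Weyl modules --- restriction and induction of $W^{\nu}$ carry Weyl filtrations with factors indexed by the removable and the addable nodes of $\nu$, respectively --- to link partitions differing by a single node, together with an abacus computation showing that all partitions sharing a fixed $e$-core are connected through such moves; this route avoids the Hecke-algebra input at the cost of being more laborious.
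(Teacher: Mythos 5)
The paper offers no proof of this statement: it is quoted from Mathas's book, where one direction is obtained from central characters attached to residue contents and the converse from the $q$-analogue of the Jantzen sum formula. Your sketch of the direction ``same block $\Rightarrow$ same $e$-core'' is essentially the textbook argument and is correct in outline: symmetric polynomials in the Jucys--Murphy elements do give central elements of $\s_{n}$ acting on each $W^{\rho}$ through its residue content, and the combinatorial facts you use (a rim $e$-hook contains exactly one node of each residue, and an $e$-core is determined by its content vector) are standard; as you acknowledge, the construction of this central subalgebra and the computation of its action on Weyl modules is the genuine content and is left as a black box here. For the converse you take a genuinely different route from the cited source: instead of the sum formula you invoke the Nakayama conjecture for $\h_{n}$, the connectedness of the decomposition graph of a Hecke block, and Theorem \ref{schur functor d} to transfer shared composition factors from Specht to Weyl modules. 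The mechanics are sound (each $W^{\rho}$ is indecomposable, each Specht module lies in a single block, a block's bipartite decomposition graph is connected, and $[W^{\rho_j}:L^{\nu_j}]=[S^{\rho_j}:D^{\nu_j}]\neq 0$ for $\nu_j\in\pr(n)$ forces the consecutive Weyl modules into one block), but one caution is needed: in this paper, as in Mathas's book, the Hecke-algebra Nakayama conjecture (Corollary \ref{nakayama hecke}) is itself deduced from Theorem \ref{nakayama}, so to avoid circularity you must cite an independent proof of the Hecke-algebra block classification (e.g.\ Dipper--James) rather than that corollary. With that citation in place your argument trades the computational sum-formula step for imported Hecke-algebra input, while the textbook route is self-contained; your alternative suggestion via branching rules and induction on the weight is the classical Brauer--Robinson-style argument and, as you say, is workable but more laborious.
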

Applying the Schur functor to the theorem above, we get the following corollary.
\begin{cor}(Nakayama Conjecture)~\cite[Corollary 5.38]{mathas book}\label{nakayama hecke}
Let $\lambda$ and $\mu$ be partitions of $n$. Then, $S^{\lambda}$ and $S^{\mu}$ lie in the same block of $\mathcal{H}_{n}$ if and only if $\lambda$ and $\mu$ have the same $e$-core.
\end{cor}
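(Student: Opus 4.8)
The plan is to deduce the corollary from Theorem~\ref{nakayama} by transporting the block structure across the exact Schur functor $f$, with Theorem~\ref{schur functor d} supplying the comparison of decomposition numbers. I would first record the standard fact that, since $\s_{n}$ is quasi-hereditary with standard modules the Weyl modules and $\mathcal{H}_{n}$ is cellular with cell modules the Specht modules, in either algebra two such modules lie in the same block if and only if they are joined by a path in the \emph{linkage graph} on $\p(n)$, whose edges join $\nu$ and $\nu'$ exactly when the corresponding modules share a composition factor.

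The forward implication is then immediate. If $S^{\lambda}$ and $S^{\mu}$ lie in the same block of $\mathcal{H}_{n}$, choose a linkage path $\lambda=\nu_{0},\nu_{1},\dots,\nu_{k}=\mu$, so that for each $i$ there is a (necessarily $e$-regular) partition $\tau_{i}$ with $[S^{\nu_{i}}:D^{\tau_{i}}]\neq 0\neq[S^{\nu_{i+1}}:D^{\tau_{i}}]$; by Theorem~\ref{schur functor d} this gives $[W^{\nu_{i}}:L^{\tau_{i}}]\neq 0\neq[W^{\nu_{i+1}}:L^{\tau_{i}}]$, so $\nu_{0},\dots,\nu_{k}$ is also a linkage path for $\s_{n}$. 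Hence $W^{\lambda}$ and $W^{\mu}$ lie in the same block of $\s_{n}$, and Theorem~\ref{nakayama} shows that $\lambda$ and $\mu$ have the same $e$-core.

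For the converse, if $\lambda$ and $\mu$ have the same $e$-core then Theorem~\ref{nakayama} places $W^{\lambda}$ and $W^{\mu}$ in a common block $B$ of $\s_{n}$, and it suffices to show that $f$ sends all of $B$ into a single block of $\mathcal{H}_{n}$, for then $S^{\lambda}=f(W^{\lambda})$ and $S^{\mu}=f(W^{\mu})$ land in the same block. Realising $f$ in the usual way as $X\mapsto eX$ for an idempotent $e$ with $e\s_{n}e\cong\mathcal{H}_{n}$, this is the assertion that the truncated algebra $eBe$ is indecomposable (it is nonzero, since $B$ contains an $e$-regular partition $\tau$ and then $eL^{\tau}=D^{\tau}\neq0$). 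I would try to establish this by showing that the $e$-regular partitions in $B$ are linked through the composition factors of the modules $f(P^{\sigma})$ for $e$-regular $\sigma\in B$ --- each of which is a projective $\mathcal{H}_{n}$-module carrying a Specht filtration whose multiplicities are read off from the decomposition numbers $[W^{\nu}:L^{\sigma}]$ via Brauer--Humphreys reciprocity --- and then deducing connectedness of this linkage from the connectedness of $B$ itself; alternatively one can try to compare the centres of $\s_{n}$ and $\mathcal{H}_{n}$ directly.

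The main obstacle is exactly this last point. Because $f$ kills $L^{\tau}$ for every $e$-singular $\tau$, an edge of the $\s_{n}$-linkage graph whose witnessing composition factor is $e$-singular need not survive to an edge of the $\mathcal{H}_{n}$-linkage graph, so a linkage path cannot simply be pushed forward; the substance of the proof is to check that deleting the $e$-singular vertices never disconnects a block of $\s_{n}$ (equivalently, that $eBe$ is indecomposable whenever nonzero), which genuinely uses the structure of $\s_{n}$ rather than formal nonsense about Serre quotients. Once that is granted, the remainder is routine bookkeeping with Theorems~\ref{nakayama} and~\ref{schur functor d}.
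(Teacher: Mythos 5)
Your forward direction (same block of $\h_{n}$ $\Rightarrow$ same $e$-core) is sound: pushing a linkage chain of Specht modules through Theorem \ref{schur functor d} and invoking Theorem \ref{nakayama} works, granted the standard cellular-algebra facts you quote. But the converse contains a genuine gap, and it is exactly the one you flag yourself. Knowing that $\lambda$ and $\mu$ lie in a common block $B$ of $\s_{n}$ gives a linkage chain of Weyl modules in which consecutive terms may only share \emph{$e$-singular} composition factors; since the Schur functor kills $L^{\tau}$ for $e$-singular $\tau$, such an edge does not transport to $\h_{n}$. The assertion that the $e$-regular vertices of the linkage graph of $B$ remain connected after deleting the $e$-singular ones (equivalently, that $eBe$ is indecomposable whenever nonzero) is precisely the substance of the statement, and your proposal does not establish it: you name two possible strategies (linking $e$-regular partitions through the projectives $f(P^{\sigma})$ and reciprocity, or comparing centres) but carry out neither, and the connectivity you need does not follow formally from the connectedness of $B$. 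To repair an edge witnessed by an $e$-singular $L^{\tau}$ one needs genuine input about $\h_{n}$ itself --- e.g. James's regularization theorem ($[S^{\tau}:D^{\tau^{R}}]=1$) together with a Jantzen--Schaper-type argument for $\h_{n}$, which is essentially how the result is proved in the source the paper cites.

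For comparison: the paper does not prove this corollary at all; it records it as a known result, quoting \cite[Corollary 5.38]{mathas book} and remarking that it is obtained from Theorem \ref{nakayama} by applying the Schur functor. So as a citation your framing is consistent with the paper, but as a self-contained proof the proposal is incomplete until the indecomposability of $eBe$ (or an equivalent statement) is actually demonstrated rather than announced.
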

Given a block $B$ of $\s_{n}$ or $\mathcal{H}_{n}$, we say that a partition $\lambda \in \p(n)$ lies in $B$ if $W^{\lambda}$ or $S^{\lambda}$ lies in $B$. Given the Nakayama Conjecture, we may define the $e$-weight and $e$-core of a block $B$ of $\s_{n}$ or $\mathcal{H}_{n}$ simply to be the $e$-weight and $e$-core of a partition lying in $B$. 

Let $\lambda(i)$ be the partition corresponding to the abacus display containing only a single runner, the $i^{th}$ runner. Denote the number of beads on the $i^{th}$ runner as $b_{i}$. Then, we may write $\lambda$ as $$\langle 0_{\lambda(0)},\dots,(e-1)_{\lambda(e-1)} \mid b_{0},\dots,b_{e-1}\rangle;$$
we omit $i_{\lambda(i)}$ if $\lambda(i)=\varnothing$ and write $i_{\la(i)}$ simply as $i$ if $\lambda(i)=(1)$. Additionally, we may omit $b_{0},\dots,b_{e-1}$ if there is no ambiguity. If $\lambda$ has $e$-weight $w$ and lies in a block $B$ of $\s_{n}$ or $\mathcal{H}_{n}$, we say that $B$ is the block of $e$-weight $w$ with the $\langle b_{0},\dots,b_{e-1}\rangle$ notation.

\subsection{Jantzen-Schaper formula and the product order}\label{section jantzen}
Let $\trianglerighteq$ denote the usual dominance order for partitions. Due to the fact that $\s_{n}$ is a \textit{cellular algebra} ~\cite{graham}, we have the following result.
\begin{thrm}\label{theorem cellular}\cite[Corollary 4.13]{dipper 2}
Suppose that $\la$ and $\mu$ are partitions of $n$. We have
\begin{itemize}
\item $[W^{\mu}:L^{\mu}]=1$,
\item $[W^{\la}:L^{\mu}]=0$ unless $\mu\trianglerighteq\lambda$.
\end{itemize}
\end{thrm}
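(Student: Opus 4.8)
The statement is the familiar unitriangularity of the decomposition matrix of a cellular algebra, so the plan is to read it off from the cellular structure of $\s_{n}$, combining the general framework of Graham and Lehrer \cite{graham} with the explicit cellular basis of $\s_{n}$ constructed by Dipper and James \cite{dipper 2}. First I would recall the cell datum: for each $\la\in\p(n)$ there is a finite set $T(\la)$ (of semistandard tableaux of shape $\la$) and a basis $\{c^{\la}_{\mathbf{s}\mathbf{t}} : \la\in\p(n),\ \mathbf{s},\mathbf{t}\in T(\la)\}$ of $\s_{n}$, the partial order on the index set being the dominance order $\domer$ on $\p(n)$ (with the Dipper--James labelling adopted here, cf.\ the Warning above), and a contravariant anti-automorphism sending $c^{\la}_{\mathbf{s}\mathbf{t}}\mapsto c^{\la}_{\mathbf{t}\mathbf{s}}$. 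The crucial structural fact is that the $\F$-span of all $c^{\mu}_{\mathbf{s}\mathbf{t}}$ with $\mu\domr\la$ is a two-sided ideal of $\s_{n}$; these ideals give a filtration whose sections realise the Weyl modules $W^{\la}$ as cell modules, each carrying a contravariant bilinear form $\langle\,,\,\rangle_{\la}$ whose radical is a submodule, and one sets $L^{\la}:=W^{\la}/\operatorname{rad}\langle\,,\,\rangle_{\la}$ whenever the form is non-zero.

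Granting this, the argument splits along the two bullets. The second is the more robust half: from the ideal filtration of $\s_{n}$ by the spans of the $c^{\mu}_{\mathbf{s}\mathbf{t}}$ with $\mu\domr\la$, the standard Graham--Lehrer argument shows that every composition factor of the cell module $W^{\la}$ lies among the $L^{\mu}$ with $\mu\domer\la$, which is precisely $[W^{\la}:L^{\mu}]=0$ unless $\mu\domer\la$. For the first bullet, the contravariant form pins down the multiplicity of the top factor: $W^{\la}/\operatorname{rad}\langle\,,\,\rangle_{\la}=L^{\la}$ is simple and occurs exactly once in $W^{\la}$, every other factor $L^{\mu}$ having $\mu\domr\la$; hence $[W^{\la}:L^{\la}]=1$, provided only that $L^{\la}\neq 0$.

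It therefore remains to know that $\langle\,,\,\rangle_{\la}\neq 0$, equivalently $L^{\la}\neq 0$, for every $\la\in\p(n)$; this is exactly the point at which $\s_{n}$ differs from $\h_{n}$, where only the $e$-regular $\la$ survive. Here I would simply invoke the fact recorded in the Introduction that $\{L^{\la}\mid\la\in\p(n)\}$ is a complete set of mutually non-isomorphic simple $\s_{n}$-modules (Dipper--James), which gives $L^{\la}\neq 0$ for all $\la$ and finishes the proof; alternatively one exhibits an explicit element of $W^{\la}$ on which the form takes a unit value.

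I expect the genuine work — as opposed to quoting \cite[Corollary 4.13]{dipper 2} — to lie in two places. The first is constructing the cellular basis of $\s_{n}$ and verifying the two-sided ideal property, which is the substance of Dipper and James's work and is not a formality. The second, and the subtler bookkeeping point, is fixing the order convention: the Weyl modules used throughout this paper are the Dipper--James ones, dual to, and transpose-relabelled from, those of \cite{mathas book}, and transposition reverses $\domer$ — so one must check that the partial order governing the filtration really is $\domer$ and not $\domel$, so that the inequality comes out as stated.
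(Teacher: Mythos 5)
Your reconstruction is correct, and it follows exactly the route the paper relies on: the paper gives no proof of this statement, quoting it as \cite[Corollary 4.13]{dipper 2} with the remark that $\s_{n}$ is cellular in the sense of \cite{graham}, and your argument (cell-module filtration, contravariant form, non-vanishing of all forms for the $q$-Schur algebra, plus the Dipper--James labelling/dominance-direction check you rightly flag) is precisely the standard Graham--Lehrer/Dipper--James proof behind that citation.
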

Let $\lambda$ be a partition and consider its abacus display, say with $r$ beads. Suppose that after moving a bead at position $a$ up its runner to a vacant position $a-ie$, we obtain the partition $\mu$. Denote $l_{\lambda\mu}$ for the number of occupied positions between $a$ and $a-ie$, and let $h_{\lambda\mu}=i$. \\
Further, write $\lambda \xrightarrow{\text{$\mu$}} \tau$ if the abacus display of $\tau$ with $r$ beads is obtained from that of $\mu$ by moving a bead at position $b-ie$ to a vacant position $b$, and $a<b$.
\begin{definition}\label{defn js} \textit{Jantzen-Schaper bound}\\
Let $p=\textnormal{char}(\mathbb{F})$. For any ordered pair $(\lambda,\tau)$, we define the \textit{Jantzen-Schaper coefficient} to be the integer $$J_{\mathbb{F}}(\lambda,\tau):=\sum\limits_{\lambda \xrightarrow{\text{$\sigma$}} \tau}(-1)^{l_{\lambda \sigma}+l_{\tau \sigma} +1}(1+v_{p}(h_{\lambda \sigma})),$$
where $v_{p}$ denotes the standard $p$-valuation if $p>0$ and $v_{0}(x)=0$  $\forall x$.
The Jantzen-Schaper bound is defined as the integer
$$B_{\mathbb{F}}(\lambda,\mu)=\sum\limits_{\tau}J_{\mathbb{F}}(\lambda,\tau)[W^{\tau}_{\mathbb{F}}:L^{\mu}_{\mathbb{F}}].$$
\end{definition}

\begin{remark}
If $\la$ has $e$-weight $w$ and $p>w$, then $v_{p}(h_{\lambda \sigma})$ is always zero. In this case, $J_{\mathbb{F}}(\lambda,\tau)$ is independent of $\textnormal{char}(\mathbb{F})$ and we may just refer to it as $J(\lambda,\tau)$. Similarly, if $B_{\mathbb{F}}(\lambda,\tau)$ turns out to be independent of $\textnormal{char}(\mathbb{F})$, we just refer to it as $B(\lambda,\tau)$.
\end{remark}

\begin{thrm} Jantzen-Schaper formula(~\cite[Theorem 4.7]{Jantzen Schaper})\label{jantzen schaper}
$$[W^{\lambda}_{\mathbb{F}}:L^{\mu}_{\mathbb{F}}] \le B_{\mathbb{F}}(\lambda,\mu).$$
Moreover, the left-hand side is zero if and only if the right-hand side is zero.
\end{thrm}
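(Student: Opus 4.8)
\emph{Proof strategy.} The plan is to obtain the statement from Jantzen's filtration of the Weyl module together with Schaper's evaluation of the associated Gram determinants; I argue for $\mu\ne\lambda$, which suffices (if $\mu\ntrianglerighteq\lambda$ both sides vanish by Theorem~\ref{theorem cellular}, and $\mu=\lambda$ is excluded since there $[W^\lambda:L^\lambda]=1$). The first step is to fix a \emph{modular system}: a complete discrete valuation ring $\mathcal{O}$ with uniformiser $\pi$, residue field $\F$ and fraction field $K$, together with a unit $q_{\mathcal{O}}\in\mathcal{O}^\times$ reducing to $q$, chosen so that the resulting $\mathcal{O}$-form $\s_{n,\mathcal{O}}$ satisfies $\F\otimes_{\mathcal{O}}\s_{n,\mathcal{O}}\cong\s_n$ while $\s_{n,K}$ is split semisimple with irreducibles $W^\lambda_K:=K\otimes_{\mathcal{O}}W^\lambda_{\mathcal{O}}$. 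Each $W^\lambda_{\mathcal{O}}$ is $\mathcal{O}$-free and carries a contravariant bilinear form $\langle\,,\,\rangle$ (contravariant for the anti-automorphism of the introduction); fixing an $\mathcal{O}$-basis we write $G_\lambda$ for the Gram matrix and $N_\lambda:=v_\pi(\det G_\lambda)$, which is finite since $\langle\,,\,\rangle$ is non-degenerate over $K$.

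Next I would define the \emph{Jantzen filtration} of $W^\lambda_\F$ by letting $W^{\lambda,i}$ be the image in $W^\lambda_\F$ of $\{x\in W^\lambda_{\mathcal{O}}:\langle x,y\rangle\in\pi^i\mathcal{O}\text{ for all }y\in W^\lambda_{\mathcal{O}}\}$, so that $W^\lambda_\F=W^{\lambda,0}\supseteq W^{\lambda,1}\supseteq\cdots$ is a descending chain of $\s_n$-submodules terminating in $0$. One checks that $W^{\lambda,1}$ is the radical of the reduced contravariant form; since $\s_n$ is cellular \cite{graham} and $[W^\lambda_\F:L^\lambda]=1$ by Theorem~\ref{theorem cellular}, this is the unique maximal submodule of $W^\lambda_\F$, so $W^\lambda_\F/W^{\lambda,1}\cong L^\lambda$. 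The Smith normal form of $G_\lambda$ at once gives $\sum_{i\ge1}\dim_\F W^{\lambda,i}=N_\lambda$; the real input is the refinement of this to an identity in the Grothendieck group of $\s_n$,
\[
\sum_{i\ge1}[W^{\lambda,i}]=\sum_{\tau\in\p(n)}c_{\lambda\tau}\,[W^{\tau}],
\]
with $c_{\lambda\tau}$ read off from $\pi$-adic valuations of suitable ``partial'' Gram determinants. Establishing this refined sum formula is the first technical hurdle: one route deforms the form over a two-dimensional regular base and compares the contributions at the two height-one primes, while another, following the symmetric-group case, works with an explicit cellular (Murphy-type) basis and sandwiches each $W^{\lambda,i}$ between images of concrete homomorphisms between Weyl modules.

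The second, and principal, hurdle is the explicit evaluation of the coefficients $c_{\lambda\tau}$. Using the construction of $W^\lambda_{\mathcal{O}}$ inside a $q$-analogue of a permutation module, as in Dipper--James \cite{dipper 1,dipper 2}, one reduces the computation of $v_\pi(\det G_\lambda)$ and of its partial-determinant refinements to a sum of local contributions indexed by pairs of positions on the abacus; each such contribution corresponds to removing one rim hook and re-inserting a rim hook of the same length further along, that is, to a configuration $\lambda\xrightarrow{\sigma}\tau$, and bookkeeping the leg lengths together with the $p$-valuation of the hook length shows that it equals $(-1)^{l_{\lambda\sigma}+l_{\tau\sigma}+1}\bigl(1+v_p(h_{\lambda\sigma})\bigr)$. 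Summing over configurations identifies $c_{\lambda\tau}$ with $J_{\F}(\lambda,\tau)$ of Definition~\ref{defn js}, so the sum formula reads $\sum_{i\ge1}[W^{\lambda,i}]=\sum_{\tau}J_{\F}(\lambda,\tau)[W^{\tau}]$.

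The theorem then follows formally. Applying $[\,\cdot\,:L^\mu]$ to the sum formula for $\mu\ne\lambda$, and using that $W^\lambda_\F/W^{\lambda,1}\cong L^\lambda$ contributes nothing, gives $[W^\lambda:L^\mu]=[W^{\lambda,1}:L^\mu]\le\sum_{i\ge1}[W^{\lambda,i}:L^\mu]=\sum_\tau J_{\F}(\lambda,\tau)[W^\tau:L^\mu]=B_{\F}(\lambda,\mu)$, which is the asserted inequality. For the equivalence of vanishing: each $[W^{\lambda,i}:L^\mu]$ is non-negative and $W^{\lambda,i}\subseteq W^{\lambda,1}$ for $i\ge1$, so $B_{\F}(\lambda,\mu)=0$ forces $[W^{\lambda,1}:L^\mu]=0$ and hence $[W^\lambda:L^\mu]=0$; conversely $[W^\lambda:L^\mu]\ne0$ gives $B_{\F}(\lambda,\mu)\ge[W^\lambda:L^\mu]>0$. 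I expect essentially all of the difficulty to lie in the two hurdles above — the Grothendieck-group refinement of Jantzen's sum formula and, above all, the explicit evaluation of the Gram determinants via the abacus — the passage from the sum formula to the displayed inequality and the ``zero iff zero'' clause being routine bookkeeping.
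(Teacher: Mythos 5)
The paper does not prove this statement at all: it is quoted verbatim from James--Mathas (their Theorem 4.7), so there is no internal proof to match your argument against. Your outline does correctly identify the strategy of that cited work -- a modular system, the contravariant form on an $\mathcal{O}$-form of $W^{\lambda}$, the Jantzen filtration $W^{\lambda,i}$, and a sum formula in the Grothendieck group whose coefficients are evaluated via Gram determinants -- and the final passage from the sum formula to the inequality is correct bookkeeping. But as a proof it has a genuine gap: the two ``hurdles'' you explicitly defer (the refinement $\sum_{i\ge1}[W^{\lambda,i}]=\sum_{\tau}J_{\mathbb{F}}(\lambda,\tau)[W^{\tau}]$ in the Grothendieck group, and the abacus/hook evaluation showing the coefficients are exactly $(-1)^{l_{\lambda\sigma}+l_{\tau\sigma}+1}(1+v_{p}(h_{\lambda\sigma}))$, in the mixed setting where both the quantum characteristic $e$ and $p=\mathrm{char}(\mathbb{F})$ intervene) constitute essentially the entire content of the theorem being proved. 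Gesturing at two possible routes for the first and at ``local contributions'' for the second reduces the statement to itself; nothing in the proposal actually establishes either ingredient.

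There is also a logical slip in your treatment of the ``zero if and only if zero'' clause: you prove $B_{\mathbb{F}}(\lambda,\mu)=0\Rightarrow[W^{\lambda}:L^{\mu}]=0$, and then your ``conversely'' ($[W^{\lambda}:L^{\mu}]\neq0\Rightarrow B_{\mathbb{F}}(\lambda,\mu)>0$) is just the contrapositive of the same implication. The direction you never argue is $[W^{\lambda}:L^{\mu}]=0\Rightarrow B_{\mathbb{F}}(\lambda,\mu)=0$, which is the one that does not follow from the inequality alone. It is easily repaired from your own sum formula -- each $W^{\lambda,i}$ is a submodule of $W^{\lambda}$, so $[W^{\lambda,i}:L^{\mu}]\le[W^{\lambda}:L^{\mu}]$ and hence $B_{\mathbb{F}}(\lambda,\mu)=\sum_{i\ge1}[W^{\lambda,i}:L^{\mu}]=0$ whenever the left-hand side vanishes -- but as written the equivalence is not established.
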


\begin{cor}\label{cor jantzen}
If $B_{\mathbb{F}}(\lambda,\mu) \le 1$, then 
$$[W^{\lambda}_{\mathbb{F}}:L^{\mu}_{\mathbb{F}}] = B_{\mathbb{F}}(\lambda,\mu).$$
\end{cor}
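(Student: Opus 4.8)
The plan is to deduce this directly from the Jantzen--Schaper formula (Theorem~\ref{jantzen schaper}), using only the facts that decomposition numbers are non-negative integers and that, by that theorem, $[W^{\lambda}_{\mathbb{F}}:L^{\mu}_{\mathbb{F}}]\le B_{\mathbb{F}}(\lambda,\mu)$ with the left-hand side vanishing precisely when the right-hand side does.

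First I would argue that $B_{\mathbb{F}}(\lambda,\mu)\in\{0,1\}$ under the hypothesis. If $B_{\mathbb{F}}(\lambda,\mu)\neq 0$, then the ``moreover'' clause of Theorem~\ref{jantzen schaper} forces $[W^{\lambda}_{\mathbb{F}}:L^{\mu}_{\mathbb{F}}]\neq 0$, hence $[W^{\lambda}_{\mathbb{F}}:L^{\mu}_{\mathbb{F}}]\ge 1$ (it is a non-negative integer), and so $B_{\mathbb{F}}(\lambda,\mu)\ge [W^{\lambda}_{\mathbb{F}}:L^{\mu}_{\mathbb{F}}]\ge 1$; combined with the assumption $B_{\mathbb{F}}(\lambda,\mu)\le 1$ this gives $B_{\mathbb{F}}(\lambda,\mu)=1$. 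In particular $B_{\mathbb{F}}(\lambda,\mu)$ is always non-negative in this situation.

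Now I would split into the two cases. If $B_{\mathbb{F}}(\lambda,\mu)=0$, the iff statement of Theorem~\ref{jantzen schaper} gives $[W^{\lambda}_{\mathbb{F}}:L^{\mu}_{\mathbb{F}}]=0=B_{\mathbb{F}}(\lambda,\mu)$. If $B_{\mathbb{F}}(\lambda,\mu)=1$, then $1=B_{\mathbb{F}}(\lambda,\mu)\ge [W^{\lambda}_{\mathbb{F}}:L^{\mu}_{\mathbb{F}}]\ge 1$ by the previous paragraph, so $[W^{\lambda}_{\mathbb{F}}:L^{\mu}_{\mathbb{F}}]=1=B_{\mathbb{F}}(\lambda,\mu)$. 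This exhausts both possibilities and completes the proof. There is no real obstacle here: the corollary is a formal consequence of the sandwich inequality together with the sharp vanishing criterion in Theorem~\ref{jantzen schaper}, and the only thing to be careful about is to invoke the ``if and only if'' (rather than just the inequality) to rule out $[W^{\lambda}_{\mathbb{F}}:L^{\mu}_{\mathbb{F}}]=0$ when $B_{\mathbb{F}}(\lambda,\mu)=1$.
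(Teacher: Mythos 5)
Your argument is correct and is exactly the immediate deduction the paper intends: the paper states Corollary~\ref{cor jantzen} without proof as a direct consequence of Theorem~\ref{jantzen schaper}, and your use of the inequality together with the ``if and only if'' vanishing clause (which also rules out negative values of $B_{\mathbb{F}}(\lambda,\mu)$) is the standard way to justify it. Nothing further is needed.
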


We write $\lambda \rightarrow \tau$ if there exists some $\mu$ such that $\lambda \xrightarrow{\text{$\mu$}} \tau$. Further, write $\lambda <_{J}\sigma$ if there exist partitions $\tau_{0},\tau_{1},\dots,\tau_{r}$ such that $\tau_{0}=\lambda$, $\tau_{r}=\sigma$ and $\tau_{i-1}\rightarrow \tau_{i}$ $\forall i \in\{1,2,\dots,r\}$. We call $\le_{J}$ the \textit{Jantzen order} and it is clear that this defines a partial order on the set of all partitions. Only partitions in the same block are comparable under this partial order. Moreover, the dominance order extends the Jantzen order in the following sense: $\mu >_{J} \lambda$ implies $\mu \domr \lambda$. Theorem \ref{jantzen schaper} can be used to refine Theorem \ref{theorem cellular} the following way:
\begin{thrm}\label{jantzen order d} Suppose that $\lambda$ and $\mu$ are partitions of n. Then,
\begin{itemize}
\item $[W^{\mu}:L^{\mu}]=1;$
\item $[W^{\lambda}:L^{\mu}]>0 \Rightarrow \mu \ge_{J} \lambda .$
\end{itemize}
\end{thrm}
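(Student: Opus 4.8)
The first bullet is precisely the first assertion of Theorem~\ref{theorem cellular}, so I would only need to treat the second. The plan is to prove the implication $[W^{\lambda}:L^{\mu}]>0 \Rightarrow \mu\ge_{J}\lambda$ by Noetherian induction on $\lambda$, ordering the (finitely many) partitions of $n$ in a fixed block by the dominance order $\domer$ and inducting from the top down. This choice of order is the crucial one: the Jantzen order $<_{J}$ refines dominance (recall $\nu>_{J}\lambda\Rightarrow\nu\domr\lambda$, as noted just before the statement), so any move that strictly increases a partition in the Jantzen order strictly increases it in the dominance order, which is what makes the induction well-founded.

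For the inductive step I would argue as follows. Assume $[W^{\lambda}:L^{\mu}]>0$. If $\mu=\lambda$ there is nothing to prove, so suppose $\mu\neq\lambda$; then Theorem~\ref{theorem cellular} already gives $\mu\domr\lambda$. By the Jantzen--Schaper formula (Theorem~\ref{jantzen schaper}) we have $B_{\mathbb{F}}(\lambda,\mu)\ge[W^{\lambda}:L^{\mu}]>0$, so, unwinding Definition~\ref{defn js}, the sum $B_{\mathbb{F}}(\lambda,\mu)=\sum_{\tau}J_{\mathbb{F}}(\lambda,\tau)[W^{\tau}:L^{\mu}]$ is nonzero, and hence there must exist a partition $\tau$ with $J_{\mathbb{F}}(\lambda,\tau)\neq0$ and $[W^{\tau}:L^{\mu}]>0$. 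Since $J_{\mathbb{F}}(\lambda,\tau)\neq0$, the sum defining it cannot be empty, so $\lambda\xrightarrow{\sigma}\tau$ for some $\sigma$; thus $\lambda\to\tau$ and therefore $\lambda<_{J}\tau$, and in particular $\tau\domr\lambda$ strictly. Now the inductive hypothesis applies to $\tau$, giving $\mu\ge_{J}\tau$ from $[W^{\tau}:L^{\mu}]>0$; combining this with $\tau>_{J}\lambda$ and the transitivity of the Jantzen order yields $\mu>_{J}\lambda$, so in particular $\mu\ge_{J}\lambda$. The base of the induction is the case where $\lambda$ is maximal under $\domer$ in its block, where Theorem~\ref{theorem cellular} forces $\mu=\lambda$ and the claim is trivial.

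I do not anticipate a serious obstacle in this argument, which is really just careful bookkeeping of the Jantzen--Schaper sum against the dominance order. The one point deserving attention is the legitimacy of the induction: one must run it along $\domer$ (not directly along $<_{J}$) and invoke the compatibility $\nu>_{J}\lambda\Rightarrow\nu\domr\lambda$ to be sure the auxiliary partition $\tau$ is strictly larger than $\lambda$ in a finite order. A minor related point is the strictness $\tau\neq\lambda$: this is automatic from $\lambda<_{J}\tau$ once one knows that $<_{J}$ is a strict partial order (equivalently, it follows from the condition $a<b$ built into the definition of $\lambda\xrightarrow{\sigma}\tau$, which guarantees that the abacus displays of $\lambda$ and $\tau$ genuinely differ).
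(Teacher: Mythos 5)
Your argument is correct and is exactly the route the paper intends: the paper gives no explicit proof, saying only that Theorem \ref{jantzen schaper} "can be used to refine" Theorem \ref{theorem cellular}, and your induction along the dominance order, extracting from $B_{\mathbb{F}}(\lambda,\mu)\neq 0$ a partition $\tau$ with $\lambda\to\tau$ and $[W^{\tau}:L^{\mu}]>0$, is the standard way to carry that out. Your attention to the well-foundedness of the induction (via $\nu>_{J}\lambda\Rightarrow\nu\domr\lambda$) and to the strictness $\tau\neq\lambda$ covers the only delicate points.
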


It is difficult to check that $\mu\ngtr_{J}\lambda$ by inspection. To this end, we introduce the \textit{product order} on partitions which was first defined by Tan in~\cite{beyond}. Let $\lambda$ be a partition, displayed on an abacus with $e$ runners and $r$ beads. Suppose that the beads having positive \textit{e-weights} are at positions $a_{1},a_{2},\dots,a_{s}$ with weights $w_{1},w_{2},\dots,w_{s}$ respectively. The \textit{induced e-sequence} of $\lambda$, denoted $s(\lambda)_{r}$, is defined as 
$$\bigsqcup\limits_{i=1}^{s}(a_{i},a_{i}-e,\dots,a_{i}-(w_{i}-1)e),$$ where $(b_{1},b_{2},\dots,b_{t})\sqcup(c_{1},c_{2},\dots,c_{u})$ denotes the weakly decreasing sequence obtained by rearranging terms in the sequence $(b_{1},\dots,b_{t},c_{1},\dots,c_{u})$. Note that $s(\lambda)_{r}\in\mathbb{N}_{>0}^{w}$, where $w$ is the $e$-weight of $\lambda$. 

We define a partial order $\ge_{P}$ on the set of partitions by: $\mu\ge_{P}\lambda$ if and only if $\mu$ and $\lambda$ have the same $e$-core and $e$-weight, and $s(\mu)_{r}\ge s(\lambda)_{r}$ (for sufficiently large $r$) in the standard product order of $\mathbb{N}_{>0}^{w}$.

\begin{lem}(~\cite[Lemma 2.9]{beyond})\label{product order}
$$\lambda\le_{J}\mu\Rightarrow\lambda\le_{P}\mu.$$
\end{lem}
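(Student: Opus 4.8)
The plan is to reduce to a single step of the Jantzen order and then track the induced $e$-sequence through that step. As $\le_{P}$ is transitive, it suffices to prove that a single step $\lambda\to\tau$ forces $\lambda\le_{P}\tau$; so I would suppose $\lambda\xrightarrow{\text{$\mu$}}\tau$ and fix $r$ large enough to display it, so that $\mu$ is obtained from $\lambda$ by sliding a bead up its runner $\rho:=a\bmod e$ from position $a$ to $a-ie$, and $\tau$ is obtained from $\mu$ by sliding a bead down its runner $\rho':=b\bmod e$ from $b-ie$ to $b$, with $a<b$. A short check (the point being that both moves must actually move a bead, which forces $a\ne b-ie$) shows that the four positions $a-ie<a$ and $b-ie<b$ are pairwise distinct, that $a$ and $b-ie$ are occupied in $\lambda$ while $a-ie$ and $b$ are vacant, and that in $\tau$ the opposite holds, all other positions agreeing. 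In particular each runner carries the same number of beads in $\lambda$, $\mu$ and $\tau$, so these partitions share an $e$-core, and $\lambda$ and $\tau$ share an $e$-weight $w$; it therefore remains to prove $s(\lambda)_{r}\le s(\tau)_{r}$ in the product order on $\N_{>0}^{w}$.

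Next I would set up two elementary facts, working one runner at a time in the scaled coordinate in which abacus position $\rho+me$ on runner $\rho$ is labelled $m$. First: if the beads on a runner occupy scaled positions $j_{1}<\dots<j_{b}$, then the $k$-th bead has $e$-weight $j_{k}-(k-1)$, so it contributes the block $\{k,k+1,\dots,j_{k}\}$ (empty when $j_{k}<k$) to the scaled form of $s(\lambda)_{r}$, and hence that runner's contribution is the multiset $\bigsqcup_{k=1}^{b}\{k,\dots,j_{k}\}$. Second, a single-move lemma: a valid ``up'' move carrying a bead on some runner from scaled position $\delta$ to $\delta-i$ changes that runner's contribution by deleting exactly the $i$ consecutive scaled positions $\delta-i+1,\delta-i+2,\dots,\delta$, and these genuinely form a sub-multiset of the contribution; dually, a valid ``down'' move carrying a bead from scaled position $\delta$ to $\delta+i$ changes the contribution by inserting the positions $\delta+1,\dots,\delta+i$.

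Now I would apply the lemma twice. The step $\lambda\to\mu$ is an up-move on runner $\rho$ lowering a bead by $i$ in scaled coordinates, so it deletes, in absolute coordinates, the positions $a,a-e,\dots,a-(i-1)e$ from the induced sequence; the step $\mu\to\tau$ is a down-move on runner $\rho'$ raising a bead by $i$, so it inserts $b,b-e,\dots,b-(i-1)e$. (If $\rho=\rho'$ one simply composes the two applications, since $\tau$ is built from $\lambda$ through $\mu$.) Hence, as multisets,
\[
s(\tau)_{r}=\bigl(s(\lambda)_{r}\setminus\{a,a-e,\dots,a-(i-1)e\}\bigr)\sqcup\{b,b-e,\dots,b-(i-1)e\},
\]
the deleted set being a sub-multiset of $s(\lambda)_{r}$. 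Since $a<b$, the bijection $a-je\mapsto b-je$ (for $0\le j<i$) carries the deleted multiset to the inserted one while strictly increasing each element; and replacing a sub-multiset of a fixed-size multiset by one that dominates it in the product order can only move the whole multiset up in the product order (compare the tail-counts $\#\{x:x\ge c\}$). Therefore $s(\lambda)_{r}\le s(\tau)_{r}$, as required.

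The technical heart is the single-move lemma. Proving that an up- or down-move translates into deleting or inserting a clean interval of scaled positions requires a careful but elementary analysis of how the move reindexes the beads $j_{1}<\dots<j_{b}$ on its runner: the deleted (or inserted) block arises partly from the moved bead and partly from the cluster of beads it slides past, and one uses here that every bead strictly between $\delta-i$ and $\delta$ has a vacancy beneath it on that runner, so that its contributing block is nonempty and starts where expected. Everything else, namely the identification of the four distinguished positions, the block formula for $s(\cdot)_{r}$, the composition when $\rho=\rho'$, and the final product-order bookkeeping, is routine.
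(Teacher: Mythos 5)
The paper does not actually prove this lemma: it is quoted verbatim from Tan's \emph{Beyond Rouquier partitions} (the cited Lemma 2.9), so there is no in-paper proof to compare against. Judged on its own, your argument is correct and gives a self-contained proof of the cited result, by what is essentially the natural (and, in spirit, the original) route: reduce by transitivity to a single Jantzen step $\lambda\xrightarrow{\mu}\tau$, check the four positions $a-ie,a,b-ie,b$ behave as claimed (your observation that $b-ie\neq a$ because $a$ is vacant in $\mu$ is the right point), and then track the induced $e$-sequence runner by runner. Your block formula $\bigsqcup_{k}\{k,\dots,j_{k}\}$ for a runner's contribution is right, and your single-move lemma is correct as stated: writing $t$ for the number of beads below scaled position $\delta-i$ and $m$ for the index of the moved bead, the only blocks that change are those of beads $t+1,\dots,m$, and a direct comparison shows the before-contribution equals the after-contribution plus exactly $\{\delta-i+1,\dots,\delta\}$ (jumps over intermediate beads included), so the multiset identity $s(\tau)_{r}=(s(\lambda)_{r}\setminus\{a,\dots,a-(i-1)e\})\sqcup\{b,\dots,b-(i-1)e\}$ and the tail-count comparison finish the proof; I verified this computation goes through, so leaving it as ``careful but elementary'' is not a gap. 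One small imprecision: in your sketch of the reindexing, the relevant fact is that each bead strictly between $\delta-i$ and $\delta$ has positive weight \emph{before} the up-move because the vacancy at $\delta-i$ lies \emph{above} it (not ``beneath''), which is what guarantees its block contains its starting index; this does not affect the argument.
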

Therefore, $\mu\ngeq_{P}\lambda \Rightarrow [W^{\lambda}:L^{\mu}]=0$.


\begin{example}
Suppose that $e=5$, $r=10$, $\la=\langle 2,4_{2^{2}} \mid 2^{5} \rangle=(10,6,5,2,1^{2})$ and $\mu= \langle 0,1,4_{3} \mid 2^{5} \rangle=(15,3^{2},2^{2})$.
\[
\begin{array}{c@{\qquad}c}
\la&\mu\\
\abacus(bbbbn,bbnbn,nnbnb,nnnnb,nnnnn)&
\abacus(bbbbb,nnbbn,bbnnn,nnnnn,nnnnb)
\end{array}
\]
Then, $s(\la)_{10}=(19,14,14,12,9)$, $s(\mu)_{10}=(24,19,14,11,10)$. Hence, $\mu \domr \lambda$ but $\mu\ngtr_{P} \la$.

\end{example}

\subsection{$v$-decomposition numbers}\label{section v decomp}
For a brief introduction to \textit{v-decomposition numbers}, the reader may refer to~\cite[\S2.7]{Ac duality}. For our purposes, all we need to know is that given 2 partitions $\lambda$ and $\mu$ of $n$, we may define a polynomial $d^{e}_{\lambda\mu}(v)\in\mathbb{N}[v]$ with the following crucial property (which explains its name):

\begin{thrm}\label{v-decomposition numbers}
Let $\la,\mu\in\p(n)$. Then,
\begin{enumerate}
\item$d^{e}_{\lambda\mu}(1)=[W^{\lambda}_{\mathbb{C}}:L^{\mu}_{\mathbb{C}}]$,
\item$\frac{d}{dv}(d^{e}_{\lambda\mu}(v))|_{v=1}=B_{\mathbb{C}}(\lambda,\mu)$.
\end{enumerate}
\end{thrm}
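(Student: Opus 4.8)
The plan is to handle the two parts separately: part~(1) by a direct appeal to the Ariki--Varagnolo--Vasserot theorem, and part~(2) by combining the graded categorification of the $q$-Schur algebra with the Jantzen sum formula. Recall that $d^{e}_{\lambda\mu}(v)$ is, by definition, the coefficient of the standard basis vector $s_{\lambda}$ in the canonical basis vector $G(\mu)$ of the level-one $v$-deformed Fock space for $U_{v}(\widehat{\mathfrak{sl}}_{e})$; these polynomials satisfy $d^{e}_{\mu\mu}(v)=1$ and $d^{e}_{\lambda\mu}(v)\in v\N[v]$ for $\lambda\neq\mu$, and are computed by the LLT algorithm~\cite{llt}. (Throughout, one has to remember that we use the Dipper--James modules, so the pertinent Fock space conventions are the transpose/dual ones; cf.\ the Warning above.)

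For part~(1) I would simply invoke the theorem of Varagnolo and Vasserot, the $q$-Schur analogue of Ariki's theorem~\cite{Ariki}, which asserts exactly that $d^{e}_{\lambda\mu}(1)=[W^{\lambda}_{\C}:L^{\mu}_{\C}]$ for all $\lambda,\mu\in\p(n)$. The columns indexed by $e$-regular $\mu$ already follow from Ariki's theorem for $\h_{n}$ together with the Schur functor (Theorem~\ref{schur functor d}); the columns indexed by $e$-singular $\mu$ constitute the additional content of Varagnolo--Vasserot.

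For part~(2), the first input is the graded lift of the $q$-Schur algebra over $\C$ (through cyclotomic Khovanov--Lauda--Rouquier algebras and the Brundan--Kleshchev graded-decomposition-number machinery, in its Schur-algebra form), which promotes part~(1) to the graded statement
\[
d^{e}_{\lambda\mu}(v)=\sum_{j\ge 0}\big[\,\underline{W}^{\lambda}:\underline{L}^{\mu}\langle j\rangle\,\big]\,v^{j},
\]
where $\underline{W}^{\lambda}$ and $\underline{L}^{\mu}$ denote the graded Weyl and simple modules and $\langle j\rangle$ is a grading shift. The second input is Shan's theorem that the Jantzen filtration $W^{\lambda}=W^{\lambda}(0)\supseteq W^{\lambda}(1)\supseteq\cdots$ of the Weyl module of the $v$-Schur algebra over $\C$ coincides with the grading filtration, so that $[W^{\lambda}(i):L^{\mu}_{\C}]=\sum_{j\ge i}[\,\underline{W}^{\lambda}:\underline{L}^{\mu}\langle j\rangle\,]$ for every $i$. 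The third input is the Jantzen--Schaper sum formula underlying Theorem~\ref{jantzen schaper}, which over $\C$ reads $\sum_{i\ge 1}[W^{\lambda}(i)]=\sum_{\tau}J(\lambda,\tau)[W^{\tau}]$ in the Grothendieck group, whence $B_{\C}(\lambda,\mu)=\sum_{i\ge 1}[W^{\lambda}(i):L^{\mu}_{\C}]$. Assembling these,
\[
B_{\C}(\lambda,\mu)=\sum_{i\ge 1}\sum_{j\ge i}\big[\,\underline{W}^{\lambda}:\underline{L}^{\mu}\langle j\rangle\,\big]=\sum_{j\ge 1}j\,\big[\,\underline{W}^{\lambda}:\underline{L}^{\mu}\langle j\rangle\,\big]=\frac{d}{dv}\big(d^{e}_{\lambda\mu}(v)\big)\Big|_{v=1},
\]
which is part~(2); the last equality uses the graded statement above, and the middle equality is the elementary identity $\sum_{i\ge 1}\sum_{j\ge i}a_{j}=\sum_{j\ge 1}j\,a_{j}$.

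The hard part is not the mathematics on my side --- all the depth is in the cited theorems of Varagnolo--Vasserot, Brundan--Kleshchev and Shan --- but the bookkeeping required to glue those citations together cleanly: reconciling the Fock space and charge conventions with the Dipper--James normalisation, fixing the grading-shift normalisation (a factor of $v$ versus $v^{2}$ recurs in the literature), and, most delicately, confirming that the Jantzen sum formula being invoked is the one whose coefficients are precisely the $J(\lambda,\tau)$ of Definition~\ref{defn js} --- with the leg-length sign $(-1)^{l_{\lambda\sigma}+l_{\tau\sigma}+1}$ and the parameter $h_{\lambda\sigma}$ entering in the right places, and with the $p$-valuation factor $1+v_{p}(h_{\lambda\sigma})$ collapsing to $1$ in characteristic zero. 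Should one wish to bypass Shan's theorem, one could instead attempt a direct combinatorial argument from the LLT recursion, but matching its output term by term against the Schaper coefficients looks to be at least as delicate.
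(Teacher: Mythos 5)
Your part (1) is exactly what the paper does: it cites Varagnolo--Vasserot (the Schur-algebra analogue of Ariki's theorem) and nothing more. For part (2), however, the paper does not argue at all --- it simply invokes Schroll and Tan~\cite[Theorem 2.13]{Ac duality}, where the identity $\frac{d}{dv}d^{e}_{\lambda\mu}(v)|_{v=1}=B_{\mathbb{C}}(\lambda,\mu)$ is established --- whereas you sketch an independent proof via the graded lift of the $q$-Schur algebra (graded decomposition numbers equal the coefficients of $d^{e}_{\lambda\mu}(v)$) together with Shan's theorem identifying the Jantzen filtration of the Weyl module over $\mathbb{C}$ with the grading filtration, and then the Jantzen--Schaper sum formula in the normalisation of Definition~\ref{defn js}. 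Your chain of equalities is sound given those three inputs, and the caveats you flag (Fock-space/charge conventions versus the Dipper--James modules, the shift normalisation, and matching the sum-formula coefficients to $J(\lambda,\tau)$ exactly) are indeed the only places where it could silently go wrong; note also that $B_{\mathbb{C}}(\lambda,\mu)=\sum_{i\ge1}[W^{\lambda}(i):L^{\mu}_{\mathbb{C}}]$ needs the sum formula stated at the level of the Grothendieck group, which is how James--Mathas prove it, so that step is fine. The trade-off is that your route imports machinery (graded cyclotomic Schur algebras, Shan's Jantzen-versus-grading theorem) that is substantially heavier than, and historically later than, the Schroll--Tan result the paper leans on, which derives the derivative formula directly from the Jantzen--Schaper coefficients and canonical-basis combinatorics; what your route buys is a conceptual explanation of \emph{why} the derivative at $1$ computes the Schaper bound, namely that $d^{e}_{\lambda\mu}(v)$ records the layers of the Jantzen filtration.
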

\begin{proof}
(1) has been proven by Varagnolo and Vasserot in~\cite{varagnolo}. (2) was proved by Schroll and Tan in~\cite[Theorem 2.13]{Ac duality}.
\end{proof}
The following result tells us that $d^{e}_{\lambda\mu}(v)$ is either an even or an odd polynomial, depending on the relative $e$-signs of $\lambda$ and $\mu$.
\begin{thrm}~\cite[Theorem 2.4]{parities}\label{v-decomposition numbers are odd or even}
If $d^{e}_{\lambda\mu}(v)\neq0$, then
\begin{equation*}
d^{e}_{\lambda\mu}(v)\in
\begin{cases}
\mathbb{N}[v^{2}] & \text{if          } \sigma	_{e}(\lambda)=\sigma_{e}(\mu), \\
v\mathbb{N}[v^{2}] & \text{otherwise}.
\end{cases}
\end{equation*}
\end{thrm}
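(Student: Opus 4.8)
The plan is to reduce the theorem to the single identity $d^{e}_{\lambda\mu}(-v)=\sigma_{e}(\lambda)\,\sigma_{e}(\mu)\,d^{e}_{\lambda\mu}(v)$ for all partitions $\lambda,\mu$ of $n$, and then to prove this in two stages: that every $d^{e}_{\lambda\mu}(v)$ is supported in degrees of a single parity, and that this parity is the one dictated by the relative $e$-signs. The reduction is immediate, since for a nonzero polynomial in $\mathbb{N}[v]$ the identity is equivalent to its lying in $v^{k}\mathbb{N}[v^{2}]$ for the appropriate $k\in\{0,1\}$; and by Theorems~\ref{nakayama} and~\ref{v-decomposition numbers}(1) we may assume $\lambda$ and $\mu$ lie in the same block, so that $\sigma_{e}(\lambda)$ and $\sigma_{e}(\mu)$ are taken relative to the same $e$-core.

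For the first stage I would invoke the geometry behind the $v$-decomposition numbers. By Varagnolo and Vasserot, $d^{e}_{\lambda\mu}(v)$ equals, up to an overall power of $v$, a parabolic affine Kazhdan--Lusztig polynomial for $\widehat{\mathfrak{sl}}_{e}$; in particular it is the Poincaré polynomial of the stalk of an intersection cohomology sheaf on an affine partial flag variety, and such stalks satisfy cohomological parity vanishing. Hence $d^{e}_{\lambda\mu}(v)\in v^{a_{\lambda\mu}}\mathbb{N}[v^{2}]$, where $a_{\lambda\mu}\in\{0,1\}$ is the parity of the cohomological shift, namely of $\ell(x_{\mu})-\ell(x_{\lambda})$, with $\ell$ the Coxeter length of the affine Weyl group elements $x_{\lambda},x_{\mu}$ indexing $\lambda,\mu$ in their block. (A geometry-free alternative would establish this parity homogeneity directly, by induction along the dominance order using the recursive LLT construction of $G(\mu)$: one applies a divided power $f_{i}^{(r)}$, whose action on the standard basis of the Fock space is by explicit powers of $v$, then subtracts the canonical basis vectors already constructed, checking throughout that only exponents of the predicted parity arise. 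This runs into the same combinatorics as the second stage.)

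The second stage is the purely combinatorial identity that, for every $\lambda$ in a block $B$, $(-1)^{\ell(x_{\lambda})}=c_{B}\,\sigma_{e}(\lambda)$ for a constant $c_{B}\in\{\pm1\}$ depending only on $B$ — i.e. that the relative $e$-sign of $\lambda$ is computed by the parity of the relevant Coxeter length, or equivalently, in the abacus language of \S\ref{background}, by the parity of a natural inversion statistic on the bead configuration of $\lambda$. Granting this, $a_{\lambda\mu}\equiv\ell(x_{\lambda})+\ell(x_{\mu})\pmod 2$ is exactly the parity for which $(-1)^{a_{\lambda\mu}}=\sigma_{e}(\lambda)\sigma_{e}(\mu)$ (the block constant $c_{B}$ cancels), which completes the proof. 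I would prove the identity by induction on the $e$-weight $w$: when $w=0$ the block is the single $e$-core and both sides are constant, and the inductive step amounts to adding a rim $e$-hook of leg length $\ell$, which multiplies $\sigma_{e}(\lambda)$ by $(-1)^{\ell}$; one must show this operation multiplies $(-1)^{\ell(x_{\lambda})}$ by $(-1)^{\ell}$ up to a factor independent of which hook is added. I expect this step — a finite but delicate count of how the length (equivalently, the inversions in the abacus display) changes under adding a rim $e$-hook — to be the main obstacle.

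Finally, a route that avoids geometry entirely would run through the graded representation theory of the cyclotomic $q$-Schur algebra (Hu and Mathas): in characteristic $0$ one has $d^{e}_{\lambda\mu}(v)=\sum_{k}[W^{\lambda}:L^{\mu}]_{k}\,v^{k}$ for the graded Weyl and simple modules, and one would need the assertion that, for fixed $\lambda$, the graded module $W^{\lambda}$ meets each graded simple $L^{\mu}$ only in degrees of one parity. The naive strengthening — that $W^{\lambda}$ is itself concentrated in degrees of a single parity — is already false when $e\ge 3$, so the correct statement must be isolated with care, and proving it again comes down to the same length-versus-sign combinatorics. I would therefore make the geometric argument the main line and regard the combinatorial identity above as the crux.
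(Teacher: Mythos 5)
The paper does not prove this statement at all: it is quoted verbatim from Tan's preprint (the reference \cite{parities}, Theorem 2.4), so there is no in-paper argument to compare yours against. Judged on its own terms, your proposal is a plausible outline but not a proof, and the missing piece is exactly the one you flag yourself. Your first stage (parity homogeneity of $d^{e}_{\lambda\mu}(v)$) is acceptable as a citation: via Varagnolo--Vasserot the $v$-decomposition numbers are parabolic affine Kazhdan--Lusztig polynomials, and parity vanishing for the relevant IC stalks (Kashiwara--Tanisaki) does give $d^{e}_{\lambda\mu}(v)\in v^{a_{\lambda\mu}}\mathbb{N}[v^{2}]$ with $a_{\lambda\mu}\equiv\ell(x_{\mu})-\ell(x_{\lambda})\pmod 2$, provided you nail down the normalization shift in the identification, which you only assert. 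Your reduction to the same block is fine, since nonnegativity of the coefficients plus Theorem \ref{v-decomposition numbers}(1) and Theorem \ref{nakayama} handles it.

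The genuine gap is the second stage: the identity $(-1)^{\ell(x_{\lambda})}=c_{B}\,\sigma_{e}(\lambda)$, i.e.\ that the relative $e$-sign is computed (up to a constant on the block) by the parity of the Coxeter length, or equivalently by an inversion statistic on the abacus. You do not prove it; you sketch an induction on $e$-weight and then write that the inductive step --- tracking how the length changes when a rim $e$-hook of leg length $\ell$ is added, and showing the change has parity $\ell$ modulo a hook-independent constant --- is ``the main obstacle.'' That step is precisely where the content of the theorem lives: without it, stage one only says each $d^{e}_{\lambda\mu}(v)$ has pure parity, not that the parity is governed by $\sigma_{e}(\lambda)\sigma_{e}(\mu)$. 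As it stands the argument therefore does not close. (For what it is worth, the cited source proves the result by Fock-space/LLT-type combinatorics of the kind you mention as a ``geometry-free alternative,'' so carrying out your length-versus-sign computation on the abacus --- where adding an $e$-hook of leg length $\ell$ moves one bead past exactly $\ell$ others, changing the inversion count by $\ell$ up to a correction you must control uniformly --- is not a side remark but the heart of any complete proof along your lines.)
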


\begin{remark}\label{remark after}In section \ref{section wt 4}, we will sometimes calculate that $B_{\mathbb{C}}(\lambda,\mu)=2$ for some pair of partitions $(\lambda,\mu)$. In order to verify whether $[W^{\lambda}_{\mathbb{C}}:L^{\mu}_{\mathbb{C}}]$ is 1 or 2, we would calculate $\sigma_{e}(\lambda)$ and $\sigma_{e}(\mu)$. If they turn out to be the same, then $d^{e}_{\lambda\mu}(v)=v^{2}$ and $[W^{\lambda}_{\mathbb{C}}:L^{\mu}_{\mathbb{C}}]=1$ by Theorem \ref{v-decomposition numbers} and Theorem \ref{v-decomposition numbers are odd or even}. Otherwise, $d^{e}_{\lambda\mu}(v)=2v$ and $[W^{\lambda}_{\mathbb{C}}:L^{\mu}_{\mathbb{C}}]=2$.
\end{remark}

The following two theorems commonly known as the Runner Removal Theorems allow us to relate $v$-decomposition numbers for different values of $e$.

\begin{thrm}\label{Runner Removal Theorem Empty}~\cite[Theorem 4.5]{runner removal}
Suppose that $e\ge3$. Let $\lambda$ and $\mu$ be partitions lying in the same block, and display them on an abacus with $e$ runners and $r$ beads, for some large enough $r$. Suppose that there exists some $i$ such that in both abacus displays, the last bead on runner $i$ occurs before every unoccupied space on the abacus. Define two abacus displays with $e-1$ runners by deleting runner $i$ from each display, and let $\lambda^{-}$ and $\mu^{-}$ be the partitions defined by these displays. Then,
$$d^{e}_{\lambda\mu}(v)=d^{e-1}_{\lambda^{-}\mu^{-}}(v).$$
\end{thrm}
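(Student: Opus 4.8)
This is a known result (as the citation records); here is the proof I would give. Recall that the $v$-decomposition number $d^{e}_{\lambda\mu}(v)$ is the coefficient of the standard basis vector $s_{\lambda}$ in the canonical basis element $G_{e}(\mu)$ of the level-one $q$-deformed Fock space $\mathcal{F}_{e}$ for $U_{v}(\widehat{\mathfrak{sl}}_{e})$ (in its semi-infinite wedge realisation), and that $G_{e}(\mu)$ is the \emph{unique} bar-invariant element of $\mathcal{F}_{e}$ lying in $s_{\mu}+\sum_{\lambda\domr\mu}v\mathbb{Z}[v]\,s_{\lambda}$. The plan is to produce, for each runner-$i$-full $\mu$, an element of $\mathcal{F}_{e}$ with these two properties whose $s_{\lambda}$-coefficient is $d^{e-1}_{\lambda^{-}\mu^{-}}(v)$ when $\lambda$ is runner-$i$-full and $0$ otherwise; uniqueness then forces it to be $G_{e}(\mu)$, and comparing coefficients gives the theorem (both sides vanishing unless $\lambda$ is runner-$i$-full).

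First I would record the combinatorics. Since $\lambda$ and $\mu$ lie in the same block they have the same $e$-core, hence the same number of beads on each runner, so ``runner $i$ is full and lies below every vacant position'' is really a condition on the block's $e$-core $\gamma$, and $\gamma$ itself is runner-$i$-full. Deleting runner $i$ then gives a bijection between the partitions of this $e$-block that are runner-$i$-full and the partitions of the corresponding block of $\widehat{\mathfrak{sl}}_{e-1}$ of the same weight $w$, sending $\gamma\mapsto\gamma^{-}$. A direct comparison of $\beta$-numbers — using that the deleted runner sits below every gap, so it does not disturb the relative order of the remaining ``mobile'' beads — shows this bijection preserves the dominance order. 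Write $\mathcal{W}\subseteq\mathcal{F}_{e}$ for the span of the $s_{\lambda}$ with $\lambda$ runner-$i$-full in this block; the bijection identifies $\mathcal{W}$ with the block-summand of $\mathcal{F}_{e-1}$.

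The crux is then to show that $\mathcal{W}$ is stable under the bar involution of $\mathcal{F}_{e}$ and under the divided powers $f_{j}^{(a)}$, and that under $\mathcal{W}\cong\mathcal{F}_{e-1}$ these restrict to the bar involution and divided powers of $\mathcal{F}_{e-1}$, with residues relabelled by the quotient $\mathbb{Z}/e\to\mathbb{Z}/(e-1)$ contracting runner $i$'s residue class. Granting this, the canonical basis of $(\mathcal{W},\mathrm{bar})$ relative to its standard basis and the restricted dominance order exists and, transported along the isomorphism, is $\{G_{e-1}(\mu^{-})\}$. For runner-$i$-full $\mu$, let $\widetilde{G}(\mu)\in\mathcal{W}$ be the element corresponding to $G_{e-1}(\mu^{-})$: it is bar-invariant in $\mathcal{F}_{e}$, equals $s_{\mu}$ plus a $v\mathbb{Z}[v]$-combination of $s_{\lambda}$ with $\lambda^{-}\domr\mu^{-}$, and by the order-preservation of the bijection these $\lambda$ all satisfy $\lambda\domr\mu$. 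Hence $\widetilde{G}(\mu)$ meets the uniqueness characterisation of $G_{e}(\mu)$ in $\mathcal{F}_{e}$, so $\widetilde{G}(\mu)=G_{e}(\mu)$, and reading off $s_{\lambda}$-coefficients yields $d^{e}_{\lambda\mu}(v)=d^{e-1}_{\lambda^{-}\mu^{-}}(v)$.

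The hard part is exactly the assertion at the start of the previous paragraph: that a full, bottom-most runner is invisible to the $U_{v}(\widehat{\mathfrak{sl}}_{e})$-module structure on $\mathcal{F}_{e}$ in the relevant sense. For the operators $f_{j}^{(a)}$ this is a finite bookkeeping check: one must verify that for every $\nu$ in the block the addable and removable $j$-nodes seen by $f_{j}$, and their relative positions — hence the powers of $v$ decorating the terms of $f_{j}^{(a)}s_{\nu}$ — agree with the corresponding data computed on the $(e-1)$-runner abacus after deleting runner $i$, which holds precisely because that runner is full and carries no gap. For the bar involution one must unwind its definition via straightening of semi-infinite wedges (equivalently, via $\bar{s}_{\emptyset}=s_{\emptyset}$ together with compatibility with the $f_{j}^{(a)}$) and observe that no straightening relation ever involves the full runner. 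Everything else — the bijection, its order-preservation, and the extraction of the canonical basis from the uniqueness property — is formal.
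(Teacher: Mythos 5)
You should note first that the paper does not prove this statement at all: Theorem \ref{Runner Removal Theorem Empty} is quoted from James--Mathas \cite{runner removal}, so your proposal can only be measured against that source and against the statement itself. Measured that way, it has a genuine gap. Your plan hinges on three linked claims: that runner-$i$-fullness ``is really a condition on the block's $e$-core''; that deleting runner $i$ gives a bijection from the runner-$i$-full partitions of the block onto the whole corresponding $(e-1)$-block; and that the transported element is the canonical basis element, so that ``both sides vanish unless $\lambda$ is runner-$i$-full''. All three fail. Take $e=3$, $n=3$, $r=6$, the weight-one block with empty core, $\mu=(3)$ (beads $\{0,1,2,3,4,8\}$) and $i=1$: runner $1$ carries beads $1,4$ and every unoccupied space is $\ge 5$, so $\mu$ satisfies the hypothesis. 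But $\lambda=(2,1)$ (beads $\{0,1,2,3,5,7\}$) lies in the same block, is \emph{not} runner-$1$-full (bead at $7$, gap at $4$), and by the paper's own description of weight-one blocks $[W^{(2,1)}_{\mathbb{C}}:L^{(3)}_{\mathbb{C}}]=1$, i.e.\ $d^{3}_{(2,1)(3)}(v)=v\neq 0$. Hence $G_{3}((3))=s_{(3)}+v\,s_{(2,1)}$ is not supported on runner-$1$-full partitions; moreover the runner-$1$-full partitions of this block are exactly $\{(3)\}$, while the corresponding $2$-runner block contains two partitions, $(2)$ and $(1,1)$, so your ``bijection'' is not surjective and your subspace $\mathcal{W}$ (here one-dimensional) is not bar-stable. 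Run to its conclusion, your uniqueness argument would give $G_{3}((3))=s_{(3)}$, which is false. The root of the problem is that the theorem's hypothesis is imposed only on the pair $(\lambda,\mu)$ and is genuinely a condition on each individual display, not on the block; partitions violating it can and do occur in the support of $G_{e}(\mu)$, and increasing $r$ does not change this.

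Your skeleton is essentially viable only in the special situation where \emph{every} partition of the block is runner-$i$-full --- for instance when the deleted runner carries no beads at all (as in Example \ref{example empty runner}), since the number of beads on each runner is a block invariant --- and even there the ``crux'' you grant yourself (that the bar involution and the divided powers are blind to that runner under the relabelling of residues) is precisely the hard content, not a bookkeeping remark. In the generality actually stated, any correct proof, including the cited one, must compare the canonical-basis computations for $e$ and $e-1$ in a way that matches coefficients only at those $\lambda$ satisfying the hypothesis, rather than by exhibiting a bar-stable subspace spanned by them; as written, your argument proves a statement that is false, so the gap is real.
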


\begin{thrm}\label{Runner Removal Theorem Full}~\cite{another runner removal theorem}
Suppose that $e\ge3$. Let $\lambda$ and $\mu$ be partitions lying in the same block, and display them on an abacus with $e$ runners and $r$ beads, for some large enough $r$. Suppose that there exists some $i$ such that in both abacus displays, the first unoccupied space on runner $i$ occurs after every bead on the abacus. Define two abacus displays with $e-1$ runners by deleting runner $i$ from each display, and let $\hat{\lambda}$ and $\hat{\mu}$ be the partitions defined by these displays. Then,
$$d^{e}_{\lambda\mu}(v)=d^{e-1}_{\hat{\lambda}\hat{\mu}}(v).$$
\end{thrm}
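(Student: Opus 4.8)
The plan is to reduce this ``full runner'' statement to the already-established ``empty runner'' statement of Theorem~\ref{Runner Removal Theorem Empty} by conjugating partitions. The tool that makes this work is the conjugation symmetry of $v$-decomposition numbers,
$$d^{e}_{\lambda\mu}(v) \;=\; d^{e}_{\mu'\lambda'}(v) \qquad \text{for all } \lambda,\mu\in\p(n),$$
where $\lambda'$ denotes the conjugate partition. This is a standard feature of the canonical basis of the level-one Fock space for $U_{v}(\widehat{\mathfrak{sl}}_{e})$: conjugation of partitions reverses dominance and intertwines the two realisations of Fock space, so it carries the canonical basis to the canonical basis, while $d^{e}_{\mu\mu}(v)=1$ pins down the normalisation. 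It can be extracted from the canonical basis formalism (see~\cite{llt}, and~\cite{parities} for the circle of ideas around Theorem~\ref{v-decomposition numbers are odd or even}).

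First I would fix a common window: enlarging $r$ if necessary, which changes neither the partitions nor $d^{e}_{\lambda\mu}(v)$, I display $\lambda$ and $\mu$ on an abacus with $e$ runners inside a block of $N$ positions, where $N$ is a multiple of $e$. Recall that the conjugate $\mu'$ is read off the same window by reversing the order of the $N$ positions and interchanging beads with vacancies, and that since $e\mid N$ the runner structure survives, the reversal carrying runner $i$ to runner $i^{\ast}$ with $i^{\ast}\equiv -1-i\pmod e$. The content of the hypothesis of Theorem~\ref{Runner Removal Theorem Full} is exactly that, after this reversal, runner $i^{\ast}$ of the displays of $\mu'$ and $\lambda'$ consists of beads followed by vacancies, with its last bead preceding every vacancy of the whole reversed display; that is, the pair $(\mu',\lambda')$ together with runner $i^{\ast}$ satisfies the hypothesis of Theorem~\ref{Runner Removal Theorem Empty}. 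The one combinatorial identity that must be checked with care is that deleting this runner from a conjugate returns the conjugate of the deleted partition,
$$(\mu')^{-} \;=\; (\hat{\mu})' \qquad\text{and}\qquad (\lambda')^{-} \;=\; (\hat{\lambda})';$$
this is a routine bookkeeping exercise: one tracks $\beta$-sets through ``reverse, then delete runner $i^{\ast}$'' versus ``delete runner $i$, then reverse'' and confirms the two produce the same partition, the bead counts matching because $e\mid N$.

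Granting the identity, the theorem follows by chaining four equalities,
$$d^{e}_{\lambda\mu}(v) \;=\; d^{e}_{\mu'\lambda'}(v) \;=\; d^{e-1}_{(\mu')^{-}(\lambda')^{-}}(v) \;=\; d^{e-1}_{(\hat{\mu})'(\hat{\lambda})'}(v) \;=\; d^{e-1}_{\hat{\lambda}\hat{\mu}}(v),$$
the outer two being the conjugation symmetry (at ranks $e$ and $e-1$), the second being Theorem~\ref{Runner Removal Theorem Empty} applied to $(\mu',\lambda')$, and the third the identity above. This also explains the hypothesis $e\geq 3$: the reduction lands in rank $e-1$, and Theorem~\ref{Runner Removal Theorem Empty} requires that rank to be at least $2$.

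The main obstacle I anticipate is not a single hard step but precision: one must quote the conjugation symmetry with no stray power of $v$ or substitution $v\mapsto v^{-1}$, and keep the runner labels and bead counts straight through the reversal so that the hypotheses and the target partitions $\hat{\lambda},\hat{\mu}$ match up on the nose. An alternative route that avoids conjugation is to imitate the proof of Theorem~\ref{Runner Removal Theorem Empty} directly: show that the $\mathbb{Q}(v)$-linear map on Fock spaces induced by deleting the full runner $i$ sends the canonical basis element indexed by $\mu$ to the one indexed by $\hat{\mu}$, by verifying that this map is compatible with the bar involution and preserves the unitriangularity and $v\mathbb{Z}[v]$ conditions characterising the canonical basis. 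That route trades the conjugation bookkeeping for a direct check that the bar involution respects full-runner deletion, which is where the real work would then sit.
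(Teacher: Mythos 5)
Your reduction collapses at its first link: the identity $d^{e}_{\lambda\mu}(v)=d^{e}_{\mu'\lambda'}(v)$ is not a theorem, and it is in fact false. A small counterexample already occurs for $e=3$ in the principal (weight $2$) block of $n=6$: take $\lambda=(3,2,1)$, which is self-conjugate, and $\mu=(6)$. Running the LLT-type induction on the Fock space one finds $G((6))=s_{(6)}+v\,s_{(5,1)}+v\,s_{(3,2,1)}+v^{2}s_{(2,2,2)}$ and $G((3,2,1))=s_{(3,2,1)}+v\,s_{(2,2,2)}+v\,s_{(3,1^{3})}+v^{2}s_{(2,1^{4})}$, so $d^{3}_{(3,2,1),(6)}(v)=v$ while $d^{3}_{(1^{6}),(3,2,1)}(v)=0$; equivalently, at $v=1$, $[W^{(3,2,1)}_{\mathbb{C}}:L^{(6)}_{\mathbb{C}}]=1$ but $[W^{(1^{6})}_{\mathbb{C}}:L^{(3,2,1)}_{\mathbb{C}}]=0$. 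Since one side vanishes and the other does not, no renormalisation (a power of $v$, or $v\mapsto v^{-1}$) can rescue the claimed symmetry. The correct conjugation statements are genuinely different: for Hecke algebras conjugating $\lambda$ must be accompanied by the Mullineux involution on $\mu$, and at the Fock-space level conjugation relates the canonical basis to the \emph{dual} basis, i.e.\ to the inverse of the matrix $(d_{\lambda\mu}(v))$ with $v\mapsto -v^{-1}$ (Leclerc--Thibon), not to the matrix itself. So the full-runner theorem is not a formal consequence of the empty-runner theorem via conjugation; this is exactly why the cited paper of Fayers exists as a separate result. Your abacus bookkeeping (reverse-and-complement turns a full runner into an empty one and commutes with runner deletion) is fine; it is the symmetry input that fails.

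Note also that the paper itself offers no proof of this statement: it is quoted from Fayers's runner removal paper, whose argument works directly with the two Fock spaces for $U_{v}(\widehat{\mathfrak{sl}}_{e})$ and $U_{v}(\widehat{\mathfrak{sl}}_{e-1})$ and their canonical bases --- much closer in spirit to the ``alternative route'' you sketch in your last paragraph. That route is the viable one, but as you acknowledge, the compatibility of full-runner deletion with the bar involution is precisely the real work, and it is not carried out in your proposal.
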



\begin{remark}\label{remark empty runner} \textbf{Calculating $v$-decomposition numbers in practice.}
When $\mu\in \pr(n)$, we have a relatively fast recursive algorithm for calculating $d^{e}_{\lambda\mu}(v)$ called the LLT algorithm~\cite{llt} (developed by Lascoux, Leclerc and Thibon). The author uses the GAP package hecke (https://www.gap-system.org/Packages/hecke.html) which was first written by Mathas for running the LLT algorithm. 

If $\mu\in \ps(n)$, we may calculate $d^{e}_{\lambda\mu}(v)$ by adding an empty runner to $\la$ and $\mu$ and using Theorem \ref{Runner Removal Theorem Empty}. By Theorem \ref{v-decomposition numbers}, the decomposition matrix for $\s_{n}$ can be calculated in principle when the underlying field is $\mathbb{C}$.
\end{remark}

\begin{example}\label{example empty runner}
Suppose that $e\ge4$, $\mu=\langle 0,1,2,3 \mid 2^{e} \rangle$, $\la=\langle 0_{1^{2}},2_{1^{2}} \mid 2^{e} \rangle$, $\hat{\mu}=\langle 0,1,2,3 \mid 2^{4} \rangle$, $\hat{\la}=\langle 0_{1^{2}},2_{1^{2}} \mid 2^{4} \rangle$, $\mu^{+}=\langle 0,1,2,3 \mid 2^{4},0 \rangle$ and $\la^{+}=\langle 0_{1^{2}},2_{1^{2}} \mid 2^{4}, 0 \rangle$.
\[
\begin{array}{c@{\qquad}c@{\qquad}c}
\mu&\hat{\mu}&\mu^{+}\\
\abacus(bbbbbhb,nnnnbhb,bbbbnhn)&
\abacus(bbbb,nnnn,bbbb)&
\abacus(bbbbn,nnnnn,bbbbn)

\end{array}
\]
\[
\begin{array}{c@{\qquad}c@{\qquad}c}
\la&\hat{\la}&\la^{+}\\
\abacus(nbnbbhb,bbbbbhb,bnbnnhn)&
\abacus(nbnb,bbbb,bnbn)&
\abacus(nbnbn,bbbbn,bnbnn)
\end{array}
\]
By Theorem \ref{Runner Removal Theorem Full}, $d^{e}_{\la\mu}(v)=d^{4}_{\hat{\la}\hat{\mu}}(v)$. By Theorem \ref{Runner Removal Theorem Empty} and the LLT algorithm, $d^{4}_{\hat{\la}\hat{\mu}}(v)=d^{5}_{\la^{+}\mu^{+}}(v)=v$.
\end{example}

\subsection{The modular branching rules}\label{section branching}
We use some notational conventions for modules. We write
$$M\sim M_{1}^{a_{1}}+M_{2}^{a_{2}}+\dots+M_{t}^{a_{t}}$$
to indicate that $M$ has a filtration in which the factors are $M_{1},\dots,M_{t}$ appearing $a_{1},\dots,a_{t}$ times respectively. Additionally, we write $M^{\oplus a}$ to indicate the direct sum of $a$ isomorphic copies of $M$.

There are restriction and induction functors which are exact functors between $\s_{n-1}$ and $\s_{n}$. If $M$ is a module for $\s_{n}$, the restriction of $M$ to $\s_{n-t}$ is denoted by $M{\downarrow}_{\s_{n-t}}$. Similarly, the induction of $M$ to $\s_{n+t}$ is denoted by $M{\uparrow}^{\s_{n+t}}$. If $B$ is a block of $\s_{n-t}$, we write $M{\downarrow}_{B}$ to indicate the projection of $M{\downarrow}_{\s_{n-t}}$ onto $B$. Similarly, if $C$ is a block of $\s_{n+t}$, we write $M{\uparrow}^{B}$ to indicate the projection of $M{\uparrow}^{\s_{n+t}}$ onto $C$. In this section, we describe the restriction and induction of Weyl modules and simple modules.


Suppose that $A$, $B$ and $C$ are blocks of $\s_{n-k}$, $\s_{n}$ and $\s_{n+k}$ respectively, and that there is an $e$-residue $j$ such that a partition lying in $A$ may be obtained from a partition lying in $B$ by removing exactly $k$ $j$-removable nodes, while a partition lying in $C$ may be obtained from a partition lying in $B$ by adding exactly $k$ $j$-addable nodes.

Suppose that $\lambda$ is a partition in $B$, and that $\lambda^{-1},\lambda^{-2},\dots,\lambda^{-t}$ are the partitions in $A$ that may be obtained from $\lambda$ by removing $k$ $j$-removable nodes. Similarly, let $\lambda^{+1},\lambda^{+2},\dots,\lambda^{+s}$ be the partitions in $C$ that may be obtained from $\lambda$ by adding $k$ $j$-addable nodes. We have the following result.

\begin{thrm}(The Branching Rule~\cite{brundan})\label{branching rule}
Suppose that $A$, $B$, $C$ and $\lambda$ are as above. Then,
$$W^{\lambda}{\downarrow}_{A} \sim (W^{\lambda^{-1}})^{k!}+ (W^{\lambda^{-2}})^{k!}+\dots+ (W^{\lambda^{-t}})^{k!}$$ and
$$W^{\lambda}{\uparrow}^{C} \sim (W^{\lambda^{+1}})^{k!}+ (W^{\lambda^{+2}})^{k!}+\dots+ (W^{\lambda^{+s}})^{k!}.$$
\end{thrm}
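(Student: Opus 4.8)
The plan is to reduce everything to the one‑step branching rule for Weyl modules and then control the multiplicities by iterating. Recall that $W^{\lambda}{\downarrow}_{A}$ is, by definition, the image of $W^{\lambda}{\downarrow}_{\s_{n-k}}$ under projection onto the block $A$, and that the restriction functor $\s_{n}\text{-mod}\to\s_{n-1}\text{-mod}$ is exact, so iterated restriction carries Weyl‑filtered modules to Weyl‑filtered modules.

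I would first treat $k=1$. The one‑step branching rule for Weyl modules of the $q$-Schur algebra says that $W^{\lambda}{\downarrow}_{\s_{n-1}}$ has a Weyl filtration whose sections are exactly the $W^{\mu}$ with $\mu$ obtained from $\lambda$ by deleting one removable node, each occurring once; I would quote this from \cite{mathas book} (it also follows from the Specht‑module version through the Schur functor and the cellularity of $\s_{n}$). Projecting onto $A$ and using Theorem~\ref{nakayama}, only those $W^{\mu}$ survive for which $\mu$ has the $e$-core of $A$; since $A$ is obtained from $B$ by deleting one node of residue $j$, these are precisely $\lambda^{-1},\dots,\lambda^{-t}$, each with multiplicity $1=1!$. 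The case of $W^{\lambda}{\uparrow}^{C}$ is identical, with induction and addable nodes.

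For general $k$ I would iterate: applying the one‑step rule $k$ times shows that $W^{\lambda}{\downarrow}_{\s_{n-k}}$ has a Weyl filtration in which $W^{\nu}$ appears with multiplicity $c_{\nu}$ equal to the number of chains $\lambda=\nu^{(0)}\supset\nu^{(1)}\supset\dots\supset\nu^{(k)}=\nu$ in which each $\nu^{(m)}$ comes from $\nu^{(m-1)}$ by deleting one removable node. After projecting onto $A$ it then suffices to show that, for $\nu\in A$, one has $c_{\nu}=k!$ if $\nu\in\{\lambda^{-1},\dots,\lambda^{-t}\}$ and $c_{\nu}=0$ otherwise. The multiset of $e$-residues of the $k$ deleted nodes along a chain depends only on $\lambda$ and $\nu$, so since the $e$-core of $A$ is the $e$-core of $B$ with $k$ nodes of residue $j$ removed, every chain reaching a $\nu\in A$ deletes exactly $k$ nodes, all of residue $j$. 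On an abacus display of $\lambda$ with $r\gg0$ beads, deleting a node of residue $j$ slides one bead from runner $i$ into the vacant preceding position on runner $i-1$, where $i\equiv j+r\pmod{e}$; so a chain reaching $\nu$ must slide a fixed set of $k$ beads, at positions $p_{1}<\dots<p_{k}$ on runner $i$, down to $p_{1}-1,\dots,p_{k}-1$ on runner $i-1$. I would check that in the display of $\lambda$ the $p_{a}$ are occupied, the $p_{a}-1$ are vacant, and no $p_{a}-1$ equals any $p_{b}$; hence no slide obstructs another, all $k!$ orderings of the slides are legal chains, and these are the only ones, so $c_{\nu}=k!$. Since the $\nu\in A$ reachable from $\lambda$ are by definition exactly $\lambda^{-1},\dots,\lambda^{-t}$, this gives $W^{\lambda}{\downarrow}_{A}\sim(W^{\lambda^{-1}})^{k!}+\dots+(W^{\lambda^{-t}})^{k!}$, and the statement for $W^{\lambda}{\uparrow}^{C}$ comes out the same way with induction, addable nodes, and beads sliding up a runner.

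I expect the one genuinely delicate point to be the non‑interference of the $k$ bead‑slides in the last step. It works because deleting a single node moves a bead to the neighbouring runner (in contrast to removing an $e$-hook, which moves a bead along its own runner), so sources and targets cannot coincide; and it uses in an essential way that $A$ and $B$ differ by nodes of a single residue $j$ — for mixed residues the chains would be order‑constrained and the clean factor $k!$ would fail.
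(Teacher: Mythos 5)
The paper does not actually prove Theorem \ref{branching rule}: it is imported from the literature (cited to Brundan), so there is no internal argument to compare against; your proposal supplies the standard derivation of the divided-power, block-projected form from the one-step branching rule, and it is essentially correct. Iterating the one-step rule (legitimately, by exactness and induction on filtration length) gives a Weyl filtration of $W^{\lambda}{\downarrow}_{\s_{n-k}}$ whose multiplicities count chains of single-node removals; projecting to $A$ kills all sections outside $A$; for $\nu$ in $A$ the removed nodes all have residue $j$; and the abacus argument is the right key point: the $k$ removed nodes correspond to beads at positions $p_{1}<\dots<p_{k}$ on runner $i$ moving to the vacant positions $p_{a}-1$ on runner $i-1$, sources and targets lie on different runners and so never collide, hence every node is removable already in $\lambda$, all $k!$ orderings are admissible chains, and there are no others — giving multiplicity $k!$ exactly for $\nu\in\{\lambda^{-1},\dots,\lambda^{-t}\}$. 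Two small repairs are needed. First, your justification that every chain into $A$ removes only $j$-nodes passes through the claim that the $e$-core of $A$ is the $e$-core of $B$ with $k$ $j$-nodes removed; that is false in general (removing nodes can change the weight: for $e=2$ the block of $(2)$ has empty core, while removing one node gives $(1)$, whose core is $(1)$). What you actually need, and what is true, is that the residue content of a partition is a block invariant (determined by core and weight), so the content difference between $B$ and $A$ is exactly $k$ nodes of residue $j$, and hence so is the residue multiset of $\lambda/\nu$. Second, the parenthetical suggestion that the one-step Weyl-module branching rule "follows from the Specht-module version through the Schur functor" is not immediate and should be dropped in favour of a direct citation (the one-step rule for Weyl modules of $\s_{n}$ is standard). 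With these adjustments your argument stands; the only trade-off against the paper is self-containedness versus an appeal to the literature.
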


We now discuss the restriction and induction of simple modules. Suppose that the nodes with residue $j$ are on runner $i$. The \textit{j-signature} of $\lambda$ is the sequence of signs defined as follows. Starting from the top row of the abacus display for $\lambda$ and working downwards, write a $-$ if there is a bead on runner $i$ but no bead on runner $i-1$; write a $+$ if there is a bead on runner $i-1$ but no bead on runner $i$; write nothing for that row otherwise. Given the $j$-signature of $\lambda$, successively delete all neighbouring pairs of the form $-+$ to obtain the \textit{reduced j-signature} of $\lambda$. If there are any $-$ (resp. $+$) signs in the reduced $j$-signature of $\lambda$, we call the corresponding nodes on runner $i$ \textit{normal} (resp. \textit{conormal}). Normal (resp. conormal) nodes with residue $j$ are also called \textit{j-normal} (resp. \textit{j-conormal}).   

\begin{definition}
Let $\la$ be a partition. We denote the number of $j$-normal nodes of $\la$ by $\epsilon_{j}(\lambda)$ and the number of $j$-conormal nodes of $\la$ by $\varphi_{j}(\lambda)$. For $t\le \epsilon_{j}(\lambda)$, we define $\tilde{E}_{j}^{t}\lambda$ to be the partition obtained from $\lambda$ by removing the $t$ highest (in an abacus display for $\la$) $j$-normal nodes. For $t \le \varphi_{j}(\lambda)$, we define $\tilde{F}_{j}^{t}\lambda$ to be the partition obtained from $\lambda$ by adding the $t$ lowest (in an abacus display for $\la$) $j$-conormal nodes.
\end{definition}

\begin{thrm}~\cite{brundan}\label{modular branching rule}
Suppose that $A$, $B$, $C$ and $\lambda$ are as above.
\begin{itemize}
\item If $\epsilon_{j}(\lambda)<k$, then $L^{\lambda}{\downarrow}_{A}=0$.
\item If $\epsilon_{j}(\lambda)>k$, then $soc(L^{\lambda}{\downarrow}_{A})\cong(L^{\tilde{E}_{j}^{k}\lambda})^{\oplus k!}$.
\item If $\epsilon_{j}(\lambda)=k$, then $L^{\lambda}{\downarrow}_{A}\cong(L^{\tilde{E}_{j}^{k}\lambda})^{\oplus k!}$.
\item If $\varphi_{j}(\lambda)<k$, then $L^{\lambda}{\uparrow}^{C}=0$.
\item If $\varphi_{j}(\lambda)>k$, then $soc(L^{\lambda}{\uparrow}^{C})\cong(L^{\tilde{F}_{j}^{k}\lambda})^{\oplus k!}$.
\item If $\varphi_{j}(\lambda)=k$, then $L^{\lambda}{\uparrow}^{C}\cong(L^{\tilde{F}_{j}^{k}\lambda})^{\oplus k!}$.
\end{itemize}
\end{thrm}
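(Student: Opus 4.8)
The plan is to prove this modular branching rule by realizing restriction and induction as the Chevalley operators of a categorified $\widehat{\mathfrak{sl}}_{e}$-action on $\bigoplus_{n}(\s_{n}\text{-mod})$, following the strategy of Kleshchev and Brundan and, for the socle analysis, the $\mathfrak{sl}_{2}$-categorification framework later formalized by Chuang--Rouquier. First I would introduce, for each residue $i\in\mathbb{Z}/e\mathbb{Z}$, the $i$-restriction functor $e_{i}$ and $i$-induction functor $f_{i}$: here $e_{i}M$ is the projection of $M{\downarrow}_{\s_{n-1}}$ onto the blocks whose content differs from that of the block of $M$ by one removable node of residue $i$, and dually $f_{i}M$ projects $M{\uparrow}^{\s_{n+1}}$. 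These functors are exact and biadjoint up to a degree shift, and on Weyl modules they are computed by the Branching Rule (Theorem~\ref{branching rule}): in the Grothendieck group $[e_{i}W^{\lambda}]=\sum[W^{\lambda^{-}}]$ over the $\lambda^{-}$ obtained by deleting one $i$-removable node, and similarly for $f_{i}$. The functor ``restrict to the block $A$'' appearing in the statement is the $k$-fold composite $e_{j}^{k}$ (at each intermediate step there is a unique block with content dropping by one in direction $j$); the nilHecke algebra of the last $k$ deleted strands acts on $e_{j}^{k}M$, and via its divided-power structure one gets $e_{j}^{k}\cong(e_{j}^{(k)})^{\oplus k!}$ for a divided-power functor $e_{j}^{(k)}$ --- this is the same mechanism producing the $k!$ in Theorem~\ref{branching rule}. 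Hence it suffices to treat $k=1$, apply it $k$ times, and collect the $\oplus k!$ at the end.

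The heart is an $\mathfrak{sl}_{2}$-categorification lemma: for a simple module $L$, set $\varepsilon_{i}(L):=\max\{k:e_{i}^{k}L\neq0\}$ and $\varphi_{i}(L):=\max\{k:f_{i}^{k}L\neq0\}$; then whenever $e_{i}L\neq0$, both the socle and the head of $e_{i}L$ are simple and mutually isomorphic --- call this simple $\tilde{e}_{i}L$ --- with $\varepsilon_{i}(\tilde{e}_{i}L)=\varepsilon_{i}(L)-1$, and dually for $\tilde{f}_{i}$. I would prove this from the biadjointness of $(e_{i},f_{i})$ together with the categorified commutation relation $f_{i}e_{i}\cong e_{i}f_{i}\oplus(\mathrm{id})^{\oplus a}$ (or the reverse, depending on the block), using a finite-length argument to bound $\dim\mathrm{Hom}(f_{i}e_{i}L,L)$ and $\dim\mathrm{Hom}(L,e_{i}f_{i}L)$ and force the head of $e_{i}L$ to be simple; applying the contravariant duality, which interchanges $e_{i}$ and $f_{i}$ up to adjunction and fixes each $L^{\lambda}$, then yields the socle statement. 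Iterating this $k$ times covers the first three and the last three bullets, once the combinatorics has been matched up.

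It remains to identify $\varepsilon_{i},\varphi_{i},\tilde{e}_{i},\tilde{f}_{i}$ with the signature combinatorics of the excerpt. The Grothendieck group $\bigoplus_{n}K_{0}(\s_{n})$ with $\mathbb{Z}$-basis $\{[W^{\lambda}]\}$ is the level-$1$ Fock space $\mathcal{F}$ as an integrable $\widehat{\mathfrak{sl}}_{e}$-module, on which $e_{i},f_{i}$ act as the Chevalley generators by the Branching Rule; over $\mathbb{C}$ the simple classes $\{[L^{\lambda}_{\mathbb{C}}]\}$ form the canonical basis of $\mathcal{F}$ (Varagnolo--Vasserot, that is Theorem~\ref{v-decomposition numbers}(1)), and in characteristic $p$ they differ from it by a unitriangular adjustment matrix that leaves the crystal structure unchanged. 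The crystal of $\mathcal{F}$ is the Misra--Miwa crystal on the set of all partitions, whose Kashiwara operators are exactly those in the excerpt --- cancel $-+$ pairs in the $j$-signature, let $\tilde{e}_{j}$ remove the node of the topmost surviving $-$ and $\tilde{f}_{j}$ add the node of the lowest surviving $+$, and let $\varepsilon_{j},\varphi_{j}$ count the surviving signs. Transporting this along the canonical/crystal basis and inducting downward in the dominance order, using Theorem~\ref{theorem cellular}, gives $\varepsilon_{j}(L^{\lambda})=\epsilon_{j}(\lambda)$, $\varphi_{j}(L^{\lambda})=\varphi_{j}(\lambda)$, $\tilde{e}_{j}L^{\lambda}=L^{\tilde{E}_{j}^{1}\lambda}$ and $\tilde{f}_{j}L^{\lambda}=L^{\tilde{F}_{j}^{1}\lambda}$; combined with the previous two paragraphs this proves the theorem.

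I expect the $\mathfrak{sl}_{2}$-categorification lemma to be the main obstacle --- concretely, showing that $e_{i}L$ has a \emph{simple} socle and head is the genuinely delicate point, and it is precisely where Kleshchev's original arguments (or the Chuang--Rouquier machinery) are needed. Everything else is bookkeeping with the Branching Rule and with Grothendieck-group combinatorics, modulo invoking the Ariki/Varagnolo--Vasserot description of the simple classes --- together with the empty-runner reduction of Remark~\ref{remark empty runner} for $e$-singular $\lambda$ over $\mathbb{C}$ --- to pin down the crystal structure on $\{[L^{\lambda}]\}$.
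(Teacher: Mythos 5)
This theorem is not proved in the paper at all: it is background, imported verbatim from \cite{brundan}, so there is no internal argument to compare your proposal against. What you have written is a sketch of how one might reprove Brundan's theorem with later machinery (divided-power functors, $\mathfrak{sl}_2$-categorification in the style of Chuang--Rouquier, the Misra--Miwa crystal), rather than the direct Hecke-algebra/Schur-algebra computations of the original source.

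Judged on its own terms, the sketch has a genuine gap at exactly the point you flag as ``bookkeeping''. The categorification lemma in your second paragraph, granting it, only produces an \emph{abstract} crystal on the set of simple modules: operators $\tilde e_j,\tilde f_j$ and integers $\varepsilon_j,\varphi_j$ defined module-theoretically through socles of $i$-restriction. The content of the theorem is the identification of this crystal, \emph{including its labelling by partitions}, with the signature combinatorics: $\tilde e_jL^{\lambda}\cong L^{\tilde E_j^{1}\lambda}$ and $\varepsilon_j(L^{\lambda})=\epsilon_j(\lambda)$. Your third paragraph obtains this by asserting that in characteristic $p$ the classes $[L^{\lambda}_{\F}]$ differ from the canonical basis by an adjustment matrix ``that leaves the crystal structure unchanged'' and then ``transporting'' the Misra--Miwa combinatorics across. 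That step is circular as stated: the crystal structure on the modular simples is defined by the very socle statements being proved, the $[L^{\lambda}_{\F}]$ are not the canonical basis, and compatibility of the adjustment matrix with $j$-restriction/induction is not automatic --- in this paper such compatibilities (e.g.\ Proposition \ref{lowerable}) are \emph{consequences} of Theorem \ref{modular branching rule}, not inputs to it. The downward induction on dominance using Theorem \ref{theorem cellular} that would pin down the labels (which is where Kleshchev's and Brundan's real work lies, and where the specific choice of ``highest normal'' and ``lowest conormal'' node is forced) is named but not carried out. Secondary, but worth noting: for $q$-Schur algebras the biadjointness of $e_j$ and $f_j$ and the nilHecke/divided-power structure you invoke are themselves nontrivial facts about the Fock-space categorification and would need a citation or proof; they are not formal consequences of Theorem \ref{branching rule}.
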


The following lemma guarantees that the weight of a partition will not increase if we remove all of its $j$-normal nodes (or add all of its $j$-conormal nodes) for some $e$-residue $j$.
\begin{lem}\label{lemma w-kl}
Suppose that $\mu$ lies in a block of $\s_{n}$ of weight $w$. Let $k=\epsilon_{j}(\mu)$ and $l=\varphi_{j}(\mu)$. Then, $\tilde{E}_{j}^{k}\mu$ and $\tilde{F}_{j}^{l}\mu$ have weight $w-kl$.
\end{lem}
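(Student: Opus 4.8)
The plan is to prove both claims by a single computation on an abacus display, localised to the two runners on which $\tilde{E}_{j}$ and $\tilde{F}_{j}$ act. Display $\mu$ with $e$ runners and $r$ beads for $r$ large (say $r>n$, so positions $0,1,\dots,e-1$ are occupied), and let runner $i$ be the one carrying the nodes of $e$-residue $j$. Every operation $\tilde{E}_{j}^{t}$, $\tilde{F}_{j}^{t}$ only moves beads between runner $i$ and runner $i-1$ — with the usual wrap-around when $i=0$, taken care of by the convention that position $0$ has an occupied preceding position, and for $r$ large that top pair is (occupied, occupied) and plays no role — and fixes every other runner. Since the $e$-weight is $\sum_{b}(\#\text{ vacant positions above }b\text{ on its runner})$, summed over beads $b$, only the contributions of runners $i-1$ and $i$ can change, so it is enough to track those two.

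\textbf{Encoding.} I would then encode the two runners combinatorially. Group the positions of runners $i-1$ and $i$ into consecutive pairs and read off, from the top of the abacus downward, the list of states $(\ell_{0},r_{0}),(\ell_{1},r_{1}),\dots\in\{0,1\}^{2}$, where $\ell_{m}$ (resp.\ $r_{m}$) is $1$ precisely if the $m$-th pair has a bead on runner $i-1$ (resp.\ runner $i$). A pair in state $(0,1)$ contributes the sign $-$, a pair in state $(1,0)$ the sign $+$, and a pair in state $(0,0)$ or $(1,1)$ nothing; reading these signs downward is the $j$-signature, and cancelling adjacent $-+$'s is exactly the passage to the reduced $j$-signature. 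Hence the uncancelled $-$'s are the $k=\epsilon_{j}(\mu)$ normal nodes and the uncancelled $+$'s the $l=\varphi_{j}(\mu)$ conormal nodes; writing $p$ for the common number of cancelled $-$'s and cancelled $+$'s, the list has $k+p$ signs $-$ and $l+p$ signs $+$ altogether. Since $\tilde{F}^{l}_{j}$ adds all conormal nodes and $\tilde{E}^{k}_{j}$ removes all normal nodes, $\tilde{F}^{l}_{j}\mu$ is obtained from $\mu$ by switching each conormal pair from state $(1,0)$ to $(0,1)$, and $\tilde{E}^{k}_{j}\mu$ by switching each normal pair from $(0,1)$ to $(1,0)$.

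\textbf{Weight bookkeeping.} The contribution of runner $i$ to the $e$-weight is $\#\{(a,b):a<b,\ r_{a}=0,\ r_{b}=1\}$, and that of runner $i-1$ is the same with $\ell$ in place of $r$. I would compare these counts before and after flipping the designated set $X$ of pairs, using that for $m\notin X$ one has $[r_{m}=0]-[\ell_{m}=0]=[(\ell_{m},r_{m})=(1,0)]-[(\ell_{m},r_{m})=(0,1)]$, and that, among indices outside $X$, the only state-$(1,0)$ pairs are the cancelled $+$'s while all state-$(0,1)$ pairs are exactly the $-$'s. Once the interaction terms between two flipped indices are seen to cancel, the surviving terms reorganise into $|X|$ copies of a single quantity, yielding
\[
  (e\text{-weight of }\tilde{F}^{l}_{j}\mu)-w = l\bigl(p-(k+p)\bigr)=-kl,\qquad (e\text{-weight of }\tilde{E}^{k}_{j}\mu)-w = k\bigl(p-(l+p)\bigr)=-kl,
\]
the second computation being the analogue of the first with the roles of the normal data $(-,\ \ell)$ and the conormal data $(+,\ r)$ interchanged. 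This gives the lemma.

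\textbf{Main obstacle.} The genuinely non-formal point is the last paragraph: one must verify that every contribution coming from a pair of flipped indices cancels, and that what remains collapses into $|X|$ copies of $\#\{\text{cancelled }+\text{'s}\}-\#\{\text{all }-\text{'s}\}$ (or its analogue). This is a short but slightly fiddly case analysis over the four state types $(0,0),(0,1),(1,0),(1,1)$; the choice of $r$ and the wrap-around at $i=0$ need only a remark each.
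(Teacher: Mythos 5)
Your proposal is correct and is essentially the paper's own argument: both localise the computation to the two runners carrying the residue-$j$ nodes and do direct weight bookkeeping there, the paper organising it bead-by-bead into four types (normal, conormal, same-row pairs, cancelled $-+$ pairs), while you count inversion-type pairs of rows split according to how many flipped indices they contain. The case analysis you flag does go through: pairs with both rows in the flipped set $X$ contribute zero, and each flipped row contributes $p-(k+p)=-k$ for $\tilde{F}_{j}^{l}$ (resp.\ $p-(l+p)=-l$ for $\tilde{E}_{j}^{k}$), giving the required change $-kl$ in both cases.
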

Before we proceed with the proof of this lemma, it may be helpful to first look at an example.
\begin{example}
In the diagrams below, we only display two runners of the abacus displays for the partitions; the runner on the right corresponds to the nodes with $e$-residue equal to $j$. We highlight the reduced $j$-signature in red. In this example, $k=3$ and $l=2$.
\[
\begin{array}{c@{\qquad}c@{\qquad}c}
\mu&\tilde{E}_{j}^{k}\mu&\tilde{F}_{j}^{l}\mu \\
\abacus(bnC,bnC,nbN,nb-,bb,nb-,bn+,bn+,nbN,nb-,bb,bn+,nbN)&
\abacus(bnC,bnC,bnC,nb-,bb,nb-,bn+,bn+,bnC,nb-,bb,bn+,bnC)&
\abacus(nbN,nbN,nbN,nb-,bb,nb-,bn+,bn+,nbN,nb-,bb,bn+,nbN)
\end{array}
\]
Assuming for simplicity that the other runners which are not displayed in the diagram have no weight, we may count that $\mu$ has weight $43$ while $\tilde{E}_{j}^{k}\mu$ and $\tilde{F}_{j}^{l}\mu$ have weight $37$.
\end{example}
\begin{proof}
We only show the proof for $\tilde{E}_{j}^{k}\mu$ here as the other case is similar.
Let $\mu$ be a partition, $k=\epsilon_{j}(\mu)$ and $l=\varphi_{j}(\mu)$. We focus our attention on the two adjacent runners, with the runner on the right corresponding to the nodes with $e$-residue equal to $j$. Our task is to keep track of the change of weight of each bead in these two runners when $\tilde{E}_{j}^{k}$ is applied to $\mu$. We may categorize the beads in these two runners into four categories:
\begin{enumerate}
\item Normal; in which case there are no conormal beads in the rows below it.
\item Conormal; in which case there are no normal beads in the rows above it.
\item Two beads in the same row.
\item Two beads in two distinct rows forming a $(-+)$ pair that was deleted from the $j$-signature to form the reduced $j$-signature; by definition, there are no normal or conormal beads in between these two rows.
\end{enumerate}
The change of weight of each of these types of beads when $\tilde{E}_{j}^{k}$ is applied to $\mu$ is summarised below:
\begin{enumerate}
\item Each normal bead loses weight $l$ when moved to the left.
\item The conormal beads do not experience any change in weight since there are no normal beads above them.
\item If two beads are in the same row and that there are $\alpha$ normal beads above this row, then the bead on the right gains weight $\alpha$ while the bead on the left loses weight $\alpha$. Therefore, there is no net change in weight contributed by beads of this type.
\item Suppose that we have two beads in two distinct rows forming a $(-+)$ pair that was deleted from the $j$-signature to form the reduced $j$-signature. If there are $\beta$ normal beads in the rows above this $(-+)$ pair (this is well defined), then the bead on the right corresponding to the $-$ gains weight $\beta$, while the bead on the left corresponding to the $+$ gains weight $\beta$. Therefore, there is no net change in weight contributed by beads of this type.
\end{enumerate}
Since there are $k$ normal beads, $\tilde{E}_{j}^{k}\mu$ must have weight $kl$ less than $\mu$.

\end{proof}
\section{Adjustment Matrices}\label{section adj}
Let the $q$-Schur algebra over an arbitrary field $\mathbb{F}$ be denoted by $\s_{n}$, and denote the $q$-Schur algebra over $\mathbb{C}$ by $\s^{0}_{n}$. Let $\zeta$ be a primitive $e^{th}$-root of unity in $\mathbb{C}$. We write $\mathcal{H}^{0}_{n}$ for $\mathcal{H}_{\mathbb{C},\zeta}(\mathfrak{S}_{n})$ and $\mathcal{H}_{n}$ for $\mathcal{H}_{\mathbb{F},q}(\mathfrak{S}_{n})$. By Theorem \ref{nakayama}, the Weyl modules corresponding to two partitions lie in the same block of $\s_{n}$ if and only if they lie in the same block of $\s^{0}_{n}$. Similarly, the Specht modules corresponding to two partitions lie in the same block of $\mathcal{H}_{n}$ if and only if they lie in the same block of $\mathcal{H}^{0}_{n}$ by Corollary \ref{nakayama hecke}. Therefore, given a block $B$ of $\s_{n}$ or $\mathcal{H}_{n}$, we may denote $B^{0}$ to be its corresponding block in $\s_{n}^{0}$ or $\mathcal{H}_{n}^{0}$ respectively.

The \textit{Grothendieck group} $\g(\s_{n})$ of $\s_{n}$ is the additive abelian group (with complex coefficients) generated by the symbols $[E]$, where $E$ runs over the isomorphism classes of finite dimensional $\s_{n}$-modules, together with the relations $[F]=[E]+[G]$ whenever there exists a short exact sequence $0\rightarrow E \rightarrow F \rightarrow G \rightarrow 0$. Thus, as a complex vector space, $\g(\s_{n})$ has a basis given by $\{[L^{\la}_{\F}]\mid \la \in \p(n)\}$. We denote the decomposition matrix for $\s_{n}$ by $\Ds$. Since $\Ds$ is unitriangular, $\{[W^{\la}_{\F} \mid \la \in \p(n) \}$ must be another basis for $\gs$ with $\Ds$ being the transition matrix between these two bases. In other words, given any $\la\in\p(n)$,
$$[W^{\la}_{\F}]=\sum\limits_{\mu\in\p(n)}[W^{\la}_{\F}:L^{\mu}_{\F}][L^{\mu}_{\F}].$$
There is a well-defined homomorphism $d_{\s}:\gsz\rightarrow \gs$ which fixes the Weyl modules, $d_{\s}([W^{\la}_{\C}])=[W^{\la}_{\F}]$ $\forall \la \in \p(n)$. In the literature~\cite{mathas book}, this homomorphism is known as the \textit{decomposition map}.

\begin{thrm}~\cite[Theorem 6.35]{mathas book}\label{theorem adjustment}
Let $\Ds$ and $\Dsz$ be the decomposition matrices for $\s_{n}$ and $\s_{n}^{0}$ respectively. Let $\mathcal{A}_{\mathcal{S}}$ be the matrix $(a_{\mu\nu}^{\s})_{\mu,\nu \in \p(n)}$, where $a^{\s}_{\mu\nu}$'s satisfy $d_{\mathcal{S}}([L^{\mu}_{\mathbb{C}}]) = \sum\limits_{\nu \in \p(n)} a_{\mu\nu}^{\s} [L^{\nu}_{\mathbb{F}}]$. Then, $a_{\mu\nu}^\mathcal{S} \in \mathbb{N}$ for all $\mu,\nu \in \mathcal{P}(n)$ and $$\Ds=\Dsz\As.$$
\end{thrm}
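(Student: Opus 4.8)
The plan is to show that the decomposition map $d_{\s}$ is well-defined and sends each complex simple $[L^{\mu}_{\C}]$ to a non-negative integral combination of the $[L^{\nu}_{\F}]$, from which the matrix identity $\Ds = \Dsz \As$ follows by a change-of-basis computation in the two Grothendieck groups. Concretely, $\gsz$ has two bases: the Weyl basis $\{[W^{\la}_{\C}]\}$ and the simple basis $\{[L^{\mu}_{\C}]\}$, with $\Dsz$ the transition matrix expressing the former in terms of the latter. Applying $d_{\s}$ and using that it fixes Weyl modules, $[W^{\la}_{\F}] = d_{\s}([W^{\la}_{\C}]) = \sum_{\mu} [W^{\la}_{\C}:L^{\mu}_{\C}]\, d_{\s}([L^{\mu}_{\C}]) = \sum_{\mu,\nu} [W^{\la}_{\C}:L^{\mu}_{\C}]\, a^{\s}_{\mu\nu}\, [L^{\nu}_{\F}]$. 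Comparing this with $[W^{\la}_{\F}] = \sum_{\nu}[W^{\la}_{\F}:L^{\nu}_{\F}][L^{\nu}_{\F}]$ and using that $\{[L^{\nu}_{\F}]\}$ is a basis gives the entrywise identity $[W^{\la}_{\F}:L^{\nu}_{\F}] = \sum_{\mu} [W^{\la}_{\C}:L^{\mu}_{\C}]\, a^{\s}_{\mu\nu}$, i.e. $\Ds = \Dsz \As$.

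The two substantive points are the existence of $d_{\s}$ as a group homomorphism and the non-negativity (and integrality) of the $a^{\s}_{\mu\nu}$. For existence, I would invoke the standard modular-reduction setup: realize $\s_n$ over $\mathbb{F}$ and $\s^0_n$ over $\C$ as specializations of a $q$-Schur algebra over a suitable discrete valuation ring (or a localization of $\mathbb{Z}[v,v^{-1}]$) with residue field $\mathbb{F}$ and fraction field a characteristic-zero field carrying the relevant root of unity, so that Brauer–Nesbitt-type reduction modulo the maximal ideal induces a well-defined map on Grothendieck groups; since Weyl modules are defined integrally and base change commutes with their formation, $d_{\s}$ fixes $[W^{\la}]$. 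This is exactly the content of the cited \cite[Theorem 6.35]{mathas book}, so in the paper I would simply cite it; the only thing to check by hand is that the displayed expansions are legitimate, which is immediate from unitriangularity of $\Dsz$ (Theorem \ref{theorem cellular}) guaranteeing that $\{[W^{\la}_{\C}]\}$ really is a basis.

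For non-negativity of the adjustment entries, the key is that $d_{\s}([L^{\mu}_{\C}])$ is the class of the reduction modulo $p$ of an integral lattice in $L^{\mu}_{\C}$, hence the class of an honest $\s_n$-module over $\mathbb{F}$, so its expansion in the simple basis has coefficients in $\N$. The main obstacle, were one to prove this from scratch rather than cite it, is precisely setting up a well-behaved integral form of $L^{\mu}$ and controlling its reduction — this requires the integral representation theory of $q$-Schur algebras and is the heart of the cited result. Since the excerpt permits assuming earlier-stated results, I would keep the proof short: cite \cite[Theorem 6.35]{mathas book} for the existence of $d_{\s}$ and $a^{\s}_{\mu\nu}\in\N$, and then spend the proof on the short linear-algebra derivation of $\Ds = \Dsz\As$ above, noting that the same argument applied via the Schur functor (Theorem \ref{schur functor d}) yields the analogous statement for $\h_n$.
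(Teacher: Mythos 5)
Your proposal is correct; note that the paper itself offers no proof at all here, citing the statement wholesale to \cite[Theorem 6.35]{mathas book}, so your treatment — deferring the existence of the decomposition map and the non-negativity of the $a^{\s}_{\mu\nu}$ (via reduction of a lattice in $L^{\mu}_{\C}$) to that citation, and supplying the routine change-of-basis computation that yields $\Ds=\Dsz\As$ — is consistent with, and slightly more detailed than, the paper's own handling. The linear-algebra step is sound, and your remark that the Hecke-algebra analogue follows via the Schur functor matches how the paper proceeds afterwards.
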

We call the matrix $\As$ in Theorem \ref{theorem adjustment} the \textit{adjustment matrix} for $\s_{n}$. Replacing $\s_{n}$ by $\h_{n}$ and $\s^{0}_{n}$ by $\h^{0}_{n}$ in Theorem \ref{theorem adjustment} yields the following theorem.

\begin{thrm}\label{theorem adjustment h}
Let $\Dh$ and $\Dhz$ be the decomposition matrices for $\h_{n}$ and $\h_{n}^{0}$ respectively. Let $\mathcal{A}_{\mathcal{H}}$ be the matrix $(a_{\mu\nu}^{\h})_{\mu,\nu \in \pr(n)}$, where $a^{\h}_{\mu\nu}$'s satisfy $d_{\mathcal{H}}([D^{\mu}_{\mathbb{C}}]) = \sum\limits_{\nu \in \pr(n)} a_{\mu\nu}^{\h} [D^{\nu}_{\mathbb{F}}]$. Then, $a_{\mu\nu}^\mathcal{H} \in \mathbb{N}$ for all $\mu,\nu \in \pr(n)$ and $$\Dh=\Dhz\Ah.$$
\end{thrm}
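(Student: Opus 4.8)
The statement to prove is Theorem~\ref{theorem adjustment h}, which asserts that the Hecke-algebra analogue of Theorem~\ref{theorem adjustment} holds: the decomposition map $d_{\h}$ sends $[D^{\mu}_{\C}]$ to a non-negative integer combination of the $[D^{\nu}_{\F}]$, and consequently $\Dh = \Dhz\Ah$. The paper itself says this follows by ``replacing $\s_{n}$ by $\h_{n}$'', so the proof proposal is essentially to transcribe the proof of Theorem~\ref{theorem adjustment} (i.e. \cite[Theorem 6.35]{mathas book}) into the Hecke setting, using the structural parallels already laid out in the excerpt.

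The plan is as follows. First I would set up the Grothendieck group $\g(\h_{n})$ exactly as $\g(\s_{n})$ was set up, with $\C$-coefficients and generators $[E]$ for finite-dimensional $\h_{n}$-modules modulo short exact sequences; by the analogue of cellularity for $\h_{n}$ (the Specht modules are a cellular basis and $[S^{\mu}:D^{\mu}]=1$, $[S^{\la}:D^{\mu}]=0$ unless $\mu\domer\la$, for $\mu\in\pr(n)$), the classes $\{[D^{\nu}_{\F}]\mid\nu\in\pr(n)\}$ form a basis and $\{[S^{\la}_{\F}]\mid\la\in\p(n)\}$ spans, with $\Dh$ the (rectangular, but column-unitriangular after restricting to $e$-regular rows) transition data. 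Next I would invoke the decomposition map $d_{\h}:\g(\h^{0}_{n})\to\g(\h_{n})$ which fixes Specht modules, $d_{\h}([S^{\la}_{\C}])=[S^{\la}_{\F}]$; this is the standard modular-reduction map and exists for the same reason as $d_{\s}$. Writing $[S^{\la}_{\C}]=\sum_{\mu\in\pr(n)}[S^{\la}_{\C}:D^{\mu}_{\C}][D^{\mu}_{\C}]$ and applying $d_{\h}$, then expanding $d_{\h}([D^{\mu}_{\C}])=\sum_{\nu\in\pr(n)}a^{\h}_{\mu\nu}[D^{\nu}_{\F}]$ and comparing with $[S^{\la}_{\F}]=\sum_{\nu}[S^{\la}_{\F}:D^{\nu}_{\F}][D^{\nu}_{\F}]$ yields the matrix identity $\Dh=\Dhz\Ah$ purely formally. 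So the only genuine content is showing $a^{\h}_{\mu\nu}\in\N$ (non-negativity) — a priori the $a^{\h}_{\mu\nu}$ are only defined as complex, indeed integer, coefficients.

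For non-negativity I would use the Schur functor bridge that the excerpt has already emphasized. The Schur functor $f$ from $\s_{n}$-modules to $\h_{n}$-modules is exact, sends $W^{\mu}\mapsto S^{\mu}$, sends $L^{\mu}\mapsto D^{\mu}$ for $\mu\in\pr(n)$ and $L^{\mu}\mapsto 0$ for $\mu\in\ps(n)$, and it commutes with modular reduction in the sense that the induced maps on Grothendieck groups intertwine $d_{\s}$ and $d_{\h}$. Concretely, the map $\g(\s_{n})\to\g(\h_{n})$ induced by $f$ sends $[L^{\nu}_{\F}]$ to $[D^{\nu}_{\F}]$ if $\nu\in\pr(n)$ and to $0$ otherwise; similarly over $\C$. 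Applying this to the already-established identity $d_{\s}([L^{\mu}_{\C}])=\sum_{\nu\in\p(n)}a^{\s}_{\mu\nu}[L^{\nu}_{\F}]$ of Theorem~\ref{theorem adjustment}, and taking $\mu\in\pr(n)$, the left side maps to $d_{\h}([D^{\mu}_{\C}])$ and the right side maps to $\sum_{\nu\in\pr(n)}a^{\s}_{\mu\nu}[D^{\nu}_{\F}]$; hence $a^{\h}_{\mu\nu}=a^{\s}_{\mu\nu}$ for $\mu,\nu\in\pr(n)$, and in particular $a^{\h}_{\mu\nu}\in\N$ because Theorem~\ref{theorem adjustment} already gives $a^{\s}_{\mu\nu}\in\N$. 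This also shows $\Ah$ is literally the submatrix of $\As$ on $e$-regular rows and columns, which is the statement flagged in the introduction. An alternative, more self-contained route — if one does not want to lean on Theorem~\ref{theorem adjustment} — is to repeat the argument of \cite[Theorem 6.35]{mathas book} directly for $\h_{n}$: realize $d_{\h}$ via reduction modulo a maximal ideal of a suitable discrete valuation ring $\mathcal{O}$ with $\h_{\mathcal{O}}$ an $\mathcal{O}$-form, observe that a projective indecomposable $\h_{\F}$-module lifts to a projective $\h_{\mathcal{O}}$-lattice whose $\C$-span decomposes into projective $\h^{0}_{n}$-modules with non-negative multiplicities, and dualize the resulting statement about projectives (Brauer reciprocity / the fact that the Cartan and decomposition matrices satisfy $C=D^{T}D$) to get non-negativity of the adjustment coefficients on the simple side.

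The main obstacle is really just the non-negativity claim $a^{\h}_{\mu\nu}\in\N$; everything else is bookkeeping that mirrors the $q$-Schur case verbatim. Via the Schur-functor argument above this obstacle is dispatched cleanly by reduction to Theorem~\ref{theorem adjustment}, so I would present that as the main line, with the standard caveat that one must check the Schur functor is compatible with the modular reduction maps (this is classical and can be cited). I would write the proof tersely — essentially ``apply the Schur functor to Theorem~\ref{theorem adjustment}'' — since reproving $\cite[\text{Theorem }6.35]{\text{mathas book}}$ from scratch would be a needless duplication of well-known material.
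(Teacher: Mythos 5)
Your proposal is correct, and it is worth noting how it sits relative to the paper: the paper offers no independent argument for this statement at all — it simply observes that the adjustment-matrix theorem of \cite[Theorem 6.35]{mathas book} applies verbatim with $\s_{n}$, $\s_{n}^{0}$ replaced by $\h_{n}$, $\h_{n}^{0}$, i.e.\ both Theorem \ref{theorem adjustment} and this statement are treated as instances of the same standard result (your ``alternative, more self-contained route'' via an $\mathcal{O}$-form, lifting of projectives and Brauer reciprocity is exactly that standard proof). Your main line instead deduces the Hecke statement from the already-quoted Schur-algebra statement by applying the Schur functor, checking the intertwining $f_{\F}\circ d_{\s}=d_{\h}\circ f_{\C}$ on the basis $\{[W^{\la}_{\C}]\}$ and comparing coefficients to get $a^{\h}_{\mu\nu}=a^{\s}_{\mu\nu}\ge 0$ for $\mu,\nu\in\pr(n)$. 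That route is sound and has the bonus of simultaneously proving the assertion, which the paper states separately and only sketches (``one may argue using the Schur functor\dots''), that $\Ah$ is the submatrix of $\As$ on $e$-regular rows and columns; its only cost is that it is not logically independent of Theorem \ref{theorem adjustment}, whereas the paper's citation covers both algebras at once. Either way the content of the theorem is the non-negativity of the $a^{\h}_{\mu\nu}$, and you identify and handle that point correctly.
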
 
We call the matrix $\Ah$ in Theorem \ref{theorem adjustment h} the \textit{adjustment matrix} for $\h_{n}$. The matrix $\As$ has rows and columns indexed by $\p(n)$, whereas $\Ah$ has rows and columns indexed by $\pr(n)$. One may argue using the Schur functor that $\Ah$ is the submatrix of $\As$ obtained by removing the rows and columns of $\As$ indexed by $\ps(n)$. Therefore, we refer to adjustment matrices as $\mathcal{A}$ when it is clear whether we are dealing with $\s_{n}$ or $\h_{n}$. Moreover, given any two partitions $\la,\mu\in\p(n)$, we may refer to the $(\la,\mu)$-entry of $\mathcal{A}$ as $\adj_{\la\mu}$ without any ambiguity. 

We highlight the unitriangular property of adjustment matrices inherited from the unitriangularity of the decomposition matrices in the following corollary.
\begin{cor}\label{jantzen order adj}
Suppose that $\lambda$ and $\mu$ are partitions lying in a block B of $\s_{n}$. Then,
\begin{itemize}
\item $\textnormal{adj}_{\mu\mu}=1$,
\item $\textnormal{adj}_{\lambda\mu}=0$ unless $\mu\ge_{J}\lambda$.
\end{itemize}
\end{cor}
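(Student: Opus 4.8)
The plan is to derive the corollary from the factorisation $\Ds = \Dsz\As$ of Theorem \ref{theorem adjustment} together with the unitriangularity of both decomposition matrices with respect to the Jantzen order.

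First I would record two elementary observations. (i) The arrows $\lambda\to\tau$ and the resulting relation $\le_J$ from Section \ref{section jantzen} are defined purely in terms of abacus displays, so $\le_J$ does not depend on $\textnormal{char}(\mathbb{F})$; moreover, as noted there, two partitions are $\le_J$-comparable only when they lie in the same block. (ii) By Theorem \ref{nakayama}, applied over $\mathbb{F}$ and over $\C$, the matrices $\Ds$ and $\Dsz$ are block-diagonal with the same block structure; fixing the block $B$ containing $\lambda$ and $\mu$, let $D$, $D^{0}$ and $A$ denote the diagonal blocks of $\Ds$, $\Dsz$ and $\As$ indexed by the partitions in $B$. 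Applying Theorem \ref{jantzen order d} once over $\mathbb{F}$ and once over $\C$ shows that $D$ and $D^{0}$ are both unitriangular with respect to the partial order $\le_J$ on the finite set of partitions in $B$: every diagonal entry equals $1$, and the $(\lambda,\nu)$-entry vanishes unless $\nu\ge_J\lambda$.

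Next, since $D^{0}$ is unitriangular it is invertible, and $\As = (\Dsz)^{-1}\Ds$ is then block-diagonal with the same block structure as $\Ds$, so restricting the identity $\Ds = \Dsz\As$ to $B$ yields $A = (D^{0})^{-1}D$. I would then invoke the standard fact that, for a fixed partial order on a finite index set, a unitriangular matrix has a unitriangular inverse — writing $D^{0} = I + N$ with $N$ strictly triangular, hence nilpotent, so that $(I+N)^{-1} = \sum_{m\ge 0}(-N)^{m}$ is a finite sum of triangular matrices — and that a product of unitriangular matrices is unitriangular, by transitivity of $\le_J$. It follows that $A$ is unitriangular with respect to $\le_J$, i.e. $\textnormal{adj}_{\mu\mu} = 1$ and $\textnormal{adj}_{\lambda\mu} = 0$ unless $\mu\ge_J\lambda$.

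There is no genuine obstacle: the only points needing care are that the Jantzen order triangulating $\Ds$ is the \emph{same} characteristic-free order that triangulates $\Dsz$ (so that $D$ and $D^{0}$ are simultaneously unitriangular), and that $\As$ respects the block decomposition, so that the global identity $\Ds = \Dsz\As$ may legitimately be restricted to $B$. Everything else is the routine linear algebra of unitriangular matrices.
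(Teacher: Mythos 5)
Your proposal is correct and is exactly the argument the paper intends: the paper states this corollary without proof, describing it as the unitriangularity of $\mathcal{A}$ "inherited" from that of the decomposition matrices, and your derivation $\As=(\Dsz)^{-1}\Ds$ with both factors unitriangular for the characteristic-free Jantzen order (via Theorem \ref{jantzen order d} over $\mathbb{F}$ and $\C$, plus the block-diagonal structure from Theorem \ref{nakayama}) is precisely that standard argument spelled out. No gaps; the points you flag (same order triangulating both matrices, restriction to a single block) are the right ones to check.
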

It follows from Lemma \ref{product order} and Corollary \ref{jantzen order adj} that $\mu\ngeq_{P}\lambda\Rightarrow\textnormal{adj}_{\lambda\mu}=0$. As mentioned before, it is difficult to check that $\mu\ngeq_{J}\lambda$, whereas $\mu\ngeq_{P}\lambda$ can be verified by inspection. 
In terms of adjustment matrices, we have the following corollary of Theorem \ref{v-decomposition numbers}.
\begin{cor}\label{decomposition number 0 or v}
Suppose that $\lambda$ and $\mu$ are partitions lying in a block $B$ of $\s_{n}$ of weight $w<\textnormal{char}(\mathbb{F})$. Additionally, suppose that $\textnormal{adj}_{\nu\mu}=0$ for all partitions $\nu$ such that $\lambda <_{J} \nu <_{J} \mu$, and that $d_{\lambda \mu}^{e}(v) \in \{0,v\}$. Then, $\textnormal{adj}_{\lambda\mu}=0.$
\end{cor}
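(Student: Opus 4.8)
The plan is to combine the factorization $\Ds=\Dsz\As$ from Theorem \ref{theorem adjustment} with the derivative identity for $v$-decomposition numbers in Theorem \ref{v-decomposition numbers}(2), using the weight hypothesis $w<\textnormal{char}(\mathbb{F})$ to eliminate the $p$-valuation terms in the Jantzen--Schaper coefficients. First I would expand the $(\lambda,\mu)$-entry of $\Ds=\Dsz\As$ to write
$$[W^{\lambda}_{\mathbb{F}}:L^{\mu}_{\mathbb{F}}]=\sum_{\nu}[W^{\lambda}_{\mathbb{C}}:L^{\nu}_{\mathbb{C}}]\,\textnormal{adj}_{\nu\mu}.$$
By Corollary \ref{jantzen order adj}, $\textnormal{adj}_{\nu\mu}=0$ unless $\mu\ge_{J}\nu$, and by Theorem \ref{theorem cellular}/Theorem \ref{jantzen order d}, $[W^{\lambda}_{\mathbb{C}}:L^{\nu}_{\mathbb{C}}]=0$ unless $\nu\ge_{J}\lambda$; so the sum runs only over $\nu$ with $\lambda\le_{J}\nu\le_{J}\mu$. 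The hypothesis $\textnormal{adj}_{\nu\mu}=0$ for all $\nu$ strictly between $\lambda$ and $\mu$ in the Jantzen order collapses this sum to its two endpoints $\nu=\lambda$ and $\nu=\mu$, giving
$$[W^{\lambda}_{\mathbb{F}}:L^{\mu}_{\mathbb{F}}]=[W^{\lambda}_{\mathbb{C}}:L^{\mu}_{\mathbb{C}}]+[W^{\lambda}_{\mathbb{C}}:L^{\lambda}_{\mathbb{C}}]\,\textnormal{adj}_{\lambda\mu}=[W^{\lambda}_{\mathbb{C}}:L^{\mu}_{\mathbb{C}}]+\textnormal{adj}_{\lambda\mu},$$
using $[W^{\lambda}_{\mathbb{C}}:L^{\lambda}_{\mathbb{C}}]=1$. (If $\lambda=\mu$ the statement is trivial since $\textnormal{adj}_{\mu\mu}=1$ and $d^{e}_{\mu\mu}(v)=1\notin\{0,v\}$; so assume $\lambda<_{J}\mu$.)

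Next I would bound $[W^{\lambda}_{\mathbb{F}}:L^{\mu}_{\mathbb{F}}]$ from above. Since $w<\textnormal{char}(\mathbb{F})$, the remark after Definition \ref{defn js} gives $v_{p}(h_{\lambda\sigma})=0$ throughout, so $J_{\mathbb{F}}(\lambda,\tau)=J(\lambda,\tau)$ is characteristic-independent; and applying the same collapsing argument to the Jantzen--Schaper bound $B_{\mathbb{F}}(\lambda,\mu)=\sum_{\tau}J(\lambda,\tau)[W^{\tau}_{\mathbb{F}}:L^{\mu}_{\mathbb{F}}]$ — together with the fact that $\tau\to$ appearing with nonzero $J$ forces $\lambda<_{J}\tau$, and $[W^{\tau}_{\mathbb{F}}:L^{\mu}_{\mathbb{F}}]\ne 0$ forces $\tau\le_{J}\mu$ — one sees $B_{\mathbb{F}}(\lambda,\mu)$ depends only on the decomposition numbers $[W^{\tau}_{\mathbb{F}}:L^{\mu}_{\mathbb{F}}]$ for $\lambda\le_{J}\tau\le_{J}\mu$. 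Actually the cleaner route is: by Theorem \ref{v-decomposition numbers}(2), $B_{\mathbb{C}}(\lambda,\mu)=\frac{d}{dv}d^{e}_{\lambda\mu}(v)|_{v=1}$, which is $0$ if $d^{e}_{\lambda\mu}(v)=0$ and $1$ if $d^{e}_{\lambda\mu}(v)=v$; in either case $B_{\mathbb{C}}(\lambda,\mu)\le 1$. One then needs that over $\mathbb{F}$ the Jantzen--Schaper bound is no larger; this follows because $B_{\mathbb{F}}(\lambda,\mu)$ is expressed through the same coefficients $J(\lambda,\tau)$ and the decomposition numbers $[W^{\tau}_{\mathbb{F}}:L^{\mu}_{\mathbb{F}}]$, each of which is bounded by the corresponding $\mathbb{C}$-decomposition number once we know $\textnormal{adj}_{\tau\mu}=0$ for $\tau$ strictly between — the hypothesis. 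Combining: $B_{\mathbb{F}}(\lambda,\mu)=B_{\mathbb{C}}(\lambda,\mu)\le 1$, so by Corollary \ref{cor jantzen}, $[W^{\lambda}_{\mathbb{F}}:L^{\mu}_{\mathbb{F}}]=B_{\mathbb{F}}(\lambda,\mu)=B_{\mathbb{C}}(\lambda,\mu)=[W^{\lambda}_{\mathbb{C}}:L^{\mu}_{\mathbb{C}}]$, where the last equality again uses Corollary \ref{cor jantzen} over $\mathbb{C}$ (valid since $B_{\mathbb{C}}(\lambda,\mu)\le 1$).

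Finally, substituting $[W^{\lambda}_{\mathbb{F}}:L^{\mu}_{\mathbb{F}}]=[W^{\lambda}_{\mathbb{C}}:L^{\mu}_{\mathbb{C}}]$ into the collapsed equation from the first paragraph yields $\textnormal{adj}_{\lambda\mu}=0$, as desired.

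\textbf{Main obstacle.} The delicate point is justifying that $B_{\mathbb{F}}(\lambda,\mu)=B_{\mathbb{C}}(\lambda,\mu)$ (equivalently $B_{\mathbb{F}}(\lambda,\mu)\le 1$) rather than merely $B_{\mathbb{C}}(\lambda,\mu)\le 1$: the Jantzen--Schaper bound over $\mathbb{F}$ involves $\mathbb{F}$-decomposition numbers $[W^{\tau}_{\mathbb{F}}:L^{\mu}_{\mathbb{F}}]$ for intermediate $\tau$, so one must run an induction on the Jantzen order, using the hypothesis $\textnormal{adj}_{\nu\mu}=0$ for $\lambda<_{J}\nu<_{J}\mu$ at each step to match $\mathbb{F}$- and $\mathbb{C}$-decomposition numbers along the interval $(\lambda,\mu)_J$. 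Care is also needed with the base case and with ensuring the sets of $\tau$ contributing to $B_{\mathbb{F}}$ and $B_{\mathbb{C}}$ genuinely coincide (they do, since $J(\lambda,\tau)$ is characteristic-free under $w<p$). Everything else is bookkeeping with the unitriangularity in Corollary \ref{jantzen order adj} and the order-compatibility facts stated in Section \ref{section jantzen}.
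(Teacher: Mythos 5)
Your proposal is correct and is essentially the paper's own proof, which simply cites Fayers's argument (\cite{wt 4}, Corollary 2.12) with $S^{\nu},D^{\mu}$ replaced by $W^{\nu},L^{\mu}$: collapse $\Ds=\Dsz\As$ along the Jantzen interval, show $B_{\mathbb{F}}(\lambda,\mu)=B_{\mathbb{C}}(\lambda,\mu)=\tfrac{d}{dv}d^{e}_{\lambda\mu}(v)|_{v=1}\le 1$, and finish with Corollary \ref{cor jantzen}. One small repair: since the coefficients $J(\lambda,\tau)$ may be negative, "bounded by" must be "equal to", but the needed equalities $[W^{\tau}_{\mathbb{F}}:L^{\mu}_{\mathbb{F}}]=[W^{\tau}_{\mathbb{C}}:L^{\mu}_{\mathbb{C}}]$ for all $\lambda<_{J}\tau\le_{J}\mu$ follow in one line from the hypothesis together with the same collapse as in your first display (no induction on the Jantzen order is required), so the "main obstacle" you flag is not actually an obstacle.
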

\begin{proof}
The proof of this corollary is essentially the same as the proof of~\cite[Corollary 2.12]{wt 4} by replacing $S^{\nu}$ with $W^{\nu}$ and $D^{\mu}$ with $L^{\mu}$.
\end{proof}

In section \ref{section wt 4}, we will use the following easy fact several times.
\begin{lem}\label{lemma easy fact}
Let $\la$ and $\mu$ be two distinct partitions lying in some block $B$ of $\s_{n}$. If $[W^{\la}_{\F}:L^{\mu}_{\F}]=[W^{\la}_{\C}:L^{\mu}_{\C}]$, then $\adj_{\la\mu}=0$.
\end{lem}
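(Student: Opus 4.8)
The plan is to use the factorization $\Ds = \Dsz \As$ from Theorem \ref{theorem adjustment} together with the unitriangularity of $\As$ (Corollary \ref{jantzen order adj}) and induct on the Jantzen order. First I would write out the $(\la,\mu)$-entry of the matrix identity $\Ds = \Dsz \As$. This gives
\begin{equation*}
[W^{\la}_{\F}:L^{\mu}_{\F}] = \sum_{\nu} [W^{\la}_{\C}:L^{\nu}_{\C}]\,\adj_{\nu\mu},
\end{equation*}
where the sum ranges over partitions $\nu$ in the block $B^{0}$. By Theorem \ref{jantzen order d} (applied over $\C$) and Corollary \ref{jantzen order adj}, the only nonzero terms come from $\nu$ with $\la \le_{J} \nu \le_{J} \mu$; moreover the $\nu = \la$ term contributes $[W^{\la}_{\C}:L^{\la}_{\C}]\,\adj_{\la\mu} = \adj_{\la\mu}$ (since $[W^{\la}_{\C}:L^{\la}_{\C}]=1$), and the $\nu=\mu$ term contributes $[W^{\la}_{\C}:L^{\mu}_{\C}]\cdot 1$.

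Next I would isolate $\adj_{\la\mu}$:
\begin{equation*}
\adj_{\la\mu} = [W^{\la}_{\F}:L^{\mu}_{\F}] - [W^{\la}_{\C}:L^{\mu}_{\C}] - \sum_{\la <_{J} \nu <_{J} \mu} [W^{\la}_{\C}:L^{\nu}_{\C}]\,\adj_{\nu\mu}.
\end{equation*}
The hypothesis $[W^{\la}_{\F}:L^{\mu}_{\F}]=[W^{\la}_{\C}:L^{\mu}_{\C}]$ kills the first two terms, leaving
\begin{equation*}
\adj_{\la\mu} = -\sum_{\la <_{J} \nu <_{J} \mu} [W^{\la}_{\C}:L^{\nu}_{\C}]\,\adj_{\nu\mu}.
\end{equation*}
Each summand is a product of two nonnegative integers (decomposition numbers are nonnegative, and $\adj_{\nu\mu}\in\N$ by Theorem \ref{theorem adjustment}), so the right-hand side is $\le 0$. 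On the other hand $\adj_{\la\mu}\ge 0$, again by Theorem \ref{theorem adjustment}. Hence every summand must vanish and $\adj_{\la\mu}=0$.

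There is essentially no obstacle here — the statement is a direct consequence of the nonnegativity of adjustment matrix entries combined with the triangular shape of the relevant matrices; the only thing to be careful about is bookkeeping the index set of the sum (restricting to the block and to the Jantzen interval $(\la,\mu)$) so that the $\nu=\la$ and $\nu=\mu$ terms are correctly separated out. No induction on intermediate $\adj_{\nu\mu}$ is actually needed, since nonnegativity alone forces the whole sum to zero. One could alternatively phrase the argument by noting that $\adj_{\nu\mu}$ for $\nu$ strictly between $\la$ and $\mu$ need not be known, precisely because nonnegativity of each term does the work.
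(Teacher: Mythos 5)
Your proposal is correct and is essentially the paper's own argument: both expand the $(\la,\mu)$-entry of $\Ds=\Dsz\As$, use unitriangularity (Theorem \ref{jantzen order d} and Corollary \ref{jantzen order adj}) to isolate the $\nu=\la$ and $\nu=\mu$ terms, cancel the two equal decomposition numbers via the hypothesis, and conclude from nonnegativity of all remaining terms. The extra bookkeeping you spell out (and the observation that no induction is needed) is exactly what the paper's one-line proof implicitly relies on.
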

\begin{proof}
By Theorem \ref{theorem adjustment}, Theorem \ref{jantzen order d} and Corollary \ref{jantzen order adj},
$$[W^{\la}_{\F}:L^{\mu}_{\F}]=[W^{\la}_{\C}:L^{\mu}_{\C}]+\adj_{\la\mu}+\sum_{\la<_{J}\nu<_{J}\mu}[W^{\la}_{\C}:L^{\nu}_{\C}]\adj_{\nu\mu}.$$
The terms in the sum are all non-negative, so $\adj_{\la\mu}=0$.
\end{proof}

\subsection{James's Conjecture}
Throughout the rest of this paper, we shall adopt the Kronecker delta. In view of the LLT algorithm (see Remark \ref{remark empty runner}), the decomposition matrix $\mathcal{D}^{0}$ of $\s^{0}_{n}$ or $\h^{0}_{n}$ can be calculated in principle. Therefore, we focus our attention on studying the adjustment matrices $\mathcal{A}$. The following is the famous James's Conjecture for adjustment matrices.

\begin{conj}(James's Conjecture~\cite[\S4]{glnq})
Suppose that $\lambda$ and $\mu$ are partitions lying in a block B of $\s_{n}$ or $\h_{n}$ with $e$-weight $w$. If $w<\textnormal{char}(\mathbb{F})$, then $\textnormal{adj}_{\lambda\mu}=\delta_{\lambda\mu}$.
\end{conj}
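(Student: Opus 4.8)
The plan is to reduce everything to $\s_n$ and then kill the off-diagonal adjustment entries by a double induction, descending to blocks of smaller weight via the branching functors. Since $\Ah$ is the submatrix of $\As$ obtained by deleting the rows and columns indexed by $\ps(n)$, it suffices to prove $\adj_{\la\mu}=\de_{\la\mu}$ for all $\la,\mu$ lying in a block $B$ of $\s_n$ of weight $w<\textnormal{char}(\F)$. By Corollary \ref{jantzen order adj} the matrix $\As$ is already unitriangular for $\ge_J$, so the diagonal entries equal $1$ and I only need $\adj_{\la\mu}=0$ whenever $\mu>_J\la$. I would run an outer induction on the weight $w$ (the cases $w\le 2$ being known, e.g.\ Schroll--Tan for $q$-Schur algebras), and, inside a fixed block, an inner induction running down the Jantzen order from $\mu$ towards $\la$.

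For the inductive step I would exploit the branching rules. Given a non-diagonal pair $(\la,\mu)$ with $\mu>_J\la$, the aim is to find an $e$-residue $j$ for which removing all $j$-normal nodes from $\la$, via the operator $\tilde{E}_j^{k}$ with $k=\epsilon_j(\la)$, carries the pair into a block of strictly smaller weight: by Lemma \ref{lemma w-kl} this drops the weight by $kl$ (with $l=\varphi_j$), so whenever $kl>0$ the image lies in a block of weight $<w$, where the adjustment entries vanish by the outer induction hypothesis. Because the restriction and induction functors are exact and integrally defined, the decomposition map $d_\s$ commutes with them on the Grothendieck groups; combining this with the Weyl-module branching rule (Theorem \ref{branching rule}) and the simple-module branching rule (Theorem \ref{modular branching rule}) transfers the known vanishing in the smaller block into a constraint on $\adj_{\la\mu}$ in $B$. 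The hypothesis $w<\textnormal{char}(\F)$ enters precisely here through the Remark after Definition \ref{defn js}: every hook length arising in the Jantzen--Schaper sum is at most $w<p$, so the coefficients $J(\la,\tau)$ are characteristic-independent, which is what makes the $\F$-side and $\C$-side of the branching comparable.

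For the pairs that the branching reduction fails to resolve --- those for which no residue yields a weight-decreasing move separating $\la$ from $\mu$ --- I would argue directly. By Lemma \ref{lemma easy fact} together with the inner induction hypothesis, which annihilates every intermediate term $\adj_{\nu\mu}$ with $\la<_J\nu<_J\mu$, the vanishing $\adj_{\la\mu}=0$ reduces to the equality $[W^\la_\F:L^\mu_\F]=[W^\la_\C:L^\mu_\C]$; equivalently, by Corollary \ref{decomposition number 0 or v}, it suffices to establish $d^e_{\la\mu}(v)\in\{0,v\}$. These $v$-decomposition numbers I would compute using the LLT algorithm when $\mu$ is $e$-regular, and otherwise using the two Runner Removal Theorems (Theorems \ref{Runner Removal Theorem Empty} and \ref{Runner Removal Theorem Full}) to strip the block down to a smaller, explicitly computable one, exactly in the style of Example \ref{example empty runner}.

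The hard part will be controlling these residual pairs uniformly across all weights $w<\textnormal{char}(\F)$. For small $w$ the functorial descent resolves almost everything and the few surviving $v$-decomposition numbers are linear in $v$, so that Corollary \ref{decomposition number 0 or v} applies verbatim; but as $w$ grows the normal/conormal combinatorics leaves progressively more pairs beyond the reach of the branching reduction, and $d^e_{\la\mu}(v)$ can acquire degree greater than one, so the crucial hypothesis $d^e_{\la\mu}(v)\in\{0,v\}$ need no longer hold. Securing a weight-uniform structural bound on $d^e_{\la\mu}(v)$ --- equivalently, showing that the characteristic-independent Jantzen--Schaper bound remains tight in every weight-$w$ block --- is therefore the central obstacle, and is the point at which any attempt at the full statement must confront the fine structure of $v$-decomposition numbers for arbitrary weight.
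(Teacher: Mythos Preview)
The statement you are attempting to prove is not a theorem of the paper; it is \emph{James's Conjecture}, stated as a conjecture and not proved there. The paper explicitly records that Williamson~\cite{Williamson} has produced a counter-example to it (for the symmetric group $\mathfrak{S}_n$ with $n=1744860$), so the statement is in fact \emph{false} in general. What the paper actually proves is the conjecture restricted to blocks of $e$-weight $3$ and $4$ (Theorems~\ref{James Conjecture schur 3} and~\ref{James Conjecture schur 4}), building on the previously known cases $w\le 2$ and on Fayers's results for $\h_n$.

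Consequently your proposal cannot succeed as a proof of the full conjecture, and you have correctly identified in your final paragraph exactly where it breaks: the branching reduction together with the bound $d^e_{\la\mu}(v)\in\{0,v\}$ does \emph{not} hold uniformly in $w$. Indeed, already in the paper's weight-$4$ analysis one encounters pairs with $d^e_{\la\mu}(v)=v^3$ or $v^3+v$ (see Tables~\ref{table 4e 1}, \ref{table 4e 2} and Proposition~\ref{prop 4e difficult}), so Corollary~\ref{decomposition number 0 or v} alone is insufficient even at $w=4$; extra work with Jantzen--Schaper computations, relative $e$-signs, and semisimple induction to Rouquier blocks is required. Your outline is a reasonable description of the \emph{strategy} the paper uses for small fixed $w$, but it is not---and, by Williamson, cannot be---a proof of the general conjecture.
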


James's Conjecture is easy to verify for blocks of weight $0$ or $1$. A block of weight $0$ contains its core $\kappa$ as the only partition, so $W^{\kappa}=L^{\kappa}$ regardless of the underlying field. Blocks of weight 1 each contain $e$ partitions which can be totally ordered by the dominance order, $\lambda^{1}\domr\cdots\domr\lambda^{e}$. The decomposition numbers are independent of the underlying field; $[W^{\lambda^{i}}:L^{\lambda^{j}}]$ is equal to $1$ if $i \in \{j,j+1\}$, and is equal to $0$ otherwise. We summarize the progress on James's Conjecture made so far by the works of Fayers, Richards, Schroll and Tan in the following two theorems.

\begin{thrm} (James's Conjecture for blocks of Iwahori-Hecke algebras of weight at most 4)~\cite[Theorem 4.1]{wt 3}~\cite[Theorem 2.6]{wt 4}\label{James Conjecture hecke 4}~\cite{Richards}

Suppose that $\textnormal{char}(\mathbb{F})\ge5$. Let $\lambda$ and $\mu$ be $e$-regular partitions lying in $B$, a block of $\mathcal{H}_{n}$ of weight at most 4. Then, $\textnormal{adj}_{\lambda\mu}=\delta_{\lambda\mu}$.
\end{thrm}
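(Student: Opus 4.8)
The plan is to reduce James's Conjecture for a weight-$w$ block $B$ of $\h_n$ (with $2\le w\le 4$) to the statement that \emph{no decomposition number of $B$ changes under reduction modulo $p=\textnormal{char}(\mathbb{F})\ge5$}. Indeed, by Theorem~\ref{theorem adjustment h} we have $\Dh=\Dhz\Ah$, and since $\Dh$ and $\Dhz$ are both unitriangular (in the sense of Theorem~\ref{theorem cellular} transported through the Schur functor), the equality $\Dh=\Dhz$ is equivalent to $\Ah=I$, that is, to $\adj_{\lambda\mu}=\delta_{\lambda\mu}$ for all $e$-regular $\lambda,\mu$ in $B$. The weight-$0$ and weight-$1$ cases are handled by the explicit descriptions above, and the weight-$2$ case is Richards's theorem~\cite{Richards}, so I would concentrate on weights $3$ and $4$ and aim to prove $[S^\lambda_{\mathbb{F}}:D^\mu_{\mathbb{F}}]=[S^\lambda_{\mathbb{C}}:D^\mu_{\mathbb{C}}]$ directly, for all $\lambda\in\p(n)$ and all $e$-regular $\mu$ lying in $B$.

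The main tool is the Jantzen--Schaper formula. By Theorem~\ref{schur functor d} we may work with Weyl modules and invoke Theorem~\ref{jantzen schaper}: since the $e$-weight $w\le4$ is strictly less than $p$, every $v_p(h_{\lambda\sigma})$ vanishes, so the Jantzen--Schaper coefficients $J(\lambda,\tau)$ are integers that do not depend on $\mathbb{F}$, and the only $\mathbb{F}$-dependence of $B_{\mathbb{F}}(\lambda,\mu)=\sum_\tau J(\lambda,\tau)[W^\tau_{\mathbb{F}}:L^\mu_{\mathbb{F}}]$ is through the decomposition numbers on the right. Fixing $\mu$, I would induct on $\lambda$ downwards in the Jantzen order, the base case $\lambda=\mu$ being trivial. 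At the inductive step every $\tau$ that contributes to the sum satisfies $\lambda<_J\tau\le_J\mu$, so the inductive hypothesis gives $[W^\tau_{\mathbb{F}}:L^\mu_{\mathbb{F}}]=[W^\tau_{\mathbb{C}}:L^\mu_{\mathbb{C}}]$, whence $B_{\mathbb{F}}(\lambda,\mu)=B_{\mathbb{C}}(\lambda,\mu)=:B(\lambda,\mu)$. Whenever $B(\lambda,\mu)\le1$, Corollary~\ref{cor jantzen} applied over both $\mathbb{F}$ and $\mathbb{C}$ forces $[W^\lambda_{\mathbb{F}}:L^\mu_{\mathbb{F}}]=B(\lambda,\mu)=[W^\lambda_{\mathbb{C}}:L^\mu_{\mathbb{C}}]$, completing the step.

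The hard part is therefore confined to the pairs with $B(\lambda,\mu)\ge2$. For weight $3$ I would first establish Fayers's bound that every decomposition number of a weight-$3$ block is at most $1$; given this, a Jantzen--Schaper bound of $2$ can only arise for a short list of very constrained abacus configurations, which can be resolved one at a time (again using that the $p$-valuation contributions are absent for $p\ge5$). For weight $4$ the combinatorics is substantially heavier, and the natural strategy is to reduce: using the modular branching rules (Theorem~\ref{modular branching rule}) one restricts or induces along a well-chosen residue $j$ to move a troublesome pair $(\lambda,\mu)$ to a block of the same weight with smaller $n$, or---via Lemma~\ref{lemma w-kl}---to a block of strictly smaller weight, while the runner-removal theorems (Theorems~\ref{Runner Removal Theorem Empty} and~\ref{Runner Removal Theorem Full}) cut down $e$. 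After these reductions only finitely many explicit pairs in small blocks remain, where the characteristic-zero decomposition number, together with the fact that it is unaffected by reduction mod $p$ for $p\ge5$, can be verified by direct computation (for instance via the LLT algorithm). I expect the principal difficulty to be organisational: arranging the weight-$4$ case analysis so that the branching and runner-removal reductions provably terminate at checkable cases, and so that they never appeal to a pair not already settled by the induction on the Jantzen order.
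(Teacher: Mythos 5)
You should first note that the paper does not prove this statement at all: it is imported wholesale from Richards (weight $2$) and Fayers (weights $3$ and $4$), so there is no in-paper argument to compare with, and your proposal has to stand on its own as a proof of those cited theorems. As such it is an outline rather than a proof. The part you carry out correctly is the easy part: for fixed $\mu$, inducting downwards on $\lambda$ in the Jantzen order and using $\textnormal{char}(\mathbb{F})>w$ to get $J_{\mathbb{F}}=J_{\mathbb{C}}$ does give $B_{\mathbb{F}}(\lambda,\mu)=B_{\mathbb{C}}(\lambda,\mu)$, and Corollary \ref{cor jantzen} then settles every pair with Jantzen--Schaper bound at most $1$. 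But all pairs with $B(\lambda,\mu)\ge2$ -- which is exactly where Fayers's two papers spend essentially all of their effort -- are dispatched in a sentence, so the substance of the theorem is missing.

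Three gaps are concrete. First, your weight-$3$ plan begins by ``establishing Fayers's bound'' that decomposition numbers are at most $1$; that bound is the main theorem of the same paper and is of comparable depth to the adjustment-matrix statement (note that in the present paper the Schur-algebra analogue, Corollary \ref{d<2 schur weight 3}, is \emph{deduced from} James's Conjecture, not used to prove it), so assuming it makes the plan close to circular. Second, the claim that branching and induction reduce the problem to ``finitely many explicit pairs'' checkable by the LLT algorithm is unsubstantiated: for fixed weight there are infinitely many blocks as $e$ and the core vary, and finiteness is only achieved through a systematic induction via $[w:k]$-pairs and Scopes equivalence terminating in Rouquier blocks (the machinery of sections \ref{section wk} and \ref{section rock}, and the analogue of what sections \ref{section wt 3} and \ref{section wt 4} do for $\s_{n}$), none of which your outline sets up; in particular you never explain how the exceptional partitions for such pairs are controlled. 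Third, the runner-removal theorems (Theorems \ref{Runner Removal Theorem Empty} and \ref{Runner Removal Theorem Full}) are statements about $v$-decomposition numbers, i.e.\ characteristic-zero data; they do not transfer modular decomposition numbers or adjustment-matrix entries between different values of $e$, so they cannot be used to ``cut down $e$'' for a troublesome pair in the way you propose.
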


\begin{thrm} (James's Conjecture for blocks of q-Schur algebras of weight 2)~\cite[Corollary 3.6]{Ac duality}\label{James Conjecture schur 2}

Suppose that $\textnormal{char}(\mathbb{F})\ge3$. Let $\lambda$ and $\mu$ be partitions lying in $B$, a block of $\s_{n}$ of weight 2. Then, $\textnormal{adj}_{\lambda\mu}=\delta_{\lambda\mu}$.
\end{thrm}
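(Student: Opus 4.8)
The plan is to induct on the Jantzen order. By Corollary~\ref{jantzen order adj} we have $\adj_{\mu\mu}=1$ and $\adj_{\lambda\mu}=0$ whenever $\mu\ngeq_{J}\lambda$, so the only case needing work is a pair of distinct partitions $\lambda,\mu$ lying in $B$ with $\lambda<_{J}\mu$; fix such a pair and assume inductively that $\adj_{\nu\mu}=0$ for every $\nu$ with $\lambda<_{J}\nu<_{J}\mu$. Since $B$ has $e$-weight $2<\textnormal{char}(\mathbb{F})$, Corollary~\ref{decomposition number 0 or v} then yields $\adj_{\lambda\mu}=0$ as soon as we know that $d^{e}_{\lambda\mu}(v)\in\{0,v\}$. (The mechanism packaged inside that corollary is the following: because $2<\textnormal{char}(\mathbb{F})$, the $p$-valuations occurring in the Jantzen--Schaper coefficients all vanish, so the $J(\lambda,\tau)$ are characteristic-free; the inductive hypothesis then forces $B_{\mathbb{F}}(\lambda,\mu)=B_{\mathbb{C}}(\lambda,\mu)$, and one squeezes $[W^{\lambda}_{\mathbb{C}}:L^{\mu}_{\mathbb{C}}]\le[W^{\lambda}_{\mathbb{F}}:L^{\mu}_{\mathbb{F}}]\le B_{\mathbb{F}}(\lambda,\mu)=B_{\mathbb{C}}(\lambda,\mu)$, the outer terms agreeing by Theorem~\ref{v-decomposition numbers} once $d^{e}_{\lambda\mu}(v)\in\{0,v\}$.)

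Thus the whole theorem reduces to a statement living entirely over $\mathbb{C}$: for any two distinct partitions $\lambda,\mu$ lying in a common block of $e$-weight $2$ one has $d^{e}_{\lambda\mu}(v)\in\{0,v\}$; equivalently, by Theorem~\ref{v-decomposition numbers}, the Jantzen--Schaper bound satisfies $B_{\mathbb{C}}(\lambda,\mu)\le1$ in weight $2$. This is the graded refinement of the classical fact that weight-$2$ decomposition numbers are bounded by one. I would establish it by first using the Runner Removal Theorems~\ref{Runner Removal Theorem Empty} and~\ref{Runner Removal Theorem Full}, together with the branching-rule (Scopes-type) equivalences furnished by Theorems~\ref{branching rule} and~\ref{modular branching rule}, to cut all weight-$2$ blocks down to a finite list of base configurations with small $e$, and then computing $d^{e}_{\lambda\mu}(v)$ directly there by the LLT algorithm. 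Theorem~\ref{v-decomposition numbers are odd or even} helps organise the computation, forcing $d^{e}_{\lambda\mu}(v)$ to be an even or odd polynomial according to whether $\sigma_{e}(\lambda)=\sigma_{e}(\mu)$, so that the remaining task is to bound its degree by one. Alternatively one may simply invoke the known weight-$2$ decomposition matrices---Richards' description for $\mathcal{H}_{n}$, extended to the $e$-singular columns via runner removal---which exhibit only $0$ and $v$ off the diagonal.

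Granting the weight-$2$ bound, Corollary~\ref{decomposition number 0 or v} closes the induction, and together with the cases $\lambda=\mu$ and $\mu\ngeq_{J}\lambda$ handled directly by Corollary~\ref{jantzen order adj} this gives $\adj_{\lambda\mu}=\delta_{\lambda\mu}$ for all $\lambda,\mu$ lying in $B$. The main obstacle is the weight-$2$ bound of the second paragraph: the Jantzen-order induction and the appeal to Corollary~\ref{decomposition number 0 or v} are purely formal, whereas pinning down $d^{e}_{\lambda\mu}(v)\le v$ in weight $2$ requires genuine combinatorial input on weight-$2$ blocks; this is also the only place where the hypothesis $\textnormal{char}(\mathbb{F})\ge3$ is used, entering merely as the condition ``$w<\textnormal{char}(\mathbb{F})$'' in Corollary~\ref{decomposition number 0 or v}.
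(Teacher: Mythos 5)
There is a genuine gap, and it sits exactly where you located the ``main obstacle'': your claimed reduction --- that for distinct $\lambda,\mu$ in a weight~$2$ block one always has $d^{e}_{\lambda\mu}(v)\in\{0,v\}$, equivalently $B_{\mathbb{C}}(\lambda,\mu)\le 1$ --- is false. Weight~$2$ blocks contain pairs $\lambda<_{J}\mu$ with $\sigma_{e}(\lambda)=\sigma_{e}(\mu)$ and $[W^{\lambda}_{\mathbb{C}}:L^{\mu}_{\mathbb{C}}]=1$, and for any such pair Theorem~\ref{v-decomposition numbers are odd or even} forces $d^{e}_{\lambda\mu}(v)$ to be a nonzero \emph{even} polynomial, hence $v^{2}$ rather than $v$, so $B_{\mathbb{C}}(\lambda,\mu)=2$ by Theorem~\ref{v-decomposition numbers}. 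A concrete instance already occurs for $e=2$, $n=4$ (the principal weight~$2$ block): $\lambda=(1^{4})$, $\mu=(4)$ have $\sigma_{2}(\lambda)=\sigma_{2}(\mu)=+1$ and $[S^{(1^{4})}:D^{(4)}]=1$ (at $q=-1$ the two one-dimensional representations coincide), so $d^{2}_{(1^{4})(4)}(v)=v^{2}$. Consequently Corollary~\ref{decomposition number 0 or v} simply does not apply to such pairs, and your induction on the Jantzen order stalls there; your fallback assertion that the known weight~$2$ ($v$-)decomposition matrices ``exhibit only $0$ and $v$ off the diagonal'' is likewise incorrect --- the weight~$2$ $v$-decomposition numbers are $0$, $v$ \emph{or} $v^{2}$, with $v^{2}$ occurring in every weight~$2$ block for equal-sign pairs.

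Handling those $v^{2}$ pairs is precisely the real content of the theorem, and it is why the paper does not prove this statement at all but quotes it from Schroll--Tan \cite[Corollary 3.6]{Ac duality}, whose argument uses modular Alvis--Curtis duality, i.e.\ input beyond the formal Jantzen-order/Corollary~\ref{decomposition number 0 or v} mechanism. (Compare the present paper's own weight~$3$ and~$4$ proofs: whenever $d^{e}_{\lambda\mu}(v)\notin\{0,v\}$ it has to resort to Proposition~\ref{e-regular mu} together with the Hecke-algebra results, semisimple induction to Rouquier blocks via Corollary~\ref{working exceptional lambda}, row and column removal, or explicit Jantzen--Schaper bookkeeping.) For some of the problematic weight~$2$ pairs $\mu$ is $e$-regular and Proposition~\ref{e-regular mu} plus Richards's weight~$2$ result would rescue you, but equal-sign pairs with $e$-singular $\mu$ (e.g.\ columns like $L^{(2,2)}$ in the $e=2$ block above) remain, so as written the proposal does not close; you would need to supply an argument of the Schroll--Tan type, or a case analysis along the lines of Sections~\ref{section wt 3}--\ref{section wt 4}, for exactly those entries.
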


In section \ref{section wt 3}, we prove James's Conjecture for blocks of $\s_{n}$ of weight 3.
\begin{thrm} (James's Conjecture for blocks of q-Schur algebras of weight 3)\label{James Conjecture schur 3}
Suppose that $\textnormal{char}(\mathbb{F})\ge5$. Let $\lambda$ and $\mu$ be partitions lying in $B$, a block of $\s_{n}$ of weight 3. Then, $\textnormal{adj}_{\lambda\mu}=\delta_{\lambda\mu}$.
\end{thrm}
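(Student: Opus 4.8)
The plan is to reduce the statement for $\s_n$ to the already-known statement for $\h_n$ (Theorem \ref{James Conjecture hecke 4}), and then handle the extra columns indexed by $e$-singular partitions directly. Since $\Ah$ is the submatrix of $\As$ obtained by deleting the rows and columns indexed by $\ps(n)$, Theorem \ref{James Conjecture hecke 4} already gives $\adj_{\la\mu}=\de_{\la\mu}$ whenever $\mu\in\pr(n)$ (so in particular $\adj_{\la\mu}=0$ for $\la\neq\mu$ with $\mu$ $e$-regular). Thus the whole content is to show $\adj_{\la\mu}=0$ for every pair of distinct partitions $\la,\mu$ lying in $B$ with $\mu\in\ps(n)$; equivalently, by Corollary \ref{jantzen order adj}, for every such pair with $\mu>_J\la$.

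The main tool will be an induction along the Jantzen order $\le_J$ inside the block $B$, combined with Corollary \ref{decomposition number 0 or v}: if $\adj_{\nu\mu}=0$ for all $\nu$ with $\la<_J\nu<_J\mu$ and $d^e_{\la\mu}(v)\in\{0,v\}$, then $\adj_{\la\mu}=0$. The induction is on the number of $\nu$ strictly between $\la$ and $\mu$; the base case is $\la$ a maximal such partition below $\mu$. So the crux reduces to a purely combinatorial/numerical claim about weight $3$ blocks: for all distinct $\la,\mu$ in a weight $3$ block with $\mu\in\ps(n)$ and $\mu>_P\la$ (using Lemma \ref{product order} to restrict to $\le_P$-comparable pairs), one has $d^e_{\la\mu}(v)\in\{0,v\}$ — i.e. the $v$-decomposition numbers for weight $3$ blocks of $\s^0_n$ are bounded above by $1$, and in fact $d^e_{\la\mu}(v)$ is a monomial of degree $0$ or $1$. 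This last sub-claim is presumably where I would also extract the statement that the decomposition numbers for weight $3$ blocks of $\s_n$ are bounded above by one (the second assertion of the paper), since once all $\adj$'s are trivial, $[W^\la_\F:L^\mu_\F]=d^e_{\la\mu}(1)$.

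To prove the numerical sub-claim, I would work on the abacus with $e$ runners. The Scopes equivalence reduces an arbitrary weight $3$ block to one of finitely many "small $e$" configurations, or to configurations where some runner is empty/full relative to the others; in the latter case the Runner Removal Theorems \ref{Runner Removal Theorem Empty} and \ref{Runner Removal Theorem Full} let me delete that runner and decrease $e$, inducting downward on $e$. Concretely: if $\mu$ (hence $\la$, being in the same block) has, on some runner $i$, its last bead before every gap, delete runner $i$ and pass to $e-1$; if instead some runner has its first gap after every bead, do the dual deletion. After enough such reductions one lands at small $e$ (say $e\le 4$ or so for weight $3$), where the $v$-decomposition numbers can be listed explicitly via the LLT algorithm (Remark \ref{remark empty runner}) — here one uses that $d^e_{\la\mu}(v)$ for $e$-singular $\mu$ is computed by adding an empty runner and applying Theorem \ref{Runner Removal Theorem Empty}. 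The parity result Theorem \ref{v-decomposition numbers are odd or even} forces each nonzero $d^e_{\la\mu}(v)$ to lie in $\N[v^2]$ or $v\N[v^2]$, which, combined with the bound $d^e_{\la\mu}(1)\le$ (something small) coming from Jantzen--Schaper estimates $B_\C(\la,\mu)$, pins the polynomial down to $1$, $v$, or $0$; the case $d^e_{\la\mu}(v)=v^2$ (the only other possibility compatible with degree $\le 2$ and the parity constraint) would have to be ruled out, and ruling it out — showing it never occurs in weight $3$ — is the step I expect to be the real obstacle, since it requires either a complete classification of weight $3$ decomposition data (à la Fayers' weight $3$ analysis for Hecke algebras) or a clever global argument.

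The hard part, then, is not the homological bookkeeping — the induction via Corollary \ref{decomposition number 0 or v} is routine once the inputs are in place — but the combinatorial heavy lifting: establishing that in every weight $3$ block of $\s^0_n$ the $v$-decomposition numbers $d^e_{\la\mu}(v)$ are monomials of degree at most $1$, uniformly in $e$ and in the core. I would expect this to occupy the bulk of section \ref{section wt 3}, proceeding case-by-case on the relative positions of runners carrying the three units of weight, using Scopes equivalences to cut down to finitely many cases and the two Runner Removal Theorems to induct on $e$, with the parity theorem supplying the final squeeze from "$\le 2$" down to "monomial of degree $\le 1$".
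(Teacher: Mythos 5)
There is a genuine gap, and it sits exactly where you suspected: the numerical claim your whole induction rests on is not merely hard, it is false. In a weight $3$ block the $v$-decomposition numbers are indeed monomials (over $\C$ the decomposition numbers are at most $1$), but their degrees run up to $3$, not $1$: pairs with $d^{e}_{\la\mu}(v)=v^{2}$ or $v^{3}$ occur, including with $\mu$ $e$-singular. Theorem \ref{v-decomposition numbers are odd or even} cannot rule these out — it only constrains the parity of the degree, and $v^{2}$ is perfectly compatible with $\sigma_{e}(\la)=\sigma_{e}(\mu)$ — and the Jantzen--Schaper bound gives no contradiction either, since $B_{\C}(\la,\mu)=\frac{d}{dv}d^{e}_{\la\mu}(v)|_{v=1}$ is simply $2$ or $3$ for such entries. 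Consequently Corollary \ref{decomposition number 0 or v} is inapplicable at those pairs, your induction along $\le_{J}$ stalls there, and no alternative mechanism is offered. (A smaller imprecision: the submatrix relation between $\mathcal{A}_{\h}$ and $\mathcal{A}_{\s}$ only gives $\adj_{\la\mu}=\de_{\la\mu}$ when \emph{both} $\la$ and $\mu$ are $e$-regular; for $e$-singular $\la$ and $e$-regular $\mu$ one needs the unitriangularity argument of Proposition \ref{e-regular mu}, which is available but must be invoked.)

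The paper's route is quite different and avoids any degree bound on $d^{e}_{\la\mu}(v)$: Corollary \ref{decomposition number 0 or v} is not used for weight $3$ at all. Instead one inducts on $n$ through $[3:k]$-pairs (only a single $[3:1]$- or $[3:2]$-pair can occur, by counting normal beads), so that Proposition \ref{proposition exceptional lambda} disposes of every non-exceptional $\la$; the base case $\s_{3e}$ and the exceptional partition $\langle a_{1^{3}}\rangle$ are handled by inducing semisimply to a Rouquier block (Proposition \ref{induce semisimply}, Theorem \ref{James Conjecture Rock}); and for the remaining exceptional $\la$ a first-gap/length comparison forces any $\mu$ with $\mu\gg\la$ to be $e$-regular, whereupon Proposition \ref{e-regular mu} and Theorem \ref{James Conjecture hecke 4} finish. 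The bound $[W^{\la}:L^{\mu}]\le1$ is then deduced \emph{after} the theorem (via runner removal and Fayers's Hecke bound), not used in its proof — your ordering of those two statements was right, but the engine you propose for the theorem itself cannot work as stated.
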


In section \ref{section wt 4}, we prove James's Conjecture for blocks of $\s_{n}$ of weight 4.
\begin{thrm} (James's Conjecture for blocks of q-Schur algebras of weight 4)\label{James Conjecture schur 4}
Suppose that $\textnormal{char}(\mathbb{F})\ge5$. Let $\lambda$ and $\mu$ be partitions lying in $B$, a block of $\s_{n}$ of weight 4. Then, $\textnormal{adj}_{\lambda\mu}=\delta_{\lambda\mu}$.
\end{thrm}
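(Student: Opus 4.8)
The proof of Theorem~\ref{James Conjecture schur 4} will proceed along the same general lines as the weight 3 case in Section~\ref{section wt 3}, bootstrapping off the Hecke algebra result. The plan is as follows. First I would invoke Theorem~\ref{James Conjecture hecke 4}: since $\textnormal{char}(\mathbb{F})\ge 5 > 4$, the adjustment matrix $\Ah$ for the weight 4 block of $\h_n$ is the identity, so $\textnormal{adj}_{\lambda\mu}=\delta_{\lambda\mu}$ whenever $\mu$ is $e$-regular. Because $\Ah$ is the submatrix of $\As$ on $e$-regular rows and columns (via the Schur functor), it remains only to treat the columns of $\As$ indexed by $e$-singular $\mu$; that is, I must show $\textnormal{adj}_{\lambda\mu}=0$ whenever $\lambda\neq\mu$ and $\mu\in\ps(n)$ lies in the weight 4 block $B$.

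For a fixed $e$-singular $\mu$ in $B$, I would argue by induction on the Jantzen order $\le_J$ (equivalently, work downward from $\mu$), so that I may assume $\textnormal{adj}_{\nu\mu}=0$ for all $\nu$ with $\lambda <_J \nu <_J \mu$. Then, as in Lemma~\ref{lemma easy fact} and Corollary~\ref{decomposition number 0 or v}, it suffices to show either that $[W^{\lambda}_{\F}:L^{\mu}_{\F}]=[W^{\lambda}_{\C}:L^{\mu}_{\C}]$, or that $d^e_{\lambda\mu}(v)\in\{0,v\}$ together with the inductive vanishing hypothesis. The key structural input is the classification of weight 4 blocks and their partitions — following Fayers's analysis in \cite{wt 4}, every weight 4 partition is of a controlled combinatorial type on the abacus (e.g.\ governed by the relative positions of beads on pairs of runners), and the $v$-decomposition numbers $d^e_{\lambda\mu}(v)$ for such blocks are small, bounded by $v^2$ in all cases and equal to $0$, $v$, or $v^2$. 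The branching rules (Theorems~\ref{branching rule} and~\ref{modular branching rule}) together with Lemma~\ref{lemma w-kl} let me reduce many cases: if $\epsilon_j(\mu)$ or $\varphi_j(\mu)$ is chosen so that restricting or inducing lands in a block of strictly smaller weight (weight $\le 3$, already handled), then I can transfer the vanishing of $\textnormal{adj}$ back up, since the decomposition map commutes with the exact functors $\downarrow$ and $\uparrow$ and these functors send Weyl modules to Weyl-filtered modules and simples to (socles of) simples with known behaviour.

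The cases that survive this reduction are those where $\mu$ (and the relevant $\lambda$'s) are, up to adding empty runners and the Runner Removal Theorems (Theorems~\ref{Runner Removal Theorem Empty} and~\ref{Runner Removal Theorem Full}), confined to a small number of runners — effectively $e=2,3,4$ configurations — where the block is small enough to handle by direct computation. For these I would compute $d^e_{\lambda\mu}(v)$ via the LLT algorithm after adding an empty runner (Remark~\ref{remark empty runner}), compute $B_{\C}(\lambda,\mu)=\frac{d}{dv}d^e_{\lambda\mu}(v)|_{v=1}$ by Theorem~\ref{v-decomposition numbers}, and when $B_{\C}(\lambda,\mu)=2$ use the relative $e$-sign criterion of Remark~\ref{remark after} (Theorems~\ref{v-decomposition numbers} and~\ref{v-decomposition numbers are odd or even}) to decide whether $[W^{\lambda}_{\C}:L^{\mu}_{\C}]$ is $1$ or $2$; since $w=4<\textnormal{char}(\mathbb{F})$, the Jantzen--Schaper coefficient $B_{\F}(\lambda,\mu)=B(\lambda,\mu)$ is characteristic-free, and whenever $B(\lambda,\mu)\le 1$ Corollary~\ref{cor jantzen} forces $[W^{\lambda}_{\F}:L^{\mu}_{\F}]=[W^{\lambda}_{\C}:L^{\mu}_{\C}]$ and hence $\textnormal{adj}_{\lambda\mu}=0$ by Lemma~\ref{lemma easy fact}. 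I expect the main obstacle to be the handful of pairs where $B(\lambda,\mu)=2$ with $\lambda$ and $\mu$ of the \emph{same} relative $e$-sign, so that $d^e_{\lambda\mu}(v)=v^2$ and $[W^\lambda_\C:L^\mu_\C]=1$: here one cannot conclude directly from Jantzen--Schaper, and I would need a more delicate argument — either pushing through the inductive hypothesis on $\le_J$ to kill the stray $\textnormal{adj}_{\nu\mu}$ terms, or using an independent bound (an analogue of Fayers's weight 3 decomposition-number bound, which Section~\ref{section wt 3} establishes for $\s_n$) to pin down $[W^\lambda_\F:L^\mu_\F]$ from above. Assembling the finitely many configuration types, each disposed of by one of these mechanisms, completes the proof.
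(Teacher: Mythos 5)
There is a genuine gap, and it occurs exactly where the real difficulty of the theorem lies. Your structural claim that in weight 4 blocks the $v$-decomposition numbers are ``bounded by $v^2$ in all cases and equal to $0$, $v$, or $v^2$'' is false: already in the principal block of $\s_{4e}$ one has $d^{e}_{\lambda^{3}\mu^{3}}(v)=v^{3}+v$ (so $[W^{\lambda^3}_{\C}:L^{\mu^3}_{\C}]=2$) and other entries equal to $v^{3}$, and for the same reason your fallback of an ``analogue of Fayers's weight 3 decomposition-number bound'' is unavailable --- weight 4 decomposition numbers genuinely reach 2, and no such bound can be proved independently of the conjecture itself. A second, more structural problem: the Runner Removal Theorems (Theorems \ref{Runner Removal Theorem Empty} and \ref{Runner Removal Theorem Full}) only equate $v$-decomposition numbers, i.e.\ characteristic-zero data; they give no control over $[W^{\lambda}_{\F}:L^{\mu}_{\F}]$ or over $\adj_{\lambda\mu}$, so they cannot be used to confine the adjustment-matrix computation to ``effectively $e=2,3,4$ configurations.'' Likewise, the branching-rule reduction you describe only lowers the weight when the pair is lowerable in the sense of Definition \ref{definition lowerable} (both normal and conormal nodes of the same residue present, with $\epsilon_{j}(\lambda)\le\epsilon_{j}(\mu)$); for the many non-lowerable pairs your scheme has no mechanism.

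The paper's proof is organized quite differently: it is an induction on $n$ whose inductive step runs through $[4:k]$-pairs (Propositions \ref{proposition exceptional lambda}, \ref{proposition non-exceptional mu} and Corollary \ref{working exceptional lambda}), reducing to blocks forming exactly one $[4:k]$-pair ($k\le 3$) or two $[4:1]$-pairs, together with semisimple induction to Rouquier blocks (Proposition \ref{induce semisimply}), row and column removal (Corollaries \ref{row removal adj}, \ref{column removal adj}), and Proposition \ref{e-regular mu} for $e$-regular $\mu$; the base case is the principal block of $\s_{4e}$. The surviving hard pairs --- precisely those with Jantzen--Schaper bound $2$, such as $(\lambda^{3},\mu^{3})$ in Proposition \ref{prop 4e difficult} and the partitions $\nu^{0}$ in Section \ref{section 41} --- are not dispatched by any general principle but by bespoke arguments: inducing Weyl and simple modules through explicit chains of blocks via the branching rules and comparing composition multiplicities, Jantzen--Schaper coefficient tables over whole intervals in the Jantzen order, relative $e$-sign parity, and repeated row/column removal down to weight 3. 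Your proposal correctly identifies that these pairs are the obstacle but supplies no workable mechanism for them, so as written it does not constitute a proof.
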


In the context of Theorem \ref{James Conjecture schur 3} and Theorem \ref{James Conjecture schur 4}, we already know that $\textnormal{adj}_{\lambda\mu}=\delta_{\lambda\mu}$ when $\lambda$ and $\mu$ are both $e$-regular due to Theorem \ref{James Conjecture hecke 4}. In fact, this knowledge can be strengthened for $q$-Schur algebras:

\begin{prop} \label{e-regular mu}
Let $B$ be a block of $\s_{n}$ and let $\bar{B}$ be its corresponding block of $\mathcal{H}_{n}$ with the same $e$-core and $e$-weight. Let $\lambda$ and $\mu$ be partitions lying in $B$ with $\mu$ being $e$-regular. If the adjustment matrix for $\bar{B}$ is the identity matrix, then $\textnormal{adj}_{\lambda\mu}=\delta_{\lambda\mu}$.
\end{prop}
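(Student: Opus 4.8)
The plan is to exploit the relationship $\Ds = \Dsz\As$ together with the hypothesis on $\bar{B}$, read off via the Schur functor which identifies $\Ah$ with the submatrix of $\As$ on $e$-regular rows and columns. Since $\mu$ is $e$-regular, the $\mu$-column of $\As$ restricted to $e$-regular rows agrees with the $\mu$-column of $\Ah$, which by hypothesis is $\delta_{\bullet\mu}$; so the only place the $\mu$-column of $\As$ could be nonzero is in rows indexed by $e$-singular partitions. Thus the real content is to show $\adj_{\lambda\mu}=0$ whenever $\lambda$ is $e$-singular and $\lambda <_J \mu$.

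First I would set up the comparison of decomposition numbers over $\F$ and over $\C$. By Theorem \ref{theorem adjustment} we have
$$[W^{\la}_{\F}:L^{\mu}_{\F}] = [W^{\la}_{\C}:L^{\mu}_{\C}] + \adj_{\la\mu} + \sum_{\la <_J \nu <_J \mu}[W^{\la}_{\C}:L^{\nu}_{\C}]\,\adj_{\nu\mu},$$
using Corollary \ref{jantzen order adj} to drop all terms with $\nu \not\ge_J \mu$ and the unitriangularity $\adj_{\mu\mu}=1$. All terms in the sum are non-negative, so it suffices to produce the reverse inequality $[W^{\la}_{\F}:L^{\mu}_{\F}] \le [W^{\la}_{\C}:L^{\mu}_{\C}]$, which forces $\adj_{\la\mu}=0$ (this is exactly the mechanism of Lemma \ref{lemma easy fact}). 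To get this inequality for $e$-regular $\mu$, I would pass through the Hecke algebra: by Theorem \ref{schur functor d}, $[W^{\la}_{\F}:L^{\mu}_{\F}] = [S^{\la}_{\F}:D^{\mu}_{\F}]$ and $[W^{\la}_{\C}:L^{\mu}_{\C}] = [S^{\la}_{\C}:D^{\mu}_{\C}]$, since $\mu$ is $e$-regular (and this identity needs only $\mu$ to be $e$-regular, not $\la$). Now apply Theorem \ref{theorem adjustment h} to $\bar B$: since the adjustment matrix for $\bar B$ is the identity, $d_{\h}([D^{\mu}_{\C}]) = [D^{\mu}_{\F}]$, whence $\Dh = \Dhz$ on the relevant block and $[S^{\la}_{\F}:D^{\mu}_{\F}] = [S^{\la}_{\C}:D^{\mu}_{\C}]$ for every $\la$ lying in $\bar B$. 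Combining, $[W^{\la}_{\F}:L^{\mu}_{\F}] = [W^{\la}_{\C}:L^{\mu}_{\C}]$, and then Lemma \ref{lemma easy fact} gives $\adj_{\la\mu}=\delta_{\la\mu}$ for all $\la$ in $B$.

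The one point requiring care is the bookkeeping with the decomposition map: I must check that "adjustment matrix for $\bar B$ is the identity" genuinely yields $[S^{\la}_{\F}:D^{\mu}_{\F}]=[S^{\la}_{\C}:D^{\mu}_{\C}]$ for a fixed $e$-regular $\mu$ and \emph{all} $\la$ in $\bar B$ (not merely $e$-regular $\la$), which is immediate because $\Dh = \Dhz\Ah = \Dhz$ is an equality of full decomposition matrices (rows indexed by all of $\p(n)$, columns by $\pr(n)$). There is no genuine obstacle here — the statement is essentially a transport of the Hecke-algebra result to the Schur algebra via the Schur functor, and the whole argument is short. The only mild subtlety, which I would state explicitly, is that one does \emph{not} need $\bar B$ and $B$ to have literally the same partitions; one needs only that $\bar B$ is the block of $\h_n$ with the same $e$-core and $e$-weight as $B$, so that the set of $\la \in \p(n)$ lying in $B$ coincides with the set lying in $\bar B$ (by Theorem \ref{nakayama} and Corollary \ref{nakayama hecke}), and the $e$-regular partitions among them index the columns of both $\Ah$ and the $e$-regular part of $\As$.
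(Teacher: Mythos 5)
Your proposal is correct and follows essentially the same route as the paper's own proof: the unitriangular expansion of $[W^{\la}_{\F}:L^{\mu}_{\F}]$ in terms of $\adj$, the Schur functor (Theorem \ref{schur functor d}) to identify both sides with Hecke-algebra decomposition numbers, the hypothesis on $\bar{B}$ to equate those over $\F$ and $\C$, and non-negativity of the remaining terms to force $\adj_{\la\mu}=0$. The extra bookkeeping you flag (that the identity adjustment matrix for $\bar{B}$ gives equality of $[S^{\la}:D^{\mu}]$ for all $\la$, not just $e$-regular ones) is correct and is implicit in the paper's argument.
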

\begin{proof}
Suppose that $\la\neq\mu$. Then,
$$[W^{\la}_{\mathbb{F}}:L^{\mu}_{\mathbb{F}}]=[W^{\la}_{\mathbb{C}}:L^{\mu}_{\mathbb{C}}]+\textnormal{adj}_{\la\mu}+\sum\limits_{\la<_{J}\sigma<_{J}\mu}[W^{\la}_{\mathbb{C}}:L^{\sigma}_{\mathbb{C}}]\textnormal{adj}_{\sigma\mu}.$$
Since $\mu$ is $e$-regular, we can apply Theorem \ref{schur functor d} to get $[W^{\la}_{\mathbb{F}}:L^{\mu}_{\mathbb{F}}]=[S^{\la}_{\mathbb{F}}:D^{\mu}_{\mathbb{F}}]$ and $[W^{\la}_{\mathbb{C}}:L^{\mu}_{\mathbb{C}}]=[S^{\la}_{\mathbb{C}}:D^{\mu}_{\mathbb{C}}]$. On the other hand, $[S^{\la}_{\mathbb{F}}:D^{\mu}_{\mathbb{F}}]=[S^{\la}_{\mathbb{C}}:D^{\mu}_{\mathbb{C}}]$ since the adjustment matrix for $\bar{B}$ is the identity matrix by assumption. Moreover, the terms in the sum are non-negative, so $\adj_{\la\mu}$ must be zero.
\end{proof}

\subsection{The row and column removal theorems}\label{section row rem}
\begin{definition}\label{defn row rem}
We define the \textit{row removal function} $\row$ as
\begin{align*}
  \row \colon \bigcup_{n>0}\p(n) &\to \bigcup_{n>0}\p(n)\\
  \nu=(\nu_{1},\nu_{2},\dots,\nu_{l(\nu)}) &\mapsto (\nu_{2},\nu_{3},\dots,\nu_{l(\nu)}).
\end{align*}
We define the \textit{column removal function} $\col$ as
\begin{align*}
  \col \colon \bigcup_{n>0}\p(n) &\to \bigcup_{n>0}\p(n)\\
  \nu=(\nu_{1},\nu_{2},\dots,\nu_{l(\nu)}) &\mapsto (\nu_{1}-1,\nu_{2}-1,\dots,\nu_{l(\nu)}-1).
\end{align*}

\end{definition}

\begin{thrm}\label{row removal d}
(~\cite[\S4.2 (9)]{donkin}) Suppose that $\lambda$ and $\mu$ are partitions of $n$ with $\lambda_{1}=\mu_{1}$.
Then,
$$[W^{\lambda}:L^{\mu}]=[W^{\row(\la)}:L^{\row(\mu)}].$$
\end{thrm}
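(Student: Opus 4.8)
\emph{Proof strategy.}
Set $c:=\lambda_{1}=\mu_{1}$, so that $\row(\lambda)=(\lambda_{2},\lambda_{3},\dots)$ and $\row(\mu)=(\mu_{2},\mu_{3},\dots)$ are partitions of $n-c$. The plan is to realise $\s_{n}\text{-mod}$ as the category of polynomial representations of the quantum group $GL_{n}$ of degree $n$ --- for $q=1$ this is the classical Schur--Weyl picture, and for $q\neq1$ one works with the quantum general linear group, the formal structure being identical --- and then to reduce along the Levi ``subgroup'' $GL_{1}\times GL_{n-1}$, exploiting that the $GL_{1}$ factor is representation-theoretically trivial. Recall that in this setting $\{W^{\nu}\}_{\nu\vdash n}$ and $\{L^{\nu}\}_{\nu\vdash n}$ are the standard and simple objects of a highest-weight category whose weight poset is the dominance poset $(\p(n),\domer)$, and that by Theorem \ref{theorem cellular} every composition factor $L^{\sigma}$ of $W^{\lambda}$ has $\sigma\domer\lambda$, hence in particular $\sigma_{1}\ge\lambda_{1}=c$.

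The first step is a highest-weight subquotient. The subsets $\{\nu\vdash n:\nu_{1}\ge c\}$ and $\{\nu\vdash n:\nu_{1}>c\}$ are each closed upward under $\domer$, so the former defines a highest-weight subcategory of $\s_{n}\text{-mod}$ and the latter a highest-weight subcategory of that; passing to the quotient of the first by the second yields a highest-weight category $\overline{\mathcal{C}}$ whose weight poset is $\{\nu\vdash n:\nu_{1}=c\}$, and the composite exact functor preserves every multiplicity $[W^{\nu}:L^{\rho}]$ between weights that survive. Since $\lambda_{1}=\mu_{1}=c$, both $\lambda$ and $\mu$ survive, so $[W^{\lambda}:L^{\mu}]=[\overline{W^{\lambda}}:\overline{L^{\mu}}]$ in $\overline{\mathcal{C}}$. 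The second step --- the Levi reduction proper --- is to identify $\overline{\mathcal{C}}$, via $\nu\mapsto\row(\nu)$, with the category of degree-$(n-c)$ polynomial representations of $GL_{n-1}$ as highest-weight categories, matching $\overline{W^{\nu}}\leftrightarrow W^{\row(\nu)}$ and $\overline{L^{\nu}}\leftrightarrow L^{\row(\nu)}$; concretely one applies the parabolic truncation functor attached to an appropriate parabolic subgroup $P$ with Levi factor $GL_{1}\times GL_{n-1}$, under which $W^{\nu}$ goes to $W^{(c)}\boxtimes W^{\row(\nu)}$, the $1$-dimensional first factor contributing nothing. A final stability observation --- the decomposition numbers of $q$-Schur algebras depend only on the degree once there are at least as many rows, and here $n-1\ge n-c$ --- matches $[W^{\row(\lambda)}:L^{\row(\mu)}]$ computed for $GL_{n-1}$ in degree $n-c$ with the corresponding decomposition number of $\s_{n-c}$, giving $[W^{\lambda}:L^{\mu}]=[W^{\row(\lambda)}:L^{\row(\mu)}]$.

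The main obstacle is the Levi identification of the second step, and within it the assertion that parabolic truncation carries the simple module $L^{\nu}$ (for $\nu_{1}=c$) to the simple module $L^{(c)}\boxtimes L^{\row(\nu)}$, rather than merely having it as a head. Establishing this requires the good-filtration and tilting-module machinery for (quantum) $GL_{n}$ --- the Donkin--Mathieu circle of results on $U_{P}$-(co)invariants of modules with good or Weyl filtration, and the behaviour of Weyl and simple modules under restriction to a Levi --- and it is here that one would, a priori, need to control $\textnormal{char}(\F)$; in fact this part of the argument is uniform in the characteristic, which is why the theorem holds over an arbitrary field as stated. An alternative, more self-contained route would follow James's original row-removal argument for symmetric groups: embed the Levi $q$-Schur algebra $\s_{q}(1,c)\otimes\s_{q}(n-1,n-c)$ into $\s_{q}(n,n)$ as $\xi\s_{q}(n,n)\xi$ for the parabolic idempotent $\xi$, restrict a cellular basis of $W^{\lambda}$ along this embedding, and extract the multiplicity comparison from a $\operatorname{Hom}$-space dimension count --- trading quantum $GL_{n}$ for heavier combinatorial bookkeeping.
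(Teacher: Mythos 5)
The paper offers no internal proof of this theorem---it is quoted directly from Donkin~\cite[\S4.2 (9)]{donkin}---and your outline essentially reconstructs the argument of that source: pass to the highest-weight subquotient with weight poset $\{\nu\vdash n:\nu_{1}=c\}$ (legitimate, since $\{\nu_{1}\ge c\}$ and $\{\nu_{1}>c\}$ are upward closed under $\domer$ and composition factors of $W^{\lambda}$ only move upward in dominance), then identify it with degree-$(n-c)$ polynomial representations of quantum $GL_{n-1}$ via truncation to the Levi $GL_{1}\times GL_{n-1}$, and finish with the standard independence of decomposition numbers from the number of rows once it is at least the degree. The two points you single out as the crux---that truncation takes $W^{\nu}$ to $W^{(c)}\boxtimes W^{\row(\nu)}$ (a character count: all entries $1$ in a semistandard tableau of shape $\nu$ lie in the first row, so the top $x_{1}$-weight space has character $x_{1}^{c}s_{\row(\nu)}$) and $L^{\nu}$ to $L^{(c)}\boxtimes L^{\row(\nu)}$ rather than merely to a module with that head (idempotent truncation of a simple is simple or zero, and it is nonzero here because the highest weight survives)---are exactly what Donkin's good-filtration/truncation machinery supplies, so the strategy is sound and matches the cited proof.
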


\begin{cor}\label{row removal adj}~\cite[Corollary 2.19]{wt 5}
Suppose that $\lambda$ and $\mu$ are partitions of $n$ with $\lambda_{1}=\mu_{1}$. Then, $$\textnormal{adj}_{\lambda \mu}=\textnormal{adj}_{\row(\lambda) \row(\mu)}.$$
\end{cor}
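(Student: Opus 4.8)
The plan is to deduce Corollary \ref{row removal adj} from the row removal theorem for decomposition numbers (Theorem \ref{row removal d}) by running the two relations $\Ds=\Dsz\As$ and $\Dh=\Dhz\Ah$ through an induction on the Jantzen order, exactly in the spirit of Lemma \ref{lemma easy fact}. First I would fix $\lambda$ and $\mu$ with $\lambda_1=\mu_1$ lying in a common block $B$ (if they lie in different blocks, then $\row(\lambda)$ and $\row(\mu)$ also lie in different blocks, and both sides are $0$, so there is nothing to prove). Writing out the matrix identity $\Ds = \Dsz \As$ entrywise gives
\begin{equation*}
[W^{\lambda}_{\F}:L^{\mu}_{\F}] = \sum_{\nu}[W^{\lambda}_{\C}:L^{\nu}_{\C}]\,\adj_{\nu\mu},
\end{equation*}
and by Theorems \ref{theorem cellular}, \ref{jantzen order d} and Corollary \ref{jantzen order adj} the sum ranges only over $\nu$ with $\lambda \le_J \nu \le_J \mu$. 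The analogous expansion holds over $\C$ (where $\As$ is the identity, so it is trivial) and also after applying $\row$, using Theorem \ref{row removal d} to match the $\C$-side decomposition numbers.

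The key step is an induction on the interval $[\lambda,\mu]_J$ under the Jantzen order, or more conveniently on $n - l$ where... actually cleanest is to induct downwards from $\mu$: I would prove $\adj_{\lambda\mu} = \adj_{\row(\lambda)\,\row(\mu)}$ by induction on the number of partitions $\nu$ with $\lambda <_J \nu \le_J \mu$. The base case $\lambda = \mu$ is Corollary \ref{jantzen order adj} on both sides (both entries equal $1$; note $\lambda_1=\mu_1$ forces $\row$ to behave compatibly). For the inductive step, I would apply Theorem \ref{row removal d} to get $[W^{\lambda}_{\F}:L^{\mu}_{\F}] = [W^{\row(\lambda)}_{\F}:L^{\row(\mu)}_{\F}]$, expand both sides via the adjustment-matrix identity, cancel the $\C$-decomposition numbers termwise (again using Theorem \ref{row removal d}, now applied to pairs $(\lambda,\nu)$ with $\lambda <_J \nu <_J \mu$ — here one needs that $\lambda_1=\mu_1$ together with $\lambda \doml \nu \doml \mu$ forces $\nu_1 = \lambda_1$), and invoke the inductive hypothesis $\adj_{\nu\mu} = \adj_{\row(\nu)\,\row(\mu)}$ for all such intermediate $\nu$. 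Everything then collapses to $\adj_{\lambda\mu} = \adj_{\row(\lambda)\,\row(\mu)}$.

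The main obstacle I anticipate is the bookkeeping needed to be sure the Jantzen-order intervals match up correctly under $\row$: one must check that $\nu \mapsto \row(\nu)$ gives a bijection between $\{\nu : \lambda \le_J \nu \le_J \mu\}$ and $\{\eta : \row(\lambda) \le_J \eta \le_J \row(\mu)\}$, or at least that the relevant sums have matching index sets. The point is that any $\nu$ with a nonzero contribution satisfies $\lambda \domel \nu \domel \mu$ (dominance extends the Jantzen order), and combined with $\lambda_1 = \mu_1$ this pins down $\nu_1 = \lambda_1 = \mu_1$, so $\row$ is defined on all of them and Theorem \ref{row removal d} applies to each pair $(\lambda,\nu)$. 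A secondary subtlety is that Theorem \ref{row removal d} is stated for decomposition numbers of $\s_n$ over an arbitrary field, so it legitimately applies both over $\F$ and over $\C$; this is exactly what lets the two expansions be compared term by term. Once these compatibility checks are in place, the argument is a routine cancellation, and in fact this is precisely how \cite[Corollary 2.19]{wt 5} was proved, so I would simply cite that proof with $S^\nu, D^\mu$ replaced by $W^\nu, L^\mu$, mirroring the remark already used for Corollary \ref{decomposition number 0 or v}.
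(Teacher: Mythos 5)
The paper itself gives no argument for this corollary: it simply cites \cite[Corollary 2.19]{wt 5} (and proves the column analogue, Corollary \ref{column removal adj}, ``similarly''), and your reconstruction is indeed that proof: expand $\Ds=\Dsz\As$ entrywise for $(\lambda,\mu)$ and for $(\row(\lambda),\row(\mu))$, equate via Theorem \ref{row removal d} over $\F$ and over $\C$, and cancel by induction. Your worry about matching index sets is resolved exactly as you say: any $\nu$ contributing to either sum satisfies $\lambda\domel\nu\domel\mu$ (for the row-removed sum, note $[W^{\row(\lambda)}_{\C}:L^{\row(\nu)}_{\C}]=[W^{\lambda}_{\C}:L^{\nu}_{\C}]$ by Theorem \ref{row removal d} over $\C$, so nonvanishing transfers back, and dominance transfers back because the first parts agree), hence $\nu_{1}=\lambda_{1}=\mu_{1}$ and $\row$ is a bijection on the relevant index sets.

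One concrete wrinkle in your set-up: if you induct on $\#\{\nu:\lambda<_{J}\nu\le_{J}\mu\}$, the measure-zero case is not only $\lambda=\mu$ but also all pairs with $\lambda\neq\mu$ and $\mu\ngeq_{J}\lambda$, and for those Corollary \ref{jantzen order adj} gives $\adj_{\lambda\mu}=0$ but does \emph{not} directly give $\adj_{\row(\lambda)\row(\mu)}=0$, since one cannot conclude $\row(\mu)\ngeq_{J}\row(\lambda)$; moreover, in the inductive step the intermediate pairs $(\nu,\mu)$ coming from the row-removed sum may satisfy $\mu\ngeq_{J}\nu$, so you cannot simply quote ``$\adj_{\nu\mu}=\adj_{\row(\nu)\row(\mu)}$ for intermediate $\nu$'' as if all such $\nu$ lay in the Jantzen interval. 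Both points are easily repaired: in the degenerate case expand $[W^{\row(\lambda)}_{\F}:L^{\row(\mu)}_{\F}]=[W^{\lambda}_{\F}:L^{\mu}_{\F}]=0$ and use that all entries of $\Dsz$ and $\As$ are nonnegative to force $\adj_{\row(\lambda)\row(\mu)}=0$, and in the inductive step observe that for such $\nu$ the set $\{\sigma:\nu<_{J}\sigma\le_{J}\mu\}$ is empty while $\mu$ lies in $\{\sigma:\lambda<_{J}\sigma\le_{J}\mu\}$, so the measure still drops. Alternatively, and more cleanly, run the downward induction on the dominance order with $\mu$ fixed (all contributing $\nu$ strictly dominate $\lambda$ and are dominated by $\mu$), which avoids the Jantzen-order bookkeeping altogether. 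With either repair your argument is complete and is the same as the cited one.
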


\begin{thrm}\label{column removal d}
(~\cite[\S4.2 (15)]{donkin}) Suppose that $\lambda$ and $\mu$ are partitions of $n$ with $l(\lambda)=l(\mu)$.
Then,
$$[W^{\lambda}:L^{\mu}]=[W^{\col(\lambda)}:L^{\col(\mu)}].$$
\end{thrm}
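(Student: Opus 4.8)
The statement is classical and is exactly Donkin's \cite[\S4.2 (15)]{donkin}; let me describe the representation-theoretic mechanism behind it. Write $\ell=l(\lambda)=l(\mu)$, so that $\col(\lambda)$ and $\col(\mu)$ are partitions of $n-\ell$ with at most $\ell$ parts, and, reading a partition with at most $\ell$ parts as an $\ell$-tuple padded with trailing zeros, $\lambda=\col(\lambda)+(1^{\ell})$ and $\mu=\col(\mu)+(1^{\ell})$. The plan is to compute both decomposition numbers inside the category of polynomial representations of the quantum group $\mathrm{GL}_{\ell}$ --- that is, for the $q$-Schur algebras $\s_q(\ell,n)$ and $\s_q(\ell,n-\ell)$ rather than for $\s_n=\s_q(n,n)$ --- and there to exploit the fact that adjoining a full column of length $\ell$ is realised by tensoring with the one-dimensional $q$-determinant module.

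First I would use the standard independence of decomposition numbers on the first parameter of the $q$-Schur algebra. For $N\ge m$ there is an idempotent $e\in\s_q(N,r)$ with $e\,\s_q(N,r)\,e\cong\s_q(m,r)$; the associated Schur functor $M\mapsto eM$ is exact, it sends $W^{\alpha}\mapsto W^{\alpha}$ and $L^{\alpha}\mapsto L^{\alpha}$ whenever $l(\alpha)\le m$ and sends them to $0$ otherwise, and hence $[W^{\alpha}:L^{\beta}]$ is unchanged when passing between $\s_q(N,r)$ and $\s_q(m,r)$ as long as $l(\alpha),l(\beta)\le m$. Applying this twice --- with $(N,r,m)=(n,n,\ell)$ and with $(N,r,m)=(n-\ell,n-\ell,\ell)$, noting that $\col(\lambda)$ and $\col(\mu)$ have at most $\ell$ and at most $n-\ell$ parts --- reduces the claim to proving $[W^{\lambda}:L^{\mu}]=[W^{\col(\lambda)}:L^{\col(\mu)}]$ with the left side computed in $\s_q(\ell,n)$ and the right side in $\s_q(\ell,n-\ell)$ (the cases $\ell=n$ and $n-\ell=0$ being trivial).

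Next, let $D=W^{(1^{\ell})}=L^{(1^{\ell})}$ be the $q$-determinant module of quantum $\mathrm{GL}_{\ell}$, a one-dimensional polynomial representation of degree $\ell$. Tensoring with $D$ gives an exact functor from the category of $\s_q(\ell,n-\ell)$-modules to the category of $\s_q(\ell,n)$-modules; by highest-weight theory it carries $W^{\alpha}$ to $W^{\alpha+(1^{\ell})}$ and $L^{\alpha}$ to $L^{\alpha+(1^{\ell})}$ for every partition $\alpha$ with at most $\ell$ parts, and since every weight of $M\otimes D$ has all coordinates $\ge 1$, every composition factor of $M\otimes D$ has the form $L^{\alpha+(1^{\ell})}$. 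Because $D$ is invertible among rational representations, $-\otimes D^{-1}$ undoes the twist at the level of composition series, so $[M\otimes D:L^{\alpha+(1^{\ell})}]=[M:L^{\alpha}]$ for all $M$ and all such $\alpha$. Taking $M=W^{\col(\lambda)}$ and $\alpha=\col(\mu)$ yields $[W^{\lambda}:L^{\mu}]=[W^{\col(\lambda)}:L^{\col(\mu)}]$, as required.

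The genuine content --- and the step I expect to be the most delicate to write out in full --- is the behaviour of Weyl and simple modules under the row-reducing Schur functor and under the determinant twist; these are precisely the facts packaged in Donkin's treatment of the $q$-Schur algebra, which is why in the text we simply cite \cite[\S4.2 (15)]{donkin}. As an aside, column removal can also be obtained from the already-stated row removal Theorem~\ref{row removal d}: the hypothesis $l(\lambda)=l(\mu)$ forces $\lambda'$ and $\mu'$ to have equal first parts, and $(\col(\lambda))'=\row(\lambda')$, so a transpose-conjugate symmetry relating the decomposition matrices of the $q$-Schur algebras at $q$ and at $q^{-1}$ converts row removal for the pair $(\lambda',\mu')$ into column removal for $(\lambda,\mu)$.
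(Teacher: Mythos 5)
Your argument is correct and is essentially the one the paper relies on: the paper offers no proof of this theorem beyond the citation to Donkin, and your reduction to quantum $\mathrm{GL}_{\ell}$ via idempotent truncation followed by tensoring with the $q$-determinant is exactly the mechanism behind Donkin's \S4.2~(15). The only cosmetic point is the direction of the truncation in your second application (if $\ell>n-\ell$ one should instead truncate $\s_{q}(\ell,n-\ell)$ down to $\s_{q}(n-\ell,n-\ell)$), which does not affect the substance since the underlying fact — independence of $[W^{\alpha}:L^{\beta}]$ of the first parameter once it is at least $l(\alpha),l(\beta)$ — is symmetric.
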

\begin{cor}\label{column removal adj}
Suppose that $\lambda$ and $\mu$ are partitions of $n$ with $l(\lambda)=l(\mu)$. Then, $$\textnormal{adj}_{\lambda \mu}=\textnormal{adj}_{\col(\lambda) \col(\mu)}.$$
\end{cor}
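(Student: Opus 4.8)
The plan is to mimic exactly the strategy used for the row-removal analogue (Corollary \ref{row removal adj}): combine the column-removal statement for decomposition numbers (Theorem \ref{column removal d}) with the unitriangular relation $\Ds = \Dsz \As$ and induct along the Jantzen order. First I would observe that the map $\col$ restricted to partitions with a fixed number of parts $l$ is a bijection onto partitions whose Young diagram fits (after column removal) in the obvious way, and crucially it is an order-isomorphism for the relevant orders: if $\lambda$ and $\mu$ both have $l$ parts then $\mu \domr \lambda \iff \col(\mu) \domr \col(\lambda)$, and likewise $\mu \ge_J \lambda \iff \col(\mu) \ge_J \col(\lambda)$ (one can see the Jantzen-order statement either by chasing the abacus description of the arrows $\lambda \xrightarrow{\sigma}\tau$ under deleting the first column, or simply by applying Theorem \ref{column removal d} to both sides of the vanishing criterion in Theorem \ref{jantzen schaper}). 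This guarantees that $\lambda, \mu$ lie in a block together iff $\col(\lambda),\col(\mu)$ do, and that partitions $\nu$ with $\lambda <_J \nu <_J \mu$ correspond bijectively under $\col$ with partitions $\nu'$ satisfying $\col(\lambda) <_J \nu' <_J \col(\mu)$.

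Next I would set up the induction. If $\lambda = \mu$ both sides are $1$ by Corollary \ref{jantzen order adj}, so assume $\lambda \ne \mu$; if $\mu \not\ge_J \lambda$ then $\col(\mu)\not\ge_J\col(\lambda)$ by the order-isomorphism, and both adjustment entries vanish by Corollary \ref{jantzen order adj}. Otherwise, expand $[W^\lambda_\F : L^\mu_\F]$ using $\Ds = \Dsz \As$ and Theorem \ref{jantzen order d}:
\begin{equation*}
[W^{\lambda}_{\F}:L^{\mu}_{\F}] = [W^{\lambda}_{\C}:L^{\mu}_{\C}] + \adj_{\lambda\mu} + \sum_{\lambda <_{J} \nu <_{J} \mu} [W^{\lambda}_{\C}:L^{\nu}_{\C}]\,\adj_{\nu\mu}.
\end{equation*}
Note that every $\nu$ occurring in this sum lies in the same block as $\lambda$ and $\mu$, hence has the same $e$-core, and in particular — since $\lambda_1 > \lambda_1 - 1 \ge$ ... wait, more carefully: a partition $\nu$ with $\mu \ge_J \nu \ge_J \lambda$ satisfies $\mu \domer \nu \domer \lambda$, and when $l(\lambda) = l(\mu) = l$ one checks $l(\nu) = l$ as well (the dominance interval between two partitions with $l$ parts consists of partitions with $l$ parts, because having more parts would violate $\nu \domel \mu$ and having fewer would violate $\nu \domer \lambda$ when combined with equal size). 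So $\col$ applies to every term. Writing the same identity for $\col(\lambda), \col(\mu)$ and invoking Theorem \ref{column removal d} term by term (for the Weyl-module multiplicities, both over $\F$ and over $\C$) together with the inductive hypothesis $\adj_{\nu\mu} = \adj_{\col(\nu)\col(\mu)}$ for all $\nu$ strictly below $\mu$ in the Jantzen order, every term except $\adj_{\lambda\mu}$ and $\adj_{\col(\lambda)\col(\mu)}$ matches up, forcing $\adj_{\lambda\mu} = \adj_{\col(\lambda)\col(\mu)}$.

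The main obstacle, and the only place requiring genuine care, is justifying that $\col$ is an order-isomorphism for the Jantzen order between the two blocks — i.e. that the bijection on partitions with $l$ parts sending $\nu \mapsto \col(\nu)$ sends Jantzen intervals to Jantzen intervals. One clean way around having to analyse the abacus arrows directly is to use Theorem \ref{jantzen schaper} as a black box: $\mu >_J \lambda$ is equivalent to $B_\C(\lambda,\mu) \ne 0$, equivalently $[W^\lambda_\C : L^\mu_\C] \ne 0$ (using that the Jantzen order is defined via the arrows but agrees, for pairs in the same block, with nonvanishing of the bound), and then $[W^\lambda_\C : L^\mu_\C] = [W^{\col(\lambda)}_\C : L^{\col(\mu)}_\C]$ by Theorem \ref{column removal d} immediately gives the equivalence $\mu >_J \lambda \iff \col(\mu) >_J \col(\lambda)$. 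With that in hand the induction above goes through verbatim, exactly paralleling \cite[Corollary 2.19]{wt 5}.
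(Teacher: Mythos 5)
Your overall skeleton is the paper's proof: the paper disposes of this corollary by transplanting the argument of Corollary \ref{row removal adj} from \cite[Corollary 2.19]{wt 5}, with Theorem \ref{column removal d} in place of Theorem \ref{row removal d}, i.e.\ exactly your expansion of $[W^{\lambda}_{\F}:L^{\mu}_{\F}]$ via $\Ds=\Dsz\As$ together with an induction along the Jantzen order; and your observation that every $\nu$ with $\mu\domer\nu\domer\lambda$ again has exactly $l(\lambda)$ parts (so that $\col$ applies to all relevant terms) is correct, modulo a harmless swap of which inequality each case violates. The genuine flaw is the shortcut you lean on in your final paragraph: it is \emph{not} true that $\mu>_{J}\lambda$ is equivalent to $B_{\C}(\lambda,\mu)\neq0$, i.e.\ to $[W^{\lambda}_{\C}:L^{\mu}_{\C}]\neq0$. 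Theorem \ref{jantzen schaper} and Theorem \ref{jantzen order d} give only one direction (nonvanishing forces $\mu\ge_{J}\lambda$); the Jantzen order is the transitive closure of the arrow relation and routinely relates partitions whose decomposition number vanishes. Already in a weight $1$ block all $e$ partitions are totally ordered under $\le_{J}$, while $[W^{\lambda^{i}}:L^{\lambda^{j}}]=0$ for non-adjacent pairs. So the ``clean way around'' does not establish that $\col$ is a Jantzen-order isomorphism, and as written neither the step ``$\mu\not\ge_{J}\lambda\Rightarrow\col(\mu)\not\ge_{J}\col(\lambda)$'' nor the bijection between Jantzen intervals is justified.

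The gap is repairable in two ways. Either carry out your first suggestion honestly on the abacus: if $\lambda$ and $\tau$ both have exactly $l$ parts in an $r$-bead display, any arrow $\lambda\rightarrow\tau$ can only involve positions strictly above $r-l$, hence commutes with inserting the bead at position $r-l$ that realises $\col$, and intermediate partitions in any chain lie in the dominance interval and so again have exactly $l$ parts; this gives the interval isomorphism you want. Or, more economically, note that the full isomorphism is not needed: every nonzero term of either expansion is indexed by a partition in the corresponding dominance interval, hence has exactly $l$ parts (respectively at most $l$ parts, hence equals $\col(\nu)$ for a unique $\nu$ with $l$ parts), and then Theorem \ref{column removal d}, the one-directional statements of Theorem \ref{jantzen order d} and Corollary \ref{jantzen order adj}, and the inductive hypothesis match the nonzero terms of the two sums bijectively; when $\mu\not\ge_{J}\lambda$, nonnegativity of all terms already forces $\adj_{\col(\lambda)\col(\mu)}=0$. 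With either substitution your argument becomes a complete version of the paper's proof.
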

\begin{proof}
This is similar to the proof of Corollary \ref{row removal adj} in~\cite[Corollary 2.19]{wt 5}; we use Theorem \ref{column removal d} instead of Theorem \ref{row removal d}.
\end{proof}


Suppose that $\nu$ is a partition of weight $w$. Let us examine the weights of $\row(\nu)$ and $\col(\nu)$. An abacus display for $\row(\nu)$ is obtained from that of $\nu$ by replacing the bead corresponding to $\nu_{1}$ (the maximal occupied position) with an empty space. Therefore, if this bead has weight $s\ge 0$, then $\row(\nu)$ would have weight $w-s\le w$. On the other hand, an abacus display for $\col(\nu)$ is obtained from that of $\nu$ by replacing the first unoccupied space with a bead. If there are $r\ge 0$ beads in the same runner under this position, then $\col(\nu)$ would have weight $w-r\le w$. For our purposes, the crucial point is that the weights of $\row(\nu)$ and $\col(\nu)$ are at most $w$.

\begin{remark}\label{remark row removal}
In sections \ref{section wt 3} and \ref{section wt 4}, we will want to prove that $\textnormal{adj}_{\lambda\mu}=\delta_{\lambda\mu}$ for all pairs of partitions $(\lambda,\mu)$ lying in a block of $\s_{n}$ of weight $w$, assuming that James's conjecture holds for all blocks of $\s_{m}$ of weight at most $w$, where $m<n$. Note that $\mu>_{P}\lambda$ implies that $\mu_{1}\ge\lambda_{1}$ and $l(\mu)\le l(\lambda)$. When $\mu_{1}=\lambda_{1}$ or $l(\mu)=l(\lambda)$, we may apply Corollary \ref{row removal adj} or Corollary \ref{column removal adj} respectively to conclude that $\textnormal{adj}_{\lambda\mu}=\delta_{\lambda\mu}$. Thus, in this setting, we have $\adj_{\la\mu}=0$ unless $\mu>_{P}\lambda$, $\mu_{1}>\lambda_{1}$ and $l(\mu)<l(\lambda)$. We write $\mu\gg \lambda$ when $\mu>_{P}\lambda$, $\mu_{1}>\lambda_{1}$ and $l(\mu)<l(\lambda)$.
\end{remark}

\subsection{Lowerable partitions}\label{lowerable notation}
Recall the decomposition map $d_{\s}$ between the Grothendieck groups $\gsz$ and $\gs$. Suppose that $A$, $B$ and $C$ are blocks of $\s_{n-1}$, $\s_{n}$ and $\s_{n+1}$ respectively, and that there is an $e$-residue $j$ such that a partition lying in $A$ may be obtained from a partition lying in $B$ by removing exactly one $j$-removable node, while a partition lying in $C$ may be obtained from a partition lying in $B$ by adding exactly one $j$-addable node. Let $\mu$ be an arbitrary partition in $B$. We define $E_{j}$ to be the \textit{j-restriction} functor from $\gs$ to $\g(\s_{n-1})$ and $F_{j}$ to be the \textit{j-induction} functor from $\gs$ to $\g(\s_{n+1})$ in the following way:
$$ E_{j}([M]):= [M{\downarrow}_{A}],$$
$$ F_{j}([M]):= [M{\uparrow}^{C}].$$

Similarly, we define $\bar{E}_{j}$ to be the \textit{j-restriction} functor from $\gsz$ to $\g(\s^{0}_{n-1})$ and $\bar{F}_{j}$ to be the \textit{j-induction} functor from $\gsz$ to $\g(\s^{0}_{n+1})$ in the following way:
$$ \bar{E}_{j}( [M]):= [M{\downarrow}_{A}],$$
$$ \bar{F}_{j}( [M]):= [M{\uparrow}^{C}].$$
It is easy to check using Theorem \ref{branching rule} that $$d_{\s}\bar{E}_{j}([W^{\mu}_{\C}])=E_{j}d_{\s}([W^{\mu}_{\C}]),$$
$$d_{\s}\bar{F}_{j}([W^{\mu}_{\C}])=F_{j}d_{\s}([W^{\mu}_{\C}]).$$ 
Since $\{[W^{\mu}_{\C}] \mid \mu\in\p(n)\}$ is a basis for $\gsz$, the following diagrams commute.
$$\bfig
\square(0,0)/>`>`>`>/[\gsz`\gs`\g(\s^{0}_{n-1})`\g(\s_{n-1});d_{\s}`\bar{E}_{j}`E_{j}`d_{\s}]
\square(1200,0)/>`>`>`>/[\gsz`\gs`\g(\s^{0}_{n+1})`\g(\s_{n+1});d_{\s}`\bar{F}_{j}`F_{j}`d_{\s}]
\efig$$
Since the adjustment matrices are unitriangular, $d_{\s}$ must be bijective; we shall identify $\gsz$ with its image under $d_{\s}$. Under this identification, we get $\bar{E}_j = E_j$ and $\bar{F}_j = F_j$ from the commutative diagrams. In particular,
$$E_{j}([L_{\F}^{\mu}])=[L_{\F}^{\mu}{\downarrow}_{A}],$$ $$F_{j}([L_{\F}^{\mu}])=[L_{\F}^{\mu}{\uparrow}^{C}], $$ 
$$E_{j}([L_{\C}^{\mu}])=[L_{\C}^{\mu}{\downarrow}_{A}],$$  $$F_{j}([L_{\C}^{\mu}])=[L_{\C}^{\mu}{\uparrow}^{C}].$$
Let $t$ be a positive integer. We denote the \textit{divided power j-restriction functor} and \textit{divided power j-induction functor} as $E_{j}^{(t)}:=\frac{1}{t!} E_{j}^{t}$ and $F_{j}^{(t)}:=\frac{1}{t!} F_{j}^{t}$ respectively.

\begin{prop}\label{lowerable}
Let $\lambda$ and $\mu$ be two distinct partitions lying in some block $B$ of $\s_{n}$. If $\epsilon_{j}(\lambda)< \epsilon_{j}(\mu)$ (resp. $\varphi_{j}(\lambda)< \varphi_{j}(\mu)$) for some $e$-residue $j$, then $\textnormal{adj}_{\lambda\mu} = 0$. 
\\
If $\epsilon_{j}(\lambda)=\epsilon_{j}(\mu)$ (resp. $\varphi_{j}(\lambda)= \varphi_{j}(\mu)$), for some $e$-residue $j$, then
$\textnormal{adj}_{\lambda\mu} = \textnormal{adj}_{\tilde{E}_{j}^{k}\lambda\tilde{E}_{j}^{k}\mu}$ (resp. $\adj_{\la\mu}=\textnormal{adj}_{\tilde{F}_{j}^{k}\lambda\tilde{F}_{j}^{k}\mu}$), where $k:=\epsilon_{j}(\lambda)$ (resp. $k:=\varphi_{j}(\lambda)$).
\end{prop}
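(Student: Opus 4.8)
The plan is to use the divided power functors $E_j^{(k)}$ and $F_j^{(k)}$ together with the modular branching rules (Theorem \ref{modular branching rule}), exploiting the commutative diagrams that identify $\bar E_j$ with $E_j$ and $\bar F_j$ with $F_j$. I will treat the $\epsilon_j$ case; the $\varphi_j$ case is completely analogous with $F_j$ replacing $E_j$, $\tilde F_j$ replacing $\tilde E_j$, and ``adding conormal nodes'' replacing ``removing normal nodes''. Write $k=\epsilon_j(\lambda)$.

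First I would expand $[W^\lambda_{\F}]=[L^\lambda_{\F}]+\sum_{\nu \domr \lambda}\adj'_{\lambda\nu}[L^\nu_{\C}]$ — more precisely, using the identification $\gsz \subseteq \gs$ and the fact that $d_{\s}([W^\lambda_{\C}])=[W^\lambda_{\F}]$, write $[L^\lambda_{\F}] = \sum_{\nu}\adj_{\lambda\nu}[L^\nu_{\C}]$ inverted to express $[L^\lambda_{\C}]$ in terms of the $[L^\nu_{\F}]$, or equivalently work directly from $d_{\s}([L^\lambda_{\C}])=\sum_\nu \adj_{\lambda\nu}[L^\nu_{\F}]$. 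The key point is to apply $E_j^{(k)}$ to this identity. By Theorem \ref{modular branching rule}, $E_j^{(k)}[L^\lambda_{\F}] = [L^{\tilde E_j^k\lambda}_{\F}]$ exactly (since $\epsilon_j(\lambda)=k$), and for any $\nu$ with $\adj_{\lambda\nu}\neq 0$ we have, by Corollary \ref{jantzen order adj} and the hypothesis $\epsilon_j(\lambda)\le\epsilon_j(\nu)$ forced in the relevant range, that $\epsilon_j(\nu)\ge k$; so $E_j^{(k)}[L^\nu_{\F}]$ is either $0$ (impossible here as $\nu=\mu$ has $\epsilon_j(\mu)=k$) or has simple head $[L^{\tilde E_j^k\nu}_{\F}]$ with multiplicity one, plus possibly lower terms. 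Matching the coefficient of $[L^{\tilde E_j^k\mu}_{\F}]$ on both sides, after applying the same argument with $\C$ in place of $\F$ and using that $E_j^{(k)}=\bar E_j^{(k)}$ under the identification, yields $\adj_{\lambda\mu}=\adj_{\tilde E_j^k\lambda\,\tilde E_j^k\mu}$. For the first assertion, if $\epsilon_j(\lambda)<\epsilon_j(\mu)$, then applying $E_j^{(\epsilon_j(\mu))}$ kills $[L^\lambda_{\F}]$ but sends $[L^\mu_{\C}]$ to something nonzero with $[L^{\tilde E_j^{\epsilon_j(\mu)}\mu}]$ appearing; comparing coefficients forces $\adj_{\lambda\mu}=0$.

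The main obstacle is bookkeeping the ``lower terms'': when $\epsilon_j(\nu)>k$, the module $L^\nu_{\F}\!\downarrow_A$ has simple head $(L^{\tilde E_j^k\nu})^{\oplus k!}$ but may have other composition factors, so $E_j^{(k)}[L^\nu_{\F}]$ is not simply $[L^{\tilde E_j^k\nu}_{\F}]$. I would handle this by an induction on the dominance (or Jantzen) order: order the partitions $\nu$ with $\adj_{\lambda\nu}\neq0$ as $\lambda=\nu_0 \doml \nu_1 \doml \cdots \doml \nu_r=\mu$, note that every composition factor $L^\tau$ of $L^{\nu_i}_{\F}\!\downarrow_A$ other than the head satisfies $\tau \domr \tilde E_j^k\nu_i$, and that $\tilde E_j^k$ is order-preserving enough that these extra $\tau$'s coincide with $\tilde E_j^k\nu_j$ for $j>i$ only in controlled ways. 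In fact the cleanest route is to avoid this entirely: apply $E_j^{(k)}$ to the \emph{Weyl module} identity $d_{\s}([W^\lambda_{\C}])=[W^\lambda_{\F}]$, use Theorem \ref{branching rule} to compute $W^\lambda\!\downarrow_A$ on both sides (getting $(k!)$ copies of a single $W^{\tilde E_j^k\lambda}$, since $\epsilon_j(\lambda)=k$ means $\lambda$ has a unique way to remove $k$ $j$-nodes down to its block $A$), divide by $k!$, conclude $d_{\s}([W^{\tilde E_j^k\lambda}_{\C}])=[W^{\tilde E_j^k\lambda}_{\F}]$ is respected, and then run the standard argument that a commuting exact functor inducing a bijection on the relevant Grothendieck subgroups transports the adjustment matrix entry. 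This reduces the claim to the statement that $E_j^{(k)}$ restricts to an isomorphism between the span of $\{[L^\nu_{\C}] : \nu \text{ in } B,\ \epsilon_j(\nu)=k\}$ and the corresponding span in $A$ commuting with $d_{\s}$, which follows from the last bullets of Theorem \ref{modular branching rule} applied over both $\C$ and $\F$.

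Finally I would record the $\varphi_j$/$F_j$ case in one line (``the proof is identical, using $F_j^{(k)}$, Theorem \ref{branching rule} for induction, and the conormal bullets of Theorem \ref{modular branching rule}''), and note that Lemma \ref{lemma w-kl} guarantees $\tilde E_j^k\lambda$ and $\tilde E_j^k\mu$ lie in a block of weight $\le w$, which is what makes this proposition useful for the inductive arguments in sections \ref{section wt 3} and \ref{section wt 4}.
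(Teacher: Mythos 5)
Your first assertion follows the paper's route: apply $E_j^{(l)}$ with $l=\epsilon_j(\mu)$ to $[L^{\la}_{\C}]=\sum_\nu \adj_{\la\nu}[L^{\nu}_{\F}]$; the left-hand side vanishes by the branching rule, each summand on the right is a non-negative multiple of an effective class, and $E_j^{(l)}[L^{\mu}_{\F}]\neq 0$, forcing $\adj_{\la\mu}=0$. (You should invoke the positivity $\adj_{\la\nu}\ge 0$ from Theorem \ref{theorem adjustment} rather than merely ``comparing coefficients'', and you have $\F$ and $\C$ interchanged in a couple of places, but the idea is right.) The genuine gap is in the second assertion. You correctly identify the obstacle — when $\epsilon_j(\nu)>k$ the class $E_j^{(k)}[L^{\nu}_{\F}]$ is not a single simple — but you miss the resolution, which is simply to feed the first assertion back in: for every $\nu$ with $\epsilon_j(\nu)>k=\epsilon_j(\la)$, the first part applied to the pair $(\la,\nu)$ already gives $\adj_{\la\nu}=0$, so those terms disappear from $\sum_\nu \adj_{\la\nu}E_j^{(k)}[L^{\nu}_{\F}]$; what survives is $\sum_{\epsilon_j(\nu)=k}\adj_{\la\nu}[L^{\tilde{E}_j^{k}\nu}_{\F}]$, and comparing this with the expansion $[L^{\tilde{E}_j^{k}\la}_{\C}]=\sum_\sigma \adj_{\tilde{E}_j^{k}\la,\sigma}[L^{\sigma}_{\F}]$ yields $\adj_{\la\mu}=\adj_{\tilde{E}_j^{k}\la,\tilde{E}_j^{k}\mu}$, using that $\nu\mapsto\tilde{E}_j^{k}\nu$ is injective on partitions with $\epsilon_j(\nu)=k$.

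Neither of your proposed workarounds closes this gap. The dominance-order induction is not carried out and rests on the unsubstantiated claim that the non-head composition factors of $L^{\nu}_{\F}{\downarrow}$ meet the partitions $\tilde{E}_j^{k}\nu_j$ only ``in controlled ways''. The ``cleanest route'' via Weyl modules starts from a false premise: $\epsilon_j(\la)=k$ counts the \emph{normal} $j$-nodes, not the removable ones, so $\la$ may well have more than $k$ removable $j$-nodes, in which case Theorem \ref{branching rule} expresses the restriction of $W^{\la}$ as $k!$ copies of \emph{several} Weyl modules, not of the single $W^{\tilde{E}_j^{k}\la}$. Moreover the concluding ``standard argument'' about a commuting exact functor transporting adjustment entries is precisely the point at issue: it does not by itself rule out contributions from $[L^{\nu}_{\F}]$ with $\epsilon_j(\nu)>k$ in the expansion of $[L^{\la}_{\C}]$, which is exactly what the paper's bootstrap of the first assertion accomplishes.
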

\begin{proof}
Let $l:=\epsilon_{j}(\mu)$ and $k:=\epsilon_{j}(\lambda)$. If $l>k$, then
$$E_{j}^{(l)}([L_{\C}^{\la}])=\sum\limits_{\nu \in \mathcal{P}(n)}\textnormal{adj}_{\lambda\nu}E_{j}^{(l)}[L_{\F}^{\nu}].$$
By Theorem \ref{modular branching rule}, $E_{j}^{(l)}([L_{\C}^{\la}])=0$ and $E_{j}^{(l)}[L_{\F}^{\mu}]\neq0$, so $\textnormal{adj}_{\lambda\mu}$ must be zero.

If $l=k$, then
\begin{equation}\label{equation 1}
[L_{\mathbb{C}}^{\tilde{E}_{j}^{k}\lambda}]=E_{j}^{(k)}([L_{\C}^{\la}])=\sum\limits_{\nu\in\p(n)}\textnormal{adj}_{\lambda\nu}E_{j}^{(k)}[L_{\F}^{\nu}]=\sum_{\substack{\nu \in\p(n), \\ \epsilon_{j}(\nu) \ge k}} \adj_{\lambda\nu} E_{j}^{(k)}[L_{\F}^{\nu}]=\sum_{\substack{\nu \in\p(n), \\ \epsilon_{j}(\nu) = k}} \adj_{\lambda\nu}[L_{\mathbb{F}}^{\tilde{E}_{j}^{k}\nu}],
\end{equation}
where the third equality is due to Theorem \ref{modular branching rule} and the last equality is due to the case $l>k$ that we just proved above.
On the other hand,
\begin{equation}\label{equation 2}
[L_{\mathbb{C}}^{\tilde{E}_{j}^{k}\lambda}]=\sum\limits_{\sigma\in\p(n-k)}\textnormal{adj}_{\tilde{E}_{j}^{k}\lambda,\sigma}[L_{\mathbb{F}}^{\sigma}].
\end{equation}
Comparing the coefficients of $[L_{\mathbb{F}}^{\tilde{E}_{j}^{k}\mu}]$ in equations \ref{equation 1} and \ref{equation 2}, we conclude that $\textnormal{adj}_{\lambda\mu} = \textnormal{adj}_{\tilde{E}_{j}^{k}\lambda\tilde{E}_{j}^{k}\mu}$.
The proof of the other case considering conormal nodes is similar.
\end{proof}

\begin{remark}\label{remark lowerable}
In sections \ref{section wt 3} and \ref{section wt 4}, we will want to prove that $\textnormal{adj}_{\lambda\mu}=\delta_{\lambda\mu}$ for all pairs of partitions $(\lambda,\mu)$ lying in a block of $\s_{n}$ of weight $w$, assuming that James's conjecture holds for all blocks of $\s_{m}$ of weight at most $w$, where $m<n$.  If $\epsilon_{j}(\mu)>0$ and $\epsilon_{j}(\lambda)\le \epsilon_{j}(\mu)$ for some $e$-residue $j$, then we may apply Proposition \ref{lowerable} to conclude that $\textnormal{adj}_{\lambda\mu}=\delta_{\lambda\mu}$.
\end{remark}

\begin{definition}\label{definition lowerable}
Let $\lambda$ and $\mu$ be two distinct partitions lying in some weight $w$ block $B$ of $\s_{n}$. We say that the pair $(\lambda, \mu)$ is $lowerable$ if there is some $e$-residue $j$ such that $\epsilon_{j}(\mu)>0$, $\varphi_{j}(\mu)>0$ and $\epsilon_{j}(\lambda)\le \epsilon_{j}(\mu)$.
\end{definition}

\begin{cor}\label{corollary lowerable}
Suppose that $\la$ and $\mu$ are two distinct partitions lying in some weight $w$ block $B$ of $\s_{n}$ and that $(\la,\mu)$ is lowerable. Moreover, suppose that the adjustment matrix is the identity matrix for blocks of weight less than $w$. Then $\textnormal{adj}_{\lambda\mu} = 0$.
\end{cor}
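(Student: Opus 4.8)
The plan is to reduce to the case where $(\lambda,\mu)$ is lowerable via a single residue $j$, and then apply Proposition~\ref{lowerable} together with Lemma~\ref{lemma w-kl}. Since $(\lambda,\mu)$ is lowerable, fix an $e$-residue $j$ with $\epsilon_{j}(\mu)>0$, $\varphi_{j}(\mu)>0$ and $\epsilon_{j}(\lambda)\le\epsilon_{j}(\mu)$. First I would dispose of the easy case: if $\epsilon_{j}(\lambda)<\epsilon_{j}(\mu)$, then Proposition~\ref{lowerable} gives $\textnormal{adj}_{\lambda\mu}=0$ immediately, and we are done. So we may assume $\epsilon_{j}(\lambda)=\epsilon_{j}(\mu)=:k>0$.

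In the remaining case, Proposition~\ref{lowerable} tells us that $\textnormal{adj}_{\lambda\mu}=\textnormal{adj}_{\tilde{E}_{j}^{k}\lambda\,\tilde{E}_{j}^{k}\mu}$. The key point is now to control the weight of the block containing $\tilde{E}_{j}^{k}\mu$. By Lemma~\ref{lemma w-kl}, the partition $\tilde{E}_{j}^{k}\mu$ lies in a block of weight $w-kl$, where $l=\varphi_{j}(\mu)$. Since $k=\epsilon_{j}(\mu)>0$ and $l=\varphi_{j}(\mu)>0$ by the definition of lowerable, we have $kl\ge 1$, so $\tilde{E}_{j}^{k}\mu$ lies in a block of weight strictly less than $w$; the same is true of $\tilde{E}_{j}^{k}\lambda$, since $\tilde{E}_{j}^{k}\lambda$ lies in the same block as $\tilde{E}_{j}^{k}\mu$ (both are obtained by $j$-restricting a partition in $B$ exactly $k$ times to the block $A$). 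Note also that $\tilde{E}_{j}^{k}\lambda\ne\tilde{E}_{j}^{k}\mu$: if they were equal, then applying the $j$-induction divided power $F_{j}^{(k)}$ and using Theorem~\ref{modular branching rule} would force $L^{\lambda}_{\mathbb{F}}$ and $L^{\mu}_{\mathbb{F}}$ to have the same socle after induction, contradicting $\lambda\neq\mu$ — more simply, $\tilde{E}_{j}^{k}$ restricted to partitions in $B$ with $\epsilon_{j}=k$ is injective, since one recovers $\lambda$ from $\tilde{E}_j^k\lambda$ by adding back the $k$ lowest $j$-conormal nodes. By the hypothesis that the adjustment matrix is the identity for blocks of weight less than $w$, we conclude $\textnormal{adj}_{\tilde{E}_{j}^{k}\lambda\,\tilde{E}_{j}^{k}\mu}=0$, hence $\textnormal{adj}_{\lambda\mu}=0$.

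The main obstacle, such as it is, is the bookkeeping needed to justify that $\tilde{E}_{j}^{k}\lambda$ and $\tilde{E}_{j}^{k}\mu$ are distinct and lie in the \emph{same} block of weight $<w$; both facts follow cleanly once one observes that $\tilde{E}_{j}^{k}$ acts injectively on the set of partitions in $B$ with $\epsilon_{j}=k$ (with inverse given by adding the $k$ lowest $j$-conormal nodes, cf.\ Theorem~\ref{modular branching rule} and the compatibility of $E_j,F_j$ established in Section~\ref{lowerable notation}), and that $j$-restriction sends partitions in $B$ to partitions in a single block $A$. Everything else is a direct invocation of Proposition~\ref{lowerable}, Lemma~\ref{lemma w-kl}, and the inductive hypothesis, so there is no genuinely hard step here — the corollary is essentially a packaging of the preceding two results.
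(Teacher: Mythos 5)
Your proposal is correct and follows essentially the same route as the paper: split into the cases $\epsilon_{j}(\lambda)<\epsilon_{j}(\mu)$ and $\epsilon_{j}(\lambda)=\epsilon_{j}(\mu)$, invoke Proposition~\ref{lowerable} in each, and use Lemma~\ref{lemma w-kl} plus the weight-$<w$ hypothesis to kill the adjusted entry. The extra care you take over $\tilde{E}_{j}^{k}\lambda\neq\tilde{E}_{j}^{k}\mu$ and their lying in a common block is a point the paper leaves implicit, and your injectivity argument (recovering $\lambda$ by re-adding the $k$ lowest $j$-conormal nodes) settles it correctly.
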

\begin{proof}
Let $k:=\epsilon_{j}(\mu)$ and $l:=\varphi_{j}(\mu)$. If $\epsilon_{j}(\lambda)<k$, this follows directly from Proposition \ref{lowerable}.
If $\epsilon_{j}(\lambda)=k$, then we have $\textnormal{adj}_{\lambda\mu} = \textnormal{adj}_{\tilde{E}_{j}^{k}\lambda\tilde{E}_{j}^{k}\mu}$ by Proposition \ref{lowerable}. By Lemma \ref{lemma w-kl}, $\tilde{E}_{j}^{k}\lambda$ has weight $w-kl<w$, so the result follows.
\end{proof}

Note that our notion of lowerable partitions here is inspired by and generalises Fayers's definition of lowerable partitions in~\cite[Proposition 2.17]{wt 4}.

\begin{example}
Suppose that $e=8$ and $w=5$. Let $\la=\langle 1,2,3,4,5\rangle$ and $\mu= \langle 2,3,4,5,6 \rangle$ be the partitions lying in the block $B$ of $\h_{n}$ with the $\langle 2^{e}\rangle$ notation. 
\[
\begin{array}{c@{\qquad}c}
\la&\mu \\
\abacus(bbbbbbbb,bnnnnnbb,nbbbbbnn,nnnnnnnn)&
\abacus(bbbbbbbb,bbnnnnnb,nnbbbbbn,nnnnnnnn)
\end{array}
\]
Observe that $\epsilon_{2}(\mu)=1$, $\varphi_{2}(\mu)=1$ and $\epsilon_{2}(\la)=0$. Hence, $(\la,\mu)$ is lowerable by Theorem \ref{James Conjecture hecke 4}.
\end{example}

\subsection{[\textit{w:k}]-\emph{pairs}}\label{section wk}
Let $B$ be a weight $w$ block of $\s_{n}$ whose core $\kappa_{B}$ has exactly $k$ removable nodes on a given runner $i$ with residue $j$ (if $0<i<e$, runner $i$ has exactly $k$ more beads than runner $i-1$. If $i=0$, runner $0$ has exactly $k+1$ more beads than runner $e-1$). Suppose that $A$ is the weight $w$ block of $\s_{n-k}$ whose core $\kappa_{A}$ is obtained from $\kappa_{B}$ by removing all of the $k$ $j$-removable nodes. We say that the blocks $A$ and $B$ form a $[w:k]$-pair. We note that for each partition $\mu$ in $A$ (resp. $B$), we have $\varphi_{j}(\mu)-\epsilon_{j}(\mu)=k$ (resp. $\epsilon_{j}(\mu)-\varphi_{j}(\mu)=k$).

Given a partition $\mu$ in $A$, recall that $\tilde{F}_{j}^{k}\mu$ is the partition in $B$ obtained from $\mu$ by adding the $k$ lowest $j$-conormal nodes. Then, $\tilde{F}_{j}^{k}$ is a bijection from the set of partitions in $A$ to the set of partitions in $B$. Moreover, we have the following:
\begin{thrm}~\cite{brundan}
Let $\mu$ be a partition in $A$. Then, $\varphi_{j}(\mu)=k$ (equivalently $\epsilon_{j}(\mu)=0$) if and only if $\epsilon_{j}(\tilde{F}_{j}^{k}\mu)=k$, in which case, $L^{\mu}{\uparrow}^{B}\cong(L^{\tilde{F}_{j}^{k}\mu})^{\oplus k!}$ and $L^{\tilde{F}_{j}^{k}\mu}{\downarrow}_{A}\cong(L^{\mu})^{\oplus k!}$. If this happens, we say that $\mu$ and $\tilde{F}_{j}^{k}\mu$ are non-exceptional for the $[w:k]$-pair $(A,B)$. We say that $\mu$ and $\tilde{F}_{j}^{k}\mu$ are exceptional otherwise.
\end{thrm}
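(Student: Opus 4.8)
The plan is to derive all three assertions---the equivalence and the two module isomorphisms---from the modular branching rule, Theorem~\ref{modular branching rule}, once I have pinned down how $\tilde{F}_{j}$ and $\tilde{E}_{j}$ move the statistics $\epsilon_{j}$ and $\varphi_{j}$. Throughout I would exploit the defining feature of a $[w:k]$-pair noted just above: $\varphi_{j}(\mu)-\epsilon_{j}(\mu)=k$ for every partition $\mu$ in $A$ and $\epsilon_{j}(\nu)-\varphi_{j}(\nu)=k$ for every partition $\nu$ in $B$. In particular $\varphi_{j}(\mu)\ge k$ for $\mu$ in $A$, so $\tilde{F}_{j}^{k}\mu$ is always defined; and $\varphi_{j}(\mu)=k\iff\epsilon_{j}(\mu)=0$, while $\epsilon_{j}(\nu)=k\iff\varphi_{j}(\nu)=0$ for $\nu$ in $B$.

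The combinatorial core is the pair of identities
\[
\epsilon_{j}(\tilde{F}_{j}^{k}\mu)=\epsilon_{j}(\mu)+k,\qquad \varphi_{j}(\tilde{F}_{j}^{k}\mu)=\varphi_{j}(\mu)-k,
\]
together with $\tilde{E}_{j}^{k}\tilde{F}_{j}^{k}\mu=\mu$. To prove them I would look only at the two runners $i-1$ and $i$ of the abacus display that control the residue-$j$ nodes. The reduced $j$-signature obtained by deleting adjacent $-+$ pairs always has the shape $+^{\varphi_{j}(\mu)}-^{\epsilon_{j}(\mu)}$, since the $-+$-cancellation is exactly parenthesis matching and so leaves every surviving $+$ to the left of every surviving $-$; the $j$-conormal nodes are precisely the rows carrying the surviving $+$'s, and the lowest ones sit nearest the end of the signature. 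Adding the lowest $j$-conormal node slides one bead from runner $i-1$ to runner $i$ in one row---in two consecutive rows, with the evident modification, when $i=0$---which flips that single row's sign from $+$ to $-$ and touches no other row; because the flipped $+$ was a surviving one, no new $-+$ pair is created and no previously cancelled pair is disturbed. Thus one surviving $+$ turns into one surviving $-$, which is the case $k=1$; iterating $k$ times (legitimate as $\varphi_{j}(\mu)\ge k$) gives the displayed identities. Running the same bookkeeping backwards, removing the highest $j$-normal node of $\tilde{F}_{j}^{k}\mu$ flips the topmost surviving $-$ back to a $+$ and reverses the last addition, so $\tilde{E}_{j}^{k}\tilde{F}_{j}^{k}\mu=\mu$.

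Given this, the equivalence is immediate: for $\mu$ in $A$,
\[
\varphi_{j}(\mu)=k\iff\epsilon_{j}(\mu)=0\iff\epsilon_{j}(\mu)+k=k\iff\epsilon_{j}(\tilde{F}_{j}^{k}\mu)=k.
\]
Now suppose this common condition holds. Since $\varphi_{j}(\mu)=k$ and $B$ arises from $A$ by adjoining $k$ residue-$j$ nodes, Theorem~\ref{modular branching rule} (the case $\varphi_{j}(\lambda)=k$) yields $L^{\mu}{\uparrow}^{B}\cong(L^{\tilde{F}_{j}^{k}\mu})^{\oplus k!}$. Since $\epsilon_{j}(\tilde{F}_{j}^{k}\mu)=k$ and $A$ arises from $B$ by deleting $k$ residue-$j$ nodes, Theorem~\ref{modular branching rule} (the case $\epsilon_{j}(\lambda)=k$) yields $L^{\tilde{F}_{j}^{k}\mu}{\downarrow}_{A}\cong(L^{\tilde{E}_{j}^{k}\tilde{F}_{j}^{k}\mu})^{\oplus k!}=(L^{\mu})^{\oplus k!}$, the last equality by $\tilde{E}_{j}^{k}\tilde{F}_{j}^{k}\mu=\mu$. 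This is the assertion.

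Everything except the second paragraph is purely formal, so the main obstacle is a fully rigorous, case-free treatment of how adjoining or deleting the extreme $j$-(co)normal node interacts with the $-+$-cancellation that produces the reduced $j$-signature: one must check that such a move never fuses two separate cancelled pairs nor splits one, and dispatch the runner-$0$ wrap-around uniformly with the generic rows. Alternatively, one can simply quote these facts from the theory of the $\widehat{\mathfrak{sl}}_{e}$-crystal on partitions---they are the crystal relations $\epsilon_{j}(\tilde{f}_{j}b)=\epsilon_{j}(b)+1$, $\varphi_{j}(\tilde{f}_{j}b)=\varphi_{j}(b)-1$ and $\tilde{e}_{j}\tilde{f}_{j}b=b$---after which the theorem becomes a two-line corollary of Theorem~\ref{modular branching rule}, which is essentially the argument in~\cite{brundan}.
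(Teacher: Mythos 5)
Your proposal is correct. Note, though, that the paper does not prove this statement at all: it is quoted directly from~\cite{brundan}, so there is no internal argument to compare against. What you have done is reconstruct the standard derivation: establish the signature/crystal identities $\epsilon_{j}(\tilde{F}_{j}^{k}\mu)=\epsilon_{j}(\mu)+k$, $\varphi_{j}(\tilde{F}_{j}^{k}\mu)=\varphi_{j}(\mu)-k$ and $\tilde{E}_{j}^{k}\tilde{F}_{j}^{k}\mu=\mu$ (using that the reduced $j$-signature has the form $+^{\varphi_{j}}-^{\epsilon_{j}}$, that $\varphi_{j}(\mu)\ge k$ on $A$ because $\varphi_{j}-\epsilon_{j}=k$ there, and that flipping a surviving $+$ to a $-$ neither creates nor destroys cancelled pairs), and then read off both the equivalence and the two isomorphisms from the $\varphi_{j}(\lambda)=k$ and $\epsilon_{j}(\lambda)=k$ cases of Theorem \ref{modular branching rule}. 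This is essentially the argument in Brundan/Kleshchev's treatment, and your identification of the one genuinely delicate point---a case-free verification (including the runner-$0$ wrap-around) that adding the lowest surviving $+$ or removing the highest surviving $-$ only flips that one sign in the reduced signature---is exactly where the real work lies; quoting it as the crystal relations $\epsilon_{j}(\tilde{f}_{j}b)=\epsilon_{j}(b)+1$, $\varphi_{j}(\tilde{f}_{j}b)=\varphi_{j}(b)-1$, $\tilde{e}_{j}\tilde{f}_{j}b=b$ is a legitimate way to finish.
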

When $w\le k$, every partition is non-exceptional for the $[w:k]$-pair $(A,B)$, and we say that $A$ and $B$ are \textit{Scopes equivalent}; they are in fact Morita equivalent~\cite{Scopes}.

\begin{remark}\label{remark excep}
Let $\la$ be a partition in $A$. 
\begin{itemize}
\item If $\la$ were exceptional, then $\varphi_{j}(\la)>k$ (equivalently $\epsilon_{j}(\la)>0$).
\item If $\la$ were non-exceptional, then $\varphi_{j}(\la)=k$ (equivalently $\epsilon_{j}(\la)=0$).
\end{itemize}
Let $\sigma$ be a partition in $B$. 
\begin{itemize}
\item If $\sigma$ were exceptional, then $\epsilon_{j}(\sigma)>k$ (equivalently $\varphi_{j}(\sigma)>0$).
\item If $\sigma$ were non-exceptional, then $\epsilon_{j}(\sigma)=k$ (equivalently $\varphi_{j}(\sigma)=0$).
\end{itemize}
\end{remark}

\begin{definition}\label{new notation}
Let $\mu\in\p(n)$ and let $j$ be some $e$-residue. If $\varphi_{j}(\mu)-\epsilon_{j}(\mu)=k>0$, we define $\dot{F}_{j}\mu$ to be $\tilde{F}_{j}^{k}\mu$ ($\dot{F}_{j}\mu$ is defined only when $\varphi_{j}(\mu)-\epsilon_{j}(\mu)>0$). 
Given a positive integer $m$, we define $\dot{F}_{j}{\searrow}_{m}\mu$ and $\dot{F}_{j}{\nearrow}^{m}\mu$ recursively in the following way:
\begin{itemize}
\item If $\varphi_{j}(\mu)-\epsilon_{j}(\mu)>0$, $\dot{F}_{j}{\searrow}_{1}\mu:=\dot{F}_{j}\mu$ ($\dot{F}_{j}{\searrow}_{1}\mu$ is not defined when $\varphi_{j}(\mu)-\epsilon_{j}(\mu)\le0$).
\item If $m>2$ and $\varphi_{j-m+1}(\dot{F}_{j}{\searrow}_{m-1}\mu)-\epsilon_{j-m+1}(\dot{F}_{j}{\searrow}_{m-1}\mu)>0$
, $\dot{F}_{j}{\searrow}_{m}\mu:=\dot{F}_{j-m+1}(\dot{F}_{j}{\searrow}_{m-1}\mu)$.
\item If $\varphi_{j}(\mu)-\epsilon_{j}(\mu)>0$, $\dot{F}_{j}{\nearrow}^{1}\mu:=\dot{F}_{j}\mu$ ($\dot{F}_{j}{\nearrow}^{1}\mu$ is not defined when $\varphi_{j}(\mu)-\epsilon_{j}(\mu)\le0$).
\item If $m>2$ and $\varphi_{j+m-1}(\dot{F}_{j}{\nearrow}^{m-1}\mu)-\epsilon_{j+m-1}(\dot{F}_{j}{\nearrow}^{m-1}\mu)>0$
, $\dot{F}_{j}{\nearrow}^{m}\mu:=\dot{F}_{j+m-1}(\dot{F}_{j}{\nearrow}^{m-1}\mu)$.
\end{itemize}
\end{definition}

\begin{prop}\label{proposition exceptional lambda}
Suppose that $A$ and $B$ are blocks forming a $[w:k]$-pair as above with $k<w$. Let $\lambda$ and $\mu$ be two distinct partitions in $A$. 
\begin{enumerate}
\item If $\lambda$ and $\mu$ are both non-exceptional, then $\textnormal{adj}_{\lambda\mu}=\textnormal{adj}_{\dot{F}_{j}\lambda,\dot{F}_{j}\mu}$.
\item If $\la$ is non-exceptional but $\mu$ is exceptional, then $\adj_{\la\mu}=0$.
\end{enumerate}
\end{prop}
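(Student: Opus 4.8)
The plan is to obtain both parts as immediate consequences of Proposition~\ref{lowerable}, once the words ``exceptional'' and ``non-exceptional'' are rewritten in terms of the invariants $\epsilon_{j}$ and $\varphi_{j}$. First I would recall from Remark~\ref{remark excep} that, since $\varphi_{j}(\nu)-\epsilon_{j}(\nu)=k$ for every partition $\nu$ in $A$, such a $\nu$ is non-exceptional exactly when $\epsilon_{j}(\nu)=0$ (equivalently $\varphi_{j}(\nu)=k$) and exceptional exactly when $\epsilon_{j}(\nu)>0$ (equivalently $\varphi_{j}(\nu)>k$). I shall take $k\ge 1$ throughout, as holds for any non-trivial $[w:k]$-pair, so that $\dot{F}_{j}$ makes sense on partitions in $A$.

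For part (2): since $\lambda$ is non-exceptional and $\mu$ is exceptional, $\epsilon_{j}(\lambda)=0<\epsilon_{j}(\mu)$. The first assertion of Proposition~\ref{lowerable}, applied to the normal nodes of residue $j$, then gives $\adj_{\lambda\mu}=0$ at once.

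For part (1): both $\lambda$ and $\mu$ are non-exceptional, so $\varphi_{j}(\lambda)=\varphi_{j}(\mu)=k$. Applying the conormal version of Proposition~\ref{lowerable} with this common value yields $\adj_{\lambda\mu}=\adj_{\tilde{F}_{j}^{k}\lambda,\,\tilde{F}_{j}^{k}\mu}$. Because $\varphi_{j}(\nu)-\epsilon_{j}(\nu)=k>0$ for every $\nu$ in $A$, Definition~\ref{new notation} identifies $\tilde{F}_{j}^{k}\lambda=\dot{F}_{j}\lambda$ and $\tilde{F}_{j}^{k}\mu=\dot{F}_{j}\mu$; since $\tilde{F}_{j}^{k}$ is a bijection from the partitions in $A$ onto those in $B$, these are distinct partitions in $B$, and so $\adj_{\dot{F}_{j}\lambda,\dot{F}_{j}\mu}$ is a genuine entry of the adjustment matrix of $B$ equal to $\adj_{\lambda\mu}$, as claimed.

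I do not expect any genuine obstacle here: the substance is already packaged into Proposition~\ref{lowerable}, and what remains is only the bookkeeping of matching the residue $j$ and the integer $k$ of the $[w:k]$-pair to its hypotheses and checking that the conclusions land in block $B$. Should one instead want a self-contained proof, one would rerun the argument of Proposition~\ref{lowerable}: expand $[L^{\lambda}_{\C}]=\sum_{\nu}\adj_{\lambda\nu}[L^{\nu}_{\F}]$ over partitions $\nu$ in $A$, apply the divided power functor $F_{j}^{(k)}$, use Theorem~\ref{modular branching rule} to evaluate $F_{j}^{(k)}[L^{\lambda}_{\C}]=[L^{\dot{F}_{j}\lambda}_{\C}]$ and $F_{j}^{(k)}[L^{\nu}_{\F}]=[L^{\dot{F}_{j}\nu}_{\F}]$ for non-exceptional $\nu$, and compare the coefficients of $[L^{\dot{F}_{j}\mu}_{\F}]$ on the two sides; the only delicate point along that route, sidestepped by invoking Proposition~\ref{lowerable} directly, would be controlling the remaining composition factors of $F_{j}^{(k)}[L^{\nu}_{\F}]$ for exceptional $\nu$.
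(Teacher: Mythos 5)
Your proposal is correct and follows essentially the same route as the paper: translate (non-)exceptionality into the values of $\epsilon_{j}$ and $\varphi_{j}$ via Remark \ref{remark excep} and then invoke Proposition \ref{lowerable}. The only cosmetic difference is that in part (2) you use the normal-node ($\epsilon_{j}$) formulation where the paper uses the conormal one, which is equivalent here since $\varphi_{j}-\epsilon_{j}=k$ on all of $A$.
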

\begin{proof}
\begin{enumerate}
\item By Remark \ref{remark excep}, $\varphi_{j}(\la)=\varphi_{j}(\mu)=k$, so $\textnormal{adj}_{\lambda\mu}=\textnormal{adj}_{\dot{F}_{j}\lambda,\dot{F}_{j}\mu}$ by Proposition \ref{lowerable}.
\item By Remark \ref{remark excep}, $\varphi_{j}(\la)=k$ and $\varphi_{j}(\mu)>k$, so $\textnormal{adj}_{\lambda\mu}=0$ by Proposition \ref{lowerable}. 
\end{enumerate}
\end{proof}

Suppose that we have a series of $t+1$ blocks $B_{m}$ of weight $w$ with cores $\kappa_{m}$, where $0\le m\le t$. Moreover, for $1\le m \le t$, $B_{m}$ and $B_{m-1}$ forms a $[w:k_{m}]$-pair with $\kappa_{m-1}$ being obtained from $\kappa_{m}$ by adding $k_{m}>0$ addable nodes of residue $j_{m}$. Given any partition $\lambda$ in $B_{t}$, $f(\la):=\dot{F}_{j_{1}}\dot{F}_{j_{2}}\cdots\dot{F}_{j_{t-1}}\dot{F}_{j_{t}}\la$ is well-defined. We say that $f(\lambda)$ is \textit{semisimply induced} if $\lambda$, $\dot{F}_{j_{t}}\lambda$, $\dot{F}_{j_{t-1}}\dot{F}_{j_{t}}\lambda$, $\dots$, $ \dot{F}_{j_{2}}\cdots\dot{F}_{j_{t-1}}\dot{F}_{j_{t}}\lambda$ and $f(\lambda)$ are all non-exceptional. We say that $f(\lambda)$ is \textit{not semisimply induced} otherwise. Using this notation, we may restate our working version of Proposition \ref{proposition exceptional lambda} in the following corollary.
\begin{cor}\label{working exceptional lambda}
We adopt the notation above. Let $\lambda$ and $\mu$ be two distinct partitions in $B_{t}$. 
\begin{enumerate}
\item If $f(\lambda)$ and $f(\mu)$ are both semisimply induced, then $\textnormal{adj}_{\lambda\mu}=\textnormal{adj}_{f(\lambda)f(\mu)}$. 
\item If $f(\lambda)$ is semisimply induced but $f(\mu)$ is not semisimply induced, then $\textnormal{adj}_{\lambda\mu}=0$.
\end{enumerate}
\end{cor}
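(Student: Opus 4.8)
The plan is to prove both statements simultaneously by induction on the length $t$ of the chain of $[w:k]$-pairs, at each step stripping off the bottom step $\dot{F}_{j_{t}}$ (the one relating $B_{t}$ to $B_{t-1}$) and invoking Proposition \ref{proposition exceptional lambda}. The base case $t=1$ is essentially Proposition \ref{proposition exceptional lambda} itself: for a single $[w:k_{1}]$-pair $(B_{1},B_{0})$, the condition that $f(\la)=\dot{F}_{j_{1}}\la$ be semisimply induced is exactly the condition that $\la$ be non-exceptional (a non-exceptional $\la$ in the lower block automatically has $\dot{F}_{j_{1}}\la$ non-exceptional, by the cited theorem of Brundan), and likewise ``not semisimply induced'' translates to ``exceptional''; so part (1) becomes Proposition \ref{proposition exceptional lambda}(1) and part (2) becomes Proposition \ref{proposition exceptional lambda}(2).

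For the inductive step, assume $t\ge 2$ and that the corollary holds for all chains of length $t-1$. Put $\la'=\dot{F}_{j_{t}}\la$ and $\mu'=\dot{F}_{j_{t}}\mu$; these are well-defined partitions in $B_{t-1}$ because $\varphi_{j_{t}}-\epsilon_{j_{t}}=k_{t}>0$ on $B_{t}$, and they are distinct since $\dot{F}_{j_{t}}$ is injective on the partitions of $B_{t}$. Note that, along the shortened chain $B_{0},\dots,B_{t-1}$, the map $f$ applied to $\la'$ returns $f(\la)$, and similarly for $\mu'$. Since $f(\la)$ is semisimply induced, $\la$ is in particular non-exceptional for the $[w:k_{t}]$-pair $(B_{t},B_{t-1})$. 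I then distinguish whether $\mu$ is exceptional for this pair.

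If $\mu$ is exceptional, Proposition \ref{proposition exceptional lambda}(2) gives $\adj_{\la\mu}=0$ immediately; this disposes of part (2), and cannot arise in part (1) since there $f(\mu)$ semisimply induced forces $\mu$ non-exceptional. If $\mu$ is non-exceptional, Proposition \ref{proposition exceptional lambda}(1) gives $\adj_{\la\mu}=\adj_{\la'\mu'}$, and it remains to apply the inductive hypothesis to the pair $(\la',\mu')$ for the chain $B_{0},\dots,B_{t-1}$. This requires only the combinatorial observation that the list of partitions whose non-exceptionality defines ``$f(\la')$ is semisimply induced'' (namely $\la',\dot{F}_{j_{t-1}}\la',\dots,f(\la')$) is the tail of the corresponding list for $f(\la)$; hence $f(\la)$ semisimply induced implies $f(\la')$ semisimply induced, and --- using that $\mu$ is non-exceptional --- $f(\mu)$ not semisimply induced implies $f(\mu')$ not semisimply induced. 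The inductive hypothesis then yields $\adj_{\la'\mu'}=\adj_{f(\la)f(\mu)}$ in case (1) and $\adj_{\la'\mu'}=0$ in case (2), completing the induction.

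There is no genuine difficulty here beyond this bookkeeping; the only point that demands attention is the subcase of part (2) in which $\mu$ is non-exceptional for the bottom pair but fails to be non-exceptional somewhere higher up the chain --- it is exactly this subcase that forces us to iterate Proposition \ref{proposition exceptional lambda} rather than apply it once.
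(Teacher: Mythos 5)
Your proof is correct and is essentially the paper's argument: the paper simply says to apply Proposition \ref{proposition exceptional lambda} repeatedly, and your induction on $t$ is exactly that iteration written out, with the case distinction (exceptional versus non-exceptional $\mu$ at the bottom pair) making precise why part (2) needs the repetition.
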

\begin{proof}
We just apply Proposition \ref{proposition exceptional lambda} repeatedly.
\end{proof}
When we want to show that $\textnormal{adj}_{\lambda\mu}=0$ for some pair of partitions $(\lambda,\mu)$ in section \ref{section wt 4}, we sometimes do this by finding a sequence of $e$-residues $j_{1},\dots,j_{t}$ such that $f(\la):=\dot{F}_{j_{1}}\cdots\dot{F}_{j_{t}}\la$ is semisimply induced and $(f(\lambda),f(\mu))$ is lowerable. We may also use this formalism without making $f$ explicit by writing $\lambda\sim\nu$ (and say that $\lambda$ \textit{induces semisimply} to $\nu$) to indicate that there exists a sequence of $e$-residues $j_{1},\dots,j_{t}$ such that $f(\la):=\dot{F}_{j_{1}}\cdots\dot{F}_{j_{t}}\la$ is semisimply induced and equals $\nu$. When we do so, it is hoped that it will not be too difficult for the reader to construct an appropriate sequence $j_{1},\dots,j_{t}$.

\begin{example}\label{example induce to lowerable}
Let $\la=\langle0_{1^{2}},3_{1^{2}} \rangle$, $\mu=\langle 0,3_{2},4\rangle$ and $\nu=\langle 0_{2},3_{1^{2}}\rangle$ be the partitions lying in the weight 4 block $B$ of $\s_{n}$ with the $\langle 4^{3},5^{2},4^{2} \rangle$ notation (the beads on runner $0$ have $e$-residue $5$).
For any partition $\sigma$ in $B$, define $\mathfrak{a}(\sigma):=\dot{F}_{5}{\nearrow}^{3}\sigma=\dot{F}_{0}\dot{F}_{6}\dot{F}_{5}\sigma$.

\[
\begin{array}{c@{\qquad}c@{\qquad}c}
\la&\mu&\nu \\
\abacus(bbbbbbb,bbbbbbb,nbbbbbb,bbbnbbb,bnnbbnn,nnnbnnn,nnnnnnn)&
\abacus(bbbbbbb,bbbbbbb,bbbbbbb,nbbbbbb,bnnnnnn,nnnnbnn,nnnbnnn)&
\abacus(bbbbbbb,bbbbbbb,bbbbbbb,nbbnbbb,nnnbbnn,bnnbnnn,nnnnnnn)
\end{array}
\]
\[
\begin{array}{c@{\qquad}c}
\mathfrak{a}(\la)&\mathfrak{a}(\mu) \\
\abacus(bbbbbbb,bbbbbbn,bbbbbbb,bbbnbbb,nnbbbnn,nnnbnnn,nnnnnnn)&
\abacus(bbbbbbb,bbbbbbb,bbbbbbn,bbbbbbb,nnbnnnn,nnnnbnn,nnnbnnn)
\end{array}
\]
Observe that $\mathfrak{a}(\la)$ and $\mathfrak{a}(\mu)$ are semisimply induced and that $(\mathfrak{a}(\la),\mathfrak{a}(\mu))$ is lowerable. However, $\mathfrak{a}(\nu)$ is not semisimply induced.
\end{example}

We state the following observation from the list of exceptional partitions of weights 3 and 4 in the appendix (Figure \ref{figure exceptional 31}, Figure \ref{figure exceptional 32} and Figure \ref{figure exceptional 41}).
\begin{remark}\label{remark excep same}
Adopting the same notation as the beginning of this section, let $A$ and $B$ be blocks forming a $[w:k]$-pair with $k<w$ and $w\in\{3,4\}$. Let $\la$ be a partition in $A$. 
\begin{itemize}
\item If $\la$ were exceptional, then $\varphi_{j}(\la)=k+1$ (equivalently $\epsilon_{j}(\la)=1$).
\item If $\la$ were non-exceptional, then $\varphi_{j}(\la)=k$ (equivalently $\epsilon_{j}(\la)=0$).
\end{itemize}
Let $\sigma$ be a partition in $B$. 
\begin{itemize}
\item If $\sigma$ were exceptional, then $\epsilon_{j}(\sigma)=k+1$ (equivalently $\varphi_{j}(\sigma)=1$).
\item If $\sigma$ were non-exceptional, then $\epsilon_{j}(\sigma)=k$ (equivalently $\varphi_{j}(\sigma)=0$).
\end{itemize}
\end{remark}

Therefore, we have the following result specific to blocks of $\s_{n}$ of weights 3 and 4.
\begin{prop}\label{proposition non-exceptional mu}
Suppose that $A$ and $B$ are blocks forming a $[w:k]$-pair as above, with $k<w$ and $w\in\{3,4\}$. Let $\lambda$ and $\mu$ be two distinct partitions in $A$.
If $\mu$ is exceptional, then both pairs of partitions $(\lambda,\mu)$ and $(\dot{F}_{j}\lambda,\dot{F}_{j}\mu)$ are lowerable.
\end{prop}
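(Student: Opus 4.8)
The plan is to use the distinguished residue $j$ of the $[w:k]$-pair itself as the witnessing residue for lowerability in both cases, extracting everything needed from Remark \ref{remark excep same} together with the fact (recorded before Definition \ref{new notation}) that $\tilde{F}_{j}^{k}$ is a weight-preserving bijection from the partitions in $A$ to those in $B$ which preserves the exceptional/non-exceptional dichotomy.

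First I would treat the pair $(\lambda,\mu)$ in $A$. Since $\mu$ is exceptional and $w\in\{3,4\}$, Remark \ref{remark excep same} gives $\epsilon_{j}(\mu)=1$ and $\varphi_{j}(\mu)=k+1$; as $A$ and $B$ are distinct blocks we have $k\ge 1$, so both $\epsilon_{j}(\mu)$ and $\varphi_{j}(\mu)$ are strictly positive. For $\lambda$ the same remark gives $\epsilon_{j}(\lambda)\in\{0,1\}$ (according as $\lambda$ is non-exceptional or exceptional), hence $\epsilon_{j}(\lambda)\le 1=\epsilon_{j}(\mu)$ in every case. Thus $j$ witnesses that $(\lambda,\mu)$ is lowerable in the sense of Definition \ref{definition lowerable}.

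Next I would treat $(\dot{F}_{j}\lambda,\dot{F}_{j}\mu)$. These are distinct partitions in the weight-$w$ block $B$, distinctness being exactly the injectivity of $\tilde{F}_{j}^{k}$. By the theorem quoted after Definition \ref{new notation} (Brundan--Kleshchev), $\dot{F}_{j}\mu$ is exceptional since $\mu$ is, and $\dot{F}_{j}\lambda$ is exceptional precisely when $\lambda$ is. Applying the part of Remark \ref{remark excep same} for partitions in $B$ gives $\epsilon_{j}(\dot{F}_{j}\mu)=k+1>0$ and $\varphi_{j}(\dot{F}_{j}\mu)=1>0$, while $\epsilon_{j}(\dot{F}_{j}\lambda)\in\{k,k+1\}$, so that $\epsilon_{j}(\dot{F}_{j}\lambda)\le k+1=\epsilon_{j}(\dot{F}_{j}\mu)$. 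Hence $j$ again witnesses lowerability, and the proof is complete.

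I do not expect a genuine obstacle: the argument is pure bookkeeping with the invariants $\epsilon_{j}$ and $\varphi_{j}$ as tabulated in Remark \ref{remark excep same}, together with the elementary properties of $\tilde{F}_{j}^{k}$. The only point meriting an explicit word is the verification that $\dot{F}_{j}\lambda\neq\dot{F}_{j}\mu$, which is needed for $(\dot{F}_{j}\lambda,\dot{F}_{j}\mu)$ to be eligible to be called lowerable, and which is immediate from the bijectivity of $\tilde{F}_{j}^{k}$ between the partitions of $A$ and of $B$.
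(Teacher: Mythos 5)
Your proposal is correct and follows essentially the same route as the paper, whose proof is simply the observation that Remark \ref{remark excep same} combined with Definition \ref{definition lowerable} gives the required inequalities $\epsilon_{j}(\lambda)\le\epsilon_{j}(\mu)$ and $\epsilon_{j}(\dot{F}_{j}\lambda)\le\epsilon_{j}(\dot{F}_{j}\mu)$ with the positivity conditions, using the distinguished residue $j$ of the $[w:k]$-pair as the witness. Your extra remarks (that $k\ge 1$, that $\tilde{F}_{j}^{k}$ is a bijection so $\dot{F}_{j}\lambda\neq\dot{F}_{j}\mu$, and that exceptionality is preserved under $\tilde{F}_{j}^{k}$ by Brundan's theorem — which, incidentally, appears just before Definition \ref{new notation}, not after) are correct details the paper leaves implicit.
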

\begin{proof}
This follows directly from Remark \ref{remark excep same} and Definition \ref{definition lowerable}.
\end{proof}

\subsection{Rouquier blocks}\label{section rock}
A weight $w$ block $B$ of $\s_{n}$ with the $\langle b_{0},\dots,b_{e-1}\rangle$ notation is \textit{Rouquier} if for every $0\le i<j\le e-1$, either $b_{i}-b_{j}\ge w$ or $b_{j}-b_{i}\ge w-1$. The Rouquier blocks are Scopes equivalent to each other and are now well understood. In particular, we know that James's Conjecture holds for Rouquier blocks.
\begin{thrm}~\cite[Corollary 3.15]{rouquier}~\cite{filtrations}\label{James Conjecture Rock}
If $B$ is a Rouquier block of weight $w<\textnormal{char}(\mathbb{F})$, then $\textnormal{adj}_{\lambda\mu}=\delta_{\lambda\mu}$ for all $\lambda$ and $\mu$ in $B$.
\end{thrm}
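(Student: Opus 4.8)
The plan is to prove something stronger than $\As=I$ on $B$: namely that the decomposition matrix $\Ds$ of the Rouquier block $B$ over $\F$ coincides, entry for entry, with the decomposition matrix $\Dsz$ of the corresponding block $B^{0}$ over $\C$. Since $\Dsz$ is unitriangular (Theorem~\ref{jantzen order d}), it is invertible, so comparing this with the factorisation $\Ds=\Dsz\As$ of Theorem~\ref{theorem adjustment} immediately yields $\As=I$ on $B$, i.e. $\adj_{\la\mu}=\delta_{\la\mu}$ for all $\la,\mu$ in $B$. Thus everything reduces to a statement purely about ordinary decomposition numbers.

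The substantive input is the explicit description of the decomposition numbers of Rouquier (RoCK) blocks. Write $B$ in the $\langle b_{0},\dots,b_{e-1}\rangle$ notation and recall that, because the runners of a Rouquier core are widely separated, a partition $\la$ in $B$ is completely recorded by the $e$-tuple $(\la(0),\dots,\la(e-1))$ of its runner partitions, with $\sum_{i}|\la(i)|=w$. Over $\C$, the $v$-decomposition numbers $d^{e}_{\la\mu}(v)$ of a Rouquier block are given by an explicit closed formula of Leclerc--Miyachi type (a sum of products of Littlewood--Richardson coefficients), and $[W^{\la}_{\C}:L^{\mu}_{\C}]$ is recovered by setting $v=1$, using Theorem~\ref{v-decomposition numbers}. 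Over a field of characteristic $p$, the corresponding statement is exactly the structure theory of Rouquier blocks: Chuang--Tan's filtration arguments~\cite{filtrations} together with the Scopes/derived-equivalence and $\mathfrak{sl}_{2}$-categorification machinery underlying~\cite{rouquier} show that $[W^{\la}_{\F}:L^{\mu}_{\F}]$ is computed by the \emph{same} combinatorial recipe, in which the only ingredient that could possibly depend on the ground field is the collection of decomposition numbers of the (classical, $q=1$) Schur algebras $S_{\F}(w_{i},w_{i})$ attached to the individual runners, where $w_{i}=|\la(i)|\le w$; all the gluing data (Littlewood--Richardson coefficients, the bookkeeping of which $e$-tuples index $e$-regular partitions, how the Rouquier core sits on the abacus) is combinatorial and hence field-independent.

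Granting this, the conclusion is formal. The classical Schur algebra $S_{R}(m,m)$ is semisimple whenever $m=0$ or $\operatorname{char}(R)>m$; since here $0\le w_{i}\le w<p=\operatorname{char}(\F)$, each $S_{\F}(w_{i},w_{i})$ is semisimple and its decomposition matrix is the identity, precisely as over $\C$. Feeding this into the closed formula, every entry of $\Ds$ restricted to $B$ equals the corresponding entry of $\Dsz$ restricted to $B^{0}$, so $\Ds=\Dsz$ on $B$, and the argument of the first paragraph gives $\As=I$ on $B$.

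The main obstacle is entirely the second paragraph: establishing that the decomposition numbers of a Rouquier block over a field of positive characteristic are governed by the characteristic-zero combinatorics, the only genuinely field-sensitive piece being the rank-$\le w$ local Schur algebras. A priori the weight carried on a single runner could behave like a non-semisimple Schur algebra; ruling this out when that weight is smaller than $p$ is the heart of the Rouquier-block structure theory, and once it is in hand, the passage to $\As$ through Theorem~\ref{theorem adjustment} is routine. (Attempting instead a self-contained argument using only the tools of Sections~\ref{background}--\ref{section adj} — Jantzen--Schaper bounds, runner removal, branching, $[w{:}k]$-pairs — is unlikely to work for unbounded $w$, since the $v$-decomposition numbers of a Rouquier block need not lie in $\{0,v,v^{2}\}$, so Corollary~\ref{decomposition number 0 or v} is not by itself enough.)
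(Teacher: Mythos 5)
Your proposal is essentially the same route the paper takes: the paper gives no independent argument for this theorem but simply cites the Rouquier-block structure theory of James--Lyle--Mathas~\cite{rouquier} and Chuang--Tan~\cite{filtrations}, which is exactly the content you defer to in your second paragraph (adjustment/decomposition entries of a Rouquier block expressed through decomposition numbers of classical Schur algebras of degree at most $w$, which are semisimple when $w<\textnormal{char}(\mathbb{F})$). Your surrounding reductions — deducing $\mathcal{A}=I$ on the block from $\Ds=\Dsz$ via unitriangularity and Theorem~\ref{theorem adjustment} — are correct and routine, so the proposal is fine.
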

We say that a partition $\lambda$ \textit{induces semi-simply to a Rouquier block} if $\lambda\sim\nu$ for some $\nu$ lying in a Rouquier block. As a consequence of Corollary \ref{working exceptional lambda} and Theorem \ref{James Conjecture Rock}, we have the following result.
\begin{prop}\label{induce semisimply}
Suppose that $\lambda$ and $\mu$ are partitions lying in a block $B$ of $\s_{n}$ of weight $w<\textnormal{char}(\mathbb{F})$. If $\lambda$ induces semi-simply to a Rouquier block, then $\textnormal{adj}_{\lambda\mu}=\delta_{\lambda\mu}$.
\end{prop}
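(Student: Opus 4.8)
The plan is to transport the pair $(\lambda,\mu)$ through the same chain of $[w:k]$-pairs that witnesses $\lambda$ inducing semi-simply to a Rouquier block, thereby reducing $\adj_{\lambda\mu}$ to an adjustment matrix entry of a Rouquier block, where Theorem \ref{James Conjecture Rock} supplies the answer.

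First I would dispose of the case $\lambda=\mu$, where $\adj_{\mu\mu}=1$ by Corollary \ref{jantzen order adj}; so assume from now on that $\lambda\neq\mu$. By hypothesis $\lambda\sim\nu$ for some $\nu$ lying in a Rouquier block $R$, so unwinding the definition there is a chain of weight $w$ blocks $B=B_{t},B_{t-1},\dots,B_{0}=R$ and $e$-residues $j_{1},\dots,j_{t}$, with $(B_{m-1},B_{m})$ a $[w:k_{m}]$-pair for each $m$, such that $f(\lambda):=\dot{F}_{j_{1}}\cdots\dot{F}_{j_{t}}\lambda=\nu$ and $f(\lambda)$ is semisimply induced. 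The point I would stress is that the same chain applies to $\mu$: since $\mu$ lies in the same block $B_{t}$ as $\lambda$, and since each $\dot{F}_{j_{m}}$ is (by construction) the bijection $\tilde{F}_{j_{m}}^{k_{m}}$ from the partitions of one block of the $[w:k_{m}]$-pair onto those of the other, the composite $f(\mu):=\dot{F}_{j_{1}}\cdots\dot{F}_{j_{t}}\mu$ is well-defined, lies in $R$, and $f(\lambda)\neq f(\mu)$ because $f$ is a composition of bijections, hence injective.

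Then I would split into two cases. If $f(\mu)$ is not semisimply induced, part (2) of Corollary \ref{working exceptional lambda} gives $\adj_{\lambda\mu}=0=\delta_{\lambda\mu}$. If $f(\mu)$ is semisimply induced, part (1) of Corollary \ref{working exceptional lambda} gives $\adj_{\lambda\mu}=\adj_{f(\lambda)f(\mu)}$; since $f(\lambda)$ and $f(\mu)$ are distinct partitions of the Rouquier block $R$ of weight $w<\textnormal{char}(\mathbb{F})$, Theorem \ref{James Conjecture Rock} yields $\adj_{f(\lambda)f(\mu)}=0$. In either case $\adj_{\lambda\mu}=0=\delta_{\lambda\mu}$, as required.

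I do not anticipate a genuine obstacle here: given Corollary \ref{working exceptional lambda} and Theorem \ref{James Conjecture Rock} the argument is entirely formal. The one place to be attentive is the verification that the residue sequence witnessing $\lambda\sim\nu$ can be reused verbatim for $\mu$ — which is exactly the bijectivity of the operators $\dot{F}_{j}$ between the two blocks of a $[w:k]$-pair — so that Corollary \ref{working exceptional lambda} genuinely applies to the pair $(\lambda,\mu)$.
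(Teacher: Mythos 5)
Your argument is exactly the paper's intended one: the paper derives this proposition directly as a consequence of Corollary \ref{working exceptional lambda} and Theorem \ref{James Conjecture Rock}, and your case split on whether $f(\mu)$ is semisimply induced, together with the observation that the $\dot{F}_{j}$ operators are defined on every partition of the lower block of a $[w:k]$-pair (so the same residue sequence transports $\mu$), just makes that deduction explicit. The proposal is correct.
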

In sections \ref{section wt 3} and \ref{section wt 4}, we will sometimes state a partition $\nu$ lying in a Rouquier block and invite the reader to verify that $\lambda\sim\nu$ for some partition $\lambda$ for which we want to show that $\textnormal{adj}_{\lambda\mu}=\delta_{\lambda\mu}$.

\subsection{Outline of the proof of James's Conjecture for the blocks of $q$-Schur algebras of weights 3 and 4}\label{outline of proof}
From now on, we assume that $\textnormal{char}(\mathbb{F})\ge5$ and $3\le w\le4$. We prove Theorem \ref{James Conjecture schur 3} and Theorem \ref{James Conjecture schur 4} by induction on $n$, with the base case being the unique weight $w$ block of $\s_{we}$. We will deal with the base case at the beginning of sections \ref{section wt 3} and \ref{section wt 4}.

For the inductive step, we use $[w:k]$-pairs. If $B$ is a weight $w$ block of $\s_{n}$ and $n>we$, then there is at least one block $A$ forming a $[w:k]$-pair with $B$. Suppose that $A_{1},\dots,A_{t}$ are all the blocks with $A_{m}$ forming a $[w:k_{m}]$-pair with $B$, for each $m$. If $\lambda$ is a partition in $B$ which is non-exceptional for some pair $(A_{m},B)$, then $\textnormal{adj}_{\lambda\mu}=\delta_{\lambda\mu}$ for every partition $\mu$ lying in $B$ by Proposition \ref{proposition exceptional lambda} and induction.

Therefore, we may assume that $\lambda$ is exceptional for every pair $(A_{m},B)$. Suppose that an abacus display for the core of $A_{m}$ is obtained from that of $B$ by removing $k_{m}$ removable nodes on runner $i_{m}$, for each $m$. If $\lambda$ were exceptional for $(A_{m},B)$, then there must be at least $k_{m}+1$ normal beads on runner $i_{m}$ in the abacus display for $\lambda$. Hence, $|\lambda(i_{m})|\ge k_{m}+1$ for each $m$ (see Figure \ref{figure exceptional 31}, Figure \ref{figure exceptional 32} and Figure \ref{figure exceptional 41}), so we have $(k_{1}+1)+\dots+(k_{t}+1)\le|\lambda(i_{1})|+\dots+|\lambda(i_{t})|\le w$. When $w=3$, this implies that $t=1$ and $k_{1}\le2$. When $w=4$, this implies that either $t=1$ and $k_{1}\le3$ or $t=2$ and $k_{1}=k_{2}=1$. The blocks $B$ satisfying these conditions are dealt with in the rest of the paper.

By Remark \ref{remark row removal}, we may always assume that $\mu\gg\lambda$; that is $\mu>_{P}\lambda$, $\mu_{1}>\lambda_{1}$ and $l(\mu)<l(\lambda)$. Additionally, for each $e$-residue $j$ such that $\epsilon_{j}(\mu)>0$, we may assume that $\epsilon_{j}(\la)<\epsilon_{j}(\mu)$ by Remark \ref{remark lowerable}. Finally, we may assume that $\mu$ is $e$-singular by Proposition \ref{e-regular mu} and Theorem \ref{James Conjecture hecke 4}.
\section{Proof of James's Conjecture for weight 3 blocks of $\s_{n}$}\label{section wt 3}
In this section, we will first prove Theorem \ref{James Conjecture schur 3} and then use it to show that the decomposition numbers for weight 3 blocks of $\s_{n}$ are bounded above by one. Whenever $\lambda$ and $\mu$ are partitions with weight less than 3, $\textnormal{adj}_{\lambda\mu}=\delta_{\lambda\mu}$ by Theorem \ref{James Conjecture schur 2}. We will use this fact repeatedly without further comment.
\subsection{The principal block of \texorpdfstring{$\s_{3e}$}{TEXT}}\label{sec 3e}
Let $B$ be the principal block of $\s_{3e}$; that is the weight $3$ block which we display on an abacus with the $\langle 3^{e}\rangle$ notation.

\begin{prop} \label{weight 3 empty core}
Suppose that $\lambda$ and $\mu$ are partitions lying in $B$. Then, $\textnormal{adj}_{\lambda\mu}=\delta_{\lambda\mu}$.
\end{prop}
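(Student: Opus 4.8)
The plan is to separate the $e$-regular columns of the adjustment matrix from the $e$-singular ones. If $\mu$ is $e$-regular, then since $\textnormal{char}(\mathbb{F})\ge 5$ and $B$ has weight $3$, Theorem \ref{James Conjecture hecke 4} says the adjustment matrix of the Hecke block $\bar{B}$ with the same $e$-core and $e$-weight is the identity, so Proposition \ref{e-regular mu} gives $\textnormal{adj}_{\lambda\mu}=\delta_{\lambda\mu}$ at once. Hence the real content is the case $\mu\in\ps(n)$, and the task reduces to proving $\textnormal{adj}_{\lambda\mu}=0$ for every $\lambda\ne\mu$ in $B$ with $\mu$ $e$-singular.

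For such $\mu$ I would run the standard chain of reductions. By Corollary \ref{jantzen order adj} and Lemma \ref{product order} we may assume $\mu>_{P}\lambda$; in particular $\mu_{1}\ge\lambda_{1}$ and $l(\mu)\le l(\lambda)$. If $\mu_{1}=\lambda_{1}$ (resp. $l(\mu)=l(\lambda)$) then Corollary \ref{row removal adj} (resp. Corollary \ref{column removal adj}) identifies $\textnormal{adj}_{\lambda\mu}$ with an adjustment number for a pair of partitions of $3e-\mu_{1}<3e$ (resp. $3e-l(\mu)<3e$), which can have weight at most $2$ and so vanishes by Theorem \ref{James Conjecture schur 2} and the trivial weight $\le 1$ cases. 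So we may assume $\mu\gg\lambda$. Next, since weight $<3$ blocks have identity adjustment matrix, Proposition \ref{lowerable} and Corollary \ref{corollary lowerable} let me assume that $\epsilon_{j}(\lambda)<\epsilon_{j}(\mu)$ for every $e$-residue $j$ with $\epsilon_{j}(\mu)>0$, that the analogous statement holds for conormal nodes, and that $(\lambda,\mu)$ is not lowerable. Note that, $3e$ being the smallest $n$ carrying a weight-$3$ block, $B$ is the larger member of no $[w:k]$-pair, so — unlike the inductive step of the main theorem — no semisimple-induction shortcut is available and the reductions must be completed by a direct inspection.

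The decisive step is then a finite case analysis. A partition in $B$ is $\langle 0_{\lambda(0)},\dots,(e-1)_{\lambda(e-1)}\mid 3^{e}\rangle$ with $\sum_{i}|\lambda(i)|=3$, and only a short explicit list of these are $e$-singular and not already disposed of above (e.g. $(1^{3e})$ is dominance-minimal, hence vacuous). For each remaining pair $(\lambda,\mu)$ I would compute $d^{e}_{\lambda\mu}(v)$ using the LLT algorithm together with the runner-removal Theorems \ref{Runner Removal Theorem Empty} and \ref{Runner Removal Theorem Full}, as in Remark \ref{remark empty runner}. When $d^{e}_{\lambda\mu}(v)\in\{0,v\}$, Corollary \ref{decomposition number 0 or v} gives $\textnormal{adj}_{\lambda\mu}=0$ provided $\textnormal{adj}_{\nu\mu}=0$ for all $\lambda<_{J}\nu<_{J}\mu$, which I would arrange by processing the pairs in order of decreasing $\lambda$ in the Jantzen order (the $\lambda$ just below $\mu$ having no intermediate $\nu$). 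I expect every surviving pair to be of this benign type; should some pair instead have $d^{e}_{\lambda\mu}(v)$ a higher power of $v$, one argues directly from Jantzen--Schaper: since $\textnormal{char}(\mathbb{F})>3=w$ the coefficients $J(\lambda,\tau)$ are characteristic-free, so the same downward induction gives $B_{\mathbb{F}}(\lambda,\mu)=B_{\mathbb{C}}(\lambda,\mu)$, and one pins $[W^{\lambda}_{\mathbb{F}}:L^{\mu}_{\mathbb{F}}]$ between $[W^{\lambda}_{\mathbb{C}}:L^{\mu}_{\mathbb{C}}]$ and $B_{\mathbb{C}}(\lambda,\mu)$ and applies Lemma \ref{lemma easy fact}.

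The main obstacle is exactly this last step: being certain that the residual list of $e$-singular pairs really is small and that every one of them resolves. This is where a counterexample to James's Conjecture would first appear, so the argument ultimately rests on the explicit combinatorics of the weight-$3$ principal block being benign, together with the characteristic-independence of the Jantzen--Schaper data in the range $w<\textnormal{char}(\mathbb{F})$.
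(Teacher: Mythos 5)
Your reduction steps (disposing of $e$-regular $\mu$ via Proposition \ref{e-regular mu}, and the row/column-removal and lowerability reductions, where the image of $\row$ or $\col$ indeed has weight at most $2$ because any partition of size less than $3e$ does) are sound, apart from a slip of direction: after Proposition \ref{lowerable} one may assume $\epsilon_{j}(\lambda)>\epsilon_{j}(\mu)$ whenever $\epsilon_{j}(\mu)>0$, not the reverse. The genuine gap is in your decisive step, and it stems from a misreading: you assert that ``no semisimple-induction shortcut is available'' because $B$ is not the larger member of a $[w{:}k]$-pair, but the tool the paper uses here is induction \emph{upwards}: each surviving $\lambda$ in Table \ref{table 3e} is shown to induce semisimply to a Rouquier block (Proposition \ref{induce semisimply}, resting on Theorem \ref{James Conjecture Rock}), which is available regardless of whether $B$ sits above any block. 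Having discarded that tool, you replace it with (i) Corollary \ref{decomposition number 0 or v} when $d^{e}_{\lambda\mu}(v)\in\{0,v\}$, and (ii) a Jantzen--Schaper ``pinching'' otherwise — and (ii) does not work. By Theorem \ref{v-decomposition numbers}, $B_{\mathbb{C}}(\lambda,\mu)=\frac{d}{dv}d^{e}_{\lambda\mu}(v)|_{v=1}$, so if $d^{e}_{\lambda\mu}(v)=v^{k}$ with $k\ge2$ then $B_{\mathbb{C}}(\lambda,\mu)=k>1=[W^{\lambda}_{\mathbb{C}}:L^{\mu}_{\mathbb{C}}]$, and squeezing $[W^{\lambda}_{\mathbb{F}}:L^{\mu}_{\mathbb{F}}]$ between these two numbers determines nothing; Lemma \ref{lemma easy fact} then has no input. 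This is exactly why, in the weight-$4$ analogue (Tables \ref{table 4e 1} and \ref{table 4e 2}), the residual entries $v^{3}$ and $v^{3}+v$ required the lengthy arguments of Proposition \ref{prop 4e difficult} rather than any such bound.

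Moreover, you give no verification of your hope that ``every surviving pair is of the benign type,'' and it cannot simply be assumed: for instance for $\lambda=\langle 0_{1^{3}}\rangle$ and $\mu=\langle 0,1,2\rangle$ (with $e\ge3$) one computes $\sigma_{e}(\lambda)=\sigma_{e}(\mu)$, so by Theorem \ref{v-decomposition numbers are odd or even} the only possible nonzero value of $d^{e}_{\lambda\mu}(v)$ is $v^{2}$, which lies outside the scope of Corollary \ref{decomposition number 0 or v}; and if any intermediate $\nu$ in the Jantzen order has such an entry, your downward induction on $\nu$ stalls as well. So as written the proof does not close the case of the $e$-singular columns, which is the whole content of the proposition; the paper closes it by exhibiting, for each of the four surviving $\lambda$, an explicit semisimple induction to a Rouquier block, and you would need either to carry out that induction or to supply a genuinely different argument (not the Jantzen--Schaper sandwich) for every surviving pair whose $v$-decomposition number has degree at least $2$.
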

\begin{proof}
Let $\la$ be an arbitrary partition lying in $B$. Observe that each runner of $\la$ has at most one normal bead. By Remark \ref{remark lowerable}, we may assume that $\mu$ has no normal beads on any runner. By Remark \ref{remark row removal}, we may also assume that $\mu\gg\lambda$. Table \ref{table 3e} (in the appendix) lists all the possible pairs of partitions $(\lambda, \mu)$. We invite the reader to check that every partition in column $\lambda$ of Table \ref{table 3e} induces semi-simply to a Rouquier block.
\begin{itemize}
\item $\langle 0_{1^{3}} \rangle \sim \langle 0_{1^{3}} \mid 3,5,\dots, 2e+1 \rangle,$
\item $\langle 1_{1^{3}} \rangle \sim \langle 0_{1^{2}},1 \mid 3,5,\dots, 2e+1 \rangle,$
\item $\langle 0_{1^{2}},1 \rangle \sim \langle 0_{2,1} \mid 3,5,\dots, 2e+1 \rangle,$
\item $\langle  0, 1_{1^{2}} \rangle \sim \langle 0_{2},1 \mid 3,5,\dots, 2e+1 \rangle.$
\end{itemize}
Proposition \ref{induce semisimply} completes the proof of Proposition \ref{weight 3 empty core}.
\end{proof}

\subsection{Blocks forming exactly one $[3:1]$-pair}
In this section, we prove the following proposition:
\begin{prop}\label{weight 3,1}
Suppose that $A$ and $B$ are weight $3$ blocks of $\s_{n-1}$ and $\s_{n}$ respectively, forming a $[3:1]$-pair. Moreover, suppose that there is no block other than $A$ forming a $[3:k]$-pair with $B$ for any $k$. Additionally, suppose that the adjustment matrix for every weight $3$ block of $\s_{m}$ is the identity matrix whenever $m<n$. Then, the adjustment matrix for $B$ is the identity matrix.
\end{prop}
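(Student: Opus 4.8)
Let $j$ be the $e$-residue of the runner $i$ on which the core $\kappa_{B}$ carries its single removable node; then $\epsilon_{j}(\sigma)-\varphi_{j}(\sigma)=1$ for every partition $\sigma$ in $B$, so $\epsilon_{j}(\sigma)\ge1$ throughout $B$. The plan is to whittle the statement down to one genuinely new case. If $\mu$ is exceptional in $B$, then for \emph{any} $\lambda$ in $B$ Proposition \ref{proposition non-exceptional mu} shows that $(\lambda,\mu)$ is lowerable, whence $\textnormal{adj}_{\lambda\mu}=0$ by Corollary \ref{corollary lowerable} (whose hypothesis that the adjustment matrix is the identity on blocks of weight less than $3$ holds by Theorem \ref{James Conjecture schur 2}, as $\textnormal{char}(\mathbb{F})\ge5$). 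If instead $\lambda$ is non-exceptional for the $[3:1]$-pair and $\mu$ is non-exceptional in $B$, then Proposition \ref{proposition exceptional lambda}(1) identifies $\textnormal{adj}_{\lambda\mu}$ with the corresponding entry for the preimages of $\lambda$ and $\mu$ in $A$, a weight-$3$ block of $\s_{n-1}$, and that entry equals $\delta_{\lambda\mu}$ by the inductive hypothesis. So we may assume that $\lambda$ is exceptional for $(A,B)$ and $\mu$ is non-exceptional in $B$, and by Corollary \ref{jantzen order adj} it then suffices to show $\textnormal{adj}_{\lambda\mu}=0$.

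For this residual case I would bring in the further reductions of Section \ref{outline of proof}: by Remark \ref{remark row removal} we may assume $\mu\gg\lambda$, and by Proposition \ref{e-regular mu} together with Theorem \ref{James Conjecture hecke 4} we may assume $\mu$ is $e$-singular. Being exceptional for a $[3:1]$-pair pins $\lambda$ down tightly --- it forces $\epsilon_{j}(\lambda)=2$, and since the beads pushed onto runner $i$ contribute their total size to the weight $3$ we get $|\lambda(i)|\in\{2,3\}$ --- so $\lambda$ ranges over a short, explicit list of partitions (visible in the appendix, Figures \ref{figure exceptional 31} and \ref{figure exceptional 32}). For each such $\lambda$ I would tabulate, much as in the proof of Proposition \ref{weight 3 empty core}, the finitely many $e$-singular $\mu$ with $\mu\gg\lambda$, and then exhibit a partition $\nu$ lying in a Rouquier block with $\lambda\sim\nu$; Proposition \ref{induce semisimply} then gives $\textnormal{adj}_{\lambda\mu}=\delta_{\lambda\mu}=0$, which completes the proof.

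The heart of the matter --- and the part I expect to be fiddliest --- is this last verification: for each exceptional $\lambda$ I would have to produce a chain of $[3:k']$-pairs through which $\lambda$ induces semisimply to a Rouquier block, the delicate point being that the image of $\lambda$ must remain non-exceptional at every stage (which is why one clears weight off runner $i$ first). Should some exceptional $\lambda$ resist reaching a Rouquier block semisimply, the backstop is a direct computation of $[W^{\lambda}_{\F}:L^{\mu}_{\F}]$ via the Jantzen-Schaper bound (Theorem \ref{jantzen schaper} and Corollary \ref{cor jantzen}), the $v$-decomposition numbers and their parities (Theorems \ref{v-decomposition numbers} and \ref{v-decomposition numbers are odd or even}), and, if needed, the Runner Removal Theorems; Lemma \ref{lemma easy fact} then forces $\textnormal{adj}_{\lambda\mu}=0$.
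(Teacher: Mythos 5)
Your opening reductions are correct and coincide with the paper's: using Propositions \ref{proposition exceptional lambda} and \ref{proposition non-exceptional mu} (with Corollary \ref{corollary lowerable}, Theorem \ref{James Conjecture schur 2} and the inductive hypothesis) to reduce to the case where $\lambda$ is exceptional and $\mu$ is non-exceptional for the $[3:1]$-pair, and then invoking Remark \ref{remark row removal}, Proposition \ref{e-regular mu} and Theorem \ref{James Conjecture hecke 4} to assume $\mu\gg\lambda$ and $\mu$ $e$-singular, is exactly how the paper's proof begins.

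The problem is that the residual case, which is the entire substance of the proposition, is left as a programme rather than a proof. You propose to show that every exceptional $\lambda$ induces semisimply to a Rouquier block; the paper does this only for $\langle a_{1^{3}}\rangle$, and for the remaining exceptional partitions $\langle a_{2,1}\rangle$, $\langle a_{1^{2}},i\rangle$ and $\langle i,a_{1^{2}}\rangle$ you exhibit no induction sequence, no target Rouquier core, and no verification of non-exceptionality at each step --- precisely the point you yourself flag as the ``fiddliest'', with an equally unexecuted Jantzen--Schaper backstop. So the key step is asserted, not proved. Moreover the paper's actual argument for these $\lambda$ is a different and much shorter observation that your sketch misses: each remaining exceptional $\lambda$ has its first vacant position at position $2e$ or later (equivalently $l(\lambda)\le l(\nu)$ for $\nu=\langle 0,a_{1^{2}}\rangle$), so $\mu\gg\lambda$ forces every position up to $2e$ to be occupied in $\mu$; the three beads of runner $0$ of $\mu$ are then packed in rows $0,1,2$, so no $e$ consecutive occupied positions can lie above a gap (such a run would need a runner-$0$ bead beyond row $2$), and $\mu$ is automatically $e$-regular, after which Proposition \ref{e-regular mu} and Theorem \ref{James Conjecture hecke 4} finish. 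In other words, the tabulation you propose would show that the set of $e$-singular $\mu$ with $\mu\gg\lambda$ is empty, making the unverified Rouquier inductions unnecessary; as written, your treatment of the heart of the case is a plan whose feasibility is not established, and that is a genuine gap.
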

The conditions on $A$ and $B$ mean that we may $B$ on an abacus with the $\langle 3^{a},4^{b-a},3^{e-b}\rangle$ notation, where $0<a<b\le e$. Suppose that $\lambda$ and $\mu$ are two distinct partitions lying in $B$, so we want to prove that $\textnormal{adj}_{\lambda\mu}=0$. By Proposition \ref{proposition exceptional lambda} and Proposition \ref{proposition non-exceptional mu}, we may assume that $\lambda$ is exceptional and that $\mu$ is non-exceptional. We list all the exceptional partitions in $B$ below:
\begin{itemize}
\item $\langle a_{2,1}\rangle$, 
\item $\langle a_{1^{2}},i \rangle$, $a<i<e$,
\item $\langle i,a_{1^{2}} \rangle$, $0\le i \le a-2$,
\item $\langle a_{1^{3}} \rangle$.
\end{itemize}
We invite the reader to check that $\langle a_{1^{3}} \rangle$ induces semi-simply to a Rouquier block, so we may assume that $\lambda\neq\langle a_{1^{3}} \rangle$ by Proposition \ref{induce semisimply}.
\begin{equation*}
\langle a_{1^{3}} \rangle \sim
\begin{cases}
\langle 0,a,a+e-b \mid 3,5,\dots,2e+1\rangle & \text{if          } e-b>0,\\
\langle 0,a_{1^{2}} \mid 3,5,\dots,2e+1\rangle & \text{if          } e-b=0.
\end{cases}
\end{equation*}

\[
\begin{array}{c@{\qquad}c}
\langle 0, a_{1^{2}}\rangle &\langle a_{2,1}\rangle\\
\abacus(nbhbnbhbbhb,bnhnbbhbnhn,nnhnbnhnnhn,nnhnnnhnnhn,nnhnnnhnnhn)&
\abacus(bhbnbhbbhb,nhnbbhbnhn,nhnnnhnnhn,nhnbnhnnhn,nhnnnhnnhn)
\end{array}
\]

Let $\nu:=\langle0,a_{1^{2}}\rangle$. Note that $l(\nu)\ge l(\la)$ for all the remaining possible $\la$, so we may assume that $l(\mu)<l(\nu)$ by Remark \ref{remark row removal}. Since the first unoccupied position in the abacus display for $\la$ occurs at position $0$, the first unoccupied position in an abacus display for $\mu$ must occur strictly after position $0$. Consequently, all beads in runner $0$ of an abacus display for $\mu$ have zero weight, therefore $\mu$ must be $e$-regular and $\adj_{\la\mu}=\delta_{\la\mu}$ by Proposition \ref{e-regular mu} and Theorem \ref{James Conjecture hecke 4}. This completes the proof of Proposition \ref{weight 3,1}.

\subsection{Blocks forming exactly one $[3:2]$-pair}
\begin{prop} \label{weight 3,2}
Suppose that $A$ and $B$ are weight $3$ blocks of $\s_{n-2}$ and $\s_{n}$ respectively, forming a $[3:2]$-pair. Moreover, suppose that there is no block other than $A$ forming a $[3:k]$-pair with $B$ for any $k$. Additionally, suppose that the adjustment matrix for every weight $3$ block of $\s_{m}$ is the identity matrix whenever $m<n$. Then, the adjustment matrix for $B$ is the identity matrix.
\end{prop}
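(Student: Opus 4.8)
The plan is to follow the proof of Proposition \ref{weight 3,1} almost verbatim, exploiting the fact that a $[3:2]$-pair is even more rigid than a $[3:1]$-pair. The hypotheses let us display $B$ on an abacus with the $\langle 3^{a},5^{b-a},3^{e-b}\rangle$ notation for some $0<a<b\le e$, so that $A$ is obtained from $B$ by removing the two $j$-removable nodes on runner $a$, where $j$ is the residue of runner $a$. By the reduction carried out in Section \ref{outline of proof} (which uses Proposition \ref{proposition exceptional lambda}, Proposition \ref{proposition non-exceptional mu} and the inductive hypothesis), for distinct partitions $\lambda,\mu$ in $B$ we have $\adj_{\lambda\mu}=\delta_{\lambda\mu}$ unless $\lambda$ is exceptional for $(A,B)$, so it suffices to treat exceptional $\lambda$.

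Next I would pin down the exceptional partitions of $B$. As explained in Section \ref{outline of proof}, an exceptional $\lambda$ for the $[3:2]$-pair $(A,B)$ has $|\lambda(a)|\ge 3$; since $\lambda$ has weight $3$ this forces $\lambda(i)=\varnothing$ for every $i\ne a$ and $\lambda(a)\in\{(3),(2,1),(1^{3})\}$. A direct computation of the $j$-signature on runners $a-1$ and $a$ shows that $\langle a_{3}\rangle$ and $\langle a_{2,1}\rangle$ each have $\epsilon_{j}=2$, hence are non-exceptional, whereas $\langle a_{1^{3}}\rangle$ has reduced $j$-signature $(+,-,-,-)$, so $\epsilon_{j}=3$. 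Therefore $\langle a_{1^{3}}\rangle$ is the unique exceptional partition of $B$.

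It then remains only to show that $\adj_{\langle a_{1^{3}}\rangle,\mu}=0$ for all $\mu\ne\langle a_{1^{3}}\rangle$ in $B$. Exactly as in Proposition \ref{weight 3,1}, I would check that $\langle a_{1^{3}}\rangle$ induces semi-simply to a Rouquier block by producing an explicit composite of $\dot{F}$-operations along $[3:k]$-pairs that lands in a Rouquier block with every intermediate partition non-exceptional, the natural target being a partition of the form $\langle 0,a,a+e-b\mid 3,5,\dots,2e+1\rangle$ when $b<e$ and $\langle 0,a_{1^{2}}\mid 3,5,\dots,2e+1\rangle$ when $b=e$, and then invoke Proposition \ref{induce semisimply} to conclude that $\adj_{\langle a_{1^{3}}\rangle,\mu}=\delta_{\langle a_{1^{3}}\rangle,\mu}$ for every $\mu$ in $B$. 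This finishes the proof.

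The $j$-signature bookkeeping of the second step is routine. The one step needing genuine care — and the main obstacle — is exhibiting the induction path witnessing that $\langle a_{1^{3}}\rangle$ induces semi-simply to a Rouquier block and verifying non-exceptionality at each stage, paying attention to the boundary cases $b=e$ and $a=1$. Should this break down for some small choice of $(a,b,e)$, the fallback is the length argument that closes Proposition \ref{weight 3,1}: by Remark \ref{remark row removal} we may assume $\mu\gg\lambda$, so that $l(\mu)<l(\lambda)$ pushes the first vacant position of the abacus display for $\mu$ strictly below that of $\lambda$, forcing $\mu$ to be $e$-regular, whence $\adj_{\lambda\mu}=\delta_{\lambda\mu}$ by Proposition \ref{e-regular mu} and Theorem \ref{James Conjecture hecke 4}; any residual pairs could be recorded in a table in the appendix and dispatched using Corollary \ref{decomposition number 0 or v} or Lemma \ref{lemma easy fact}.
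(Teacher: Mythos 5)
Your ``fallback'' is in fact the paper's entire proof, and it is not a contingency for stray small cases: once $\langle a_{1^{3}}\rangle$ is identified as the unique exceptional partition (your $j$-signature computation agrees with the paper), Remark \ref{remark row removal} lets one assume $l(\mu)<l(\lambda)$, so the first vacant position of $\mu$ lies strictly after that of $\lambda$; since runners $0,\dots,a-1$ carry exactly three beads, every bead of $\mu$ on runner $0$ then has weight zero, forcing $\mu$ to be $e$-regular, and Proposition \ref{e-regular mu} together with Theorem \ref{James Conjecture hecke 4} gives $\adj_{\lambda\mu}=\delta_{\lambda\mu}$ --- uniformly in $a,b,e$ and for every $\mu$. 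The paper never attempts the Rouquier-induction route here, so the step you single out as the main obstacle (exhibiting a semisimple induction path for $\langle a_{1^{3}}\rangle$, with a target transplanted from the $[3:1]$ case) is both unverified and unnecessary; the length/$e$-regularity argument should be the argument, not the backup, and no appendix table or appeal to Corollary \ref{decomposition number 0 or v} is needed.

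There is, however, a genuine slip in your setup: the hypotheses do not let you display $B$ as $\langle 3^{a},5^{b-a},3^{e-b}\rangle$. The correct normal form, and the one the paper uses, is $\langle 3^{a},5^{b-a},4^{c-b},3^{e-c}\rangle$ with $0<a<b\le c\le e$, i.e.\ the bead counts may step down $5\to4\to3$ across two runners: for instance $\langle 3,5,4\rangle$ with $e=3$ has exactly two removable nodes in its core, both on runner $1$, and no other $[3:k]$-pair, yet it is not of your form for any admissible renormalisation of the bead number. As it happens, your identification of $\langle a_{1^{3}}\rangle$ as the unique exceptional partition and the length argument only involve runners $0,\dots,a$, so they carry over verbatim to the general form; but as written your proof covers only a proper subfamily of the blocks in the statement, and the ``natural target'' formulas you quote for the induction route would in any case need extra cases to accommodate the $4^{c-b}$ segment.
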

The conditions on $B$ mean that we may represent $B$ on an abacus with the $\langle 3^{a},5^{b-a},4^{c-b},3^{e-c}\rangle$ notation, where $0<a<b\le c\le e$. If $\lambda$ and $\mu$ are two distinct partitions in $B$, then we have $\textnormal{adj}_{\lambda\mu}=0$ by Proposition \ref{proposition exceptional lambda} unless $\lambda$ is the unique exceptional partition for $(A,B)$, namely $\lambda=\langle a_{1^{3}}\rangle$. 
\[
\begin{array}{c}
\la \\
\abacus(bhbnbhbbhbbhb,nhnbbhbbhbnhn,nhnbbhbnhnnhn,nhnbnhnnhnnhn,nhnnnhnnhnnhn)
\end{array}
\]

Observe that the first unoccupied position in the abacus display for $\la$ occurs at position $a$. By Remark \ref{remark row removal}, we may assume that $l(\mu)<l(\la)$, so the first unoccupied position in an abacus display for $\mu$ must occur after position $a$. Consequently, all beads in runner $0$ of an abacus display for $\mu$ have zero weight, therefore $\mu$ must be $e$-regular and $\adj_{\la\mu}=\delta_{\la\mu}$ by Proposition \ref{e-regular mu} and Theorem \ref{James Conjecture hecke 4}. 

As discussed in section \ref{outline of proof}, the combination of Proposition \ref{weight 3 empty core}, Proposition \ref{weight 3,1} and Proposition \ref{weight 3,2} completes the proof of Theorem \ref{James Conjecture schur 3}.

In his weight 3 paper, Fayers proved an upper bound for the decomposition numbers of $\mathcal{H}_{n}$. 
\begin{thrm}~\cite[Theorem 1.1]{wt 3}\label{d<2 hecke weight 3}
Suppose that $\textnormal{char}(\mathbb{F})\ge5$ and that $B$ is a block of $\h_{n}$ of weight 3. Let $\lambda$ and $\mu$ be partitions in $B$, with $\mu$ being $e$-regular. Then, $$[S^{\lambda}:D^{\mu}]\le1.$$
\end{thrm}
An easy consequence of Theorem \ref{James Conjecture schur 3} is that we may extend this upper bound to the case of the $q$-Schur algebras.
\begin{cor}\label{d<2 schur weight 3}
Suppose that $\textnormal{char}(\mathbb{F})\ge5$ and that $B$ is a block of $\s_{n}$ of weight 3. Let $\lambda$ and $\mu$ be partitions in $B$. Then, $$[W^{\lambda}:L^{\mu}]\le1.$$
\end{cor}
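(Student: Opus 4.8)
The plan is to reduce the $q$-Schur case to the Hecke algebra bound in Theorem \ref{d<2 hecke weight 3}, splitting into two cases according to whether $\mu$ is $e$-regular or $e$-singular. If $\mu \in \pr(n)$, then by Theorem \ref{schur functor d} (the Schur functor identity $[W^\lambda:L^\mu]=[S^\lambda:D^\mu]$) and Theorem \ref{d<2 hecke weight 3}, we immediately get $[W^\lambda:L^\mu] = [S^\lambda:D^\mu] \le 1$. So the entire content is in the $e$-singular case.

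For $\mu \in \ps(n)$, the plan is to pass to a $v$-decomposition number with an empty runner added, exactly as in Example \ref{example empty runner} and Remark \ref{remark empty runner}. Display $\lambda$ and $\mu$ on an abacus with $e$ runners and $r$ beads for $r$ large, then append a completely unoccupied runner on the right to obtain partitions $\lambda^{+}, \mu^{+}$ on $e+1$ runners. Theorem \ref{Runner Removal Theorem Empty} applies (the last bead on the new runner — there is none, so the hypothesis is vacuously satisfied, or more carefully one uses that the new runner sits after every bead; the standard formulation gives $d^{e}_{\lambda\mu}(v)=d^{e+1}_{\lambda^{+}\mu^{+}}(v)$), and $\lambda^{+},\mu^{+}$ still have weight $3$ and lie in the same block. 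Crucially $\mu^{+}$ is $(e+1)$-regular because the runner one to the left of the new empty runner now has all its beads "unblocked" appropriately — the point is that appending an empty runner destroys any equality of $e$ consecutive parts, so $\mu^{+}\in\pr$. Then over $\mathbb{C}$: $[W^{\lambda}_{\mathbb{C}}:L^{\mu}_{\mathbb{C}}] = d^{e}_{\lambda\mu}(1) = d^{e+1}_{\lambda^{+}\mu^{+}}(1) = [W^{\lambda^{+}}_{\mathbb{C}}:L^{\mu^{+}}_{\mathbb{C}}] = [S^{\lambda^{+}}_{\mathbb{C}}:D^{\mu^{+}}_{\mathbb{C}}] \le 1$ by Theorem \ref{v-decomposition numbers}, Theorem \ref{schur functor d} and Theorem \ref{d<2 hecke weight 3}.

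Finally, to transfer from $\mathbb{C}$ to $\mathbb{F}$, invoke Theorem \ref{James Conjecture schur 3} (already proved in this section): since $\textnormal{char}(\mathbb{F})\ge 5 > 3 = w$, the adjustment matrix for $B$ is the identity, so $\Ds = \Dsz$ on $B$, giving $[W^{\lambda}_{\mathbb{F}}:L^{\mu}_{\mathbb{F}}] = [W^{\lambda}_{\mathbb{C}}:L^{\mu}_{\mathbb{C}}] \le 1$. This handles both the $e$-regular case (where one can alternatively also quote Theorem \ref{James Conjecture hecke 4} and adjust over $\mathbb{C}$ too) and the $e$-singular case uniformly.

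The main obstacle — really the only non-bookkeeping point — is checking that appending an empty runner genuinely makes $\mu^{+}$ $e+1$-regular and that the hypotheses of Theorem \ref{Runner Removal Theorem Empty} are met (one must be slightly careful about where the new runner is placed relative to the beads: placing it strictly to the right with no beads means every bead on every other runner precedes all unoccupied positions of the new runner, which is exactly the hypothesis). Everything else is a direct concatenation of results already established: the Schur functor identity, the runner-removal theorem, the defining property of $v$-decomposition numbers, Fayers's weight-$3$ Hecke bound, and the already-proven Theorem \ref{James Conjecture schur 3}.
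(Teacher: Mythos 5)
Your proposal is correct and follows essentially the same route as the paper: the Schur functor together with Fayers's weight-3 bound when $\mu$ is $e$-regular, and for $e$-singular $\mu$ the empty-runner trick via Theorem \ref{Runner Removal Theorem Empty} to reduce to an $(e+1)$-regular $\mu^{+}$ of weight 3 over $\mathbb{C}$, followed by Theorem \ref{James Conjecture schur 3} to transfer the equality of decomposition numbers to $\mathbb{F}$. The only cosmetic difference is your loose justification that $\mu^{+}$ is $(e+1)$-regular; the clean reason is that any $e+1$ consecutive positions in the enlarged display meet the appended empty runner, so $\mu^{+}$ cannot have $e+1$ equal nonzero parts.
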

\begin{proof}
If $\mu$ is $e$-regular, then $[W^{\lambda}:L^{\mu}]=[S^{\lambda}:D^{\mu}]$ by Theorem \ref{schur functor d}, so we are done by Theorem \ref{d<2 hecke weight 3}. If $\mu$ is $e$-singular, we display $\lambda$ and $\mu$ on an abacus with $e$ runners and $r$ beads, for some $r$ large enough. Then, we define two abacus displays with $e+1$ runners each by adding a runner with every space unoccupied to the right of all the existing runners in the abacus displays for $\lambda$ and $\mu$ (see example \ref{example empty runner}). Let $\lambda^{+}$ and $\mu^{+}$ be the partitions corresponding to these two new abacus displays. Theorem \ref{Runner Removal Theorem Empty} applies and so we have $d^{e}_{\lambda\mu}(v)=d^{e+1}_{\lambda^{+}\mu^{+}}(v)$. Moreover, $\mu^{+}$ is $(e+1)$-regular, so $[W^{\lambda}_{\mathbb{C}}:L^{\mu}_{\mathbb{C}}]=[W^{\lambda^{+}}_{\mathbb{C}}:L^{\mu^{+}}_{\mathbb{C}}]=[S^{\lambda^{+}}_{\mathbb{C}}:D^{\mu^{+}}_{\mathbb{C}}]\le1$ by Theorem \ref{schur functor d}, Theorem \ref{v-decomposition numbers} and Theorem \ref{d<2 hecke weight 3} (note that $\la^{+}$ and $\mu^{+}$ have weight 3). By Theorem \ref{James Conjecture schur 3}, $[W^{\lambda}_{\mathbb{C}}:L^{\mu}_{\mathbb{C}}]=[W^{\lambda}_{\mathbb{F}}:L^{\mu}_{\mathbb{F}}]$.
\end{proof}
\section{Proof of James's Conjecture for weight 4 blocks of $\s_{n}$}\label{section wt 4}
We shall prove Theorem \ref{James Conjecture schur 4} in this section. Whenever $\lambda$ and $\mu$ are partitions with weight less than 4, $\textnormal{adj}_{\lambda\mu}=\delta_{\lambda\mu}$ by Theorem \ref{James Conjecture schur 2} and Theorem \ref{James Conjecture schur 3}. We will use this fact repeatedly without further comment.
\subsection{The principal block of \texorpdfstring{$\s_{4e}$}{TEXT}}\label{sec 4e}
Let $B$ be the principal block of $\s_{4e}$; that is the weight $4$ block which we display on an abacus with the $\langle 4^{e}\rangle$ notation.
\begin{prop}\label{empty core weight 4}
Suppose that $\lambda$ and $\mu$ are partitions lying in $B$. Then, $\textnormal{adj}_{\lambda\mu}=\delta_{\lambda\mu}$.
\end{prop}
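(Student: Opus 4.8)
The plan is to mimic the proof of Proposition~\ref{weight 3 empty core} from section~\ref{sec 3e}, but now working inside the principal block $B$ of $\s_{4e}$ of weight $4$. Let $\la$ be an arbitrary partition lying in $B$. The first observation is that, since the core is empty (the $\langle 4^{e}\rangle$ notation means every runner carries the same number of beads, so $\kappa_{B}=\varnothing$), each runner of an abacus display for $\la$ can contain at most one normal bead; indeed, the $j$-signature picks up at most one $-$ coming from the pair of runners $(i-1,i)$ carrying the residue-$j$ nodes, because consecutive runners differ in height by at most... more carefully, one checks that $\epsilon_{j}(\la)\le 2$ always and, when it is $2$, the two normal beads lie on the same runner $i$ but in distinct rows. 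I would first record this combinatorial fact. Then, by Remark~\ref{remark lowerable}, for every $e$-residue $j$ with $\epsilon_{j}(\mu)>0$ we may assume $\epsilon_{j}(\la)<\epsilon_{j}(\mu)$; combined with the previous fact this forces $\mu$ to have no normal beads at all on any runner, i.e. $\epsilon_{j}(\mu)=0$ for every $j$. In particular $\mu$ is $e$-regular, so we could even invoke Proposition~\ref{e-regular mu} and Theorem~\ref{James Conjecture hecke 4} directly — but to keep the argument parallel to the weight $3$ case and to handle the $e$-singular $\la$ uniformly, I would instead proceed as in section~\ref{sec 3e}.

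Next, by Remark~\ref{remark row removal}, we may assume $\mu\gg\la$; that is $\mu>_{P}\la$, $\mu_{1}>\la_{1}$ and $l(\mu)<l(\la)$. The constraint that $\mu$ has no normal beads, together with $\mu\gg\la$ and the weight being $4$, cuts the list of candidate pairs $(\la,\mu)$ down to a finite, explicit table (the analogue of Table~\ref{table 3e}); this table should be placed in the appendix. The partitions $\mu$ with no normal beads in a weight $4$ empty-core block are few — essentially $\langle 0_{1^{4}}\rangle$, $\langle 0_{1^{2}},1_{1^{2}}\rangle$, $\langle 0,1,2,3\rangle$ (when $e\ge 4$), and their small relatives for $e=2,3$ — and for each such $\mu$ one enumerates the $\la$ with $\la\gg\mu$ reversed, i.e. $\mu\gg\la$.

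The heart of the proof is then to verify, for every $\la$ appearing in the $\la$-column of this table, that $\la$ induces semisimply to a Rouquier block; Proposition~\ref{induce semisimply} then gives $\textnormal{adj}_{\la\mu}=\delta_{\la\mu}$ and we are done. Concretely, for each such $\la$ I would exhibit an explicit $\nu$ in a Rouquier block with $\la\sim\nu$, e.g.
$$
\langle 0_{1^{4}}\rangle \sim \langle 0_{1^{4}} \mid 4,7,10,\dots\rangle,\qquad
\langle 1_{1^{4}}\rangle \sim \langle 0_{1^{3}},1 \mid 4,7,10,\dots\rangle,
$$
and similarly for $\langle 0_{1^{3}},1\rangle$, $\langle 0_{1^{2}},1_{1^{2}}\rangle$, $\langle 0,1_{1^{3}}\rangle$ and the few remaining cases, each time applying a sequence of $\dot{F}_{j}{\nearrow}$ operators that spreads the beads far enough apart on distinct runners to land in a Rouquier configuration (bead-counts differing by at least $w=4$ or at most $w-1=3$ between runners).

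The main obstacle I anticipate is not conceptual but bookkeeping: correctly enumerating \emph{all} pairs $(\la,\mu)$ with $\mu$ normal-bead-free and $\mu\gg\la$ in the weight $4$ empty-core block — being careful about the small values $e=2,3$ where extra coincidences occur — and then, for each of the (few) exceptional-looking $\la$, writing down a valid semisimple induction sequence to a Rouquier block and checking non-exceptionality at every intermediate step. The payoff is that once this finite check is organised into a table, the argument is entirely routine given the machinery of sections~\ref{section adj} (in particular Proposition~\ref{lowerable}, Remark~\ref{remark lowerable}, Remark~\ref{remark row removal}, and Proposition~\ref{induce semisimply}).
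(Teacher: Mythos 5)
There is a genuine gap, and it is the heart of the matter rather than bookkeeping. Your plan is to finish exactly as in the weight~3 case: tabulate the pairs $(\la,\mu)$ with $\mu$ normal-bead-free and $\mu\gg\la$, and then show that \emph{every} remaining $\la$ induces semisimply to a Rouquier block. In the weight~4 principal block this is false. After the semisimple inductions that do work, the paper is still left with the pairs $(\la^{1},\mu^{1})=(\langle 0_{1^{3}},2\rangle,\langle 0_{2},1,2\rangle)$ for $e=3$, $(\la^{2},\mu^{2})=(\langle 0_{1^{3}},2\rangle,\langle 0,1,2,3\rangle)$ for $e\ge4$, and $(\la^{3},\mu^{3})=(\langle 0_{2^{2}}\rangle,\langle 0_{2},1_{2}\rangle)$ for $e=2$ (together with $\la^{0}=\langle 0_{2,1^{2}}\rangle$), where $d^{e}_{\la\mu}(v)$ equals $v^{3}$ or $v^{3}+v$ (see Tables \ref{table 4e 1} and \ref{table 4e 2}). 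For these, no Rouquier semisimple induction is exhibited, Corollary \ref{decomposition number 0 or v} does not apply (it needs $d^{e}_{\la\mu}(v)\in\{0,v\}$), and the paper has to prove $\adj_{\la^{3}\mu^{3}}=0$ by the separate Proposition \ref{prop 4e difficult}: inducing Weyl and simple modules through an explicit chain of blocks, bounding a composition multiplicity via filtrations, computing specific $v$-decomposition numbers by LLT and runner removal, and using repeated row removal together with Theorem \ref{James Conjecture schur 3} and a Jantzen-order analysis. Moreover, even the entries with $d^{e}_{\la\mu}(v)\in\{0,v\}$ (e.g.\ $\la=\langle 0_{1^{2}},2_{1^{2}}\rangle$, $\langle 1,2_{1^{3}}\rangle$) are settled by Corollary \ref{decomposition number 0 or v} only \emph{after} the harder entries above are known, not by Rouquier induction. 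Your proposal contains none of this, and anticipating only ``bookkeeping'' obstacles is exactly where it would break down.

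Two smaller points. First, Remark \ref{remark lowerable} lets you dispose of the case $\epsilon_{j}(\la)\le\epsilon_{j}(\mu)$, so the surviving assumption is $\epsilon_{j}(\la)>\epsilon_{j}(\mu)$ whenever $\epsilon_{j}(\mu)>0$ (you wrote the inequality the other way); since $\epsilon_{j}(\la)\le 2$, this does \emph{not} immediately force $\mu$ to be normal-bead-free: when $\la=\langle i_{2^{2}}\rangle$ has two normal beads on runner $i$, $\mu$ may have one, and this case must be split off first (as the paper does, where those $\mu$ happen to be $e$-regular). Second, your aside ``$\mu$ has no normal beads, in particular $\mu$ is $e$-regular'' is backwards: a partition with no normal nodes is a highest-weight vertex of the Fock-space crystal and is typically $e$-singular --- for instance $\mu^{3}=\langle 0_{2},1_{2}\rangle=(4,4)$ with $e=2$ --- so Proposition \ref{e-regular mu} cannot be invoked for these $\mu$, which is precisely why the hard cases above cannot be waved away.
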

\begin{proof}
Observe that when $\lambda = \langle i_{2^{2}} \rangle$ and $0<i<e$, $\lambda$ has two normal beads on runner $i$ and no normal beads on every other runner. By Remark \ref{remark lowerable}, we may assume that $\mu$ has at most one normal bead on runner $i$ and no normal beads on every other runner. By Remark \ref{remark row removal}, we may also assume that $\mu\gg\lambda$. We list all the possiblities for $\mu$ below:
\begin{itemize}
\item $\langle i_{4} \rangle$,
\item $\langle i_{3}, i+1 \rangle$, $i+1<e$,
\item $\langle 0, i_{3} \rangle$,
\item $\langle i_{2}, (i+1)_{2} \rangle$, $i+1<e$.
\end{itemize}
Notice that every partition $\mu$ in the list above are $e$-regular, therefore $\textnormal{adj}_{\lambda\mu}=\delta_{\lambda\mu}$ by Proposition \ref{e-regular mu} and Theorem \ref{James Conjecture hecke 4}.

Hence, we may assume that $\lambda \neq \langle i_{2^{2}} \rangle$ for $0<i<e$. In this case, observe that in the abacus displays for the remaining possibilities for $\lambda$, no runner has more than one normal bead. By Remark \ref{remark lowerable}, we may assume that $\mu$ has no normal beads on any runner. By Remark \ref{remark row removal}, we may also assume that $\mu\gg\lambda$. Table \ref{table 4e 1} and Table \ref{table 4e 2} (in the appendix) list all the possible pairs of partitions $(\lambda, \mu)$. Note that for each $\mu$, we list the partitions $\lambda$ in descending lexicographic order.

We invite the reader to check that the following partitions induce semisimply to a Rouquier block, thus $\textnormal{adj}_{\lambda\mu}=\delta_{\lambda\mu}$ if $\lambda$ is one of those partitions by Proposition \ref{induce semisimply}.
\begin{itemize}
\item $\langle 0_{2},1_{1^{2}} \rangle \sim \langle 0_{3}, 1 \mid 4,7 \rangle$ when $e=2$,
\item $\langle 0_{2,1},1 \rangle \sim \langle 0_{3,1} \mid 4,7 \rangle$ when $e=2$,
\item $\langle 0,1,2_{1^{2}} \rangle \sim \langle 0_{3},2 \mid 4,7,\dots, 3e+1 \rangle$,	
\item $\langle 0,1_{1^{2}},2 \rangle \sim \langle 0_{3},1 \mid 4,7,\dots, 3e+1 \rangle$,
\item $\langle 0_{1^{2}},1,2 \rangle \sim \langle 0_{3,1} \mid 4,7,\dots, 3e+1 \rangle$,
\item $\langle 1_{1^{2}}, 2_{1^{2}} \rangle \sim \langle 0_{2}, 1_{2} \mid 4,7,\dots, 3e+1 \rangle$,
\item $\langle 1_{1^{3}}, 2 \rangle \sim \langle 0_{1^{2}},1_{2} \mid 4,7,\dots, 3e+1 \rangle$,
\item $\langle 0,2_{1^{3}} \rangle \sim \langle 0_{2,1},2 \mid 4,7,\dots, 3e+1 \rangle$,
\item $\langle 2_{1^{4}} \rangle \sim \langle 0_{1^{3}},2 \mid 4,7,\dots, 3e+1 \rangle$,
\item $\langle 0_{1^{2}}, 1_{1^{2}} \rangle \sim \langle 0_{2^{2}} \mid 4,7,\dots, 3e+1 \rangle$,
\item $\langle 0,1_{1^{3}} \rangle \sim \langle 0_{2,1},1 \mid 4,7,\dots, 3e+1 \rangle$,
\item $\langle 0_{1^{3}},1 \rangle \sim \langle 0_{2,1^{2}} \mid 4,7,\dots, 3e+1 \rangle$,
\item $\langle 1_{1^{4}} \rangle \sim \langle 0_{1^{3}},1 \mid 4,7,\dots, 3e+1 \rangle$,
\item $\langle 0_{1^{4}} \rangle \sim \langle 0_{1^{4}} \mid 4,7,\dots, 3e+1 \rangle$.
\end{itemize}

Let $\lambda^{1}$, $\lambda^{2}$, $\lambda^{3}$, $\mu^{1}$, $\mu^{2}$ and $\mu^{3}$ (see Table \ref{table 4e 1} and Table \ref{table 4e 2}) be the following partitions:
\begin{itemize}
\item $\lambda^{1}:=\langle    0_{1^{3}} ,2     \rangle$, $e=3$,
\item $\mu^{1}:=\langle   0_{2},1,2        \rangle$, $e=3$,
\item $\lambda^{2}:=\langle    0_{1^{3}} ,2     \rangle$, $e\ge4$,
\item $\mu^{2}:=\langle     0,1,2,3      \rangle$, $e\ge4$,
\item $\lambda^{3}:=\langle       0_{2^{2}}    \rangle$, $e=2$,
\item $\mu^{3}:=\langle   0_{2},1_{2}  \rangle$, $e=2$.
\end{itemize}
If we managed to show that $\textnormal{adj}_{\lambda^{1}\mu^{1}}=\textnormal{adj}_{\lambda^{2}\mu^{2}}=\textnormal{adj}_{\lambda^{3}\mu^{3}}=0$, the proof of Proposition \ref{empty core weight 4} would follow from Proposition \ref{decomposition number 0 or v}. 

We now relax the definition of $\la^{3}$ and $\mu^{3}$ slightly, so that $\la^{3}:=\langle 0_{2^{2}} \mid 4^{2}, 5^{e-2} \rangle$ and $\mu^{3} :=\langle 0_{2},1_{2}\mid 4^{2}, 5^{e-2} \rangle$, where $e\ge2$. We also define $\lambda^{0}:=\langle 0_{2,1^{2}} \mid 4^{2},5^{e-2} \rangle$, where $e\ge2$. Recall from Definition \ref{new notation} that $\dot{F}_{1}{\searrow}_{e-2}\dot{F}_{0}{\searrow}_{e-2}\nu=\dot{F}_{4}\dots\dot{F}_{0}\dot{F}_{1}\dot{F}_{3}\dots\dot{F}_{e-1}\dot{F}_{0}\nu$ is well-defined for partitions $\nu$ in the block $B$. When $i \in \{1,2\}$, we invite the reader to check (in an abacus display with the $\langle 4^{e}\rangle$ notation, the beads on runner $0$ have $e$-residue $0$) that $\dot{F}_{1}{\searrow}_{e-2}\dot{F}_{0}{\searrow}_{e-2}\la^{i}$ and $\dot{F}_{1}{\searrow}_{e-2}\dot{F}_{0}{\searrow}_{e-2}\mu^{i}$ are semisimply induced and moreover, $\dot{F}_{1}{\searrow}_{e-2}\dot{F}_{0}{\searrow}_{e-2}\la^{i}=\la^{0}$ and $\dot{F}_{1}{\searrow}_{e-2}\dot{F}_{0}{\searrow}_{e-2}\mu^{i}=\mu^{3}$. Therefore, we are left to show that $\textnormal{adj}_{\lambda^{0}\mu^{3}}=0$ and $\textnormal{adj}_{\lambda^{3}\mu^{3}}=0$ by Corollary \ref{working exceptional lambda}.

\begin{prop}\label{prop 4e difficult}
$\textnormal{adj}_{\lambda^{3}\mu^{3}}=0$.
\end{prop}
\begin{proof}
Let $f(\la^{3}):=\dot{F}_{2}{\searrow}_{e-2}\la^{3}$ and $f(\mu^{3}):=\dot{F}_{2}{\searrow}_{e-2}\mu^{3}$ (in an abacus display with the $\langle 4^{2},5^{e-2}\rangle$ notation, the beads on runner $0$ have $e$-residue $2$. When $e=2$, $f$ is the identity map). We observe that $f(\la^{3})$ and $f(\mu^{3})$ are both semisimply induced and moreover, $f(\la^{3})=\langle (e-1)_{2^{2}} \mid 6^{e-2},4,3 \rangle$ and $f(\mu^{3})=\langle (e-2)_{2}, (e-1)_{2} \mid 6^{e-2},4,3 \rangle$. Hence, $\adj_{\la^{3}\mu^{3}}=\adj_{f(\la^{3})f(\mu^{3})}$ by Corollary \ref{working exceptional lambda}. We also note that $d^{e}_{f(\la^{3})f(\mu^{3})}(v)=d^{2}_{\la^{3}\mu^{3}}(v)=v^{3}+v$ by Theorem \ref{Runner Removal Theorem Full}. Hence, $[W_{\mathbb{C}}^{f(\lambda^{3})}:L_{\mathbb{C}}^{f(\mu^{3})}]=2$ and it suffices to prove that $[W_{\mathbb{F}}^{f(\lambda^{3})}:L_{\mathbb{F}}^{f(\mu^{3})}]=2$ by Lemma \ref{lemma easy fact}.

\[
\begin{array}{c@{\qquad}c@{\qquad}c@{\qquad}c}
\la^{3}&f(\la^{3})&\mu^{3}&f(\mu^{3})\\
\abacus(bbbhb,bbbhb,nbbhb,nbbhb,bnbhb,bnnhn,nnnhn)&
\abacus(bhbbb,bhbbn,bhbbn,bhbbb,bhbnb,bhbnn,nhnnn)&
\abacus(bbbhb,bbbhb,bbbhb,nnbhb,nnbhb,bbnhn,nnnhn)&
\abacus(bhbbb,bhbbb,bhbbn,bhbnn,bhbnb,bhbbn,nhnnn)
\end{array}
\]

Let $B^{i}$ be the weight $4$ block with the $\langle 6^{e-i-2},3,6^{i},4 \rangle$ notation for $0\le i \le e-2$. We define $\la^{y}$, $\la^{x}$ and $\mu^{x}$ to be the partitions lying in $B^{0}$ by their abacus displays below. We may check using the modular branching rules (Theorem \ref{branching rule} and Theorem \ref{modular branching rule}) that 
$$W_{\mathbb{F}}^{f(\lambda^{3})}{\uparrow}^{B^{0}}\sim W_{\mathbb{F}}^{\la^{y}}+W_{\mathbb{F}}^{\la^{x}},$$
$$L_{\mathbb{F}}^{f(\mu^{3})}{\uparrow}^{B^{0}}\cong L_{\mathbb{F}}^{\mu^{x}}.$$

\[
\begin{array}{c@{\qquad}c@{\qquad}c}
\la^{y}&\la^{x}&\mu^{x}\\
\abacus(bhbbb,bhbbn,bhbnb,bhbbb,bhbnb,bhbnn,nhnnn)&
\abacus(bhbbb,bhbnb,bhbbn,bhbbb,bhbnb,bhbnn,nhnnn)&
\abacus(bhbbb,bhbbb,bhbnb,bhbnn,bhbnb,bhbbn,nhnnn)
\end{array}
\]

We define $\la^{y,x}$, $\la^{x,x}$ and $\mu^{x,x}$ to be the partitions lying in $B^{e-2}$ by their abacus displays below. We may check using the modular branching rules that
$$\delf^{\la^{y}}\uar_{B^{0}}^{B^{1}}\uar_{B^{1}}^{B^{2}}\uar\cdots\uar_{B^{e-3}}^{B^{e-2}}\sim \delf^{\la^{y,x}},$$
$$\delf^{\la^{x}}\uar_{B^{0}}^{B^{1}}\uar_{B^{1}}^{B^{2}}\uar\cdots\uar_{B^{e-3}}^{B^{e-2}}\sim \delf^{\la^{x,x}},$$
$$\LF^{\mu^{x}}\uar_{B^{0}}^{B^{1}}\uar_{B^{1}}^{B^{2}}\uar\cdots\uar_{B^{e-3}}^{B^{e-2}}\cong \LF^{\mu^{x,x}}.$$

\[
\begin{array}{c@{\qquad}c@{\qquad}c}
\la^{y,x}&\la^{x,x}&\mu^{x,x}\\
\abacus(bbhbb,bbhbn,nbhbb,bbhbb,nbhbb,nbhbn,nnhnn)&
\abacus(bbhbb,nbhbb,bbhbn,bbhbb,nbhbb,nbhbn,nnhnn)&
\abacus(bbhbb,bbhbb,nbhbb,nbhbn,nbhbb,bbhbn,nnhnn)
\end{array}
\]

Let $C$ be the weight $4$ block with the $\langle 5,6^{e-2},2 \rangle$ notation. We define $\la^{y,x,x}$, $\la^{x,x,x}$, $\la^{x,x,y}$, $\la^{x,x,z}$ and $\mu^{x,x,x}$ to be the partitions lying in $C$ by their abacus displays below. We may check using the modular branching rules that
$$\delf^{\la^{y,x}}\uar_{B^{e-2}}^{C}\sim (\delf^{\la^{y,x,x}})^{2},$$
$$\delf^{\la^{x,x}}\uar_{B^{e-2}}^{C}\sim (\delf^{\la^{x,x,x}})^{2}+(\delf^{\la^{x,x,y}})^{2}+(\delf^{\la^{x,x,z}})^{2},$$
$$\LF^{\mu^{x,x}}\uar_{B^{e-2}}^{C}\cong \LF^{\mu^{x,x,x}}\oplus \LF^{\mu^{x,x,x}}.$$
Hence, we have the upper bound
\begin{align*}
2[\delf^{f(\la^{3})}:\LF^{f(\mu^{3})}]&=[\delf^{f(\la^{3})}:\LF^{f(\mu^{3})}][\LF^{f(\mu^{3})}\uar^{B^{0}}\uar_{B^{0}}^{B^{1}}\cdots\uar_{B^{e-3}}^{B^{e-2}}\uar_{B^{e-2}}^{C}:\LF^{\mu^{x,x,x}}]\\
&\le \sum_{\nu}[\delf^{f(\la^{3})}:\LF^{\nu}][\LF^{\nu}\uar^{B^{0}}\uar_{B^{0}}^{B^{1}}\cdots\uar_{B^{e-3}}^{B^{e-2}}\uar_{B^{e-2}}^{C}:\LF^{\mu^{x,x,x}}]\\
& = [\delf^{f(\la^{3})}\uar^{B^{0}}\uar_{B^{0}}^{B^{1}}\cdots\uar_{B^{e-3}}^{B^{e-2}}\uar_{B^{e-2}}^{C}:\LF^{\mu^{x,x,x}}]\\
& = 2([\delf^{\la^{y,x,x}}:\LF^{\mu^{x,x,x}}]+[\delf^{\la^{x,x,x}}:\LF^{\mu^{x,x,x}}]+[\delf^{\la^{x,x,y}}:\LF^{\mu^{x,x,x}}]+[\delf^{\la^{x,x,z}}:\LF^{\mu^{x,x,x}}]).
\end{align*}

\[
\begin{array}{c@{\qquad}c@{\qquad}c@{\qquad}c@{\qquad}c}
\la^{y,x,x}&\la^{x,x,x}&\la^{x,x,y}&\la^{x,x,z}&\mu^{x,x,x}\\
\abacus(bbhbb,bbhbn,nbhbb,bbhbn,bbhbn,bbhbn,nnhnn)&
\abacus(bbhbb,nbhbb,bbhbn,bbhbn,bbhbn,bbhbn,nnhnn)&
\abacus(bbhbn,bbhbb,bbhbn,bbhbb,nbhbn,bbhbn,nnhnn)&
\abacus(bbhbn,bbhbb,bbhbn,bbhbn,bbhbb,nbhbn,nnhnn)&
\abacus(bbhbb,bbhbn,bbhbn,bbhbn,nbhbb,bbhbn,nnhnn)
\end{array}
\]

Using the LLT algorithm and Theorem \ref{Runner Removal Theorem Full}, we have $d^{e}_{\lambda^{y,x,x}\mu^{x,x,x}}(v)=v^{2}$, $d^{e}_{\lambda^{x,x,x}\mu^{x,x,x}}(v)=v^{3}$, $d^{e}_{\lambda^{x,x,y}\mu^{x,x,x}}(v)=0$ and $d^{e}_{\lambda^{x,x,z}\mu^{x,x,x}}(v)=0$. Additionally, observe that $\lambda^{y,x,x}_{i}=\lambda^{x,x,x}_{i}=\lambda^{x,x,y}_{i}=\mu^{x,x,x}_{i}$ for $1\le i \le e-1$, so we may combine Corollary \ref{row removal adj} (applied $e-1$ times) and Theorem \ref{James Conjecture schur 3} to conclude that $[W_{\mathbb{F}}^{\lambda^{y,x,x}}:L_{\mathbb{F}}^{\mu^{x,x,x}}]=[W_{\mathbb{C}}^{\lambda^{y,x,x}}:L_{\mathbb{C}}^{\mu^{x,x,x}}]=1$, $[W_{\mathbb{F}}^{\lambda^{x,x,x}}:L_{\mathbb{F}}^{\mu^{x,x,x}}]=[W_{\mathbb{C}}^{\lambda^{x,x,x}}:L_{\mathbb{C}}^{\mu^{x,x,x}}]=1$ and $[W_{\mathbb{F}}^{\lambda^{x,x,y}}:L_{\mathbb{F}}^{\mu^{x,x,x}}]=[W_{\mathbb{C}}^{\lambda^{x,x,y}}:L_{\mathbb{C}}^{\mu^{x,x,x}}]=0$.

By Theorem \ref{theorem adjustment} and Corollary \ref{jantzen order adj}, we have 
$$[W_{\mathbb{F}}^{\lambda^{x,x,z}}:L_{\mathbb{F}}^{\mu^{x,x,x}}]=[W_{\mathbb{C}}^{\lambda^{x,x,z}}:L_{\mathbb{C}}^{\mu^{x,x,x}}]+\textnormal{adj}_{\lambda^{x,x,z}\mu^{x,x,x}}+\sum\limits_{\lambda^{x,x,z}<_{J} \nu <_{J} \mu^{x,x,x}} [W_{\mathbb{C}}^{\lambda^{x,x,z}}:L_{\mathbb{C}}^{\nu}]\textnormal{adj}_{\nu\mu^{x,x,x}}.$$

From the abacus display of $\mu^{x,x,x}$, we observe that $\row^{e-1}(\mu^{x,x,x})$ has weight 3. Therefore, the terms in the sum above are non-zero only if $\lambda^{x,x,z}<_{J} \nu <_{J} \mu^{x,x,x}$ and $\nu_{e-1}<\mu_{e-1}$ by Corollary \ref{row removal adj}, Corollary \ref{column removal adj} and Theorem \ref{James Conjecture schur 3}. It is easy to check that the set $\{\nu : \lambda^{x,x,z}<_{J} \nu <_{J} \mu^{x,x,x} \textnormal{ and } \nu_{e-1}<\mu_{e-1}\} $ is empty. Moreover, $\lambda^{x,x,z}\sim\langle 0_{3,1} \mid 4,7,\dots, 4e+1\rangle$ induces semisimply to a Rouquier block, so $ \textnormal{adj}_{\lambda^{x,x,z}\mu^{x,x,x}}=0$. Additionally, $[W_{\mathbb{C}}^{\lambda^{x,x,z}}:L_{\mathbb{C}}^{\mu^{x,x,x}}]=d^{e}_{\lambda^{x,x,z}\mu^{x,x,x}}(1)=0$, therefore $[W_{\mathbb{F}}^{\lambda^{x,x,z}}:L_{\mathbb{F}}^{\mu^{x,x,x}}]=0$ and $[W_{\mathbb{F}}^{f(\lambda^{3})}:L_{\mathbb{F}}^{f(\mu^{3})}] = 2$.
\end{proof}

To prove Proposition \ref{empty core weight 4}, all that remains is to show that $\textnormal{adj}_{\lambda^{0}\mu^{3}}=0$. Since $d^{e}_{\lambda^{0}\mu^{3}}(v)=0$ (calculated using the LLT algorithm and runner removal theorems \ref{Runner Removal Theorem Empty} and \ref{Runner Removal Theorem Full}), we just need to check that $\textnormal{adj}_{\nu\mu^{3}}=0$ for every partition $\nu$ satisfying $\lambda^{0}<_{P}\nu<_{P}\mu^{3}$ in order to apply Proposition \ref{decomposition number 0 or v}.

We list all the partitions $\nu$ such that $\lambda^{0}<_{P} \nu <_{P} \mu^{3}$:
\begin{itemize}
\item $\nu=\langle       0_{2}, 1_{1^{2}}   \mid 4^{2}, 5^{e-2}  \rangle\sim \langle 0_{3},1 \mid 4,7,\dots,3e+1 \rangle $, so $\textnormal{adj}_{\nu\mu^{3}}=0$ by Proposition \ref{induce semisimply}.
\item $\nu=\langle       0_{2,1}, 1   \mid 4^{2}, 5^{e-2}  \rangle\sim \langle 0_{3,1} \mid 4,7,\dots,3e+1 \rangle$, so $\textnormal{adj}_{\nu\mu^{3}}=0$ by Proposition \ref{induce semisimply}.
\item $\nu=\lambda^{3}$, so $\textnormal{adj}_{\lambda^{3}\mu^{3}}=0$ by Proposition \ref{prop 4e difficult}.
\end{itemize}

This concludes the proof of Proposition \ref{empty core weight 4}.
\end{proof}

\subsection{Blocks forming exactly two $[4:1]$-pairs}
In the next two sections, we prove the following proposition.
\begin{prop}\label{2 [4:1]-pairs}
Let $B$ be a weight $4$ block of $\s_{n}$. Suppose that there are exactly two blocks $A_{1}$ and $A_{2}$ forming a $[4:1]$-pair with $B$, and that there are no other blocks $C$ forming a $[4:k]$-pair with $B$ for any $k$. Additionally, suppose that the adjustment matrix for every weight $4$ block of $\s_{m}$ is the identity matrix whenever $m<n$. Then, the adjustment matrix for $B$ is the identity matrix.
\end{prop}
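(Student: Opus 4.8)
The plan is to run the inductive machinery of Section~\ref{outline of proof}. Write $B$ in the $\langle b_{0},\dots,b_{e-1}\rangle$ notation; the hypotheses say that there are exactly two runners, say $i_{1}$ and $i_{2}$, of residues $j_{1}$ and $j_{2}$, on which the core of $B$ has a removable node, and on each of these the core has precisely one removable node. Let $\lambda$ and $\mu$ be distinct partitions of $B$; we must show $\adj_{\lambda\mu}=0$. By Proposition~\ref{proposition exceptional lambda} applied to the two $[4:1]$-pairs $(A_{1},B)$ and $(A_{2},B)$, together with the inductive hypothesis, we may assume that $\lambda$ is exceptional for \emph{both} pairs, since otherwise $\adj_{\lambda\mu}=\delta_{\lambda\mu}$. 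As explained in Section~\ref{outline of proof}, being exceptional for $(A_{m},B)$ forces $|\lambda(i_{m})|\ge 2$, so from $|\lambda(i_{1})|+|\lambda(i_{2})|\le w=4$ we deduce $|\lambda(i_{1})|=|\lambda(i_{2})|=2$ and $\lambda(i)=\varnothing$ for all $i\notin\{i_{1},i_{2}\}$. Combined with the constraint $\epsilon_{j_{m}}(\lambda)=2$, $\varphi_{j_{m}}(\lambda)=1$ of Remark~\ref{remark excep same}, an inspection of the abacus leaves only a short explicit list of possibilities for $\lambda$ (essentially $\lambda(i_{m})\in\{(2),(1^{2})\}$ for $m=1,2$, cut down further by the relative positions of the two runners). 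By Proposition~\ref{proposition non-exceptional mu} we may also take $\mu$ non-exceptional for both pairs, and by Remarks~\ref{remark row removal},~\ref{remark lowerable} and Proposition~\ref{e-regular mu} we may assume $\mu\gg\lambda$, that $\mu$ is $e$-singular, and that $\epsilon_{j}(\lambda)<\epsilon_{j}(\mu)$ whenever $\epsilon_{j}(\mu)>0$.

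Next I would dispose of the candidates for $\lambda$ one at a time. For most of them I expect one of two mechanisms to apply. First: if the first unoccupied position in the abacus display of $\lambda$ lies far enough to the left, the constraint $l(\mu)<l(\lambda)$ forces the first unoccupied position of $\mu$ to occur strictly later, so all beads on the relevant low runner of $\mu$ have zero weight, $\mu$ is $e$-regular, and $\adj_{\lambda\mu}=\delta_{\lambda\mu}$ by Proposition~\ref{e-regular mu} and Theorem~\ref{James Conjecture hecke 4} --- exactly the argument used in Propositions~\ref{weight 3,1} and~\ref{weight 3,2}. Second: otherwise I would exhibit, using the $\dot{F}_{j}$, $\dot{F}_{j}{\searrow}$, $\dot{F}_{j}{\nearrow}$ notation of Definition~\ref{new notation}, an explicit chain of $[4:k]$-pairs carrying $\lambda$ semisimply to a partition lying in a Rouquier block, whence $\adj_{\lambda\mu}=\delta_{\lambda\mu}$ by Proposition~\ref{induce semisimply}. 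For any residual pair $(\lambda,\mu)$ I would use Corollary~\ref{working exceptional lambda} to replace it by a more tractable pair in another weight-$4$ block, compute $d^{e}_{\lambda\mu}(v)$ by the LLT algorithm together with the Runner Removal Theorems~\ref{Runner Removal Theorem Empty} and~\ref{Runner Removal Theorem Full}, and --- once $d^{e}_{\lambda\mu}(v)\in\{0,v\}$ and the intermediate entries $\adj_{\nu\mu}$ for $\lambda<_{J}\nu<_{J}\mu$ are known to vanish (via Corollaries~\ref{row removal adj} and~\ref{column removal adj}, the product-order bound of Lemma~\ref{product order} and Corollary~\ref{jantzen order adj}, or Proposition~\ref{induce semisimply}) --- apply Corollary~\ref{decomposition number 0 or v}.

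The main obstacle, if it arises, is a residual pair for which $d^{e}_{\lambda\mu}(v)\notin\{0,v\}$, i.e.\ $[W^{\lambda}_{\C}:L^{\mu}_{\C}]\in\{2,3\}$, so that Corollary~\ref{decomposition number 0 or v} no longer applies. Here one must prove directly that $[W^{\lambda}_{\F}:L^{\mu}_{\F}]=[W^{\lambda}_{\C}:L^{\mu}_{\C}]$ and then invoke Lemma~\ref{lemma easy fact}. The template is Proposition~\ref{prop 4e difficult}: induce $W^{\lambda}_{\F}$ and $L^{\mu}_{\F}$ up through a chain of $[4:k]$-pairs into a block in which their images have transparent branching behaviour by Theorems~\ref{branching rule} and~\ref{modular branching rule}, use this to bound $[W^{\lambda}_{\F}:L^{\mu}_{\F}]$ from above by a sum of decomposition numbers that are either of weight strictly less than $4$ or lie in blocks already covered by the induction and by Theorem~\ref{James Conjecture schur 3}, and check that this upper bound equals the known value over $\C$. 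Because $B$ is built out of two $[4:1]$-pairs, the exceptional partitions $\lambda$ are extremely constrained, so I expect at most one or two such delicate pairs; the bulk of the work will be the routine (if lengthy) verification of the semisimple-induction claims made in the previous paragraph.
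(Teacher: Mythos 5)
Your proposal is correct and follows essentially the same route as the paper: reduce to $\lambda$ exceptional for both $[4:1]$-pairs, note this pins $\lambda$ down (in fact uniquely, to $\langle a_{1^{2}},c_{1^{2}}\rangle$ resp.\ $\langle a_{1^{2}},b_{1^{2}}\rangle$ in the two possible block shapes $\langle4^{a},5^{b-a},4^{c-b},5^{d-c},4^{e-d}\rangle$ and $\langle4^{a},5^{b-a},6^{c-b},5^{d-c},4^{e-d}\rangle$), and then apply your first mechanism verbatim: the first gap of $\lambda$ is at position $a$, so $l(\mu)<l(\lambda)$ forces $\mu$ to be $e$-regular and Proposition~\ref{e-regular mu} with Theorem~\ref{James Conjecture hecke 4} finishes. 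None of your contingency machinery (Rouquier induction, LLT computations, Jantzen--Schaper) turns out to be needed here.
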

The conditions above give two distinct types of block $B$. 
\subsubsection{Blocks with the $\langle4^{a},5^{b-a},4^{c-b},5^{d-c},4^{e-d}\rangle$ notation}
We first consider the case where $B$ has the $\langle4^{a},5^{b-a},4^{c-b},5^{d-c},4^{e-d}\rangle$ notation, for some $0<a<b<c<d\le e$. Let $\lambda$ and $\mu$ be two distinct partitions in $B$ for which we want to prove that $\textnormal{adj}_{\lambda\mu}=0$. By Proposition \ref{proposition exceptional lambda}, if $\lambda$ were non-exceptional for any one of $(A_{i},B)$, then $\textnormal{adj}_{\lambda\mu}=\delta_{\lambda\mu}$. Therefore, we may assume that $\lambda$ is exceptional for both pairs $(A_{1},B)$ and $(A_{2},B)$; that is $\lambda=\langle a_{1^{2}},c_{1^{2}}\rangle$.
\[
\begin{array}{c}
\la \\
\abacus(bhbnbhbbhbnbhbbhb,nhnbbhbnhnbbhbnhn,nhnbnhnnhnbnhnnhn,nhnnnhnnhnnnhnnhn)
\end{array}
\]
Observe that the first unoccupied position in the abacus display for $\la$ above occurs at position $a$. By Remark \ref{remark row removal}, we may assume that $l(\mu)<l(\la)$, so the first unoccupied position in an abacus display for $\mu$ must occur after position $a$. Consequently, all beads in runner $0$ of an abacus display for $\mu$ have zero weight, therefore $\mu$ must be $e$-regular and $\adj_{\la\mu}=\delta_{\la\mu}$ by Proposition \ref{e-regular mu} and Theorem \ref{James Conjecture hecke 4}. 

\subsubsection{Blocks with the $\langle4^{a},5^{b-a},6^{c-b},5^{d-c},4^{e-d}\rangle$ notation}
We complete the proof of Proposition \ref{2 [4:1]-pairs} by considering blocks $B$ with the $\langle4^{a},5^{b-a},6^{c-b},5^{d-c},4^{e-d}\rangle$ notation, for some $0<a<b<c\le d\le e$. Let $\lambda$ and $\mu$ be two distinct partitions in $B$ for which we want to prove that $\textnormal{adj}_{\lambda\mu}=0$. By Proposition \ref{proposition exceptional lambda}, we may assume that $\lambda$ is exceptional for both pairs $(A_{1},B)$ and $(A_{2},B)$; that is $\lambda=\langle a_{1^{2}},b_{1^{2}}\rangle$ with $b-a \ge 2$.
\[
\begin{array}{c}
\la \\
\abacus(bhbnbhbbbhbbhbbhb,nhnbbhbnbhbbhbnhn,nhnbnhnbbhbnhnnhn,nhnnnhnbnhnnhnnhn,nhnnnhnnnhnnhnnhn)
\end{array}
\]
Observe that the first unoccupied position in the abacus display for $\la$ above occurs at position $a$. By Remark \ref{remark row removal}, we may assume that $l(\mu)<l(\la)$, so the first unoccupied position in an abacus display for $\mu$ must occur after position $a$. Consequently, all beads in runner $0$ of an abacus display for $\mu$ have zero weight, therefore $\mu$ must be $e$-regular and $\adj_{\la\mu}=\delta_{\la\mu}$ by Proposition \ref{e-regular mu} and Theorem \ref{James Conjecture hecke 4}. 

This completes the proof of Proposition \ref{2 [4:1]-pairs}.

\subsection{Blocks forming exactly one $[4:3]$-pair}
\begin{prop}\label{[4:3]-pairs}
Suppose that $A$ and $B$ are weight $4$ blocks of $\s_{n-1}$ and $\s_{n}$ respectively, forming a $[4:3]$-pair. Moreover, suppose that there is no block other than $A$ forming a $[4:k]$-pair with $B$ for any $k$. Additionally, suppose that the adjustment matrix for every weight $4$ block of $\s_{m}$ is the identity matrix whenever $m<n$. Then, the adjustment matrix for $B$ is the identity matrix.


\end{prop}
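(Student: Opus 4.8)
The plan is to follow, \emph{mutatis mutandis}, the proof of Proposition~\ref{weight 3,2}, of which this statement is the weight-$4$ analogue. Since $(A,B)$ is the unique $[4:k]$-pair involving $B$, one may display $B$ on an abacus with a notation of the form $\langle 4^{a},7^{b-a},6^{c-b},5^{d-c},4^{e-d}\rangle$ for suitable $0<a<b\le c\le d\le e$, so that the $[4:3]$-pair is realised on runner $a$ and every runner to the left of runner $a$ carries strictly fewer beads than runner $a$. Let $\lambda\neq\mu$ be partitions in $B$; the goal is $\textnormal{adj}_{\lambda\mu}=0$. By Proposition~\ref{proposition exceptional lambda} together with the inductive hypothesis, $\textnormal{adj}_{\lambda\mu}=\delta_{\lambda\mu}=0$ unless $\lambda$ is exceptional for $(A,B)$, so the first task is to classify the exceptional partitions.

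If $\lambda\in B$ is exceptional then $\epsilon_{j}(\lambda)>k=3$, hence $\epsilon_{j}(\lambda)\ge4$; by Lemma~\ref{lemma w-kl} the partition $\tilde{E}_{j}^{\epsilon_{j}(\lambda)}\lambda$ has weight $4-\epsilon_{j}(\lambda)\varphi_{j}(\lambda)\ge0$, which together with $\epsilon_{j}(\lambda)\ge4$ forces $\epsilon_{j}(\lambda)=4$ and $\varphi_{j}(\lambda)=1$ (using that $\epsilon_{j}-\varphi_{j}=3$ for partitions in $B$). Thus $\tilde{E}_{j}^{4}\lambda$ is a core, so all four boxes of $\lambda$ are supported on runner $a$; a direct inspection of the abacus then shows that among the partitions of $4$ supported on a single runner, only $(1^{4})$ produces four normal beads there. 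Hence $\lambda=\langle a_{1^{4}}\rangle$ is the unique exceptional partition of $B$.

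It remains to prove $\textnormal{adj}_{\langle a_{1^{4}}\rangle,\mu}=0$ for every $\mu\neq\langle a_{1^{4}}\rangle$ in $B$, and here the argument is verbatim the one used for $[3:2]$-pairs. Passing to a sufficiently small abacus display, the first unoccupied position for $\langle a_{1^{4}}\rangle$ occurs at position $a$ and runner $a-1$ carries a single bead. By Remark~\ref{remark row removal} we may assume $\mu\gg\langle a_{1^{4}}\rangle$, so in particular $l(\mu)<l(\langle a_{1^{4}}\rangle)$, and therefore the first unoccupied position for $\mu$ occurs strictly after position $a$. Consequently the single bead on runner $a-1$ in the display of $\mu$ lies in the top row, so runner $a-1$ of $\mu$ has weight zero; since an $e$-singular partition must contain $e$ consecutive non-trivial occupied positions and every such run of positions meets runner $a-1$ — which now carries only its topmost, trivial bead — it follows that $\mu$ is $e$-regular. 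Hence $\textnormal{adj}_{\langle a_{1^{4}}\rangle,\mu}=\delta_{\langle a_{1^{4}}\rangle,\mu}=0$ by Proposition~\ref{e-regular mu} and Theorem~\ref{James Conjecture hecke 4}, which completes the proof.

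The only step that needs real care is the classification of the exceptional partitions: one has to carry out the abacus bookkeeping showing that $(1^{4})$ is the only possibility (ruling out $(2,1,1)$, $(2,2)$, $(3,1)$, $(4)$ and any partition with boxes on two runners), and one has to choose the abacus display so that the $[4:3]$-pair sits on a runner $a\ge1$ whose predecessor carries a single bead. Beyond this, the argument is a direct transcription of the weight-$3$ case, so I anticipate no essential difficulty.
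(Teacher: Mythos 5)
Your proposal is correct and follows essentially the same route as the paper: reduce via Proposition~\ref{proposition exceptional lambda} (and induction) to the unique exceptional partition $\langle a_{1^{4}}\rangle$, use Remark~\ref{remark row removal} to force $l(\mu)<l(\lambda)$ and hence a late first gap for $\mu$, deduce that $\mu$ is $e$-regular from the abacus, and conclude with Proposition~\ref{e-regular mu} and Theorem~\ref{James Conjecture hecke 4}. The only differences are cosmetic: you work with a smaller bead display (one bead on runner $a-1$, which is legitimate since any $\mu$ with $l(\mu)<l(\lambda)$ is displayable there) and you sketch the classification of exceptional partitions that the paper simply reads off its appendix figures.
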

\begin{proof}
The conditions on $B$ mean that we may represent $B$ on an abacus with the $\langle4^{a},7^{b-a},6^{c-b},5^{d-c},4^{e-d}\rangle$ notation, where $0<a<b\le c\le d\le e$. If $\lambda$ and $\mu$ are two distinct partitions in $B$, then we have $\textnormal{adj}_{\lambda\mu}=0$ by Proposition \ref{proposition exceptional lambda} unless $\lambda$ is the unique exceptional partition for $(A,B)$, namely $\lambda=\langle a_{1^{4}}\rangle$. 
\[
\begin{array}{c}
\la \\
\abacus(bhbnbhbbhbbhbbhb,nhnbbhbbhbbhbnhn,nhnbbhbbhbnhnnhn,nhnbbhbnhnnhnnhn,nhnbnhnnhnnhnnhn,nhnnnhnnhnnhnnhn)
\end{array}
\]

Observe that the first unoccupied position in the abacus display for $\la$ above occurs at position $a$. By Remark \ref{remark row removal}, we may assume that $l(\mu)<l(\la)$, so the first unoccupied position in an abacus display for $\mu$ must occur after position $a$. Consequently, all beads in runner $0$ of an abacus display for $\mu$ have zero weight, therefore $\mu$ must be $e$-regular and $\adj_{\la\mu}=\delta_{\la\mu}$ by Proposition \ref{e-regular mu} and Theorem \ref{James Conjecture hecke 4}. 
\end{proof}

\subsection{Blocks forming exactly one $[4:2]$-pair}
\begin{prop}\label{[4:2]-pairs}
Suppose that $A$ and $B$ are weight $4$ blocks of $\s_{n-1}$ and $\s_{n}$ respectively, forming a $[4:2]$-pair. Moreover, suppose that there is no block other than $A$ forming a $[4:k]$-pair with $B$ for any $k$. Additionally, suppose that the adjustment matrix for every weight $4$ block of $\s_{m}$ is the identity matrix whenever $m<n$. Then, the adjustment matrix for $B$ is the identity matrix.
\end{prop}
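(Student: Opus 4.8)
The plan is to mirror the arguments already used for the $[3:1]$-pair (Proposition \ref{weight 3,1}) and the $[4:3]$-pair (Proposition \ref{[4:3]-pairs}). The hypotheses on $A$ and $B$ mean that we may display $B$ on an abacus with the $\langle 4^{a},6^{b-a},5^{c-b},4^{e-c}\rangle$ notation for some $0<a<b\le c\le e$, the beads on runner $a$ having some $e$-residue $j$, and that removing the two $j$-removable nodes from the core of $B$ produces the core of $A$. As spelled out in section \ref{outline of proof}, it is enough to take two distinct partitions $\lambda$ and $\mu$ in $B$ with $\lambda$ exceptional for $(A,B)$, $\mu$ non-exceptional, $\mu\gg\lambda$ and $\mu$ $e$-singular, and to prove $\textnormal{adj}_{\lambda\mu}=0$: when $\lambda$ is non-exceptional we conclude by Proposition \ref{proposition exceptional lambda} and the inductive hypothesis on blocks of smaller rank, when $\mu$ is exceptional by Proposition \ref{proposition non-exceptional mu} and Corollary \ref{corollary lowerable}, and when $\mu_{1}=\lambda_{1}$, $l(\mu)=l(\lambda)$, or $\mu$ is $e$-regular by Corollary \ref{row removal adj}, Corollary \ref{column removal adj}, or Proposition \ref{e-regular mu} together with Theorem \ref{James Conjecture hecke 4}.

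The first step is to list the partitions in $B$ that are exceptional for $(A,B)$. By Remark \ref{remark excep same} these are precisely the partitions $\lambda$ with $\epsilon_{j}(\lambda)=3$; computing $j$-signatures on runners $a-1$ and $a$ (or reading off Figure \ref{figure exceptional 41}) one finds that they are $\langle a_{1^{4}}\rangle$, $\langle a_{2,1^{2}}\rangle$, and the partitions with $\lambda(a)=(1^{3})$ carrying a single further node on a runner $i$ with $i\ne a-1,a$, which I shall write $\langle a_{1^{3}},i\rangle$. I would then exhibit a chain of $[4:k]$-pairs showing that $\langle a_{1^{4}}\rangle$ induces semisimply to a Rouquier block, exactly as $\langle a_{1^{3}}\rangle$ is handled in the proof of Proposition \ref{weight 3,1}, so that $\textnormal{adj}_{\lambda\mu}=\delta_{\lambda\mu}$ whenever $\lambda=\langle a_{1^{4}}\rangle$ by Proposition \ref{induce semisimply}. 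Hence we may assume $\lambda\in\{\langle a_{2,1^{2}}\rangle\}\cup\{\langle a_{1^{3}},i\rangle : i\ne a-1,a\}$.

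For these remaining exceptional partitions I would argue verbatim as in Proposition \ref{weight 3,1}: put $\nu:=\langle 0,a_{1^{3}}\rangle$ and note that $l(\nu)\ge l(\lambda)$ for every $\lambda$ in this remaining list, so that $l(\mu)<l(\lambda)\le l(\nu)$ by Remark \ref{remark row removal}. Displaying these partitions on an abacus in which the first unoccupied position of $\nu$ is position $0$, it follows that the first unoccupied position of $\mu$ lies strictly after position $0$; hence every bead on runner $0$ in that abacus display of $\mu$ has zero weight, so $\mu$ is $e$-regular, contradicting the standing assumption that $\mu$ is $e$-singular. Therefore $\textnormal{adj}_{\lambda\mu}=0$ in every remaining case, which, together with the base case established in Proposition \ref{empty core weight 4}, will complete the proof of Theorem \ref{James Conjecture schur 4}.

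I expect the main obstacle to be the enumeration step: pinning down the exact list of exceptional partitions for a $[4:2]$-pair (this rests on the $j$-signature bookkeeping summarised in the appendix) and checking that $\nu=\langle 0,a_{1^{3}}\rangle$ really has length at least that of each of them, so that the row-removal-and-$e$-regularity reduction applies without exception. If some exceptional pair $(\lambda,\mu)$ should slip past this argument — for instance if the relevant $d^{e}_{\lambda\mu}(v)$ turned out to be $v$ rather than $0$ — it would instead have to be disposed of via Proposition \ref{decomposition number 0 or v}, computing $d^{e}_{\lambda\mu}(v)$ with the LLT algorithm together with Theorem \ref{Runner Removal Theorem Empty} and Theorem \ref{Runner Removal Theorem Full} and verifying that $\textnormal{adj}_{\sigma\mu}=0$ for every $\sigma$ with $\lambda<_{P}\sigma<_{P}\mu$, exactly as for the difficult pairs in Proposition \ref{empty core weight 4}; by analogy with the $[3:1]$-pair case, however, I do not anticipate this being necessary.
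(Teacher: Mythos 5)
Your proposal follows the paper's proof essentially step for step: the same list of exceptional partitions for the $[4:2]$-pair ($\langle a_{1^{4}}\rangle$, $\langle a_{2,1^{2}}\rangle$, $\langle a_{1^{3}},i\rangle$ with $i\notin\{a-1,a\}$), the same disposal of $\langle a_{1^{4}}\rangle$ by semisimple induction to a Rouquier block via Proposition \ref{induce semisimply}, and the same final reduction using $\nu=\langle 0,a_{1^{3}}\rangle$, Remark \ref{remark row removal}, the first-unoccupied-position argument forcing $\mu$ to be $e$-regular, and Proposition \ref{e-regular mu} with Theorem \ref{James Conjecture hecke 4}. The fallback via Proposition \ref{decomposition number 0 or v} is indeed not needed, exactly as you anticipated.
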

The conditions on $B$ mean that we may represent $B$ on an abacus with the $\langle 4^{a},6^{b-a},5^{c-b},4^{e-c}\rangle$ notation, where $0<a<b\le c\le e$. Suppose that $\lambda$ and $\mu$ are two distinct partitions lying in $B$ and we want to prove that $\textnormal{adj}_{\lambda\mu}=0$. If $\lambda$ were non-exceptional for $(A,B)$, then $\textnormal{adj}_{\lambda\mu}=\delta_{\lambda\mu}$ by Proposition \ref{proposition exceptional lambda}. Therefore we may assume that $\lambda$ is exceptional; that is $\la$ must be one of the following partitions:
\begin{itemize}
\item $\langle a_{2,1^{2}}\rangle$,
\item $\langle a_{1^{3}},i\rangle, i\notin\{a-1,a\}$,
\item $\langle a_{1^{4}} \rangle$.
\end{itemize}
We invite the reader to verify that $\langle a_{1^{4}}\rangle$ induces semi-simply to a Rouquier block:
\begin{equation*}
\langle a_{1^{4}}\rangle \sim
\begin{cases}
\langle0,a,a+e-c,a+e-b\mid4,7,\dots,3e+1\rangle  & \text{if     }e-c>0,c-b>0,\\
\langle0,a_{1^{2}},a+e-c\mid4,7,\dots,3e+1\rangle & \text{if     }e-c=0,c-b>0,\\
\langle0,a,(a+e-c)_{1^{2}}\mid4,7,\dots,3e+1\rangle & \text{if     }e-c>0,c-b=0,\\
\langle0,a_{1^{3}}\mid4,7,\dots,3e+1\rangle & \text{if     }e-c=0,c-b=0.
\end{cases}
\end{equation*}
Therefore, we may assume that $\la\neq \langle a_{1^{4}}\rangle$ by Proposition \ref{induce semisimply}. We are left to consider the cases $\la=\langle a_{2,1^{2}}\rangle$ or $\la=\langle a_{1^{3}},i\rangle, i\notin\{a-1,a\}$. 

\[
\begin{array}{c@{\qquad}c}
\langle 0, a_{1^{3}}\rangle &\langle a_{2,1^{2}}\rangle\\
\abacus(nbhbnbhbbhbbhb,bnhnbbhbbhbnhn,nnhnbbhbnhnnhn,nnhnbnhnnhnnhn,nnhnnnhnnhnnhn,nnhnnnhnnhnnhn)&
\abacus(bhbnbhbbhbbhb,nhnbbhbbhbnhn,nhnbbhbnhnnhn,nhnnnhnnhnnhn,nhnbnhnnhnnhn,nhnnnhnnhnnhn)
\end{array}
\]

Let $\nu:=\langle0,a_{1^{3}}\rangle$. Note that $l(\nu)\ge l(\la)$ for all the remaining possible $\la$, so we may assume that $l(\mu)<l(\nu)$ by Remark \ref{remark row removal}. Since the first unoccupied position in the abacus display for $\la$ occurs at position $0$, the first unoccupied position in an abacus display for $\mu$ must occur strictly after position $0$. Consequently, all beads in runner $0$ of an abacus display for $\mu$ have zero weight, therefore $\mu$ must be $e$-regular and $\adj_{\la\mu}=\delta_{\la\mu}$ by Proposition \ref{e-regular mu} and Theorem \ref{James Conjecture hecke 4}. This completes the proof of Proposition \ref{[4:2]-pairs}.

\subsection{Blocks forming exactly one $[4:1]$-pair}\label{section 41}
In this section, we prove the following proposition:
\begin{prop}\label{[4:1]-pairs}
Suppose that $A$ and $B$ are weight $4$ blocks of $\s_{n-1}$ and $\s_{n}$ respectively, forming a $[4:1]$-pair. Moreover, suppose that there is no block other than $A$ forming a $[4:k]$-pair with $B$ for any $k$. Additionally, suppose that the adjustment matrix for every weight $4$ block of $\s_{m}$ is the identity matrix whenever $m<n$. Then, the adjustment matrix for $B$ is the identity matrix.
\end{prop}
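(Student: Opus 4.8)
The plan is to reproduce, one weight higher, the argument used for $[3\!:\!1]$-pairs in Proposition \ref{weight 3,1} and for $[4\!:\!2]$-pairs in Proposition \ref{[4:2]-pairs}, falling back on the branching machinery of Proposition \ref{prop 4e difficult} only for the residual cases. First I would record the shape forced by the hypotheses: $B$ may be displayed with the $\langle 4^{a},5^{b-a},4^{e-b}\rangle$ notation for some $0<a<b\le e$, with runner $a$ the special runner of the $[4\!:\!1]$-pair $(A,B)$. Given distinct partitions $\lambda,\mu$ in $B$, Proposition \ref{proposition exceptional lambda} together with the inductive hypothesis (the adjustment matrix of the weight $4$ block $A$ of $\s_{n-1}$ is the identity) kills every pair in which $\lambda$ is non-exceptional for $(A,B)$; Proposition \ref{proposition non-exceptional mu} and Corollary \ref{corollary lowerable} kill every pair in which $\mu$ is exceptional. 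So I may assume $\lambda$ is exceptional and $\mu$ non-exceptional; as in Section \ref{outline of proof} I may further assume $\mu\gg\lambda$ (Remark \ref{remark row removal}), that $\epsilon_{j}(\lambda)>\epsilon_{j}(\mu)$ for every residue $j$ with $\epsilon_{j}(\mu)>0$ (Remark \ref{remark lowerable}), and that $\mu$ is $e$-singular (Proposition \ref{e-regular mu} and Theorem \ref{James Conjecture hecke 4}).

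Next I would list the exceptional partitions $\lambda$ in $B$ read off the appendix (Figure \ref{figure exceptional 41}): each satisfies $\epsilon_{a}(\lambda)=2$ and has $e$-weight $4$, so $\lambda(a)$ is a partition of $2$, $3$ or $4$ and the remaining $e$-weight, if any, is a single box or a domino on at most two further runners away from $a$ and $a-1$. For $\langle a_{1^{4}}\rangle$, and more generally for those exceptional $\lambda$ built from a single column of boxes on runner $a$ together with isolated boxes on other runners, I would exhibit an explicit chain of $[4\!:\!k]$-pairs realising $\lambda\sim\nu$ with $\nu$ in a Rouquier block, so that $\adj_{\lambda\mu}=\delta_{\lambda\mu}$ by Proposition \ref{induce semisimply} — this is the direct analogue of the Rouquier-induction lists in Section \ref{section wt 3} and in Proposition \ref{empty core weight 4}. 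For the remaining exceptional $\lambda$ I would run the length argument of Proposition \ref{weight 3,1}: among them the number of parts is maximised by a partition $\nu$ whose first vacant abacus position lies at (or just past) the start of runner $0$, so $\mu\gg\lambda$ forces the first vacant position of $\mu$ to come strictly later, every bead on runner $0$ of $\mu$ is then weightless, hence $\mu$ is $e$-regular, contradicting the previous reduction and in any case giving $\adj_{\lambda\mu}=0$ by Proposition \ref{e-regular mu} and Theorem \ref{James Conjecture hecke 4}.

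The partitions not cleared by the previous paragraph are the $q$-Schur analogues of $\lambda^{1},\lambda^{2},\lambda^{3}$ from Proposition \ref{empty core weight 4} — essentially $\langle a_{2,2}\rangle$ together with a short list of exceptions in small $e$. For these I would induce $\lambda$ and $\mu$ semi-simply along a carefully chosen residue sequence to reach one of: a Rouquier block (finish by Proposition \ref{induce semisimply}); a block in which Corollary \ref{row removal adj} and Corollary \ref{column removal adj} strip down to a weight $\le 3$ situation controlled by Theorem \ref{James Conjecture schur 3}; a block in which $d^{e}_{\lambda'\mu'}(v)\in\{0,v\}$ is computed by the LLT algorithm and the Runner Removal Theorems \ref{Runner Removal Theorem Empty} and \ref{Runner Removal Theorem Full}, after which Proposition \ref{decomposition number 0 or v} applies; or, in the worst case, a block where I run an explicit chain of divided-power induction functors and use the modular branching rules (Theorems \ref{branching rule} and \ref{modular branching rule}) to bound $[W^{\lambda''}_{\mathbb{F}}:L^{\mu''}_{\mathbb{F}}]$ above by $[W^{\lambda''}_{\mathbb{C}}:L^{\mu''}_{\mathbb{C}}]$, concluding via Lemma \ref{lemma easy fact}, exactly as in Proposition \ref{prop 4e difficult}.

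The main obstacle is this last step. Choosing residue sequences along which both $\lambda$ and $\mu$ induce semi-simply, verifying semi-simplicity at each stage, and then keeping track of which Weyl and simple modules survive each block projection in the branching computation is where the genuine work lies — everything else is bookkeeping with the tools assembled in Sections \ref{background} and \ref{section adj}. I expect the number of truly delicate cases to be very small (as it was for weight $3$ and for the principal block of $\s_{4e}$), but identifying them cleanly and matching each to the right finishing device is the crux of the proof.
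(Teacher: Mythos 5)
There is a genuine gap in your second reduction step. The length argument of Proposition \ref{weight 3,1} (the first vacant position of $\mu$ must occur after that of $\lambda$, hence every bead on runner $0$ of $\mu$ is weightless, hence $\mu$ is $e$-regular) is exactly what the paper uses for $[4\!:\!2]$-, $[4\!:\!3]$- and two-$[4\!:\!1]$-pair blocks, but it fails for a block forming a single $[4\!:\!1]$-pair and the paper does not use it there. After discarding $\langle a_{1^{4}}\rangle$ by Rouquier induction, the product-order-minimal remaining exceptional partition is $\nu=\langle 0_{1^{2}},a_{1^{2}}\rangle$, and the constraints $\mu\gg\nu$, $\mu$ $e$-singular, and $\mu$ non-exceptional with no normal beads off runner $a$ still leave about a dozen families of $\mu$ (the paper's cases A1--A3, B1--B2, C1--C7) paired with all eight remaining families of exceptional $\lambda$ --- not ``essentially $\langle a_{2^{2}}\rangle$ plus small-$e$ exceptions''. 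In fact $\langle a_{2^{2}}\rangle$ is one of the easy cases (no listed $\mu$ satisfies $\mu\gg\lambda$, so Remark \ref{remark row removal} disposes of it), while the genuinely hard pairs are of the shape $\lambda\in\{\langle i_{1^{2}},a_{1^{2}}\rangle,\langle i,a_{1^{3}}\rangle\}$ against $\mu$ of types $\langle 0,a_{3}\rangle$, $\langle 0,a_{2},a+1\rangle$, $\langle 0,1,a_{2}\rangle$, $\langle 0,1,a,a+1\rangle$, $\langle 0,a,a+1,a+2\rangle$ in the boundary case $b=e$; the case $e-b>0$ is settled not by a length argument but by explicit semisimple induction ($\dot{F}_{a-b}{\nearrow}^{a}$ and $\dot{F}_{a}{\searrow}_{b-a}$) producing lowerable pairs.

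Your fallback toolkit also omits the paper's main finishing device for the $b=e$ cases. After inducing semisimply to the block with the $\langle 4^{a},6,9^{e-a-1}\rangle$ notation, the paper computes the Jantzen--Schaper coefficients for the whole Jantzen-order interval below the induced $\hat{\mu}$ (Tables \ref{4:1 1}--\ref{4:1 3} and \ref{4:1 6}), settles almost every composition multiplicity by Corollary \ref{cor jantzen} and Lemma \ref{lemma easy fact}, and for the single partition $\nu^{0}$ in each table with $B(\nu^{0},\hat{\mu})=2$ uses the relative $e$-sign parity argument (Remark \ref{remark after}, via Theorems \ref{v-decomposition numbers} and \ref{v-decomposition numbers are odd or even}) to get $[W^{\nu^{0}}_{\C}:L^{\hat{\mu}}_{\C}]=1$, and then either Rouquier induction or a column-removal reduction to weight $3$ (Corollary \ref{column removal adj} with Theorem \ref{James Conjecture schur 3}) to force $\adj_{\nu^{0}\hat{\mu}}=0$. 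The devices you do name --- Proposition \ref{decomposition number 0 or v} on $v$-decomposition numbers and the branching computation of Proposition \ref{prop 4e difficult} --- are what carry the principal block of $\s_{4e}$, not this section; without the Jantzen--Schaper table computation and the $e$-sign step, your plan does not close the $b=e$ cases, so the proposal as written has a real hole precisely where the bulk of the paper's Section \ref{section 41} lies.
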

The conditions on $B$ mean that we may represent $B$ on an abacus with the $\langle 4^{a},5^{b-a},4^{e-b}\rangle$ notation, where $0<a<b\le e$. Suppose that $\lambda$ and $\mu$ are two distinct partitions lying in $B$ and we want to prove that $\textnormal{adj}_{\lambda\mu}=0$. By Proposition \ref{proposition exceptional lambda} and Proposition \ref{proposition non-exceptional mu} we may assume that $\lambda$ is exceptional, so that $\lambda$ is one of the following partitions:
\begin{itemize}
\item $\langle a_{3,1}\rangle$,
\item $\langle a_{2^{2}}\rangle$,
\item $\langle a_{2,1^{2}}\rangle$,
\item $\langle a_{2,1},i\rangle$, $i\notin\{a-1,a\}$, $e\ge3$,
\item $\langle a_{1^{2}},i_{2}\rangle$, $i\notin\{a-1,a\}$, $e\ge3$,
\item $\langle a_{1^{2}},i,j\rangle$, $i,j\notin\{a-1,a\}$ and $i<j$, $e\ge4$,
\item $\langle a_{1^{3}},i\rangle$, $i\neq a$,
\item $\langle a_{1^{2}},i_{1^{2}}\rangle$, $i\notin\{a-1,a\}$, $e\ge3$,
\item $\langle a_{1^{4}} \rangle$.
\end{itemize}
We invite the reader to check that $\langle a_{1^{4}} \rangle$ induces semi-simply to a Rouquier block:
\begin{equation*}
\langle a_{1^{4}} \rangle \sim
\begin{cases}
\langle 0_{1^{2}},a,a+e-b\mid4,7,\dots,3e+1\rangle & \text{if          } e-b>0,\\
\langle 0_{1^{2}},a_{1^{2}}\mid4,7,\dots,3e+1\rangle & \text{if          } e-b=0.
\end{cases}
\end{equation*}
Therefore, we may assume that $\lambda\neq\langle a_{1^{4}} \rangle$ by Proposition \ref{induce semisimply}. In the remaining possibilities for $\la$, observe that $\lambda$ has exactly two normal beads on runner $a$ and at most one normal bead on each of the other runners. By Remark \ref{remark lowerable}, we may assume that $\mu$ has at most one normal bead on runner $a$ (non-exceptional) and no normal beads on every other runner. Note that $\nu:=\langle 0_{1^{2}},a_{1^{2}}\rangle \le_{P} \la$ for all the remaining possibilities for $\lambda$. By Remark \ref{remark row removal} we may further assume that $\mu\gg\nu$ since $\mu\gg\lambda$ and $\lambda \ge_{P}\nu$ imply that $\mu\gg\nu$. By Corollary \ref{e-regular mu}, we may also assume that $\mu$ is $e$-singular. We list every possibility for $\mu$ satisfying all the conditions stated above:
\begin{itemize}
\item[(A1)] $\langle a_{2},b_{1^{2}} \rangle$, $e-b>0$,
\item[(A2)] $\langle a,b_{2,1}\rangle$, $b-a=1,e-b>0$,
\item[(A3)] $\langle a,a+1,b_{1^{2}}\rangle$, $b-a\ge2,e-b>0$,
\item[(B1)] $\langle 0_{3},a\rangle$, $a=1,b-a=1,e-b=0$,
\item[(B2)] $\langle 0_{2},a,a+1 \rangle$, $a=1,e-b=0$, $b-a\ge 2$,
\item[(C1)] $\langle 0,a_{3}\rangle$,
\item[(C2)] $\langle 0,a_{2},a+1\rangle$, $b-a\ge2$,
\item[(C3)] $\langle 0,a_{2,1}\rangle$, $a=1$,
\item[(C4)] $\langle 0,1,a_{2}\rangle$, $a\ge2$,
\item[(C5)] $\langle 0,a,a+1,a+2\rangle$, $b-a\ge3$,
\item[(C6)] $\langle 0,a_{1^{2}},a+1\rangle$, $a=1,b-a\ge2$,
\item[(C7)] $\langle 0,1,a,a+1\rangle$, $a\ge2,b-a\ge2$.
\end{itemize}
We now consider 2 separate cases.
\subsubsection{The case $e-b>0$}
For any partition $\nu$ lying in $B$, we define $\mathfrak{a}(\nu):=\dot{F}_{a-b}{\nearrow}^{a}\nu$ (in an abacus display with the $\langle 4^{a},5^{b-a},4^{e-b} \rangle$ notation, the beads on runner $0$ have $e$-residue $a-b$). We may check that $\mathfrak{a}(\lambda)$ and $\mathfrak{a}(\mu)$ are both semisimply induced and that $(\mathfrak{a}(\la),\mathfrak{a}(\mu))$ is lowerable except when one of the following happens:
\begin{itemize}
\item $\lambda=\langle a_{2^{2}}\rangle$ or $\lambda=\langle i_{2},a_{1^{2}}\rangle$ for some $0\le i\le a-2$,
\item $e-b=1$ and $\mu$ is in case A2 or A3.
\end{itemize}
When $\lambda\in\{\langle a_{2^{2}}\rangle,\langle i_{2},a_{1^{2}}\rangle\}$, none of our ten cases of $\mu$ (cases A1-A3, C1-C7) satisfy $\mu\gg\lambda$, therefore $\textnormal{adj}_{\lambda\mu}=\delta_{\lambda\mu}$ by Remark \ref{remark row removal}. In view of Corollary \ref{corollary lowerable} and Corollary \ref{working exceptional lambda}, we are left to consider $\mu$ in cases A2 and A3 where $e-b=1$. By considering $\mu\gg\lambda$ again, we are left with the following possibilities for $\lambda$:
\begin{itemize}
\item $\langle a_{1^{3}},i\rangle$, $0\le i\le a-1$ or $i=b$, 
\item $\langle i_{1^{2}},a_{1^{2}}\rangle$, $0\le i\le a-2$.
\end{itemize}
For any partition $\nu$ lying in $B$, we define $\mathfrak{b}(\nu):=\dot{F}_{a}{\searrow}_{b-a}\nu$ (in an abacus display with the $\langle 4^{a},5^{b-a},4^{e-b} \rangle$ notation, the beads on runner $b$ have $e$-residue $a$). We observe that for the remaining pairs $(\lambda,\mu)$, $\mathfrak{b}(\lambda)$ and $\mathfrak{b}(\mu)$ are both semisimply induced and that $(\mathfrak{b}(\la),\mathfrak{b}(\mu))$ is lowerable, so the proof of Proposition \ref{[4:1]-pairs} when $e-b>0$ is complete by Corollary \ref{corollary lowerable} and Corollary \ref{working exceptional lambda}.

\subsubsection{The case $e-b=0$}\label{sec e=b}
The case $e=b$ is much more difficult to deal with. From now on, we assume that $e=b$. We will first refine the list of possible $\mu$. We note that $\langle 0,a_{1^{3}}\rangle\sim \langle 0_{2},a_{1^{2}}\mid4,7,\dots,3e+1\rangle$, so we may assume that $\lambda\neq\langle 0,a_{1^{3}}\rangle$ by Proposition \ref{induce semisimply}. In view of this, we have $\tau:=\langle a_{1^{3}},a+1 \rangle \le_{P} \la$ for the remaining possibilities for $\la$ when $a=1$ and $e-a\ge2$. When $a=1$ and $e-a=1$, we have $\gamma:=\langle a_{2,1^{2}} \rangle\le_{P} \la$ for the remaining possibilities for $\la$.

By Remark \ref{remark row removal}, we may assume that $\mu\gg\tau$ when $a=1$ and $e-a\ge2$, since $\mu\gg\la$ and $\la \ge_{P}\tau$ imply that $\mu\gg\tau$; when $a=1$ and $e-a=1$, we may assume that $\mu\gg\gamma$. These two additional restrictions produce the following refined list of possibilities for $\mu$:
\begin{itemize}
\item[(C1)] $\langle 0,a_{3}\rangle$,
\item[(C2)] $\langle 0,a_{2},a+1\rangle$, $e-a\ge2$,
\item[(C3)] $\langle 0,a_{2,1}\rangle$, $a=1$, $e-a\ge2$,
\item[(C4)] $\langle 0,1,a_{2}\rangle$, $a\ge2$,
\item[(C5)] $\langle 0,a,a+1,a+2\rangle$, $e-a\ge3$,
\item[(C7)] $\langle 0,1,a,a+1\rangle$, $a\ge2,e-a\ge2$.
\end{itemize}

For any partition $\sigma$ in $B$, define $\mathfrak{a}(\sigma):=\dot{F}_{a}{\nearrow}^{a}\sigma$ (in an abacus display with the $\langle 4^{a},5^{e-a} \rangle$ notation, the beads on runner $0$ have $e$-residue $a$). When $\mu$ is in cases C1, C3 and C4 with $e-a\ge2$, $\mathfrak{a}(\lambda)$ and $\mathfrak{a}(\mu)$ are both semisimply induced and $(\mathfrak{a}(\la),\mathfrak{a}(\mu))$ is lowerable. When $e-a\ge3$ and $\mu$ is in case C2, $\mathfrak{a}(\lambda)$ and $\mathfrak{a}(\mu)$ are both semisimply induced and $(\mathfrak{a}(\la),\mathfrak{a}(\mu))$ is lowerable. In view of Corollary \ref{corollary lowerable} and Corollary \ref{working exceptional lambda}, we are left with the following possibilities for $\mu$:
\begin{itemize}
\item[(C1)] $\langle 0,a_{3}\rangle$, $e-a=1$,
\item[(C2)] $\langle 0,a_{2},a+1\rangle$, $e-a=2$,
\item[(C4)] $\langle 0,1,a_{2}\rangle$, $a\ge2$, $e-a=1$,
\item[(C5)] $\langle 0,a,a+1,a+2\rangle$, $e-a\ge3$,
\item[(C7)] $\langle 0,1,a,a+1\rangle$, $a\ge2,e-a\ge2$.
\end{itemize}

\paragraph{\textbf{Cases C1, C2 and C5}}\label{subsub 1}
Suppose that $\mu$ is in either case C1, C2 or C5. When $e-a=1$, $\langle a_{2,1^{2}}\rangle \sim \langle0,a_{2,1} \mid 4,7,\dots,3e+1\rangle$, so we may assume that $\lambda\neq\langle a_{2,1^{2}}\rangle$ by Proposition \ref{induce semisimply}. By Remark \ref{remark row removal}, we may also assume that $\lambda \ll \mu$. We find that the only remaining possibilities for $\la$ are:
\begin{itemize}
\item $\langle i_{1^{2}},a_{1^{2}}\rangle$ for some $0\le i\le a-2$,
\item $\langle i, a_{1^{3}} \rangle$ for some $1 \le i \le a-1$.
\end{itemize}

Our strategy to deal with the remaining pairs $(\lambda,\mu)$ is to first induce them up semisimply via the same sequence of inductions (for example using\\
$f:=\dot{F}_{5a+2}{\searrow}_{e-a-1}\dots\dot{F}_{2a+1}{\searrow}_{e-a-1}\dot{F}_{2a}{\searrow}_{e-a-1}\dot{F}_{2a-1}{\searrow}_{e-a}\dots\dot{F}_{a+1}{\searrow}_{e-a}\dot{F}_{a}{\searrow}_{e-a}$ which is well-defined for partitions in block $B$) to the block with the $\langle 4^{a},6,9^{e-a-1}\rangle$ notation, followed by an application of the Jantzen-Schaper formula. We denote their induced counterparts as $(\hat{\lambda},\hat{\mu})$; that is $(\lambda,\mu)\sim(\hat{\lambda},\hat{\mu})$. Using Theorem \ref{jantzen schaper}, we would show that $[W^{\hat{\lambda}}:L^{\hat{\mu}}]$ is independent of $\textnormal{char}(\F)$, so that $\textnormal{adj}_{\hat{\lambda}\hat{\mu}}=\delta_{\hat{\lambda}\hat{\mu}}$ and hence $\textnormal{adj}_{\lambda\mu}=\delta_{\lambda\mu}$ by Corollary \ref{working exceptional lambda} and Lemma \ref{lemma easy fact}. We may check that
\begin{itemize}
\item $\langle i_{1^{2}},a_{1^{2}}\rangle\sim \langle i_{2,1},a \mid 4^{a},6,9^{e-a-1}\rangle$ when $i \le a-2$,
\item $\langle i, a_{1^{3}} \rangle \sim \langle 0,i_{2},a \mid 4^{a},6,9^{e-a-1}\rangle$ when $1\le i \le a-1$,
\item $\mu \sim \langle 0,a_{3}\mid4^{a},6,9^{e-a-1}\rangle$.
\end{itemize}

Note that $\langle 0_{2,1},a \mid 4^{a},6,9^{e-a-1}\rangle\le_{J}\hat{\la}$ for all remaining possible $\la$. Table \ref{4:1 1}, Table \ref{4:1 2} and Table \ref{4:1 3} (in the appendix) illustrates how we use Theorem \ref{jantzen schaper}. The entries of the tables are the Jantzen-Schaper coefficients (Definition \ref{defn js}) $J(\nu,\sigma)$, for partitions $\langle 0_{2,1},a \mid 4^{a},6,9^{e-a-1}\rangle\le_{J}\nu<_{J}\sigma\le_{J}\hat{\mu}=\langle 0,a_{3} \mid 4^{a},6,9^{e-a-1}\rangle$. Note that $J_{\mathbb{F}}(\nu,\sigma)=J_{\mathbb{C}}(\nu,\sigma)$ since char$(\mathbb{F})>w=4$. We also omit the columns indexed by $\nu$ if $[W^{\nu}:L^{\hat{\mu}}]=0$ as these do not contribute to our calculations. If we are able to justify the last column of Table \ref{4:1 1}, Table \ref{4:1 2} and Table \ref{4:1 3}, it would imply that $[W^{\nu}:L^{\hat{\mu}}]$ is independent of $\F$, hence $\adj_{\nu\hat{\mu}}=\delta_{\nu\hat{\mu}}$ for all $\langle 0_{2,1},a \mid 4^{a},6,9^{e-a-1}\rangle\le_{J}\nu\le_{J}\hat{\mu}$; this shows that $\textnormal{adj}_{\lambda\mu}=\delta_{\lambda\mu}$ for every remaining possible $\la$ by Corollary \ref{working exceptional lambda} and Lemma \ref{lemma easy fact}.

We now proceed to justify the last column of Table \ref{4:1 1}, Table \ref{4:1 2} and Table \ref{4:1 3}. When $B(\nu,\hat{\mu})\le1$, we have $[W^{\nu}:L^{\hat{\mu}}]=B(\nu,\hat{\mu})$ by Corollary \ref{cor jantzen}, so $\adj_{\nu\hat{\mu}}=\delta_{\nu\hat{\mu}}$ by Lemma \ref{lemma easy fact}. When $B(\nu,\hat{\mu})>1$, we have to do more work.

Let $\nu^{0}:=\langle (a-1)_{2,1},a\mid4^{a},6,9^{e-a-1}\rangle$. Observe that $B(\nu^{0},\hat{\mu})=2$ and that this is the only partition $\nu$ in Table \ref{4:1 1}, Table \ref{4:1 2} and Table \ref{4:1 3} with $B(\nu,\hat{\mu})>1$ (shaded the tables). Following Remark \ref{remark after}, we calculate $\sigma_{e}(\nu^{0})$ and $\sigma_{e}(\hat{\mu})$, and find that they are both $(-1)^{a}$, hence $[W^{\nu^{0}}_{\mathbb{C}}:L^{\hat{\mu}}_{\mathbb{C}}]=1$. A priori, we only know that $[W^{\nu^{0}}_{\mathbb{F}}:L^{\hat{\mu}}_{\mathbb{F}}]\le2$. 
\begin{lem}
We have $[W^{\nu^{0}}_{\mathbb{F}}:L^{\hat{\mu}}_{\mathbb{F}}]=1$.
\end{lem}
\begin{proof}

We may check that $\nu^{0}$ induces semi-simply to a Rouquier block:
\begin{equation*}
\nu^{0} \sim
\begin{cases}
\langle 0_{2,1},a\mid4,7,\dots,3e+1\rangle & \text{if          } a=1,\\
\langle 0,(a-1)_{2},a\mid4,7,\dots,3e+1\rangle & \text{if          } a\ge2.
\end{cases}
\end{equation*}
Hence, $\adj_{\nu^{0}\hat{\mu}}=0$ by Proposition \ref{induce semisimply}. Moreover, we deduce from the rows above $\nu^{0}$ in Table \ref{4:1 1}, Table \ref{4:1 2} and Table \ref{4:1 3} that $[W^{\nu}:L^{\hat{\mu}}]$ is independent of $\F$ whenever $\nu^{0}<_{J}\nu<_{J}\hat{\mu}$, therefore $\adj_{\nu\hat{\mu}}=0$ by Lemma \ref{lemma easy fact}.
Finally, $$[W^{\nu^{0}}_{\mathbb{F}}:L^{\hat{\mu}}_{\mathbb{F}}]=\sum\limits_{\nu^{0}\le_{J}\nu\le_{J}\hat{\mu}}[W^{\nu^{0}}_{\C}:L^{\nu}_{\C}]\adj_{\nu\hat{\mu}}=[W^{\nu^{0}}_{\mathbb{C}}:L^{\hat{\mu}}_{\mathbb{C}}]=1.$$
\end{proof}

\paragraph{\textbf{Cases C4 and C7}}\label{subsub 2}
Suppose that $\mu$ is in either case C4 or C7. When $e-a=1$, $\langle a_{2,1^{2}}\rangle \sim \langle0,a_{2,1} \mid 4,7,\dots,3e+1\rangle$, so we may assume that $\lambda\neq\langle a_{2,1^{2}}\rangle$ by Proposition \ref{induce semisimply}. By Remark \ref{remark row removal}, we may also assume that $\lambda \ll \mu$. We find that the only remaining possibilities for $\la$ are:
\begin{itemize}
\item $\langle 1, a_{1^{3}} \rangle$,
\item $\langle 1_{1^{2}},a_{1^{2}}\rangle$,
\item $\langle 0_{1^{2}},a_{1^{2}}\rangle$.
\end{itemize}

We use the Jantzen Schaper formula and Corollary \ref{working exceptional lambda} in the same fashion as in section \ref{subsub 1} to deal with the remaining pairs of $(\la,\mu)$. We may check that (for example using $f:=\dot{F}_{5a+2}{\searrow}_{e-a-1}\dots\dot{F}_{2a+1}{\searrow}_{e-a-1}\dot{F}_{2a}{\searrow}_{e-a-1}\dot{F}_{2a-1}{\searrow}_{e-a}\dots\dot{F}_{a+1}{\searrow}_{e-a}\dot{F}_{a}{\searrow}_{e-a}$)
\begin{itemize}
\item $\langle 1, a_{1^{3}} \rangle \sim \langle 0,1_{2},a \mid 4^{a},6,9^{e-a-1}\rangle$,
\item $\langle 1_{1^{2}},a_{1^{2}}\rangle\sim \langle 1_{2,1},a \mid 4^{a},6,9^{e-a-1}\rangle$,
\item $\langle 0_{1^{2}},a_{1^{2}}\rangle\sim \langle 0_{2,1},a \mid 4^{a},6,9^{e-a-1}\rangle$,
\item $\mu \sim \langle 0,1,a_{2}\mid4^{a},6,9^{e-a-1}\rangle$.
\end{itemize}

Note that $\langle 0_{2,1},a \mid 4^{a},6,9^{e-a-1}\rangle<_{J}\langle 1_{2,1},a \mid 4^{a},6,9^{e-a-1}\rangle<_{J}\langle 0,1_{2},a \mid 4^{a},6,9^{e-a-1}\rangle$. The entries of Table \ref{4:1 6} (in the appendix) are the Jantzen-Schaper coefficients (Definition \ref{defn js}) $J(\nu,\sigma)$, for partitions $\langle 0_{2,1},a \mid 4^{a},6,9^{e-a-1}\rangle\le_{J}\nu<_{J}\sigma\le_{J}\hat{\mu}:=\langle 0,1,a_{2} \mid 4^{a},6,9^{e-a-1}\rangle$. Note that $J_{\mathbb{F}}(\nu,\sigma)=J_{\mathbb{C}}(\nu,\sigma)$ since char$(\mathbb{F})>w=4$. We also omit the columns indexed by $\nu$ if $[W^{\nu}:L^{\hat{\mu}}]=0$ as these do not contribute to our calculations. If we are able to justify the last column of Table \ref{4:1 6}, it would imply that $[W^{\nu}:L^{\hat{\mu}}]$ is independent of $\F$, hence $\adj_{\nu\hat{\mu}}=\delta_{\nu\hat{\mu}}$ for all $\langle 0_{2,1},a \mid 4^{a},6,9^{e-a-1}\rangle\le_{J}\nu\le_{J}\hat{\mu}$ by Lemma \ref{lemma easy fact}; this shows that $\textnormal{adj}_{\lambda\mu}=\delta_{\lambda\mu}$ for every remaining possible $\la$ by Corollary \ref{working exceptional lambda}.

We now proceed to justify the last column of Table \ref{4:1 6}. When $B(\nu,\hat{\mu})\le1$, we have $[W^{\nu}:L^{\hat{\mu}}]=B(\nu,\hat{\mu})$ by Corollary \ref{cor jantzen}, so $\adj_{\nu\hat{\mu}}=\delta_{\nu\hat{\mu}}$ by Lemma \ref{lemma easy fact}. When $B(\nu,\hat{\mu})>1$, we have to do more work.

Let $\nu^{0}:=\langle 1_{2,1},a\mid4^{a},6,9^{e-a-1}\rangle$. Observe that $B(\nu^{0},\hat{\mu})=2$ (shaded in Table \ref{4:1 6}) and that this is the only partition $\nu$ in Table \ref{4:1 6} with $B(\nu,\hat{\mu})>1$. Following Remark \ref{remark after}, we calculate $\sigma_{e}(\nu^{0})$ and $\sigma_{e}(\hat{\mu})$, and find that they are both $+1$, hence $[W^{\nu^{0}}_{\mathbb{C}}:L^{\hat{\mu}}_{\mathbb{C}}]=1$. A priori, we only know that $[W^{\nu^{0}}_{\mathbb{F}}:L^{\hat{\mu}}_{\mathbb{F}}]\le2$. 
\begin{lem}
We have $[W^{\nu^{0}}_{\mathbb{F}}:L^{\hat{\mu}}_{\mathbb{F}}]=1$.
\end{lem}
\begin{proof}
We may check that $\nu^{0}\sim \tilde{\nu}:=\langle0,1,2_{2}\mid 7,3,4^{a-1},10^{e-a-1} \rangle$ and $\hat{\mu}\sim \tilde{\mu}:=\langle 0_{2},1_{2} \mid 7,3,4^{a-1},10^{e-a-1} \rangle$ (see Figure \ref{fig long} in the appendix). Observe that $l(\tilde{\nu})=l(\tilde{\mu})$ and that $\col(\tilde{\nu})$ and $\col(\tilde{\mu})$ have weight 3. Hence $\adj_{\nu^{0}\hat{\mu}}=0$ by Theorem \ref{James Conjecture schur 3} and Corollary \ref{column removal adj}. Moreover,we deduce from the rows above $\nu^{0}$ in Table \ref{4:1 6} that $[W^{\nu}:L^{\hat{\mu}}]$ is independent of $\F$ whenever $\nu^{0}<_{J}\nu<_{J}\hat{\mu}$, therefore $\adj_{\nu\hat{\mu}}=0$ by Lemma \ref{lemma easy fact}.
Finally, $$[W^{\nu^{0}}_{\mathbb{F}}:L^{\hat{\mu}}_{\mathbb{F}}]=\sum\limits_{\nu^{0}\le_{J}\nu\le_{J}\hat{\mu}}[W^{\nu^{0}}_{\C}:L^{\nu}_{\C}]\adj_{\nu\hat{\mu}}=[W^{\nu^{0}}_{\mathbb{C}}:L^{\hat{\mu}}_{\mathbb{C}}]=1.$$
\end{proof}

This completes the proof of Proposition \ref{[4:1]-pairs}.

As discussed in section \ref{outline of proof}, the combination of Proposition \ref{empty core weight 4}, Proposition \ref{[4:1]-pairs}, Proposition \ref{[4:2]-pairs}, Proposition \ref{[4:3]-pairs} and Proposition \ref{2 [4:1]-pairs} completes the proof of Theorem \ref{James Conjecture schur 4}.

\\

\textit{Acknowledgements}. This paper was written under the supervision of Kai Meng Tan at the National University of Singapore. The author would like to thank Prof Tan for his many helpful comments and guidance. The author is also grateful for the financial support given by the Academic Research Fund R-146-000-317-114 of NUS.

\appendix
\section{Figures and tables}
\begin{figure}[h]
  \caption{Exceptional partitions for $[3:1]$-pairs.}
 \label{figure exceptional 31}
  \includegraphics[scale=0.2]{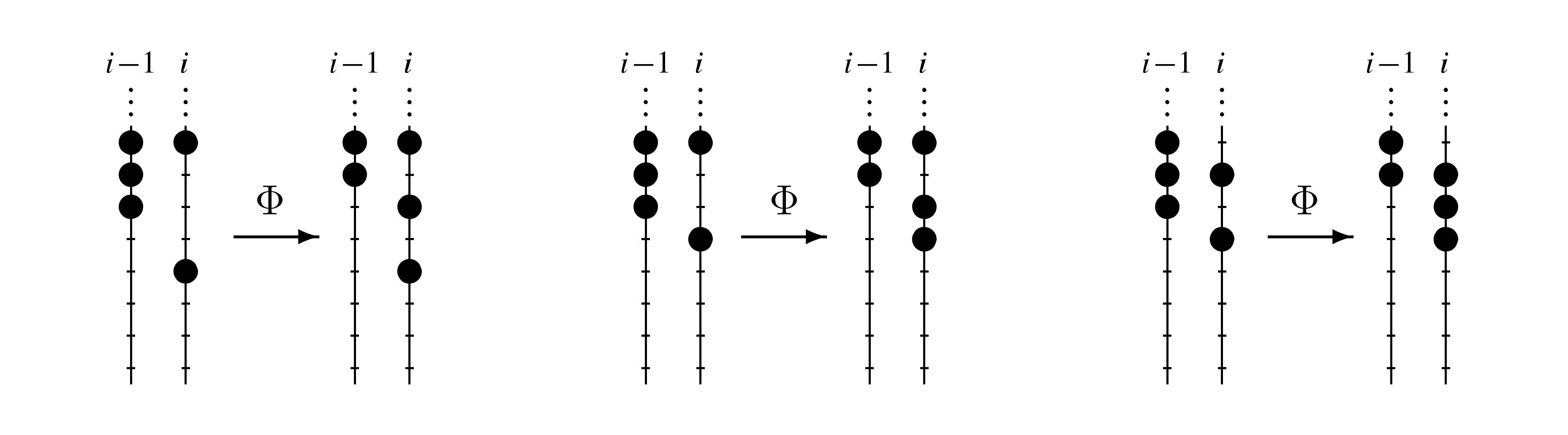}
\end{figure}

\begin{figure}[h]
  \caption{Exceptional partitions for $[3:2]$-pairs.}
 \label{figure exceptional 32}
 \includegraphics{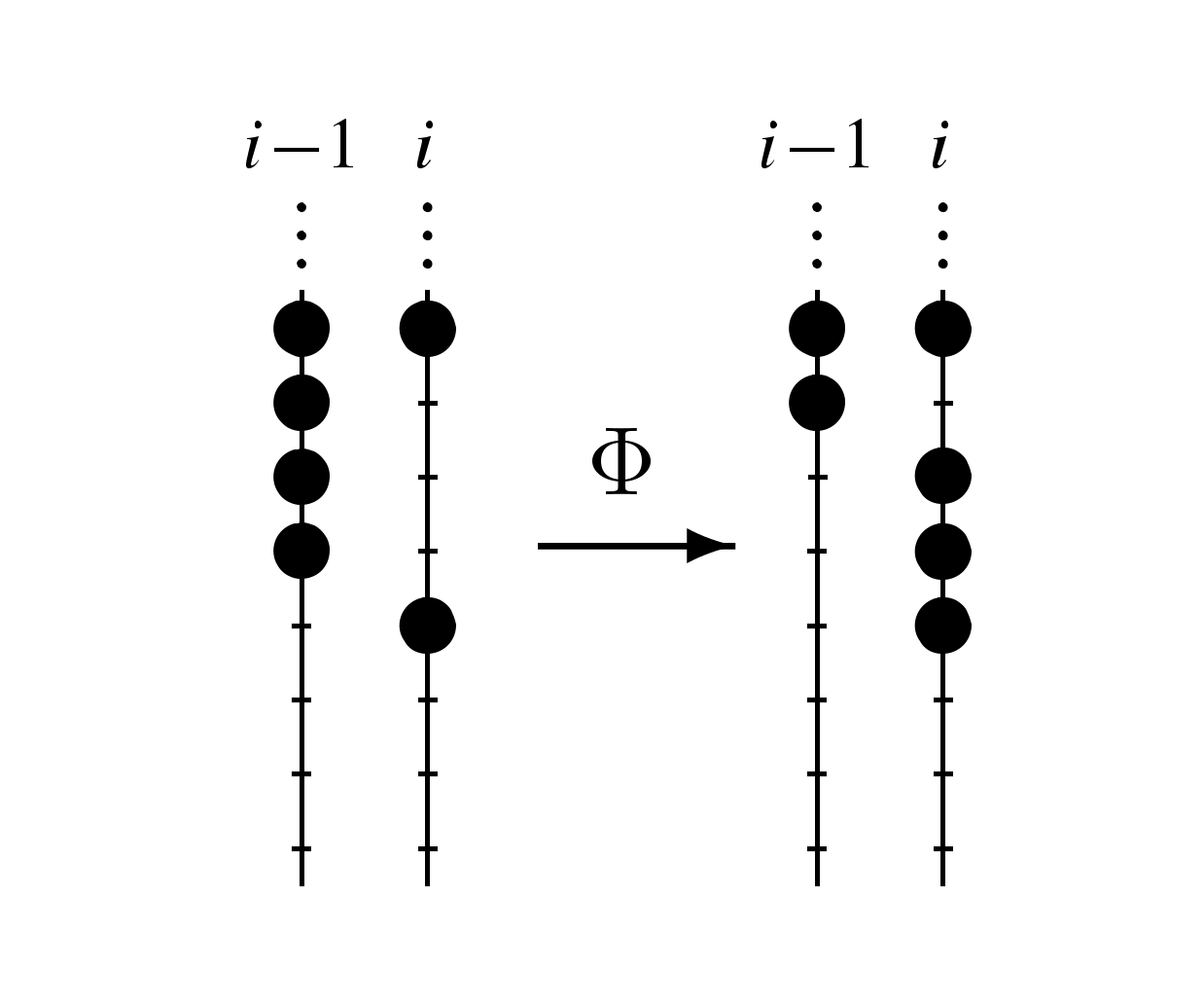}
\end{figure}

\begin{figure}[h]
  \caption{Exceptional partitions for $[4:k]$-pairs, $0<k<4$.}
 \label{figure exceptional 41}
  \includegraphics{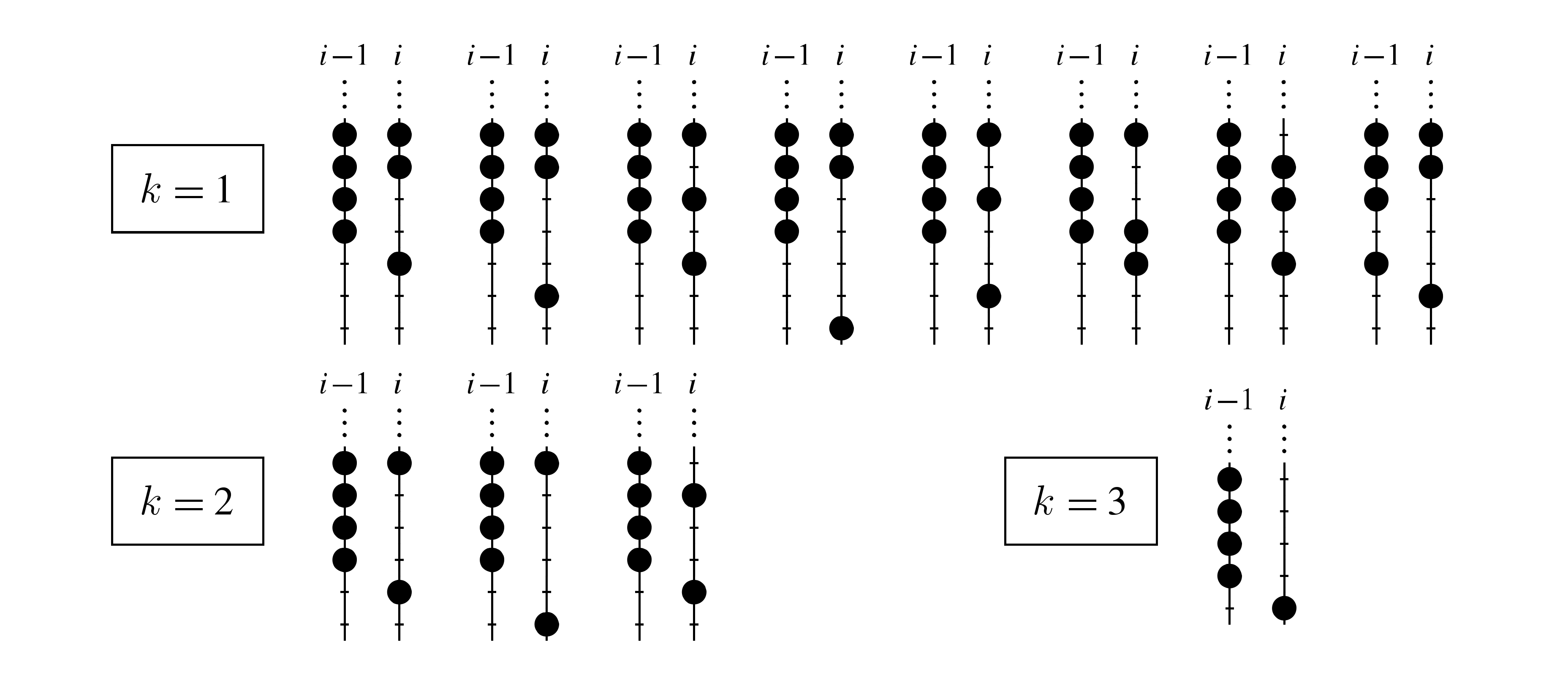}
\end{figure}

\begin{figure}[h]
\caption{$\nu^{0}=\langle 1_{2,1},a \mid 4^a,6,9^{e-a-1} \rangle$, $\hat{\mu}=\langle 0,1,a_2 \mid 4^a,6,9^{e-a-1} \rangle$}
\label{fig long}
\[
\begin{array}{ccccc}
\nu^{0}&&&&\\
\abacus(bbbhbbbhb,bbbhbbbhb,bnbhbbbhb,bbbhbbbhb,nnnhnbbhb,nbnhnnbhb,nnnhnbbhb,nnnhnnbhb,nnnhnnbhb,nnnhnnnhn,nnnhnnnhn)&
\sim&
\abacus(Ebbbhbbhbb,Ebbbhbbhbb,Ebnbhbbhbb,Ebbbhbbhbb,Cnnnhnbhbb,Cnbnhnbhbn,Ennnhnbhbb,Cnnnhnbhbn,Ennnhnbhbn,Ennnhnbhbn,Ennnhnnhnn)&
\sim&
\abacus(bbbhbbhbb,bbbhbbhbb,bnbhbbhbb,bbbhbbhbn,bnnhnbhbn,bbnhnbhbn,nnnhnbhbn,bnnhnbhbn,nnnhnbhbn,nnnhnbhbn,nnnhnnhnn)
\end{array}
\]
\[
\begin{array}{cccc}
&&&\tilde{\nu}\\
\sim&
\abacus(bbbEbhbbhb,bbbEbhbbhb,bnb-bhbbhb,bbn+bhbbhb,bnnEnhnbhb,bbnCnhnbhb,nnnEnhnbhb,bnnEnhnbhb,nnnEnhnbhb,nnnEnhnbhb,nnnEnhnnhn)&
\sim&
\abacus(bbbbhbbhb,bbbbhbbhb,bnbbhbbhb,bbnbhbbhb,bnnnhnbhb,bnbnhnbhb,nnnnhnbhb,bnnnhnbhb,nnnnhnbhb,nnnnhnbhb,nnnnhnnhn)
\end{array}
\]

\[
\begin{array}{ccccc}
\hat{\mu}&&&&\\
\abacus(bbbhbbbhb,bbbhbbbhb,bbbhbbbhb,nnbhbbbhb,bbnhnbbhb,nnnhnnbhb,nnnhnnbhb,nnnhnbbhb,nnnhnnbhb,nnnhnnnhn,nnnhnnnhn)&
\sim&
\abacus(Ebbbhbbhbb,Ebbbhbbhbb,Ebbbhbbhbb,Cnnbhbbhbb,Ebbnhnbhbb,Cnnnhnbhbn,Ennnhnbhbn,Ennnhnbhbb,Cnnnhnbhbn,Ennnhnbhbn,Ennnhnnhnn)&
\sim&
\abacus(bbbhbbhbb,bbbhbbhbb,bbbhbbhbn,bnbhbbhbb,bbnhnbhbn,bnnhnbhbn,nnnhnbhbn,nnnhnbhbn,bnnhnbhbn,nnnhnbhbn,nnnhnnhnn)
\end{array}
\]

\[
\begin{array}{cccc}
&&&\tilde{\mu}\\
\sim&
\abacus(bbbEbhbbhb,bbbEbhbbhb,bbnCbhbbhb,bnb-bhbbhb,bbn+nhnbhb,bnnEnhnbhb,nnnEnhnbhb,nnnEnhnbhb,bnnEnhnbhb,nnnEnhnbhb,nnnEnhnnhn)&
\sim&
\abacus(bbbbhbbhb,bbbbhbbhb,bnbbhbbhb,bnbbhbbhb,bbnnhnbhb,bnnnhnbhb,nnnnhnbhb,nnnnhnbhb,bnnnhnbhb,nnnnhnbhb,nnnnhnnhn)
\end{array}
\]
\end{figure}

\begin{table}[h]
\centering
\caption{Some cases $(\la,\mu)$ in section \ref{sec 3e}}
\label{table 3e}
\begin{tabular}{|p{3cm}|p{2.5cm}|} \hline
$\mu$ & $\lambda$   \\ \hline
$\langle    0_{1^{3}} \rangle $ & NIL \\ \hline
$\langle    0_{2},e-1      \rangle$, $e=2$ & $\langle    0_{1^{3}} \rangle $ \\ 
 & $\langle    1_{1^{3}} \rangle $  \\
 & $\langle 0_{1^{2}},1 \rangle $  \\
 & $\langle 0, 1_{1^{2}} \rangle $\\ \hline
$\langle 0_{1^{2}},1       \rangle$ & $\langle    0_{1^{3}} \rangle $  \\ \hline
$\langle 0,1,2 \rangle$, $e\ge3$ & $\langle    0_{1^{3}} \rangle $  \\
 & $\langle    1_{1^{3}} \rangle $  \\
 & $\langle 0_{1^{2}},1 \rangle $  \\
 & $\langle 0, 1_{1^{2}} \rangle $  \\ \hline
\end{tabular}
\end{table} 

\begin{table}[h]
\caption{Some cases $(\la,\mu)$ in section \ref{sec 4e}. For each $\mu$, the possiblities for $\la$ are listed in descending lexicographic order}
\label{table 4e 1}
\begin{tabular}{|p{5.5cm}|p{5cm}|p{2.5cm}|} \hline
$\mu$ & $\lambda$  & $d^{e}_{\lambda\mu}(v)$ \\ \hline
$\langle    0_{1^{4}} \rangle $ & NIL & \\ \hline
$\langle 0_{1^{3}},1 \rangle$ &  $\langle       0_{1^{4}}   \rangle$    &   \\      \hline
$\langle      0_{1^{2}}  , 1_{1^{2}}    \rangle$    &    $\langle       0_{1^{4}}    \rangle$       &         \\    \hline
$\langle     0_{1^{2}},1,2      \rangle$, $e\ge3$    &    $\langle   0,1_{1^{3}}       \rangle$        &           \\
 &  $\langle    0_{1^{3}}, 1      \rangle$         &           \\
 &  $\langle     1_{1^{4}}    \rangle$          &           \\
 &  $\langle  0_{1^{4}}       \rangle$         &           \\ \hline
$\langle 0_{2,1},1 \rangle$, $e=2$ &     $\langle      0  , 1_{1^{3}}   \rangle$   &  \\
& $\langle       0_{1^{3}}, 1    \rangle$  &      \\
& $\langle       1_{1^{4}}   \rangle$    &      \\
& $\langle       0_{1^{4}}   \rangle$    &      \\ \hline
$\mu^{1}:=\langle   0_{2},1,2        \rangle$, $e=3$    &     $\langle   0,1,2_{1^{2}}     \rangle$      &         \\
 &  $\langle   0,1_{1^{2}},2       \rangle$         &       \\
 &  $\langle  0_{1^{2}},1,2        \rangle$         &           \\
 &  $\langle   1_{1^{2}},2_{1^{2}}      \rangle$         &         \\
 &  $\langle    0_{1^{2}},2_{1^{2}}      \rangle$         &   $v$        \\
 &  $\langle   1,2_{1^{3}}       \rangle$         &   $v$       \\
 &  $\langle     1_{1^{3}},2     \rangle$         &           \\
 &  $\langle     0,2_{1^{3}}     \rangle$         &           \\
 &  $\lambda^{1}:=\langle    0_{1^{3}} ,2     \rangle$         &    $v^{3}$ \\
 &  $\langle    2_{1^{4}}     \rangle$         &         \\
 &  $\langle      0_{1^{2}},1_{1^{2}}    \rangle$         &           \\
 &  $\langle        0,1_{1^{3}}  \rangle$         &           \\
 &  $\langle    0_{1^{3}} ,1        \rangle$         &           \\
 &  $\langle   1_{1^{4}}      \rangle$         &           \\
 &  $\langle    0_{1^{4}}     \rangle$         &           \\ \hline
$\mu^{2}:=\langle     0,1,2,3      \rangle$, $e\ge4$    &   $\langle   0,1,2_{1^{2}}       \rangle$         &          \\
&  $\langle   0,1_{1^{2}},2       \rangle$         &          \\
 &  $\langle  0_{1^{2}},1,2        \rangle$         &           \\
 &  $\langle   1_{1^{2}},2_{1^{2}}      \rangle$         &          \\
 &  $\langle    0_{1^{2}},2_{1^{2}}      \rangle$         &     $d^{4}_{\hat{\lambda}\hat{\mu}}(v)=v$      \\
 &  $\langle   1,2_{1^{3}}       \rangle$         &     $d^{4}_{\hat{\lambda}\hat{\mu}}(v)=v$       \\
 &  $\langle     1_{1^{3}},2     \rangle$         &           \\
 &  $\langle     0,2_{1^{3}}     \rangle$         &            \\
 &  $\lambda^{2}:=\langle    0_{1^{3}} ,2     \rangle$         &      $d^{4}_{\hat{\lambda}\hat{\mu}}(v)=v^{3}$  \\
 &  $\langle    2_{1^{4}}     \rangle$         &          \\
 &  $\langle      0_{1^{2}},1_{1^{2}}    \rangle$         &           \\
 &  $\langle        0,1_{1^{3}}  \rangle$         &           \\
 &  $\langle    0_{1^{3}} ,1        \rangle$         &           \\
 &  $\langle   1_{1^{4}}      \rangle$         &           \\
 &  $\langle    0_{1^{4}}     \rangle$         &           \\ \hline
\end{tabular}
\end{table}

\begin{table}[h]
\caption{Some cases $(\la,\mu)$ in section \ref{sec 4e}. For each $\mu$, the possibilities for $\la$ are listed in descending lexicographic order}
\label{table 4e 2}
\begin{tabular}{|p{5.5cm}|p{5cm}|p{2.5cm}|} \hline
$\mu$ & $\lambda$  & $d^{e}_{\lambda\mu}(v)$ \\ \hline
$\mu^{3}:=\langle   0_{2},1_{2}  \rangle$, $e=2$  & $\langle       0_{2}, 1_{1^{2}}    \rangle$  &      \\
& $\langle       0_{2,1}, 1    \rangle$  &      \\
 &   $\lambda^{3}:=\langle       0_{2^{2}}    \rangle$        &  $v^{3}+v$       \\
& $\langle       0_{2,1^{2}}    \rangle$  &   $0$     \\
& $\langle       0_{1^{2}}, 1_{1^{2}}    \rangle$  &        \\
& $\langle       0, 1_{1^{3}}    \rangle$  &      \\
& $\langle       0_{1^{3}}, 1    \rangle$  &     \\
& $\langle       1_{1^{4}}    \rangle$  &     \\
& $\langle       0_{1^{4}}    \rangle$   &    \\ \hline
\end{tabular}
\end{table}

\begin{table}[htbp]
\footnotesize
  \caption{We have $a>3$, $\nu$ and $\hat{\mu}=\langle0,a_{3}\rangle$ lie in the block with the $\langle4^{a},6,9^{e-a-1}\rangle$ notation. See section \ref{subsub 1} for an explanation of how we obtained the last column.}
\centering
\addtolength{\leftskip} {-4cm}
\addtolength{\rightskip}{-2cm}
\setlength\tabcolsep{4pt}

    \begin{tabular}{c|c|c|c|c|c|cV{2.7}cV{2.7}c}
          &$\langle0,a_{3}\rangle$ & $\langle0_{2},a_{2}\rangle$ & $\langle(a-1)_{1^{2}},a_{2}\rangle$ & $\langle0,(a-1)_{2},a\rangle$ & $\langle(a-1)_{2,1},a\rangle$ & $\langle(a-2)_{2,1},a\rangle$ & $B(\nu,\hat{\mu})$ & $[W^{\nu}:L^{\hat{\mu}}]$ \\ \hline
    $\langle0,a_{3}\rangle$ &       &       &       &       &       &       &       & 1 \\ \hline
    $\langle0,a_{2,1}\rangle$ & 0 &       &       &       &       &       & 0     & 0 \\ \hline
    $\langle0_{2},a_{2}\rangle$ & 1     &       &       &       &       &       & 1     & 1 \\ \hline
    $\langle0_{4}\rangle$ & $-1$    & 1     &       &       &       &       & 0     & 0 \\ \hline
    $\langle0_{3},a\rangle$ & 1     & $-1$    &       &       &       &       & 0     & 0 \\ \hline
    $\langle0_{2},a_{1^{2}}\rangle$ &       & 0 &       &       &       &       & 0     & 0 \\ \hline
    $\langle0,a_{1^{3}}\rangle$ & 0 &       &       &       &       &       & 0     & 0 \\ \hline
    $\langle a_{1^{4}}\rangle$ &       &       &       &       &       &       & 0     & 0 \\ \hline
    $\langle0,a-1,a_{2}\rangle$ & \cellcolor[rgb]{ 1,  1,  1} $-1$ & \cellcolor[rgb]{ 1,  1,  1} 1 & \cellcolor[rgb]{ 1,  1,  1} & \cellcolor[rgb]{ 1,  1,  1} & \cellcolor[rgb]{ 1,  1,  1} & \cellcolor[rgb]{ 1,  1,  1} & \cellcolor[rgb]{ 1,  1,  1}0 & \cellcolor[rgb]{ 1,  1,  1}0 \\
     \vdots   & \cellcolor[rgb]{ 1,  1,  1}\vdots & \cellcolor[rgb]{ 1,  1,  1}\vdots & \cellcolor[rgb]{ 1,  1,  1} & \cellcolor[rgb]{ 1,  1,  1} & \cellcolor[rgb]{ 1,  1,  1} & \cellcolor[rgb]{ 1,  1,  1} & \cellcolor[rgb]{ 1,  1,  1}0 & \cellcolor[rgb]{ 1,  1,  1}0 \\ 
    $\langle0,1,a_{2}\rangle$ & \cellcolor[rgb]{ 1,  1,  1} $(-1)^{a-1}$ & \cellcolor[rgb]{ 1,  1,  1}$(-1)^{a}$ & \cellcolor[rgb]{ 1,  1,  1} & \cellcolor[rgb]{ 1,  1,  1} & \cellcolor[rgb]{ 1,  1,  1} & \cellcolor[rgb]{ 1,  1,  1} & \cellcolor[rgb]{ 1,  1,  1}0 & \cellcolor[rgb]{ 1,  1,  1}0 \\ \hline
     $\langle (a-1)_{1^{2}},a_{2}\rangle$ & \cellcolor[rgb]{ 1,  1,  1} & \cellcolor[rgb]{ 1,  1,  1}1 & \cellcolor[rgb]{ 1,  1,  1} & \cellcolor[rgb]{ 1,  1,  1} & \cellcolor[rgb]{ 1,  1,  1} & \cellcolor[rgb]{ 1,  1,  1} & \cellcolor[rgb]{ 1,  1,  1}1 & \cellcolor[rgb]{ 1,  1,  1}1 \\ 
   $\langle (a-2)_{1^{2}},a_{2}\rangle$ & \cellcolor[rgb]{ 1,  1,  1} & \cellcolor[rgb]{ 1,  1,  1}  $-1$   & \cellcolor[rgb]{ 1,  1,  1}1 & \cellcolor[rgb]{ 1,  1,  1} & \cellcolor[rgb]{ 1,  1,  1} & \cellcolor[rgb]{ 1,  1,  1} & \cellcolor[rgb]{ 1,  1,  1}0 & \cellcolor[rgb]{ 1,  1,  1}0 \\
     \vdots     & \cellcolor[rgb]{ 1,  1,  1} & \cellcolor[rgb]{ 1,  1,  1}\vdots &\cellcolor[rgb]{ 1,  1,  1}\vdots & \cellcolor[rgb]{ 1,  1,  1} & \cellcolor[rgb]{ 1,  1,  1} & \cellcolor[rgb]{ 1,  1,  1} & \cellcolor[rgb]{ 1,  1,  1}0 & \cellcolor[rgb]{ 1,  1,  1}0 \\
     $\langle1_{1^{2}},a_{2}\rangle$ & \cellcolor[rgb]{ 1,  1,  1} & \cellcolor[rgb]{ 1,  1,  1}$(-1)^{a}$ & \cellcolor[rgb]{ 1,  1,  1}$(-1)^{a-1}$ & \cellcolor[rgb]{ 1,  1,  1} & \cellcolor[rgb]{ 1,  1,  1} & \cellcolor[rgb]{ 1,  1,  1} & \cellcolor[rgb]{ 1,  1,  1}0 & \cellcolor[rgb]{ 1,  1,  1}0 \\ \hline
    $\langle0_{1^{2}},a_{2}\rangle$ & $(-1)^{a-1}$ & 0     & $(-1)^{a}$ &       &       &       & 0     & 0 \\ \hline
    $\langle0,a-1,a_{1^{2}}\rangle$ & \cellcolor[rgb]{ 1,  1,  1} & \cellcolor[rgb]{ 1,  1,  1} & \cellcolor[rgb]{ 1,  1,  1} & \cellcolor[rgb]{ 1,  1,  1} & \cellcolor[rgb]{ 1,  1,  1} & \cellcolor[rgb]{ 1,  1,  1} & \cellcolor[rgb]{ 1,  1,  1}0 & \cellcolor[rgb]{ 1,  1,  1}0 \\ 
    \vdots   & \cellcolor[rgb]{ 1,  1,  1} & \cellcolor[rgb]{ 1,  1,  1} & \cellcolor[rgb]{ 1,  1,  1} & \cellcolor[rgb]{ 1,  1,  1} & \cellcolor[rgb]{ 1,  1,  1} & \cellcolor[rgb]{ 1,  1,  1} & \cellcolor[rgb]{ 1,  1,  1}0 & \cellcolor[rgb]{ 1,  1,  1}0 \\ 
     $\langle0,1,a_{1^{2}}\rangle$ & \cellcolor[rgb]{ 1,  1,  1} & \cellcolor[rgb]{ 1,  1,  1} & \cellcolor[rgb]{ 1,  1,  1} & \cellcolor[rgb]{ 1,  1,  1} & \cellcolor[rgb]{ 1,  1,  1} & \cellcolor[rgb]{ 1,  1,  1} & \cellcolor[rgb]{ 1,  1,  1}0 & \cellcolor[rgb]{ 1,  1,  1}0 \\ \hline
     $\langle (a-1)_{1^{2}},a_{1^{2}}\rangle$ & \cellcolor[rgb]{ 1,  1,  1} & \cellcolor[rgb]{ 1,  1,  1} & \cellcolor[rgb]{ 1,  1,  1} & \cellcolor[rgb]{ 1,  1,  1} & \cellcolor[rgb]{ 1,  1,  1} & \cellcolor[rgb]{ 1,  1,  1} & \cellcolor[rgb]{ 1,  1,  1}0 & \cellcolor[rgb]{ 1,  1,  1}0 \\ 
     \vdots     & \cellcolor[rgb]{ 1,  1,  1} & \cellcolor[rgb]{ 1,  1,  1} & \cellcolor[rgb]{ 1,  1,  1} & \cellcolor[rgb]{ 1,  1,  1} & \cellcolor[rgb]{ 1,  1,  1} & \cellcolor[rgb]{ 1,  1,  1} & \cellcolor[rgb]{ 1,  1,  1}0 & \cellcolor[rgb]{ 1,  1,  1}0 \\ 
   $\langle1_{1^{2}},a_{1^{2}}\rangle$ & \cellcolor[rgb]{ 1,  1,  1} & \cellcolor[rgb]{ 1,  1,  1} & \cellcolor[rgb]{ 1,  1,  1} & \cellcolor[rgb]{ 1,  1,  1} & \cellcolor[rgb]{ 1,  1,  1} & \cellcolor[rgb]{ 1,  1,  1} & \cellcolor[rgb]{ 1,  1,  1}0 & \cellcolor[rgb]{ 1,  1,  1}0 \\ \hline
    $\langle0_{1^{2}},a_{1^{2}}\rangle$ &       &       &       &       &       &       & 0     & 0 \\ \hline
   $\langle0,(a-1)_{2},a\rangle$ & \cellcolor[rgb]{ 1,  1,  1}1 & \cellcolor[rgb]{ 1,  1,  1} & \cellcolor[rgb]{ 1,  1,  1} & \cellcolor[rgb]{ 1,  1,  1} & \cellcolor[rgb]{ 1,  1,  1} & \cellcolor[rgb]{ 1,  1,  1} & \cellcolor[rgb]{ 1,  1,  1}1 & \cellcolor[rgb]{ 1,  1,  1}1 \\ 
     $\langle0,(a-2)_{2},a\rangle$ & \cellcolor[rgb]{ 1,  1,  1} $-1$ & \cellcolor[rgb]{ 1,  1,  1} & \cellcolor[rgb]{ 1,  1,  1} & \cellcolor[rgb]{ 1,  1,  1}1 & \cellcolor[rgb]{ 1,  1,  1} & \cellcolor[rgb]{ 1,  1,  1} & \cellcolor[rgb]{ 1,  1,  1}0 & \cellcolor[rgb]{ 1,  1,  1}0 \\ 
     \vdots     & \cellcolor[rgb]{ 1,  1,  1}\vdots & \cellcolor[rgb]{ 1,  1,  1} & \cellcolor[rgb]{ 1,  1,  1} & \cellcolor[rgb]{ 1,  1,  1}\vdots & \cellcolor[rgb]{ 1,  1,  1} & \cellcolor[rgb]{ 1,  1,  1} & \cellcolor[rgb]{ 1,  1,  1}0 & \cellcolor[rgb]{ 1,  1,  1}0 \\
     $\langle0,1_{2},a\rangle$ & \cellcolor[rgb]{ 1,  1,  1}$(-1)^{a}$ & \cellcolor[rgb]{ 1,  1,  1} & \cellcolor[rgb]{ 1,  1,  1} & \cellcolor[rgb]{ 1,  1,  1}$ (-1)^{a-1}$ & \cellcolor[rgb]{ 1,  1,  1} & \cellcolor[rgb]{ 1,  1,  1} & \cellcolor[rgb]{ 1,  1,  1}0 & \cellcolor[rgb]{ 1,  1,  1}0 \\ \hline
     \cellcolor[gray]{0.8} $\langle (a-1)_{2,1},a\rangle$ & \cellcolor[rgb]{ 1,  1,  1} & \cellcolor[rgb]{ 1,  1,  1} & \cellcolor[rgb]{ 1,  1,  1}1 & \cellcolor[rgb]{ 1,  1,  1}1 & \cellcolor[rgb]{ 1,  1,  1} & \cellcolor[rgb]{ 1,  1,  1} & \cellcolor[gray]{0.8}2 & \cellcolor[gray]{0.8}1 \\ 
    $\langle (a-2)_{2,1},a\rangle$ & \cellcolor[rgb]{ 1,  1,  1} & \cellcolor[rgb]{ 1,  1,  1} & \cellcolor[rgb]{ 1,  1,  1} & \cellcolor[rgb]{ 1,  1,  1} & \cellcolor[rgb]{ 1,  1,  1}1 & \cellcolor[rgb]{ 1,  1,  1} & \cellcolor[rgb]{ 1,  1,  1}1 & \cellcolor[rgb]{ 1,  1,  1}1 \\ 
  $\langle (a-3)_{2,1},a\rangle$ & \cellcolor[rgb]{ 1,  1,  1} & \cellcolor[rgb]{ 1,  1,  1} & \cellcolor[rgb]{ 1,  1,  1} & \cellcolor[rgb]{ 1,  1,  1} & \cellcolor[rgb]{ 1,  1,  1} $-1$ & \cellcolor[rgb]{ 1,  1,  1} 1 & \cellcolor[rgb]{ 1,  1,  1}0 & \cellcolor[rgb]{ 1,  1,  1}0 \\ 
  \vdots     & \cellcolor[rgb]{ 1,  1,  1} & \cellcolor[rgb]{ 1,  1,  1} & \cellcolor[rgb]{ 1,  1,  1} & \cellcolor[rgb]{ 1,  1,  1} & \cellcolor[rgb]{ 1,  1,  1}\vdots& \cellcolor[rgb]{ 1,  1,  1}\vdots & \cellcolor[rgb]{ 1,  1,  1}0 & \cellcolor[rgb]{ 1,  1,  1}0 \\ 
 $\langle1_{2,1},a\rangle$ & \cellcolor[rgb]{ 1,  1,  1} & \cellcolor[rgb]{ 1,  1,  1} & \cellcolor[rgb]{ 1,  1,  1}0 & \cellcolor[rgb]{ 1,  1,  1} & \cellcolor[rgb]{ 1,  1,  1}$(-1)^{a-1}$ & \cellcolor[rgb]{ 1,  1,  1}$(-1)^{a}$ & \cellcolor[rgb]{ 1,  1,  1}0 & \cellcolor[rgb]{ 1,  1,  1}0 \\ 
   $\langle0_{2,1},a\rangle$ & \cellcolor[rgb]{ 1,  1,  1} & \cellcolor[rgb]{ 1,  1,  1} & \cellcolor[rgb]{ 1,  1,  1} & \cellcolor[rgb]{ 1,  1,  1} & \cellcolor[rgb]{ 1,  1,  1}$(-1)^{a}$ & \cellcolor[rgb]{ 1,  1,  1}$(-1)^{a-1}$ & \cellcolor[rgb]{ 1,  1,  1}0 & \cellcolor[rgb]{ 1,  1,  1}0 \\ \hline
    \end{tabular}
  \label{4:1 1}
\end{table}
\pagebreak

\begin{table}[htbp]
\footnotesize
  \centering

\setlength\tabcolsep{3pt}
  \caption{We have $a=3$, $\nu$ and $\hat{\mu}=\langle0,3_{3}\rangle$ lie in the block with the $\langle4^{a},6,9^{e-a-1}\rangle$ notation. See section \ref{subsub 1} for an explanation of how we obtained the last column.}
    \begin{tabular}{c|c|c|c|c|c|cV{2.7}cV{2.7}c}
          & $\langle0,3_{3}\rangle$ & $\langle0_{2},3_{2}\rangle$ &$\langle2_{1^{2}},3_{2}\rangle$ & $\langle0,2_{2},3\rangle$ & $\langle2_{2,1},3\rangle$ & $\langle1_{2,1},a\rangle$ & $B(\nu,\hat{\mu})$ & $[W^{\nu}:L^{\hat{\mu}}]$ \\ \hline
   $\langle 0,a_{3} \rangle$ &       &       &       &       &       &       &       & 1 \\ \hline
    $\langle 0,a_{2,1}  \rangle$& 0 &       &       &       &       &       & 0     & 0 \\ \hline
   $\langle  0_{2},3_{2}  \rangle$& 1     &       &       &       &       &       & 1     & 1 \\ \hline
    $\langle 0_{4} \rangle$& $-1$     & 1     &       &       &       &       & 0     & 0 \\ \hline
   $\langle  0_{3},a \rangle$& 1     & $-1$    &       &       &       &       & 0     & 0 \\ \hline
    $\langle 0_{2},a_{1^{2}}  \rangle$&       & 0 &       &       &       &       & 0     & 0 \\ \hline
   $\langle  0,a_{1^{3}}  \rangle$& 0 &       &       &       &       &       & 0     & 0 \\ \hline
   $\langle  a_{1^{4}}  \rangle$&       &       &       &       &       &       & 0     & 0 \\ \hline
  $\langle   0,2,3_{2}  \rangle$& $-1$    & 1     &       &       &       &       & 0     & 0 \\ \hline
  $\langle   0,1,3_{2}  \rangle$& 1     & $-1$    &       &       &       &       & 0     & 0 \\ \hline
  $\langle   2_{1^{2}},a_{2}  \rangle$&       & 1     &       &       &       &       & 1     & 1 \\ \hline
  $\langle   1_{1^{2}},a_{2}  \rangle$&       &$-1$    & 1     &       &       &       & 0     & 0 \\ \hline
  $\langle   0_{1^{2}},a_{2}  \rangle$& 1     & 0     &$-1$    &       &       &       & 0     & 0 \\ \hline
  $\langle   0,2,3_{1^{2}} \rangle$ &       &       &       &       &       &       & 0     & 0 \\ \hline
  $\langle   0,1,3_{1^{2}} \rangle$ &       &       &       &       &       &       & 0     & 0 \\ \hline
   $\langle  2_{1^{2}},3_{1^{2}} \rangle$ &       &       &       &       &       &       & 0     & 0 \\ \hline
   $\langle  1_{1^{2}},3_{1^{2}} \rangle$ &       &       &       &       &       &       & 0     & 0 \\ \hline
  $\langle   0_{1^{2}},3_{1^{2}}  \rangle$&       &       &       &       &       &       & 0     & 0 \\ \hline
   $\langle  0,2_{2},3  \rangle$& 1     &       &       &       &       &       & 1     & 1 \\ \hline
  $\langle   0,1_{2},3 \rangle$ & $-1$   &       &       & 1     &       &       & 0     & 0 \\ \hline
 \cellcolor[gray]{0.8}  $\langle  2_{2,1},3  \rangle$&       &       & 1     & 1     &       &       & \cellcolor[gray]{0.8}2 & \cellcolor[gray]{0.8}1 \\ \hline
  $\langle   1_{2,1},a \rangle$ &       &       &       &       & 1     &       & 1     & 1 \\ \hline
   $\langle  0_{2,1},a \rangle$ &       &       &       &       &$-1$    & 1     & 0     & 0 \\ \hline
    \end{tabular}
  \label{4:1 2}
\end{table}
\pagebreak

\begin{table}[htbp]
\footnotesize
  \centering
  \caption{We have $a=2$, $\nu$ and $\hat{\mu}=\langle  0,2_{3}  \rangle$ lie in the block with the $\langle4^{a},6,9^{e-a-1}\rangle$ notation. See section \ref{subsub 1} for an explanation of how we obtained the last column.}
    \begin{tabular}{c|c|c|c|c|cV{2.7}cV{2.7}c}
          & $\langle 0,2_{3} \rangle$ & $\langle 0_{2},2_{2} \rangle$ &$\langle 1_{1^{2}},2_{2} \rangle$ & $\langle 0,1_{2},2 \rangle$ & $\langle1_{2,1},2\rangle$ & $B(\nu,\hat{\mu})$& $[W^{\nu}:L^{\hat{\mu}}]$ \\ \hline
   $\langle  0,2_{3}  \rangle$&       &       &       &       &       &       & 1 \\ \hline
   $\langle  0,2_{2,1} \rangle$ & 0 &       &       &       &       & 0     & 0 \\ \hline
    $\langle 0_{2},2_{2}  \rangle$& 1     &       &       &       &       & 1     & 1 \\ \hline
    $\langle 0_{4}  \rangle$& $-1$    & 1     &       &       &       & 0     & 0 \\ \hline
  $\langle   0_{3},2  \rangle$& 1     & $-1$    &       &       &       & 0     & 0 \\ \hline
  $\langle   0_{2},2_{1^{2}} \rangle$ &       & 0 &       &       &       & 0     & 0 \\ \hline
  $\langle   0,2_{1^{3}}  \rangle$& 0 &       &       &       &       & 0     & 0 \\ \hline
  $\langle   2_{1^{4}}  \rangle$&       &       &       &       &       & 0     & 0 \\ \hline
   $\langle  0,1,2_{2}  \rangle$& $-1$    & 1     &       &       &       & 0     & 0 \\ \hline
  $\langle   1_{1^{2}},2_{2}  \rangle$&       & 1     &       &       &       & 1     & 1 \\ \hline
  $\langle   0_{1^{2}},2_{2} \rangle$ & $-1$ & 0     & $1$ &       &       & 0     & 0 \\ \hline
    $\langle 0,1,2_{1^{2}} \rangle$ &       &       &       &       &       & 0     & 0 \\ \hline
   $\langle  1_{1^{2}},2_{1^{2}} \rangle$ &       &       &       &       &       & 0     & 0 \\ \hline
   $\langle  0_{1^{2}},2_{1^{2}} \rangle$ &       &       &       &       &       & 0     & 0 \\ \hline
   $\langle  0,1_{2},2 \rangle$ & 1     &       &       &       &       & 1     & 1 \\ \hline
\cellcolor[gray]{0.8}  $\langle   1_{2,1},2  \rangle$&       &       & 1     & 1     &       & \cellcolor[gray]{0.8}2 &\cellcolor[gray]{0.8}1 \\ \hline
  $\langle   0_{2,1},2  \rangle$&       &       &       &       & 1     & 1     & 1 \\ \hline
    \end{tabular}
  \label{4:1 3}
\end{table}

\begin{table}[htbp]
\footnotesize
  \centering
  \caption{We have $a\ge2$, $e-a\ge1$, $\nu$ and $\hat{\mu}=\langle0,1,a_{2}\rangle$ lie in the block with the $\langle4^{a},6,9^{e-a-1}\rangle$ notation. See section \ref{subsub 2} for an explanation of how we obtained the last column.}
    \begin{tabular}{c|c|c|c|cV{2.7}cV{2.7}c}
          & $\langle 0,1,a_{2} \rangle$ & $\langle 0,1_{2},a \rangle$ & $\langle 1_{1^{2}},a_{2} \rangle$ & $\langle 1_{2,1},a \rangle$ & $B(\nu,\hat{\mu})$& $[W^{\nu}:L^{\hat{\mu}}]$ \\ \hline
    $\langle 0,1,a_{2} \rangle$ &       &       &       &       &      & 1 \\ \hline
    $\langle 0,1,a_{1^{2}}  \rangle$& 0     &       &       &       & 0     & 0 \\ \hline
   $\langle  0,1_{2},a  \rangle$& 1     &       &       &       & 1     & 1 \\ \hline
   $\langle  1_{1^{2}},a_{2}  \rangle$& 1     &       &       &       & 1     & 1 \\ \hline
   $\langle  1_{1^{2}},a_{1^{2}} \rangle$ &       &       &       &       & 0     & 0 \\ \hline
  \cellcolor[gray]{0.8} $\langle  1_{2,1},a  \rangle$&       & 1     & 1     &       & \cellcolor[gray]{0.8}2 &\cellcolor[gray]{0.8}1 \\ \hline
   $\langle  0_{2},1,a \rangle$ & $-1$    & 1     &       &       & 0     & 0 \\ \hline
   $\langle  0_{1^{2}},a_{2}  \rangle$& $-1$    &       & 1     &       & 0     & 0 \\ \hline
   $\langle  0_{2,1},a  \rangle$&       &       &       & 1     & 1     & 1 \\ \hline
    \end{tabular}
  \label{4:1 6}
\end{table}


\begin{thebibliography}{99}
\bibitem{Ariki}
S. Ariki, On the decomposition numbers of the Hecke algebra of $G(m, 1, n)$,
\emph{J. Math. Kyoto Univ.} \textbf{36 (4)} (1996) 789-808.

\bibitem{brundan}
J. Brundan,
Modular branching rules and the Mullineux map for Hecke algebras of type A,
\emph{Proceedings of the London Mathematical Society}, \textbf{77(3)} (1998), 551–81.


\bibitem{filtrations}
J. Chuang and K. M. Tan,
Filtrations in Rouquier Blocks of Symmetric Groups and Schur Algebras,
\emph{Proceedings of the London Mathematical Society}, Volume \textbf{86}, Issue \textbf{3} (2003),
685-706.


\bibitem{dipper 1}
R. Dipper and G. James, Representations of Hecke algebras of general linear groups, \emph{Proceedings of the London Mathematical
Society}, (3) \textbf{52} (1986), 20–52.

\bibitem{dipper 2}
R. Dipper and G. James, $q$-tensor space and $q$-Weyl modules, \emph{Trans. Amer. Math. Soc.}, \textbf{327} (1991),
251–82.

\bibitem{donkin}
S. Donkin, 
The $q$-Schur Algebra,
\emph{London Mathematical Society Lecture Note Series} \textbf{253},
Cambridge University Press, 1998.

\bibitem{another runner removal theorem}
M. Fayers,
Another runner removal theorem for $v$-decomposition numbers of Iwahori-Hecke
algebras and $q$-Schur algebras.
\emph{Journal of Algebra} \textbf{310} (2007), 396–404.

\bibitem{wt 3}
M. Fayers,
Decomposition numbers for weight three blocks of symmetric groups and Iwahori–Hecke algebras,
\emph{Trans. Amer. Math. Soc.} \textbf{360(3)} (2008), 1341-1376.

\bibitem{wt 4}
M. Fayers,
James’s Conjecture holds for weight four blocks of Iwahori–Hecke algebras,
\emph{Journal of Algebra} \textbf{317} (2007), 593–633.

\bibitem{graham}
J. J. Graham and G. I. Lehrer,
Cellular Algebras,
\emph{Invent. Math.} \textbf{123}
(1994), 1-34.

\bibitem{glnq}
G. James,
The decomposition matrices of $GL_{n}(q)$ for $n\le10$,
\emph{Proc. London Math. Soc.} \textbf{60(3)} (1990), 225–265.

\bibitem{rouquier}
G. James, S. Lyle and A. Mathas,
Rouquier blocks, 
\emph{Math. Z.} \textbf{252} (2006), 511–531.

\bibitem{Jantzen Schaper}
G. James and A. Mathas,
A $q$-analogue of the Jantzen-Schaper theorem,
\emph{Proc. London Math. Soc.} \textbf{74(3)} (1997), 241-274.

\bibitem{runner removal}
G. James and A. Mathas, 
Equating decomposition numbers for different primes, 
\emph{J. Algebra} \textbf{258} (2002), 599–614.

\bibitem{llt}
A. Lascoux, B. Leclerc and J.-Y. Thibon,
Hecke algebras at roots of unity and crystal bases of quantum affine algebras,
\emph{Comm. Math. Phys.} \textbf{181}
(1996), 205–263.

\bibitem{wt 5}
A.Y.R. Low,
Adjustment matrices for the principal block of the Iwahori-Hecke algebra $\mathcal{H}_{5e}$, \emph{Journal of Algebra}, \textbf{563}
(2020), 274-291.

\bibitem{mathas book}
A. Mathas,
Iwahori–Hecke Algebras and Schur Algebras of the Symmetric Group,
\emph{University Lecture Series} \textbf{15},
American Mathematical Society, 1999.

\bibitem{esign}
 A. O. Morris and J. B. Olsson, 
On $p$-quotients for spin characters.
\emph{Journal of Algebra} \textbf{119},
(1988): 51–82.

\bibitem{Richards}
M. Richards,
Some decomposition numbers for Hecke algebras of general linear groups,
\emph{Math. Proc. Cambridge Philos. Soc.} \textbf{119} (1996), 383-402.

\bibitem{Scopes}
J. C. Scopes, 
Cartan matrices and Morita equivalence for blocks of the symmetric groups, 
\emph{Journal of Algebra} \textbf{142} (1991), 441–455.

\bibitem{Ac duality}
S. Schroll and K. M. Tan,
Weight 2 blocks of general linear groups and modular Alvis–Curtis duality,
\emph{International Mathematics Research Notices}, Vol. \textbf{2007}
(2007), Article ID \textbf{rnm130}.

\bibitem{parities}
K. M. Tan,
Parities of $v$-decomposition numbers and an application to symmetric group algebras.
(2006), preprint

\bibitem{beyond}
K.M. Tan,
Beyond Rouquier partitions,
\emph{Journal of Algebra} \textbf{321}
(2009), 248-263.

\bibitem{varagnolo}
M. Varagnolo and E. Vasserot,
On the decomposition numbers of the quantized Schur algebra,
\emph{Duke Mathematical Journal} \textbf{100}
(1999), 267-297.

\bibitem{Williamson}
G. Williamson,
Schubert calculus and torsion explosion,
\emph{J. Amer. Math. Soc.} \textbf{30} (2017).


\end{thebibliography}
\end{document}